\def\HyPsd@CatcodeWarning#1{}\makeatother
\newtheorem{thm}[equation]{Theorem}
\newtheorem{lem}[equation]{Lemma}
\newtheorem{cor}[equation]{Corollary}
\theoremstyle{definition}
\theoremstyle{remark}
\newtheorem{rmk}[equation]{Remark}
\numberwithin{equation}{section}
\numberwithin{figure}{section}
\newcommand\abs[2][empty]{\csname#1\endcsname \lvert{#2}\csname#1\endcsname\rvert}
\newcommand\doublebar[2][empty]{\csname#1\endcsname \lVert{#2}\csname#1\endcsname\rVert}
\newcommand\mat[1]{\boldsymbol{#1}}
\newcommand\arr[1]{\boldsymbol{\dot{#1}}}
\newcommand\dist{\mathop{\mathrm{dist}}\nolimits}
\newcommand\Div{\mathop{\mathrm{div}}\nolimits}
\newcommand\Tr{\mathop{\smash{\arr{\mathrm{Tr}}}\vphantom{T}}\nolimits}
\newcommand\Trace{\mathop{\mathrm{Tr}}\nolimits}
\newcommand\M{\mathop{\smash{\arr{\mathrm{M}}}\vphantom{M}}\nolimits}
\newcommand\Ext{\mathop{\mathrm{Ext}}\nolimits}
\newcommand\supp{\mathop{\mathrm{supp}}\nolimits}
\newcommand\diam{\mathop{\mathrm{diam}}\nolimits}
\newcommand\esssup{\mathop{\mathrm{ess\,sup}}}
\newcommand\re{\mathop{\mathrm{Re}}\nolimits}
\newcommand\im{\mathop{\mathrm{Im}}\nolimits}
\newcommand\R{\mathbb{R}} 
\newcommand\C{\mathbb{C}}
\newcommand\1{\mathbf{1}}
\newcommand\DD{\mathfrak{D}}
\newcommand\NN{\mathfrak{N}}
\newcommand\XX{\mathfrak{X}}
\newcommand\YY{\mathfrak{Y}}
\def\smooth{s}
\newcommand\pmin[1][\smooth]{\pdmnMinusOne/\allowbreak(\dmnMinusOne+#1)}
\def\dmn{{n+1}}
\def\pdmn{{(n+1)}}
\def\dmnMinusOne{n}
\def\pdmnMinusOne{n}
\def\dmn{d}
\def\pdmn{d}
\def\dmnMinusOne{{d-1}}
\def\pdmnMinusOne{{(d-1)}}
\begin{document}

\title[Perturbation of well posedness]{Perturbation of well posedness for higher order elliptic systems with rough coefficients}

\author{Ariel Barton}
\address{Ariel Barton, Department of Mathematical Sciences,
			309 SCEN,
			University of Ar\-kan\-sas,
			Fayetteville, AR 72701}
\email{aeb019@uark.edu}

\subjclass[2010]{Primary 
% 31 Potential theory
% 35 Partial differential equations
% 42 Harmonic analysis on Euclidean spaces
35J58, %  	Boundary value problems for higher-order elliptic systems
Secondary 
%35C15, %  	Integral representations of solutions
%35G45, %  	Boundary value problems for linear higher (2+)-order systems
35J08, %  	Green's functions
%31B10, %  	Integral representations, integral operators, integral equations methods
%31A20, %  	Boundary behavior (theorems of Fatou type, etc.) (in 2 dimensions)
31B20%, %  	Boundary value and inverse problems
%31B35, %  	Connections with differential equations
}

\begin{abstract} 
In this paper we study boundary value problems for higher order elliptic differential operators in divergence form. We consider the two closely related topics of inhomogeneous problems and problems with boundary data in fractional smoothness spaces. 

We establish $L^\infty$ perturbative results concerning well posed\-ness of inhomogeneous problems with boundary data in fractional smoothness spaces. 

Combined with earlier known results, this allows us to establish new well posedness results for second order operators whose coefficients are close to being real and $t$-independent and for fourth-order operators close to the biharmonic operator.
\end{abstract}

\keywords{Higher order differential equation, inhomogeneous differential equation, Dirichlet problem, Neumann problem}

\maketitle

\setcounter{tocdepth}{2}
\tableofcontents
\setcounter{tocdepth}{3}

% Check for out-of-order references
%\def\mylabel#1{\expandafter\gdef\csname laabeel#1\endcsname{Labeled!}}
%\def\ref#1{\expandafter \ifx \csname laabeel#1\endcsname \relax
%	\typeout{Out of order reference to #1 on line \the\inputlineno!}%
%	\else
%	%\typeout{Correctly ordered reference to #1 on line \the\inputlineno!}
%	\fi 99}
%\let\label\mylabel
%\makeatletter \let \label@in@display \mylabel \makeatother

\section{Introduction}

In this paper we will consider the theory of boundary value problems for elliptic differential operators $L$ of the form
\begin{equation}\label{eqn:divergence}
(L\vec u)_j =\sum_{k=1}^N \sum_{\abs{\alpha}=\abs{\beta}=m}\partial^\alpha (A^{jk}_{\alpha\beta}\,\partial^\beta u_k)\end{equation}
of arbitrary even order~$2m$,
for bounded measurable coefficients~$\mat A$. We will require $\mat A$ to satisfy certain positive definiteness assumptions (see the bounds \eqref{eqn:elliptic:everywhere} and~\eqref{eqn:elliptic:domain} below). 
%, but aside from a brief discussion of a special case in Section~\ref{sec:t-independent}, we will impose no further regularity assumptions on our coefficients. 
In the case of rough coefficients, it is appropriate to formulate the operator~$L$ in the weak sense; we say that $L\vec u=\Div_m\arr H$ in $\Omega$ if
\begin{equation}\label{eqn:weak}
\langle \nabla^m\vec\varphi, \mat A\nabla^m\vec u \rangle_\Omega
=\langle \nabla^m\vec\varphi, \arr H\rangle_\Omega
\quad\text{for all $\vec\varphi\in C^\infty_0(\Omega)$}\end{equation}
where $\langle\,\cdot\,,\,\cdot\,\rangle$ denotes the standard inner product on~$L^2(\Omega)$. %(See formula~\eqref{eqn:rangle} below.)

We are interested in the Dirichlet problem
\begin{equation}
\label{eqn:Dirichlet:introduction}
L\vec u=\Div_m\arr H \text{ in }\Omega,\quad \Tr_{m-1}^\Omega\vec u=\arr f , 
\quad
\doublebar{\vec u}_\XX\leq C \doublebar{\arr f}_\DD+ C \doublebar{\arr H}_\YY\end{equation}
for some appropriate function spaces $\XX$, $\DD$ and~$\YY$. Here $\Tr_{m-1}^\Omega \vec u=\Trace^\Omega \nabla^{m-1}\vec u$, where $\Trace$ is the standard boundary trace operator of Sobolev spaces; see \cite[Definition~2.4]{Bar16pB}.

We are also interested in the Neumann problem. %We will use the (somewhat involved) formulation of Neumann boundary data described in Section~\ref{sec:data} below; 
It turns out that even formulating the Neumann problem in the case of higher order equations is a difficult matter; see \cite{CohG85,Ver05,Agr07,MitM13A,BarHM15p} for some varied formulations and \cite{Ver10,BarM16B,BarHM15p} for a discussion of related issues. Following \cite{BarHM15p}, we will let the Neumann boundary values $\M_{\mat A,\arr H}^\Omega \vec u$ of a solution $\vec u$ to $L\vec u =\Div_m\arr H$ be given by
\begin{equation}\label{dfn:Neumann}
\langle\nabla^m\vec \varphi,\mat A\nabla^m \vec u-\arr H\rangle_\Omega=\langle \nabla^{m-1}\vec \varphi, \M_{\mat A,\arr H}^\Omega \vec u\rangle_{\partial\Omega} 
\text{ for all $\vec\varphi\in C^\infty_0(\R^\dmn)$.}
\end{equation}
Observe that by the weak formulation~\eqref{eqn:weak} of~$L\vec u$ above, if $\partial\Omega$ is connected then the expression $\langle\nabla^m\vec \varphi,\mat A\nabla^m \vec u-\arr H\rangle_\Omega$ depends only on $\Tr_{m-1}^\Omega\vec\varphi$. Thus, formula~\eqref{eqn:Neumann:introduction} defines $\M_{\mat A,\arr H}^\Omega \vec u$ as a linear operator on $\{\nabla^{m-1}\vec\varphi:\vec\varphi\in C^\infty_0(\R^\dmn)\}$. Furthermore, $\langle\nabla^m\vec \varphi,\mat A\nabla^m \vec u-\arr H\rangle_\Omega$ depends only on the values of $\vec \varphi$ near~$\partial\Omega$, and not on the values of $\vec \varphi$ in the interior of~$\Omega$, and so it is reasonable to regard $\M_{\mat A,\arr H}^\Omega \vec u$ as boundary values of~$\vec u$.

We are then interested in the Neumann problem
\begin{equation}
\label{eqn:Neumann:introduction}
L\vec u=\Div_m\arr H \text{ in }\Omega,\quad \M_{\mat A,\arr H}^\Omega \vec u=\arr g, 
\quad
\doublebar{\vec u}_\XX\leq C \doublebar{\arr g}_\NN+ C \doublebar{\arr H}_\YY\end{equation}
for some appropriate function spaces $\XX$, $\NN$ and~$\YY$.

\subsection{The history of the problem and function spaces}
\label{sec:introduction:history}

Consider the two special cases of the Dirichlet problem
\begin{align}
\label{eqn:Dirichlet:inside:introduction}
L\vec u&=\Div_m\arr H \text{ in }\Omega,
&\Tr_{m-1}^\Omega\vec u&=0, 
&\doublebar{\vec u}_\XX&\leq C \doublebar{\arr H}_\YY,
\\
\label{eqn:Dirichlet:boundary:introduction}
L\vec u&=0 \text{ in }\Omega,
&\Tr_{m-1}^\Omega\vec u&=\arr f, 
&\doublebar{\vec u}_\XX&\leq C \doublebar{\arr f}_\DD
.\end{align}
%To establish well posedness of the problem~\eqref{eqn:Dirichlet:boundary:introduction}, it is clearly necessary to require that $\DD\subseteq \{\nabla^{m-1} \vec F \big\vert_{\partial\Omega}:\vec F\in\XX\}$, that is, that for every $\arr f\in\DD$ there is some extension $\vec F$ of $\arr f$ in~$\XX$. To obtain
When studying the problem~\eqref{eqn:Dirichlet:boundary:introduction},
it is often appropriate to choose the function spaces $\XX$ and $\DD$ such that $\DD=\{\Trace \nabla^{m-1} \vec F:\vec F\in\XX\}$. 
Conversely, when studying the problem~\eqref{eqn:Dirichlet:inside:introduction} it is appropriate to choose $\XX=\{\vec F:\mat A\nabla^m\vec F\in\YY\}$. 

With these choices of $\XX$ and~$\DD$, it is possible to reduce the problem \eqref{eqn:Dirichlet:boundary:introduction} to the problem~\eqref{eqn:Dirichlet:inside:introduction}: if we let $\vec F$ be an extension of~$\arr f$, and let $\vec v$ solve the problem~\eqref{eqn:Dirichlet:inside:introduction} with $\arr H=\mat A\nabla^m\vec F$, then $u=\vec F-\vec v$ solves the problem~\eqref{eqn:Dirichlet:boundary:introduction}. 
This technique was used, for example, in \cite{MazMS10} and \cite[Theorem~6.33]{MitM13A}. See also Lemma~\ref{lem:zero:boundary} below.

Conversely, it is often possible to solve $L\vec u=\Div_m\arr H$ in $\R^\dmn$ (see, for example, Section~\ref{sec:newton:bounded} below); under these circumstances, solutions to the problem~\eqref{eqn:Dirichlet:boundary:introduction} may be used to correct the boundary values and solve the problem~\eqref{eqn:Dirichlet:inside:introduction} or the full Dirichlet problem~\eqref{eqn:Dirichlet:introduction}. This technique has been used many times in the literature; see, for example, \cite{JerK95,AdoP98,MitM13B,BarM16A} and \cite[Theorems~6.34 and~6.36]{MitM13A}, or Lemma~\ref{lem:zero:interior} below.

For a number of operators of order~$2m$ with smooth or constant coefficients, the Dirichlet problem \eqref{eqn:Dirichlet:inside:introduction} has been shown to be well-posed in the case where $\Omega$ is a Lipschitz domain, $\XX$ is the Bessel potential space $L^p_{m-1+\smooth+1/p}(\Omega)$, and $\YY=L^p_{\smooth+1/p-1}(\Omega)$, for $0<\smooth<1$ and for certain values of~$p$ depending on $L$, $\smooth$ and~$\Omega$. In particular, 
in \cite{JerK95}, well-posedness was established for $L=\Delta$ and certain $p$ with $1<p<\infty$; some extensions to the case $p\leq 1$ were established in \cite{MayMit04A}. In \cite{AdoP98,MitMW11,MitM13B}, similar results were established for the biharmonic operator $L=\Delta^2$, and in \cite{MitM13A} results were established for general constant-coefficient operators. In the case of operators with variable Lipschitz continuous coefficients, some well-posedness results were established in \cite{Agr07}. %(Some of these results also extend from Bessel potential spaces to more general Besov or Triebel-Lizorkin spaces.)

If solutions $\vec u$ lie in the space $\XX=L^p_{m-1+\smooth+1/p}(\Omega)$, then the appropriate space $\DD$ of Dirichlet boundary values % $\Trace \nabla^{m-1} \vec u$ 
to extend to the problem~\eqref{eqn:Dirichlet:introduction} is the space of Whitney arrays $W\!A^p_{m-1,\smooth}(\partial\Omega)$. This is  a subspace of the Besov space $(B^{p,p}_\smooth(\partial\Omega))^r$, where $r$ is the number of multiindices of length $m-1$; if $m=1$ then $W\!A^p_{0,\smooth}(\partial\Omega)=B^{p,p}_\smooth(\partial\Omega)$, but if $m\geq 2$ then $W\!A^p_{m-1,\smooth}(\partial\Omega)$ is a proper subspace. (This reflects the fact that, if $m-1\geq 1$, then $\nabla^{m-1} u$ is an array of partial derivatives and thus must satisfy appropriate compatibility conditions.) 

%Many of the above results were proven by first solving the Dirichlet problem~\eqref{eqn:Dirichlet:boundary:introduction} with Besov boundary data and then extending to the full Dirichlet problem~\eqref{eqn:Dirichlet:introduction}. 
The parameter $\smooth$ measures smoothness; thus, we emphasize that in the above results, the Dirichlet boundary data $\Tr_{m-1}^\Omega\vec u$ always has between zero and one degree of smoothness.

The Neumann problem \eqref{eqn:Neumann:introduction} has also been studied. In the case of the harmonic operator $L=\Delta$ (\cite{FabMM98,Zan00,MayMit04A}), biharmonic operator $L=\Delta^2$ (\cite{MitM13B}),  and general constant coefficent operators (\cite{MitM13A}), well-posedness has been established in Lipschitz domains, again for $\XX=L^p_{m-1+\smooth+1/p}(\Omega)$ and $\YY=L^p_{\smooth+1/p-1}(\Omega)$, $0<\smooth<1$, and certain values of~$p$. In this case, the appropriate space of boundary data is $\NN={N\!A^p_{m-1,\smooth-1}(\partial\Omega)}$,  a quotient space of the negative smoothness space $(B^{p,p}_{\smooth-1}(\partial\Omega))^r$. %Again, this result is valid for $0<\smooth<1$ and for certain values of~$p$ depending on $L$, $\smooth$ and~$\Omega$.
%This was established in \cite{FabMM98,Zan00,MayMit04A} in the case $L=\Delta$, in \cite{MitM13B} in the case $L=\Delta^2$, and in \cite{MitM13A} for general constant coefficient operators; 
See also \cite{Agr07} for the case of operators with Lipschitz continuous coefficients.

\begin{rmk} In both the Dirichlet and Neumann problems discussed above, boundary values are expected to lie in fractional smoothness spaces.
We may also consider the problem \eqref{eqn:Dirichlet:boundary:introduction}, or the similar Neumann problem
\begin{align}
\label{eqn:Neumann:boundary:introduction}
%\langle\nabla^m\varphi,\mat A\nabla^m u\rangle_\Omega=\langle \nabla^{m-1}\varphi, \arr g\rangle_{\partial\Omega} \text{ for all $\vec\varphi\in C^\infty_0(\R^\dmn)$},
L\vec u&=0 \text{ in }\Omega,
&\M_{\mat A,0}^\Omega\vec u&=\arr g, 
&\doublebar{\vec u}_\XX&\leq C \doublebar{\arr g}_\NN
,\end{align}
with boundary data in integer smoothness spaces (that is, Lebesgue spaces $L^p(\partial\Omega)$ or Sobolev spaces $\dot W^p_{\pm 1}(\partial\Omega)$). However, this requires spaces of solutions $\XX$ for which the corresponding problem \eqref{eqn:Dirichlet:inside:introduction} is ill-posed (even in particularly nice cases, such as the case of harmonic functions $L=\Delta$ in the upper half-space~$\R^\dmn_+$). Thus, the theory of inhomogeneous problems ($L\vec u=\Div_m\smash{\arr H}$ rather than $L\vec u=0$) is deeply and inextricably connected to the theory of boundary data in fractional smoothness spaces.

Although we will not consider boundary data in integer smoothness spaces, we mention some of the known results. The Dirichlet problem for the biharmonic operator $\Delta^2$ or polyharmonic operator~$\Delta^m$, $m\geq 3$, with data in $L^p(\partial\Omega)$, was investigated in \cite{SelS81,CohG83,DahKV86,Ver87,Ver90,She06A}, and with data in the Sobolev space $W^p_1(\partial\Omega)$ in \cite{Ver90,PipV92,MitM10,KilS11A}. The $L^p$ or $W^p_1$-Dirichlet problems for more general higher order constant coefficient operators were investigated in \cite{PipV95A,Ver96,She06B,KilS11B}. The $L^p$-Neumann problem has been investigated for the biharmonic operator in \cite{CohG85,Ver05,She07A,MitM13A}. Very little is known in the case of higher order variable coefficient operators; the $L^2$-Neumann problem for self-adjoint $t$-independent coefficients in the half-space $\Omega=\R^\dmn_+$ was shown to be well posed in the recent preprint \cite{BarHM17pC}, and the Dirichlet problem for fourth-order operators of a form other than \eqref{eqn:divergence} with $L^2$ boundary data was solved in \cite{BarM13}. See the author's survey paper with  Mayboroda \cite{BarM16B} for a more extensive discussion of these results. We omit discussion of the extensive literature concerning second order boundary value problems (the case $m=1$) with data in integer smoothness spaces.
\end{rmk}

We are interested in boundary value problems for operators of the form \eqref{eqn:weak} with rough coefficients~$\mat A$. We still consider boundary data in Besov spaces. However, the space $\XX=L^p_{m-1+\smooth+1/p}(\Omega)$ is not an appropriate space in which to seek solutions, because this space requires that the gradient $\nabla^m\vec u$ of a solution $\vec u$ display $\smooth+1/p-1$ degrees of smoothness, and if $\mat A$ is rough then $\nabla^m u$ may be rough as well. See \cite[Chapter~10]{BarM16A}. Another possible solution space $\XX$ is suggested by the theory for constant coefficients. In \cite{JerK95} and~\cite{AdoP98}, it was established that if $\Delta^m u=0$ in~$\Omega$, for $m=1$ or $m=2$, then for appropriate $p$ and $\smooth$ we have that $u\in L^p_{m-1+\smooth+1/p}(\Omega)$ if and only if $u\in W^p_{m-1}(\Omega)$ and
\begin{equation*}\int_\Omega \abs{\nabla^m u(x)}^p\dist(x,\partial\Omega)^{p-1-p\smooth}\,dx<\infty.\end{equation*}
%See also \cite{DahKPV97}.
We may thus seek to control the above norm of our solutions, rather than the $L^p_{m-1+\smooth+1/p}(\Omega)$-norm.

In \cite{MazMS10}, Maz'ya, Mitrea and Shaposhnikova established well-posedness of the Dirichlet problem~\eqref{eqn:Dirichlet:introduction} for operators~$L$ with variable $VMO$ coefficients, with $\DD=W\!A^p_{m-1,\smooth}(\partial\Omega)$ as usual, but with the norm on solutions given by
\begin{equation}\label{eqn:norm:MazMS10}
\doublebar{\vec u}_{W^{p,\smooth}_m(\Omega)} = \biggl(\sum_{k=0}^m \int_\Omega \abs{\nabla^k \vec u(x)}^p \dist(x,\partial\Omega)^{p-1-p\smooth}\,dx\biggr)^{1/p}
.\end{equation}

In \cite{BarM16A},  Mayboroda and the author of the present paper investigated the Dirichlet and Neumann problems \eqref{eqn:Dirichlet:introduction} and~\eqref{eqn:Neumann:introduction} in the case $m=N=1$, for coefficients constant in the vertical direction but merely bounded measurable in the horizontal directions, in the domain above a Lipschitz graph. We established well-posedness for certain $\smooth$ and~$p$ in the case of real symmetric coefficients (the Neumann problem) or general real coefficients (the Dirichlet problem), with $\DD=\dot B^{p,p}_\smooth(\partial\Omega)$ and $\NN=\dot B^{p,p}_{\smooth-1}(\partial\Omega)$ as usual. %(In the case $m=1$ we may take $\DD$ and $\NN$ to be the full Besov spaces; the additional complication of Whitney arrays is not necessary.) 
We used a somewhat different choice of solution space~$\XX$; specifically, we let $\XX=\dot W^{p,\smooth}_{1,av}(\Omega)$, where $\dot W^{p,\smooth}_{m,av}(\Omega)$ is the set of (equivalence classes of) functions $u$ for which the $\dot W^{p,\smooth}_{m,av}(\Omega)$-norm given by
\begin{align}\label{eqn:W:norm:2}
\doublebar{u}_{\dot W^{p,\smooth}_{m,av}(\Omega)}&=\doublebar{\nabla^m u}_{ L^{p,\smooth}_{av}(\Omega)},
\\
\label{eqn:L:norm:2}
\doublebar{\arr H}_{L^{p,\smooth}_{av}(\Omega)}
&= 
\biggl(\int_\Omega \biggl(\fint_{B(x,\dist(x,\allowbreak \partial\Omega)/2)} \abs{\arr H}^2 \biggr)^{p/2}  \dist(x,\allowbreak \partial\Omega)^{p-1-p\smooth}\,dx\biggl)^{1/p}
\end{align}
is finite. (Two functions are equivalent if their difference has norm zero; if $\Omega$ is open and connected then two functions are equivalent if they differ by a polynomial of degree at most $m-1$.)
Here $\fint$ denotes the averaged integral $\fint_B H = \frac{1}{\abs{B}}\int_B H$. This norm was also used in \cite{AmeA16p}, where somewhat more general second order operators (in particular, the case $N\geq 1$) was considered.

We chose to use homogeneous norms (that is, to bound only $\nabla^m u$ and not any of the lower order derivatives) because we were working in unbounded domains, and homogeneous norms are in many ways more convenient in that context. It is also possible to consider homogeneous norms in bounded Lipschitz domains. In particular, if $\partial\Omega$ is connected then $\Tr_{m-1}^\Omega\vec u$ determines the lower-order derivatives up to adding polynomials, and so we can recover the inhomogeneous results. However, if $\partial\Omega$ is disconnected, then $\Tr_{m-1}^\Omega\vec u$ does not determine the lower-order derivatives, and so throughout this paper we will consider only domains with connected boundary.

The $L^{p,\smooth}_{av}$-norm of \cite{BarM16A} involves $L^2$ averages over Whitney balls. These averages are useful both in the case of $p$ large and in the case $p<1$. 

Finiteness of the $W^{p,\smooth}_m$-norm \eqref{eqn:norm:MazMS10} requires that the gradient $\nabla^m \vec u$ of a solution be locally $p$th-power integrable. This is a reasonable assumption, even for $p$ large, if the coefficients are constant, or even merely~$VMO$. However, for rough coefficients, the best we can expect is for $\nabla^m\vec u$ to be locally square-integrable, or at best $q$th-power integrable for $q<2+\varepsilon$ and $\varepsilon$ possibly very small. (In the second-order case, this expectation comes from the Caccioppoli inequality and Meyers's reverse H\"older inequality; both may be generalized to the higher order case, as in \cite{Cam80, AusQ00, Bar16}.) The technique of bounding $L^2$ averages of gradients on Whitney balls, rather than the gradients themselves, is common in the theory of elliptic differential equations; see, for example, the modified nontangential maximal function introduced in \cite{KenP93} and used extensively in the literature.

Conversely, if $\smooth>0$ and $p\geq1$, then finiteness of the $W^{p,\smooth}_m$-norm  \eqref{eqn:norm:MazMS10} ensures that $\nabla^m \vec u$ is locally integrable up to the boundary. This useful property ensures that the Dirichlet and Neumann boundary values of $\vec u$ are meaningful. However, if $p<1$ then finiteness of the $W^{p,\smooth}_m$-norm \eqref{eqn:norm:MazMS10} does not ensure local integrability, and so it is not clear that the trace operator is well-defined on such spaces. However, if $\smooth>0$ and $p>\pmin$, then finiteness of the $\dot W^{p,\smooth}_{m,av}$-norm \eqref{eqn:W:norm:2} does ensure local integrability; see \cite[Theorem~6.1]{BarM16A} or \cite[Lemma~3.7]{Bar16pB}. Thus, using the averaged norm allows us to consider at least some values of $p<1$.

We remark that the requirement $p>\pmin$ appears also in \cite{MayMit04A,MitM13A}, and for similar reasons: the condition $u\in L^{p}_{m-1+\smooth+1/p}(\Omega)$ ensures local integrability of $\nabla^m u$ for precisely the given range of $p$. %(A stronger requirement, still allowing some values of $p<1$, appears in \cite{MitMW11}, as well as .)

In this paper, we will investigate the Dirichlet problem
\begin{equation}
\label{eqn:Dirichlet:p:smooth}
\left\{
\begin{aligned}
L\vec u&=\Div_m\arr H\text{ in }\Omega,\\ 
\Tr_{m-1}^\Omega\vec u&=\arr f, \\ 
\doublebar{\vec u}_{\dot W^{p,\smooth}_{m,av}(\Omega)} &\leq C\doublebar{\arr H}_{L^{p,\smooth}_{av}(\Omega)}
+C\doublebar{\arr f}_{\dot W\!A^p_{m-1,\smooth}(\partial\Omega)}
\end{aligned}\right.\end{equation}
and the Neumann problem
\begin{equation}
\label{eqn:Neumann:p:smooth}
\left\{\begin{aligned}
L\vec u&=\Div_m\arr H\text{ in }\Omega,\\ 
\M_{\mat A,\arr H}^\Omega\vec u&=\arr g, \\ 
\doublebar{\vec u}_{\dot W^{p,\smooth}_{m,av}(\Omega)} &\leq C\doublebar{\arr H}_{L^{p,\smooth}_{av}(\Omega)}
+C\doublebar{\arr g}_{\dot N\!A^p_{m-1,\smooth-1}(\partial\Omega)}
\end{aligned}\right.
\end{equation}
where ${\dot W^{p,\smooth}_{m,av}(\Omega)}$ and ${L^{p,\smooth}_{av}(\Omega)}$ are given by formulas~\eqref{eqn:W:norm:2} and~\eqref{eqn:L:norm:2}, and the boundary spaces $\dot W\!A^p_{m-1,\smooth}(\partial\Omega)$ and $\dot N\!A^p_{m-1,\smooth-1}(\partial\Omega)$ are as defined in \cite[Section~2.2]{Bar16pB}. These are the homogeneous counterparts to the spaces mentioned above; the main result of \cite{Bar16pB} is that they are the spaces of Dirichlet and Neumann traces of ${\dot W^{p,\smooth}_{m,av}(\Omega)}$ functions.

We say that these problems are well posed if, for every $\arr H\in L^{p,\smooth}_{av}(\Omega)$ and every $\arr f\in {\dot W\!A^p_{m-1,\smooth}(\partial\Omega)}$ or $\arr g\in {\dot N\!A^p_{m-1,\smooth-1}(\partial\Omega)}$, there is exactly one  $\vec u \in {\dot W^{p,\smooth}_{m,av}(\Omega)}$ that satisfies $L\vec u=\Div_m\arr H$ and $\Tr_{m-1}^\Omega\vec u=\arr f$ or $\M_{\mat A,\arr H}^\Omega\vec u=\arr g$, and if that $\vec u$ satisfies the given quantitative bound $\doublebar{\vec u}_{\dot W^{p,\smooth}_{m,av}(\Omega)} \leq C\doublebar{\arr H}_{L^{p,\smooth}_{av}(\Omega)}
+C\doublebar{\arr f}_{\dot W\!A^p_{m-1,\smooth}(\partial\Omega)}$ or $\smash{\doublebar{\vec u}_{\dot W^{p,\smooth}_{m,av}(\Omega)} \leq C\doublebar{\arr H}_{L^{p,\smooth}_{av}(\Omega)}
+C\doublebar{\arr g}_{\dot N\!A^p_{m-1,\smooth-1}(\partial\Omega)}}$.

%We remark that the main result of \cite{Bar16pB} is that if $\pmin<p\leq\infty$, then 
%\begin{equation}\label{eqn:Dirichlet:trace}
%\dot W\!A^p_{m-1,\smooth}(\partial\Omega)=\{\Tr_{m-1}^\Omega \vec F:\vec F\in \dot W^{p,\smooth}_{m,av}(\Omega)\},
%\end{equation}
%and if $1<p\leq \infty$ then
%\begin{equation}\label{eqn:Neumann:trace}
%\dot N\!A^p_{m-1,\smooth-1}(\partial\Omega)=\{\M_m^\Omega \arr G:\arr G\in L^{p,\smooth}_{av}(\Omega),\>\Div_m\arr G=0\}.
%\end{equation}
%We conjecture that this second relation is also true for $\pmin<p\leq \infty$ (and can prove it in some special cases, see \cite[Theorem~3.5]{Bar16pB}).

\subsection{The main results of this paper}

The most general result of this paper is the following theorem.

\begin{thm}\label{thm:neighborhood} Let $\Omega\subset\R^\dmn$ be a Lipschitz domain with connected boundary. Let $L$ be an elliptic system of the form~\eqref{eqn:divergence}, whose coefficients $\mat A$ satisfy the ellipticity condition~\eqref{eqn:elliptic:domain} and the uniform boundedness condition~\eqref{eqn:elliptic:bounded}.

Then there is some $\varepsilon>0$, depending only on~$m$, $\dmn$, the Lipschitz character of~$\Omega$, and the constants $\lambda$ and $\Lambda$ in formulas~\eqref{eqn:elliptic:bounded} and~\eqref{eqn:elliptic:domain}, such that if $\abs{p-2}<\varepsilon$ and $\abs{\smooth-1/2}<\varepsilon$, then the Neumann problem~\eqref{eqn:Neumann:p:smooth} is well posed.
\end{thm}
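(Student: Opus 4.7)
The plan is to first establish well-posedness at the single Hilbert-space point $(p,\smooth)=(2,1/2)$, and then propagate this to a neighborhood via a Sneiberg-type openness-of-invertibility argument on the complex interpolation scale of the spaces involved.

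At $(p,\smooth)=(2,1/2)$ the weight $\dist(x,\partial\Omega)^{p-1-p\smooth}$ equals one, and a Fubini argument using a Whitney decomposition of $\Omega$ shows that the norms on $\dot W^{2,1/2}_{m,av}(\Omega)$ and $L^{2,1/2}_{av}(\Omega)$ are equivalent to $\doublebar{\nabla^m\vec u}_{L^2(\Omega)}$ and $\doublebar{\arr H}_{L^2(\Omega)}$, respectively. The trace theory of \cite{Bar16pB} identifies the Dirichlet trace space as $\dot W\!A^2_{m-1,1/2}(\partial\Omega)$, and $\dot N\!A^2_{m-1,-1/2}(\partial\Omega)$ is by construction its dual. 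I would then solve the Neumann problem variationally on the Hilbert space $\{\vec u:\nabla^m\vec u\in L^2(\Omega)\}/\mathcal{P}_{m-1}$: the bilinear form $(\vec\varphi,\vec u)\mapsto \langle\nabla^m\vec\varphi,\mat A\nabla^m\vec u\rangle_\Omega$ is bounded by the uniform boundedness condition \eqref{eqn:elliptic:bounded} and coercive by the ellipticity hypothesis \eqref{eqn:elliptic:domain} combined with a Poincar\'e-type inequality on the quotient. The functional
\[
\vec\varphi\mapsto \langle\nabla^m\vec\varphi,\arr H\rangle_\Omega+\langle\nabla^{m-1}\vec\varphi,\arr g\rangle_{\partial\Omega}
\]
is continuous by the preceding trace/duality identification. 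Lax--Milgram then produces a unique $\vec u$ in this quotient with the required estimate, settling well-posedness at $(2,1/2)$.

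For the perturbation step I would view the solution map
\[
T_{p,\smooth}\colon L^{p,\smooth}_{av}(\Omega)\times\dot N\!A^p_{m-1,\smooth-1}(\partial\Omega)\longrightarrow \dot W^{p,\smooth}_{m,av}(\Omega)/\mathcal{P}_{m-1}
\]
as a morphism between two complex interpolation scales indexed by $(p,\smooth)$. The requisite interpolation identities for the weighted averaged spaces $L^{p,\smooth}_{av}$ and $\dot W^{p,\smooth}_{m,av}$, and for the boundary spaces $\dot N\!A^p_{m-1,\smooth-1}$, should be drawn from the constructions in \cite{Bar16pB}. Uniform boundedness of $T_{p,\smooth}$ in a small strip around $(2,1/2)$ follows from the definition of the Neumann functional in \eqref{dfn:Neumann} together with standard Caccioppoli/Meyers higher-integrability estimates for solutions of \eqref{eqn:divergence}. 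Since $T_{2,1/2}$ is invertible by the previous step, Sneiberg's theorem on the stability of invertibility under complex interpolation then delivers invertibility of $T_{p,\smooth}$ for all $(p,\smooth)$ in an open neighborhood of $(2,1/2)$, which is exactly the statement of the theorem.

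The hardest part will be the interpolation/Sneiberg step rather than the Hilbert-space core. One must verify that the weighted averaged spaces $\dot W^{p,\smooth}_{m,av}(\Omega)$ on a bounded Lipschitz domain and the quotient-type Besov spaces $\dot N\!A^p_{m-1,\smooth-1}(\partial\Omega)$ on its (connected) boundary form honest complex interpolation scales in both parameters simultaneously, and that the polynomial quotient $\mathcal{P}_{m-1}$ is compatible with that interpolation. Uniform boundedness of the Neumann functional $\M_{\mat A,\arr H}^\Omega$ and of the Dirichlet trace $\Tr_{m-1}^\Omega$ along the whole scale is also required, so that the notion of well-posedness itself propagates under the openness-of-invertibility argument. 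Once these compatibilities are in place---as the constructions of these spaces in \cite{Bar16pB} strongly suggest---Sneiberg's theorem closes the argument.
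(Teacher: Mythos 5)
Your Hilbert-space base case at $(p,\smooth)=(2,1/2)$ is exactly right, and the overall architecture --- Lax--Milgram at one point, then a Sneiberg-type openness-of-invertibility argument on a complex interpolation scale --- is the paper's approach. But the interpolation step as you have written it has the operator pointing the wrong way, and this is a genuine gap, not a matter of wording.

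You propose to interpolate the \emph{solution map}
$T_{p,\smooth}\colon L^{p,\smooth}_{av}(\Omega)\times\dot N\!A^p_{m-1,\smooth-1}(\partial\Omega)\to\dot W^{p,\smooth}_{m,av}(\Omega)/\mathcal{P}_{m-1}$,
and you claim its uniform boundedness along a strip ``follows from \eqref{dfn:Neumann} together with Caccioppoli/Meyers.'' That cannot work: boundedness of the solution map along the whole scale \emph{is} well-posedness, i.e.\ the very thing you are trying to prove, and Caccioppoli/Meyers higher integrability (a local reverse-H\"older gain near $p=2$) does not give the global weighted estimate $\doublebar{\vec u}_{\dot W^{p,\smooth}_{m,av}}\le C(\doublebar{\arr H}+\doublebar{\arr g})$. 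Sneiberg's theorem requires an operator that is \emph{bounded} on the entire interpolation scale and \emph{invertible at one point}; you must therefore apply it to the \emph{forward} operator, whose a-priori boundedness is cheap. The paper first reduces to homogeneous boundary data (Lemma~\ref{lem:zero:boundary} handles general $\arr g$), after which the operator to which Lemma~\ref{lem:invertible:stable} is applied is simply $T\vec u(\vec\varphi)=\langle\mat A\nabla^m\vec u,\nabla^m\vec\varphi\rangle_\Omega$, viewed as a map $\dot W^{p,\smooth}_{m,av}(\Omega)\to(\dot W^{p',\smooth'}_{m,av}(\Omega))^*$. Its boundedness along the scale is immediate from $\doublebar{\mat A}_{L^\infty}\le\Lambda$ and the duality~\eqref{eqn:L:dual}; its invertibility at $(2,1/2)$ is Lax--Milgram; Sneiberg does the rest. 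This reduction also means you never need an interpolation theory for the boundary spaces $\dot N\!A^p_{m-1,\smooth-1}(\partial\Omega)$ at all, which removes one of the compatibility headaches you flagged.

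A secondary point: you suggest the interpolation identities for $\dot W^{p,\smooth}_{m,av}(\Omega)$ ``should be drawn from the constructions in \cite{Bar16pB}.'' In fact these are not off-the-shelf; the paper proves them (Lemma~\ref{lem:interpolation:W}) by exhibiting $\dot W^{p,\smooth}_{m,av}(\Omega)$ and its dual as retracts of $L^{p,\smooth}_{av}(\R^\dmn\setminus\partial\Omega)$, and the key ingredient is boundedness of the constant-coefficient Newton potential $\vec\Pi^{L_0}$ on $L^{p,\smooth}_{av}(\R^\dmn\setminus\partial\Omega)$ for the full range of $p$ and $\smooth$ (Lemma~\ref{lem:polyharmonic:bounded}), which takes real work (a Calder\'on--Zygmund/$A_p$-weight estimate, a Meyers-type self-improvement, and a separate argument for $p\le 1$). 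Without this retraction input your interpolation scale is not established, so the Sneiberg step has nothing to act on.
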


We remark that this theorem is a well posedness result valid for \emph{all} bounded elliptic coefficient matrices~$\mat A$; we impose no smoothness assumptions on the coefficients.
We will prove this theorem in Section~\ref{sec:neighborhood}.

The $p=2$, $\smooth=1/2$ case of Theorem~\ref{thm:neighborhood} follows from the Lax-Milgram lemma by a straightforward and well known argument. 
An equivalent result for the Dirichlet problem was proven in \cite[Section~8.1]{MazMS10}; in the case $m=1$, see also \cite[Theorem~1]{Mey63} and \cite[Theorem~5.1]{BreM13}.
If $\smooth=1-1/p$ (with no restrictions on $m$, and for either the Dirichlet or Neumann problems), then the result was established by Brewster, D.~Mitrea, I.~Mitrea, and M.~Mitrea in \cite{BreMMM14}, in somewhat more general domains. If $L$ has constant coefficients, then a very similar theorem (using Bessel potential spaces and more general Besov spaces) was established by I.~Mitrea and M.~Mitrea in \cite{MitM13A} using the method of layer potentials.

%In this paper we will establish some further results. These results imply well posedness of boundary value problems for certain coefficients $\mat B$ and exponents $p$, $\smooth$, {given} well posedness for related coefficents $\mat A$ or related exponents $q$,~$\sigma$. In the next section we shall use known results from the literature, in combination with our Theorems \ref{thm:perturb} and Lemmas \ref{lem:BVP:duality} and~\ref{lem:interpolation}, to produce new well posedness results for some particular coefficients and exponents.

Our second main result is a perturbative result.
Theorem~\ref{thm:perturb} states that, if boundary value problems for some operator $L$ are well posed, then they are also well posed for any other operator $M$ whose coefficients are sufficiently close (in $L^\infty(\R^\dmn)$) to those of~$L$. 
We will prove Theorem~\ref{thm:perturb} in Section~\ref{sec:perturb}. In Section~\ref{sec:perturb:independent}, we will discuss the history of such perturbation results. In Section~\ref{sec:A0:introduction}, we will combine Theorem~\ref{thm:perturb} with known results from the literature to establish new well posedness results.

\begin{thm}\label{thm:perturb}
Let $\Omega\subset\R^\dmn$ be a Lipschitz domain with connected boundary, let $0<\smooth<1$, and let $\pmin<p<\infty$. Then there is some constant $C_1$ depending only on $p$, $\smooth$, the dimension $\dmn$, and the Lipschitz character of~$\Omega$, with the following significance.

Let $L$ and $M$ be operators of the form \eqref{eqn:divergence} acting on functions defined in $\R^\dmn$, with the same values of $m$ and~$N$, associated to bounded coefficients $\mat A$ and~$\mat B$. Let $\varepsilon= \doublebar{\mat A-\mat B}_{L^\infty(\R^\dmn)}$.

Suppose that %$\mat A$ satisfies the ellipticity condition~\eqref{eqn:elliptic:everywhere} for some $\lambda>0$, and that 
there is some constant $C_0$ such that  the Dirichlet problem
\begin{equation}
\label{eqn:Dirichlet:0}
%\left\{\begin{aligned}
{L} \vec u = \Div_m \arr \Phi \text{ in }\Omega,
\quad \Tr_{m-1}^\Omega\vec u = 0,
\quad
\doublebar{\vec u}_{\dot W^{p,\smooth}_{m,av}(\Omega)} \leq C_0 \doublebar{\arr \Phi}_{L^{p,\smooth}_{av}(\Omega)}
%\end{aligned}\right.
\end{equation}
is well posed.
If $\varepsilon<1/C_1C_0$, then the Dirichlet problem
\begin{equation}
\label{eqn:Dirichlet:1}
\left\{\begin{aligned}
{M} \vec u &= \Div_m \arr H \text{ in }\Omega,
\\ \Tr_{m-1}^\Omega \vec u&=\arr f,
\\
\doublebar{\vec u}_{\dot W^{p,\smooth}_{m,av}(\Omega)} &\leq C_2\doublebar{\arr H}_{L^{p,\smooth}_{av}(\Omega)}
+C_2C_3\doublebar{\arr f}_{\dot W\!A^p_{m-1,\smooth}(\partial\Omega)}
\end{aligned}\right.
\end{equation}
is well posed. Here $C_3$ depends only on $p$, $\smooth$, the dimension $\dmn$ and the Lipschitz character of~$\Omega$, and  
\begin{equation*}C_2=\frac{C_0}{1-C_0\varepsilon}\text{ if }p\geq 1,\qquad
C_2=\biggl(\frac{C_0^p}{1-C_0^p\varepsilon^p}\biggr)^{1/p}\text{ if }p\leq 1.\end{equation*}

Similarly, suppose that %$\mat A$ satisfies the ellipticity condition~\eqref{eqn:elliptic:domain} for some $\lambda>0$, and that 
there is some constant $C_0$ such that the Neumann problem
\begin{equation}
\label{eqn:Neumann:0}
%\left\{\begin{aligned}
{L} \vec u = \Div_m \arr \Phi \text{ in }\Omega,
\quad \M_{\mat A,\arr \Phi}^\Omega\vec u = 0,
\quad
\doublebar{\vec u}_{\dot W^{p,\smooth}_{m,av}(\Omega)} \leq  C_0 \doublebar{\arr \Phi}_{L^{p,\smooth}_{av}(\Omega)}
%\end{aligned}\right.
\end{equation}
is well posed.
If $\varepsilon<1/C_1C_0$, then the Neumann problem
\begin{equation}
\label{eqn:Neumann:1}
\left\{
\begin{aligned}
{M} \vec u &= \Div_m \arr H \text{ in }\Omega,
\\ \M_{\mat B,\arr H}^\Omega \vec u&=\arr g,
\\
\doublebar{\vec u}_{\dot W^{p,\smooth}_{m,av}(\Omega)} &\leq  C_2 \doublebar{\arr H}_{L^{p,\smooth}_{av}(\Omega)}
+C_2C_3 \doublebar{\arr g}_{\dot N\!A^p_{m-1,\smooth-1}(\partial\Omega)}
\end{aligned}\right.
\end{equation}
is also well posed.
\end{thm}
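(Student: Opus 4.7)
The approach combines two standard ideas: (i) reduce~\eqref{eqn:Dirichlet:1} and~\eqref{eqn:Neumann:1} to the corresponding problems with \emph{zero boundary data}, by realizing the boundary datum as an interior source; and (ii) pass from well posedness for~$L$ to well posedness for~$M$ by a Neumann-series / contraction argument driven by the smallness of~$\varepsilon$.

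For Step~(i) in the Dirichlet case, the trace/extension characterization of $\dot W\!A^p_{m-1,\smooth}(\partial\Omega)$ from~\cite{Bar16pB} yields $\vec F\in\dot W^{p,\smooth}_{m,av}(\Omega)$ with $\Tr_{m-1}^\Omega\vec F=\arr f$ and $\doublebar{\vec F}_{\dot W^{p,\smooth}_{m,av}(\Omega)}\leq C_3\doublebar{\arr f}_{\dot W\!A^p_{m-1,\smooth}(\partial\Omega)}$; then $\vec u$ solves~\eqref{eqn:Dirichlet:1} if and only if $\vec v:=\vec u-\vec F$ has zero trace and solves $M\vec v=\Div_m(\arr H-\mat B\nabla^m\vec F)$. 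For the Neumann case, the dual side of the same characterization provides a realization $\arr G\in L^{p,\smooth}_{av}(\Omega)$ of the boundary functional, i.e.\ one with
\[\langle\nabla^m\vec\varphi,\arr G\rangle_\Omega=\langle\nabla^{m-1}\vec\varphi,\arr g\rangle_{\partial\Omega}\quad\text{for every }\vec\varphi\in C^\infty_0(\R^\dmn),\]
and $\doublebar{\arr G}_{L^{p,\smooth}_{av}(\Omega)}\leq C_3\doublebar{\arr g}_{\dot N\!A^p_{m-1,\smooth-1}(\partial\Omega)}$. Testing against $\vec\varphi\in C^\infty_0(\Omega)$ shows $\Div_m\arr G=0$ in~$\Omega$, and then \eqref{dfn:Neumann} gives that $\vec u$ solves~\eqref{eqn:Neumann:1} if and only if it solves the zero-Neumann problem $M\vec u=\Div_m(\arr H+\arr G)$, $\M^\Omega_{\mat B,\arr H+\arr G}\vec u=0$.

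The core of the proof is Step~(ii): well posedness of the zero-boundary (resp.\ zero-Neumann) problem for~$M$ with arbitrary $\arr\Phi\in L^{p,\smooth}_{av}(\Omega)$. Direct computation with~\eqref{eqn:weak} and~\eqref{dfn:Neumann} gives, for any $\vec u\in\dot W^{p,\smooth}_{m,av}(\Omega)$, the identity
\[M\vec u=\Div_m\arr\Phi\ \Longleftrightarrow\ L\vec u=\Div_m\bigl(\arr\Phi+(\mat A-\mat B)\nabla^m\vec u\bigr),\]
together with $\M^\Omega_{\mat B,\arr\Phi}\vec u=\M^\Omega_{\mat A,\arr\Phi+(\mat A-\mat B)\nabla^m\vec u}\vec u$ on the Neumann side (Dirichlet traces are independent of coefficients). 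Define $T\colon\dot W^{p,\smooth}_{m,av}(\Omega)\to\dot W^{p,\smooth}_{m,av}(\Omega)$ by letting $T\vec w$ be the unique solution (supplied by the hypothesis on~$L$) of the zero-boundary problem for~$L$ with interior datum $\arr\Phi+(\mat A-\mat B)\nabla^m\vec w$. The $C_0$-estimate for~$L$ then yields
\[\doublebar{T\vec w_1-T\vec w_2}_{\dot W^{p,\smooth}_{m,av}(\Omega)}\leq C_0\doublebar{(\mat A-\mat B)(\nabla^m\vec w_1-\nabla^m\vec w_2)}_{L^{p,\smooth}_{av}(\Omega)}\leq C_0\varepsilon\doublebar{\vec w_1-\vec w_2}_{\dot W^{p,\smooth}_{m,av}(\Omega)},\]
so $T$ is a contraction once $C_0\varepsilon<1$. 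Its unique fixed point is the desired $\vec u$, and uniqueness follows because any solution of the problem for~$M$ must be a fixed point of~$T$.

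Iterating, one writes $\vec u=\sum_{k\ge0}\vec u_k$ with $L\vec u_0=\Div_m\arr\Phi$ and $L\vec u_{k+1}=\Div_m((\mat A-\mat B)\nabla^m\vec u_k)$, each with zero boundary data, giving $\doublebar{\vec u_k}_{\dot W^{p,\smooth}_{m,av}(\Omega)}\leq C_0(C_0\varepsilon)^k\doublebar{\arr\Phi}_{L^{p,\smooth}_{av}(\Omega)}$. For $p\geq1$ the triangle inequality and the geometric series yield $C_2=C_0/(1-C_0\varepsilon)$; for $\pmin<p<1$, the $\dot W^{p,\smooth}_{m,av}$- and $L^{p,\smooth}_{av}$-quasinorms are $p$-subadditive, and summing $p$-th powers gives the stated $C_2=\bigl(C_0^p/(1-C_0^p\varepsilon^p)\bigr)^{1/p}$. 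Combining with Step~(i) and the appropriate (quasi-)triangle inequality produces the full estimates in~\eqref{eqn:Dirichlet:1} and~\eqref{eqn:Neumann:1}, with the requirement $\varepsilon<1/(C_1C_0)$ absorbing into $C_1$ all the multiplicative constants ($\doublebar{\mat B}_{L^\infty}$, the extension constant, etc.) introduced by Step~(i). The hardest ingredient, in my view, is the Neumann side of Step~(i): producing the interior realization $\arr G$ of $\arr g$ with the correct quantitative bound, and verifying---by carefully separating test functions in $C^\infty_0(\Omega)$ from those in $C^\infty_0(\R^\dmn)$---that substituting $\arr H\mapsto\arr H+\arr G$ preserves both the interior equation and the Neumann condition. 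The contraction step itself is a routine Neumann series, modulo the quasi-Banach bookkeeping for $p<1$.
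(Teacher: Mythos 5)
Your proof is correct, and Steps~(i) and the existence half of Step~(ii) match the paper's route (Lemma~\ref{lem:zero:boundary} plays your Step~(i); Theorem~\ref{thm:exist:perturb} is precisely your Neumann-series construction, including the $p$-subadditivity bookkeeping for $\pmin<p<1$). Where you genuinely diverge from the paper is in the \emph{uniqueness} half. You observe that any $\vec v\in\dot W^{p,\smooth}_{m,av}(\Omega)$ solving the homogeneous-data problem for $M$ is a fixed point of $T$, and since $T$ is a contraction (equivalently, $M\vec v=0$ with zero boundary data forces $\doublebar{\vec v}\leq C_0\varepsilon\doublebar{\vec v}$ by well posedness for $L$ applied to $L\vec v=\Div_m((\mat A-\mat B)\nabla^m\vec v)$), uniqueness is immediate. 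The paper instead deduces uniqueness by a duality detour: well posedness for $L$ at $(p,\smooth)$ gives, via Theorems~\ref{thm:exist:unique} and~\ref{thm:unique:exist}, well posedness for $L^*$ at the dual indices $(p',\smooth')$; applying Theorem~\ref{thm:exist:perturb} there gives existence for $M^*$, and a second application of Theorem~\ref{thm:exist:unique} returns uniqueness for $M$ at $(p,\smooth)$. Your argument is shorter and in fact yields a formally sharper smallness threshold, $\varepsilon<1/C_0$ rather than $\varepsilon<1/(C_1C_0)$ --- the extra $C_1$ in the paper's statement is precisely the duality constant injected by Theorem~\ref{thm:exist:unique}. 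The paper's route is heavier but buys the duality machinery (Lemma~\ref{lem:BVP:duality}, Theorems~\ref{thm:exist:unique}--\ref{thm:unique:exist}) as standalone tools reused in Lemma~\ref{lem:interpolation} and Section~\ref{sec:A0}, and it carefully treats the $p\leq1$ / $p'=\infty$ endpoint where the $L^{\infty,\smooth'}_{av}$ dualities need care; your contraction argument sidesteps those endpoint subtleties entirely since it never leaves the original exponent pair. Both approaches are valid for the theorem as stated.
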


In applying Theorem~\ref{thm:perturb}, especially in analyzing a range of $p$ and $\smooth$, the following two lemmas are often helpful. Lemma~\ref{lem:BVP:duality} is a duality result; Lemma~\ref{lem:interpolation} is an interpolation result. Both will be used in Section~\ref{sec:A0:introduction}.

\begin{lem}\label{lem:BVP:duality}
Let $\Omega$ be a Lipschitz domain with connected boundary and let $L$ be an operator of the form~\eqref{eqn:divergence} associated to bounded coefficients~$\mat A$. Let $0<\smooth<1$ and let $1\leq p< \infty$.

Let $\smooth'=1-\smooth$, and let $p'$ be the extended real number that satisfies $1/p+1/p'=1$.
Let $(A^*)^{jk}_{\alpha\beta}=\overline{A^{kj}_{\beta\alpha}}$, and let $L^*$ be the operator of the form~\eqref{eqn:divergence} associated to the coefficients $\mat A^*$.

If there is a constant $C_0$ such that the Dirichlet problem 
\begin{equation*}
L\vec u=\Div_m\arr H,\quad \Tr_{m-1}^\Omega\vec u=\arr f,\quad \doublebar{\vec u}_{\dot W^{p,\smooth}_{m,av}(\Omega)}\leq C_0\doublebar{\arr H}_{L^{p,\smooth}_{av}(\Omega)}
+ C_0\doublebar{\arr f}_{\dot W\!A^p_{m-1,\smooth}(\partial\Omega)}
\end{equation*}
is well posed, then there is a constant $C_1$ such that the problem
\begin{equation*}
L^*\vec u=\Div_m\arr \Phi,\>\>\>\> \Tr_{m-1}^\Omega\vec u=\arr \varphi,\>\>\>\> \doublebar{\vec u}_{\dot W^{p',\smooth'}_{m,av}(\Omega)}\leq C_1\doublebar{\arr \Phi}_{L^{p',\smooth'}_{av}(\Omega)}
+ C_1\doublebar{\arr \varphi}_{\dot W\!A^{p'}_{m-1,\smooth'}(\partial\Omega)}
\end{equation*}
is well posed.

Similarly, if there is a constant $C_0$ such that the Neumann problem
\begin{equation*}
L\vec u=\Div_m\arr H,\>\>\>\> \M_{\mat A,\arr H}^\Omega\vec u=\arr g,\>\>\>\> \doublebar{\vec u}_{\dot W^{p,\smooth}_{m,av}(\Omega)}\leq C_0\doublebar{\arr H}_{L^{p,\smooth}_{av}(\Omega)}
+ C_0\doublebar{\arr g}_{\dot N\!A^p_{m-1,\smooth-1}(\partial\Omega)}
\end{equation*}
is well posed, then there is a $C_1$ such that the problem
\begin{equation*}
L^*\vec u=\Div_m\arr \Phi,\>\>\>\> \M_{\mat A^*,\arr \Phi}^\Omega\vec u=\arr \varphi,\>\>\>\> \doublebar{\vec u}_{\dot W^{p',\smooth'}_{m,av}(\Omega)}\leq C_1\doublebar{\arr \Phi}_{L^{p',\smooth'}_{av}(\Omega)}
+ C_1\doublebar{\arr \varphi}_{\dot N\!A^{p'}_{m-1,\smooth'-1}(\partial\Omega)}
\end{equation*}
is well posed.

\end{lem}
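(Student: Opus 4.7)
The plan is to interpret each well-posed boundary value problem as the assertion that a certain bounded operator between Banach spaces is invertible, and then exploit the fact that the Banach-space adjoint of an invertible operator is itself invertible. The passage from $L$ to $L^*$ is powered by the algebraic identity
\begin{equation*}
\langle \nabla^m \vec v,\mat A\nabla^m\vec u\rangle_\Omega = \langle \mat A^*\nabla^m\vec v,\nabla^m\vec u\rangle_\Omega,
\end{equation*}
which is immediate from the definition of $\mat A^*$. The main preliminary to assemble is that the function spaces involved pair in duality: $L^{p,\smooth}_{av}(\Omega)^* = L^{p',\smooth'}_{av}(\Omega)$ under the $L^2(\Omega)$ pairing, together with the boundary dualities $(\dot W\!A^p_{m-1,\smooth}(\partial\Omega))^*=\dot N\!A^{p'}_{m-1,\smooth'-1}(\partial\Omega)$ and the complementary Neumann-to-Whitney pairing (both to be taken from \cite{Bar16pB}). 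For the interior spaces, a Fubini interchange gives $\int_\Omega \arr F\cdot\overline{\arr G}\,dy\approx\int_\Omega\fint_{B(y,\dist(y,\partial\Omega)/2)}\arr F\cdot\overline{\arr G}\,dx\,dy$; combined with Cauchy--Schwarz on each Whitney ball and weighted H\"older, this yields a bounded pairing, the key algebraic fact being that the exponents $(p-1-p\smooth)/p$ and $(p'-1-p'\smooth')/p'$ of $\dist(\cdot,\partial\Omega)$ in the two averaged norms are negatives of one another.

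For the Dirichlet claim I would first reduce to zero boundary data using the extension theorem of \cite{Bar16pB}: given $\arr\varphi\in \dot W\!A^{p'}_{m-1,\smooth'}(\partial\Omega)$, pick an extension $\vec F\in\dot W^{p',\smooth'}_{m,av}(\Omega)$ of $\arr\varphi$ and write $\vec v=\vec w+\vec F$, reducing to solving $L^*\vec w=\Div_m(\arr\Phi-\mat A^*\nabla^m\vec F)$ with zero trace. Well-posedness of the hypothesized Dirichlet problem then says precisely that the operator $T_L:\XX_0\to(\XX_0')^*$, $\vec u\mapsto(\vec v\mapsto\langle\nabla^m\vec v,\mat A\nabla^m\vec u\rangle_\Omega)$, is an isomorphism with $\|T_L^{-1}\|\leq C_0$, where $\XX_0\subset\dot W^{p,\smooth}_{m,av}(\Omega)$ is the zero-trace subspace and $\XX_0'$ its $(p',\smooth')$-counterpart; the identification of $(\XX_0')^*$ with $L^{p,\smooth}_{av}$-sources uses the interior duality together with the fact that $\nabla^m$ isometrically embeds $\XX_0'$ into $L^{p',\smooth'}_{av}$. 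By the algebraic identity above, the Banach adjoint $T_L^*$ coincides with the analogous operator for $L^*$; thus invertibility of $T_L$ transfers to $T_{L^*}$ with the same norm bound, and reversing the extension reduction yields the $L^*$ Dirichlet well-posedness with the stated constants.

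The Neumann case runs in parallel, the only structural change being that the condition $\M_{\mat A,\arr H}^\Omega\vec u=0$ amounts to $\langle\nabla^m\vec v,\mat A\nabla^m\vec u-\arr H\rangle_\Omega=0$ for \emph{all} $\vec v\in C^\infty_0(\R^\dmn)$ (not just those compactly supported in $\Omega$), and by density extends to all $\vec v\in\dot W^{p',\smooth'}_{m,av}(\Omega)$. The relevant operator now acts on the full space modulo polynomials of degree $\leq m-1$, and the adjoint argument runs identically. Non-zero Neumann data $\arr\varphi\in \dot N\!A^{p'}_{m-1,\smooth'-1}(\partial\Omega)$ simply contributes the extra boundary term $\vec\psi\mapsto\langle\Tr_{m-1}^\Omega\vec\psi,\arr\varphi\rangle_{\partial\Omega}$ to the right-hand-side functional, bounded on $\dot W^{p,\smooth}_{m,av}(\Omega)$ by the Whitney--Neumann duality of \cite{Bar16pB} together with boundedness of the trace operator, so it is absorbed directly by the already-established isomorphism.

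The main obstacle will be carefully tracking the sesquilinear structure of $\mat A\mapsto\mat A^*$ together with the several function-space identifications needed to promote the abstract adjoint to genuine well-posedness---existence, uniqueness, and an explicit quantitative constant simultaneously---particularly at the endpoint $p=1$, where $p'=\infty$ fails reflexivity and the adjoint of $T_L$ must be realized by a more hands-on construction (extracting $\vec v$ directly as a bounded functional on $L$-solutions and verifying membership in $\dot W^{\infty,\smooth'}_{m,av}$ by hand) rather than a black-box duality argument.
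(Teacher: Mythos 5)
Your route is genuinely different from the paper's. You encode interior well-posedness as invertibility of the form operator $T_L:\XX_0\to(\XX_0')^*$ (with $\XX_0$, $\XX_0'$ the zero-trace subspaces of $\dot W^{p,\smooth}_{m,av}(\Omega)$ and $\dot W^{p',\smooth'}_{m,av}(\Omega)$) and pass to $L^*$ by taking the Banach adjoint. The paper instead works with the solution operator $T:\arr H\mapsto\nabla^m\vec u$ and establishes the two implications separately: Theorem~\ref{thm:exist:unique} shows that existence for $L$ at $(p,\smooth)$ forces uniqueness for $L^*$ at $(p',\smooth')$ by testing a difference of $L^*$-solutions against $L$-solutions with bounded compactly supported data, and Theorem~\ref{thm:unique:exist} shows that uniqueness-with-bounds for $L$ forces existence for $L^*$ via the Hahn--Banach extension of $T$ and a separation argument to place $T^*\arr\Phi$ in the space of $m$-th gradients. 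Your $T_L$ is essentially the inverse of the paper's $T$, so the two approaches are dual to one another; your formulation has the aesthetic advantage of capturing existence, uniqueness, and the quantitative bound in a single invertibility statement. For $1<p<\infty$ this works: $1<p'<\infty$, both $L^{p,\smooth}_{av}(\Omega)$ and $L^{p',\smooth'}_{av}(\Omega)$ are reflexive with $(L^{p',\smooth'}_{av})^*=L^{p,\smooth}_{av}$, so Hahn--Banach identifies $(\XX_0')^*$ with $L^{p,\smooth}_{av}$-sources modulo the annihilator, $T_L$ is an isomorphism with $\doublebar{T_L^{-1}}\lesssim C_0$, and your algebraic identity transports invertibility to $T_{L^*}$ (up to the usual conjugate-linearity bookkeeping). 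The reduction to zero boundary data via the extension operator matches the paper's Lemma~\ref{lem:zero:boundary}.

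The genuine gap is the endpoint $p=1$, $p'=\infty$, which is included in the lemma's range $1\le p<\infty$. There $L^{\infty,\smooth'}_{av}(\Omega)$ is not reflexive and $(L^{\infty,\smooth'}_{av})^*\supsetneq L^{1,\smooth}_{av}$, so $(\XX_0')^*$ is not naturally identified with $L^{1,\smooth}_{av}$-sources and invertibility of $T_L$ onto all of $(\XX_0')^*$ is a strictly stronger assertion than well-posedness with $L^{1,\smooth}_{av}$ data; the dictionary you rely on breaks down. You flag this correctly, but the remedy you gesture at ("extracting $\vec v$ directly as a bounded functional\ldots and verifying membership\ldots by hand") is not spelled out far enough to assess. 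In the paper's Theorem~\ref{thm:unique:exist}, precisely this branch requires real work: $T$ is shown bounded at the other endpoint by a Whitney-cube/sum argument, and the conclusion $T^*\arr\Phi\in W$ is obtained by a Hahn--Banach separation against $L^1(\Omega;\,dx/(1+\abs{x}^\dmn))$ using Lemmas~\ref{lem:L:L1} and~\ref{lem:L:L-infinity}. None of this is automatic, so as written your argument does not cover $p=1$.
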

Lemma~\ref{lem:BVP:duality} will be proven in Section~\ref{sec:duality}, as the $p\geq 1$, $p'\geq 1$ case of Theorems~\ref{thm:exist:unique} and~\ref{thm:unique:exist}.

\begin{lem}\label{lem:interpolation}
Let $\Omega$ be a Lipschitz domain with connected boundary and let $L$ be an operator of the form~\eqref{eqn:divergence}. Let $0<\smooth_0<1$ and $0<\smooth_1<1$. 
Let $p_0$ and $p_1$ satisfy $\pmin[\smooth_j]<p_j<\infty$ for $j=0$, $1$.

If $0\leq \sigma\leq 1$, then let $\smooth_\sigma=(1-\sigma)\smooth_0+\sigma\smooth_1$ and let $1/p_\sigma=(1-\sigma)/p_0+\sigma/p_1$.

Suppose that the Dirichlet problems
\begin{gather}
\label{eqn:interpolation:Dirichlet:0}
L\vec u=\Div_m\arr\Phi,\quad \Tr_{m-1}^\Omega\vec u=0,\quad \doublebar{\vec u}_{\dot W^{p_0,\smooth_0}_{m,av}(\Omega)}\leq C_0\doublebar{\arr\Phi}_{L^{p_0,\smooth_0}_{av}(\Omega)}
,\\
\label{eqn:interpolation:Dirichlet:1}
L\vec u=\Div_m\arr\Psi,\quad \Tr_{m-1}^\Omega\vec u=0,\quad \doublebar{\vec u}_{\dot W^{p_1,\smooth_1}_{m,av}(\Omega)}\leq C_1\doublebar{\arr\Psi}_{L^{p_1,\smooth_1}_{av}(\Omega)}
\end{gather}
are both well posed. Suppose furthermore that they are compatibly well posed in the sense that, if $\arr\Phi\in {L^{p_0,\smooth_0}_{av}(\Omega)}\cap{L^{p_1,\smooth_1}_{av}(\Omega)}$, then there is a single solution $\vec u\in {\dot W^{p_0,\smooth_0}_{m,av}(\Omega)}\cap{\dot W^{p_1,\smooth_1}_{m,av}(\Omega)}$ to both problems.

Then for every $0<\sigma<1$, there is some $C>0$ depending on $\sigma$, $p_j$, $\smooth_j$, $C_j$ and~$\Omega$, such that for every $\arr H\in {L^{p_\sigma,\smooth_\sigma}_{av}(\Omega)}$ and $\arr f\in{\dot W\!A^{p_\sigma}_{m-1,\smooth_\sigma}(\partial\Omega)}$, there exists at least one solution to the Dirichlet problem
\begin{multline}\label{eqn:interpolation:Dirichlet}
L\vec u=\Div_m\arr H,\quad \Tr_{m-1}^\Omega\vec u=\arr f,\\ \doublebar{\vec u}_{\dot W^{p_\sigma,\smooth_\sigma}_{m,av}(\Omega)}\leq C\doublebar{\arr H}_{L^{p_\sigma,\smooth_\sigma}_{av}(\Omega)}
+ C\doublebar{\arr f}_{\dot W\!A^{p_\sigma}_{m-1,\smooth_\sigma}(\partial\Omega)}
.\end{multline}
If $1<p_0<\infty$ and $1<p_1<\infty$, 
then there is at most one solution to the Dirichlet problem~\eqref{eqn:interpolation:Dirichlet} and so the problem is well posed.

Similarly,
if the Neumann problems
\begin{gather}
\label{eqn:interpolation:Neumann:0}
L\vec u=\Div_m\arr\Phi,\quad \M_{\mat A,\arr \Phi}^\Omega\vec u=0,\quad \doublebar{\vec u}_{\dot W^{p_0,\smooth_0}_{m,av}(\Omega)}\leq C_0\doublebar{\arr\Phi}_{L^{p_0,\smooth_0}_{av}(\Omega)}
,\\
\label{eqn:interpolation:Neumann:1}
L\vec u=\Div_m\arr\Psi,\quad \M_{\mat A,\arr \Psi}^\Omega\vec u=0,\quad \doublebar{\vec u}_{\dot W^{p_1,\smooth_1}_{m,av}(\Omega)}\leq C_1\doublebar{\arr\Psi}_{L^{p_1,\smooth_1}_{av}(\Omega)}
\end{gather}
are both well posed and are compatibly well posed, then for every $0<\sigma<1$ the Neumann problem
\begin{multline}\label{eqn:interpolation:Neumann}
L\vec u=\Div_m\arr H,\quad \M_{\mat A,\arr H}^\Omega\vec u=\arr g,\\ \doublebar{\vec u}_{\dot W^{p_\sigma,\smooth_\sigma}_{m,av}(\Omega)}\leq C\doublebar{\arr H}_{L^{p_\sigma,\smooth_\sigma}_{av}(\Omega)}
+ C\doublebar{\arr g}_{\dot N\!A^{p_\sigma}_{m-1,\smooth_\sigma-1}(\partial\Omega)}
\end{multline}
has solutions, and if $1<p_0<\infty$ and $1<p_1<\infty$ then the problem is well posed.
\end{lem}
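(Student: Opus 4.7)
The plan is complex interpolation of the solution operators, combined with the reduction to zero boundary data from Lemma~\ref{lem:zero:boundary} and a duality argument for uniqueness based on Lemma~\ref{lem:BVP:duality}. First I would apply Lemma~\ref{lem:zero:boundary} to reduce the general problem~\eqref{eqn:interpolation:Dirichlet} to the zero-boundary-data case by subtracting off an extension $\vec F \in \dot W^{p_\sigma,\smooth_\sigma}_{m,av}(\Omega)$ of $\arr f$; a bounded extension operator $\dot W\!A^{p}_{m-1,\smooth}(\partial\Omega) \to \dot W^{p,\smooth}_{m,av}(\Omega)$ is provided by the main result of \cite{Bar16pB}. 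After this reduction, the compatibility hypothesis packages the endpoint well-posedness of \eqref{eqn:interpolation:Dirichlet:0}--\eqref{eqn:interpolation:Dirichlet:1} into a single linear solution operator $T$ defined on the sum $L^{p_0,\smooth_0}_{av}(\Omega) + L^{p_1,\smooth_1}_{av}(\Omega)$ that maps $L^{p_j,\smooth_j}_{av}(\Omega) \to \dot W^{p_j,\smooth_j}_{m,av}(\Omega)$ boundedly for $j=0,1$. Complex interpolation then yields a bounded $T_\sigma: L^{p_\sigma,\smooth_\sigma}_{av}(\Omega) \to \dot W^{p_\sigma,\smooth_\sigma}_{m,av}(\Omega)$ solving the zero-data problem at the intermediate index; combined with the boundary extension this produces the desired solution to~\eqref{eqn:interpolation:Dirichlet}.

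For the interpolation step I would verify the identification $[L^{p_0,\smooth_0}_{av}(\Omega), L^{p_1,\smooth_1}_{av}(\Omega)]_\sigma = L^{p_\sigma,\smooth_\sigma}_{av}(\Omega)$, and likewise for $\dot W^{p,\smooth}_{m,av}$. Since the outer norm in~\eqref{eqn:L:norm:2} is a weighted $L^p$ norm against the measure $\dist(\cdot,\partial\Omega)^{p-1-p\smooth}\,dx$ applied to the Whitney-ball $L^2$ average (which does not depend on $p$ or $\smooth$), this reduces to a Stein--Weiss style complex interpolation of weighted Lebesgue spaces; crucially, the weight exponent $p-1-p\smooth$ interpolates correctly along the segment parametrized by $(1/p_\sigma, \smooth_\sigma)$.

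For uniqueness when $1<p_0, p_1 < \infty$, I would argue by duality. Applying Lemma~\ref{lem:BVP:duality} at each endpoint provides well-posedness of the adjoint Dirichlet problem for $L^*$ at the conjugate indices $(p_j', \smooth_j')$, and one checks $1/p_\sigma' = (1-\sigma)/p_0' + \sigma/p_1'$ and $\smooth_\sigma' = (1-\sigma)\smooth_0' + \sigma\smooth_1'$ so that the dual indices themselves interpolate. Compatibility of the original $T$ transfers to compatibility of the adjoint solution operators, so the existence half of the present lemma applied to $L^*$ at $(p_\sigma', \smooth_\sigma')$ yields, for every $\arr\Psi \in L^{p_\sigma',\smooth_\sigma'}_{av}(\Omega)$, some $\vec v$ with $L^* \vec v = \Div_m \arr\Psi$ and $\Tr_{m-1}^\Omega \vec v = 0$. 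Given a hypothetical $\vec u \in \dot W^{p_\sigma,\smooth_\sigma}_{m,av}(\Omega)$ with $L\vec u = 0$ and $\Tr_{m-1}^\Omega \vec u = 0$, a density argument (valid because both $p_\sigma$ and $p_\sigma'$ lie in $(1,\infty)$ and so exceed the respective thresholds $\pmin$) approximates $\vec u$ and $\vec v$ in their respective homogeneous norms by $C^\infty_0(\Omega)$ test functions and yields
\[
\langle \nabla^m \vec u, \arr\Psi\rangle_\Omega = \langle \nabla^m \vec u, \mat A^*\nabla^m \vec v\rangle_\Omega = \langle \mat A\nabla^m \vec u, \nabla^m \vec v\rangle_\Omega = 0.
\]
Since $\arr\Psi$ is arbitrary in $L^{p_\sigma',\smooth_\sigma'}_{av}(\Omega)$, which is the predual of $L^{p_\sigma,\smooth_\sigma}_{av}(\Omega)$ under the unweighted pairing, this forces $\nabla^m \vec u = 0$, and together with the zero trace this gives $\vec u = 0$.

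The Neumann statement follows the same template, using the second half of Lemma~\ref{lem:BVP:duality} and an extension $\dot N\!A^{p}_{m-1,\smooth-1}(\partial\Omega) \to L^{p,\smooth}_{av}(\Omega)$ (producing an inhomogeneity $\arr H_0$ whose implicit Neumann data recovers the prescribed $\arr g$, in the sense of~\eqref{dfn:Neumann}) in place of the $\dot W\!A$-to-$\dot W^{p,\smooth}_{m,av}$ extension. The step I expect to require the most care is verifying the complex-interpolation identity for the $L^2$-Whitney-averaged weighted spaces $L^{p,\smooth}_{av}(\Omega)$, because both $p$ and the weight exponent must interpolate simultaneously; I expect to handle this by recognizing these as weighted tent spaces to which Calder\'on-Torchinsky style interpolation applies. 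A secondary delicate point is the density needed to justify the formal integration by parts in the uniqueness step, which is precisely where the assumption $p_\sigma, p_\sigma' > 1 > \pmin[\smooth_\sigma]$ enters.
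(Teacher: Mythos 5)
Your overall architecture---reduce to zero boundary data via Lemma~\ref{lem:zero:boundary}, interpolate the solution operator $T$ using the compatibility hypothesis, then argue uniqueness by duality---is exactly the paper's, so the plan is sound in outline. The uniqueness argument you sketch (constructing an adjoint solution $\vec v$ with $L^*\vec v=\Div_m\arr\Psi$ and testing) is in substance the proof of Theorem~\ref{thm:exist:unique}; the paper instead invokes Theorems~\ref{thm:exist:unique} and~\ref{thm:unique:exist} at the endpoints, interpolates the dual problem, and applies Theorem~\ref{thm:exist:unique} once more, which is the same duality idea packaged more abstractly. Either presentation is fine there.

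The genuine gap is the choice of the complex interpolation method for the existence step. The hypotheses of the lemma permit $p_j$ as small as $\pmin[\smooth_j]$, which can be strictly less than $1$ (e.g.\ when $\smooth_j$ is near $1$). The identification $[L^{p_0,\smooth_0}_{av}(\Omega),L^{p_1,\smooth_1}_{av}(\Omega)]_\sigma = L^{p_\sigma,\smooth_\sigma}_{av}(\Omega)$ is only established in the paper (Lemma~\ref{lem:L-L:interpolation}) under the additional assumption $p_j\geq 1$, and the same restriction appears in the corresponding result \cite[Theorem~5.5.3]{BerL76} for vector-valued $\ell^p$ sequence spaces; the classical complex method on a couple of quasi-Banach spaces with $p<1$ is not covered by that machinery, and appealing to Calder\'on--Torchinsky will not rescue this because their endpoint interpolation for $H^p$-type scales with $p<1$ is itself via the real method. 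The paper avoids this entirely by using the \emph{real} interpolation functor with second parameter tied to the interpolated exponent, $(A_0,A_1)_{\sigma,p_\sigma}$: formulas~\eqref{eqn:interpolation:real}, \eqref{eqn:L-L:interpolation:real}, and \eqref{eqn:W-W:interpolation:real} then give boundedness of $T$ on $L^{p_\sigma,\smooth_\sigma}_{av}(\Omega)$ for the full range $0<p_j<\infty$. Replacing your complex-method step with the real method (through the retract argument of Lemma~\ref{lem:retract} and the $\ell^{p,\smooth}_\Omega(L^2(Q_0))$ sequence spaces, rather than a Stein--Weiss weighted-$L^p$ viewpoint, which ignores the inner Whitney $L^2$ average) would close the gap. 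Note that in your uniqueness paragraph, where you explicitly assume $1<p_0,p_1<\infty$, the complex method \emph{is} legitimate, so that portion of the plan works as written.
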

This lemma will be proven at the end of Section~\ref{sec:interpolation}. 
We remark that $\{(\smooth_\sigma,1/p_\sigma):0<\sigma<1\}$ is the straight line segment connecting the points $(\smooth_0,1/p_0)$ and $(\smooth_1,1/p_1)$. 

By Corollary~\ref{cor:compatible} below, if 
$\pdmnMinusOne/p_0-\smooth_0=\pdmnMinusOne/p_1-\smooth_1$, or 
if $\Omega$ is bounded, $0<\smooth_0<\smooth_1<1$, and  $\pdmnMinusOne/p_1-\smooth_1\leq \pdmnMinusOne/p_0 -\smooth_0$, then the problems \eqref{eqn:interpolation:Dirichlet:0} and~\eqref{eqn:interpolation:Dirichlet:1} or \eqref{eqn:interpolation:Neumann:0} and~\eqref{eqn:interpolation:Neumann:1} are compatible in the above sense; furthermore, by Corollary~\ref{cor:unique:extrapolate}, solutions to the problem~\eqref{eqn:interpolation:Dirichlet} or~\eqref{eqn:interpolation:Neumann} are unique and thus the problems are well posed.

The compatibility condition is not trivial; the main result of \cite{Axe10} is an example of a second order operator $L$ such that the Dirichlet problems
\begin{align*}Lu=0 \text{ in }\R^2_+,\quad \Trace u &= f, \quad \doublebar{u}_{T^p_\infty}\leq \doublebar{f}_{L^p(\partial\R^2_+)}
,\\
Lv=0 \text{ in }\R^2_+,\quad \Trace v &= f, \quad \doublebar{u}_{\dot W^2_1(\R^2_+)}\leq \doublebar{f}_{\dot B^{2,2}_{1/2}(\partial\R^2_+)}\end{align*}
are both well posed, but for which $u\neq v$ for some $f\in {L^p(\partial\R^2_+)}\cap{\dot B^{2,2}_{1/2}(\partial\R^2_+)}$. Here $T^p_\infty$ is the tent space defined in \cite{CoiMS85}.

\subsection{Historical remarks on $L^\infty$ perturbation} 
\label{sec:perturb:independent}

Perturbation results such as Theorem~\ref{thm:perturb} have been of  interest in recent years. We mention one particular class of coefficients that has received a great deal of study.
Suppose that $m=1$, that $\Omega=\{(x',t):x'\in\R^\dmnMinusOne,\>t>\psi(x')\}$ is the domain above a Lipschitz graph, and that
$\mat A$ is $t$-independent in the sense that
\begin{equation}\label{eqn:t-independent}
\mat A(x',t)=\mat A(x',s)=\mat A(x')\qquad\text{for all $x'\in\R^\dmnMinusOne$ and all $s$, $t\in\R$.}
\end{equation}
Then well-posedness of the Dirichlet problem \eqref{eqn:Dirichlet:boundary:introduction} for $L=\Div\mat A\nabla$, with $\DD=L^2(\Omega)$ and $\XX = \widetilde T^p_\infty$, implies well-posedness of the Dirichlet problem $M=\Div\mat B\nabla$, for $t$-independent $\mat B$ sufficiently close to $\mat A$. This was established in full generality in \cite{AusAM10}, and some previous versions were established in \cite{FabJK84,AusAH08,AlfAAHK11}. 

Here $\widetilde T^p_\infty$ is a ``nontangential space''  %given by
%\begin{equation*}\doublebar{u}_{\widetilde T^p_\infty}=\biggl(\int_{\R^\dmnMinusOne} \sup_{\abs{x'-y'}<t} \abs{u(y',t)}^p\,dx'\biggr)^{1/p}.\end{equation*} This space is 
appropriate for studying boundary data in~$L^p(\partial\Omega)$. (The space $\widetilde T^p_\infty$ is a generalization, essentially introduced in \cite{KenP93}, of the tent space $T^p_\infty$ defined in \cite{CoiMS85}. See, for example, \cite{HofMayMou15} for a precise definition of~$\widetilde T^p_\infty$.)

Similar results are valid for the Neumann problem~\eqref{eqn:Neumann:boundary:introduction} with $\NN=L^2(\partial\Omega)$ and $\XX=\nabla^{-1}(\widetilde T^2_\infty)$, where $\nabla^{-1}(\widetilde T^2_\infty)$ is the space of functions whose gradients lie in a nontangential space, and the Dirichlet problem \eqref{eqn:Dirichlet:boundary:introduction}  with $\DD=\dot W_1^2(\partial\Omega)$ and $\XX = \nabla^{-1}(\widetilde T^2_\infty)$, often called the ``Dirichlet regularity'' problem. Indeed, the stability result established in \cite{AlfAAHK11} required well-posedness of all three boundary value problems for~$\mat A$ to establish any well-posedness results for~$\mat B$.

Some similar stability results are available for boundary data in $L^p(\partial\Omega)$ for more general~$p$;
in particular, such stability results follow from the boundedness of layer potentials for $t$-independent coefficients established in \cite{HofMitMor15} and the well known equivalence between invertibility of layer potentials and well-posedness of boundary value problems. (See \cite{Ver84,BarM13,BarM16A,HofKMP15B,Bar17pA}.) 
%in particular, in \cite{Bar13}, it was established that if the ambient dimension $\dmn=2$, if $\mat B$ and $\mat A$ are both $t$-independent and close in~$L^\infty$, and if $\mat A$ is real-valued, then we have well-posedness of the $(\widetilde T^p_\infty,\allowbreak 0,\allowbreak \allowbreak L^p(\partial\Omega),\allowbreak \allowbreak \mat B,\allowbreak \Omega)$-Dirichlet problem, $(\nabla^{-1}(\widetilde T^q_\infty),\allowbreak 0,\allowbreak \allowbreak L^q(\partial\Omega),\allowbreak \allowbreak \mat A,\allowbreak \Omega)$-Neumann problem, and $(\nabla^{-1}(\widetilde T^q_\infty),0,\allowbreak \dot W^q_1(\partial\Omega),\allowbreak \mat A,\Omega)$-Dirichlet problem for any Lipschitz domain~$\Omega$ with connected boundary and any $p<\infty$ large enough and $q>1$ small enough. (Well-posedness of these problems for real-valued $\mat A$ was established in \cite{KenKPT00} and in \cite{KenR09,Rul07}.) In \cite{HofMitMor15} it was shown that these results are true for $\Omega=\R^\dmn_+$ in arbitrary dimensions provided $\mat A$ is real symmetric; if $\mat A$ is merely real then the results for the Dirichlet problems are still known to be true (see \cite{HofMitMor15} combined with \cite{HofKMP15B,HofKMP15A}) and the 

A higher order stability result, for boundary data in $L^2(\partial\R^\dmn_+)$, was established in \cite[Theorem~1.12]{BarHM17pC}.

Thus far, perturbation results for $\mat B$ \emph{not} independent of~$t=x_\dmn$ have been limited to Carleson-measure perturbation rather than $L^\infty$ perturbation; that is, well posedness results for $\mat A$ extend to well posedness results for $\mat B$ provided 
\begin{equation*}\sup_{x\in\partial\Omega,\>r>0} \biggl(\frac{1}{r^\dmnMinusOne} \int_{B(x,r)\cap\Omega} 
\Bigl(\sup_{B(y,\dist(y,\partial\Omega)/2)}\abs{\mat B-\mat A} ^2\Bigr) \frac{1}{\dist(y,\partial\Omega)}\,dy\biggr)^{1/2}\end{equation*}
is small. %(Here $B(y,\Omega)=B(y,\dist(y,\partial\Omega)/2)$ is the Whitney ball centered at~$y$.) 
Notice that this is a stronger condition than smallness of $\doublebar{\mat B-\mat A}_{L^\infty(\R^\dmn)}$. See the papers \cite{Dah86a, Fef89, FefKP91, Fef93, KenP93, KenP95, DinPP07, DinR10, AusA11, AusR12, HofMayMou15} for such Carleson perturbation results. We remark that for the most part, the known results for Carleson measure perturbation concern well-posedness of problems with boundary data in integer smoothess spaces.

Our Theorem~\ref{thm:perturb} allows for $L^\infty$ perturbation of coefficients that are not $t$-independent; however, it also concerns only boundary value problems with boundary data in fractional smoothness spaces, rather than the Lebesgue and Sobolev spaces mentioned above.

\subsection{New well posedness results derived from Theorems~\ref{thm:perturb}}
\label{sec:A0:introduction}

In this section we review some known well posedness results from the literature, and we discuss how these well posedness results combine with Theorems~\ref{thm:perturb} to yield new well posedness results.

\subsubsection{Perturbation of second order operators with real \texorpdfstring{$t$}{t}-independent coefficients}

In \cite{BarM16A}, the following well posedness results were established. 
\begin{thm}[{\cite[Section~9.3]{BarM16A}}]\label{thm:BarM16A}
Let $L=\Div \mat A\nabla$ be an elliptic operator of the form \eqref{eqn:divergence}, with $m=N=1$, acting on functions defined on open sets in~$\R^\dmn$, $\dmn\geq 2$. Suppose that $\mat A$ has real coefficients and is $t$-independent in the sense of formula~\eqref{eqn:t-independent}. Let $\Omega=\{(x',t):x'\in\R^\dmnMinusOne,\>t>\psi(x')\}$ for some Lipschitz function~$\psi$.

Then there is some $\kappa>0$ depending only on the dimension $\dmn$, the constants $\lambda$ and $\Lambda$ in formulas~\eqref{eqn:elliptic:bounded} and~\eqref{eqn:elliptic:everywhere}, and on $M=\doublebar{\nabla\psi}_{L^\infty(\R^\dmnMinusOne)}$, such that,
if 
\begin{equation}
\label{eqn:exponents:BarM16A}
0<\smooth<1,\quad 0<p\leq \infty, \quad \smooth-\kappa< \frac{1}{p}<\min\biggl(\smooth+\kappa, \frac{\dmn-2+\smooth+\kappa}{\dmnMinusOne}\biggr) \end{equation}
then the Dirichlet problem \eqref{eqn:Dirichlet:p:smooth} (with $m=1$) is well posed. %If $\dmn=2$ then the Neumann problem~\eqref{eqn:Neumann:p:smooth} is well posed.

If in addition $\mat A$ is symmetric, then the Dirichlet problem is well posed whenever
\begin{equation}\label{eqn:exponents:BarM16A:symmetric}
0< \smooth< 1,\quad 0<p\leq \infty, \quad \frac{\smooth-\kappa}{2}< \frac{1}{p}<\begin{cases}
\frac{1+\smooth+\kappa}{2}, & 0<\smooth<1-\kappa,\\
\frac{\dmn-2+\smooth+\kappa}{\dmnMinusOne}, & 1-\kappa\leq \smooth<1.
\end{cases}
\end{equation}
Furthermore, the Neumann problem~\eqref{eqn:Neumann:p:smooth}
is well posed for the same range of indices.

Finally, for any $p_0$, $\smooth_0$ and $p_1$, $\smooth_1$ satisfying the given conditions, these problems are compatibly well posed in the sense of Lemma~\ref{lem:interpolation}.%that if $\arr H\in L^{p_0,\smooth_0}_{av}(\Omega)\cap L^{p_1,\smooth_1}_{av}(\Omega)$ and either $\arr f\in \dot W\!A^{p_0}_{m-1,\smooth_0}(\Omega)\cap \dot W\!A^{p_1}_{m-1,\smooth_1}(\Omega)$ or $\arr g\in \dot N\!A^{p_0}_{m-1,\smooth_0-1}(\Omega)\cap \dot N\!A^{p_1}_{m-1,\smooth_1-1}(\Omega)$, for $p_j$, $\smooth_j$ satisfying the conditions given above, then there is a single solution $u\in \dot W^{p_0,\smooth_0}_{1,av}(\Omega)\cap \dot W^{p_1,\smooth_1}_{1,av}(\Omega)$ to the problem~\eqref{eqn:BarM16A:Dirichlet} or~\eqref{eqn:BarM16A:Neumann}.
\end{thm}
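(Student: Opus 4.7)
The plan is to combine the well-developed $L^p$ and Sobolev theory for second order elliptic operators with real $t$-independent coefficients with a two-parameter interpolation argument in the $(1/p, \smooth)$ plane. I would construct solutions through layer potentials associated to the fundamental solution of~$L$: for the Dirichlet problem, write $\vec u = \D^{\mat A}\arr f + \mathcal{N}^{\mat A}\arr H$, where $\D^{\mat A}$ is the double layer potential and $\mathcal{N}^{\mat A}$ is a Newton-type potential, and analogously use the single layer $\s^{\mat A}$ for the Neumann problem. Well posedness then reduces to two ingredients: (i)~boundedness of $\D^{\mat A}$, $\s^{\mat A}$, and $\mathcal{N}^{\mat A}$ into $\dot W^{p,\smooth}_{1,av}(\Omega)$ from the appropriate boundary and interior data spaces, and (ii)~invertibility of the boundary jump operators $\pm\tfrac12 I + K^{\mat A}$ on the Whitney-array spaces $\dot W\!A^{p}_{0,\smooth}$ and $\dot N\!A^{p}_{0,\smooth-1}$.

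At the corners of the admissible region in the $(1/p, \smooth)$ plane, the needed invertibility is drawn from the existing literature on $t$-independent operators. The $L^p$-Dirichlet endpoint ($\smooth$ near $0$) is handled for real non-symmetric $t$-independent coefficients through the Hofmann-Kenig-Mayboroda-Pipher theory, and for symmetric coefficients by the Rellich-identity approach of Kenig-Pipher, which is what opens the enlarged range behind~\eqref{eqn:exponents:BarM16A:symmetric}. The Dirichlet regularity and $L^2$-Neumann endpoints ($\smooth$ near $1$) come from the work of Alfonseca-Auscher-Axelsson-Hofmann-Kim combined with the first-order functional calculus techniques of \cite{AusAM10}; the required mapping properties of the layer potentials into $\dot W^{p,\smooth}_{1,av}$ follow from pointwise kernel bounds together with the square-function and tent-space estimates proved in \cite{AusAM10, HofMitMor15}. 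Compatibility of solutions across different $(p, \smooth)$ is automatic from the layer-potential formulas, since both solutions come from applying the same operator to a common datum.

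Having invertibility at these extreme indices and boundedness throughout a two-dimensional region, the strategy is to propagate well posedness into the interior using Lemma~\ref{lem:interpolation} (or complex interpolation of the Besov and Whitney-array complexes directly). The main obstacle is that the admissible ranges described by \eqref{eqn:exponents:BarM16A} and \eqref{eqn:exponents:BarM16A:symmetric} are genuinely two-dimensional polygons, not products of intervals, so one must establish invertibility along enough boundary segments to fill in the interior by interpolation; the duality Lemma~\ref{lem:BVP:duality} is used here to pair Dirichlet-type with regularity-type endpoints across the diagonal $\smooth = 1/p$. The precise Meyers-type constant $\kappa$ controlling how far the region extends off this diagonal reflects the reverse H\"older improvement available in the $L^p$-Dirichlet theory for $t$-independent~$\mat A$, and tracking it carefully through the interpolation, while preserving compatibility, is where most of the technical effort lies.
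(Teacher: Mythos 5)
This theorem carries the attribution $\cite[\text{Section~9.3}]{\text{BarM16A}}$ in its header: the present paper imports it as a black box from the earlier Barton--Mayboroda paper and does not prove it here, so there is no ``paper's own proof'' to line up against. What the current paper \emph{does} contain, in Section~\ref{sec:BarM16A:2}, is the proof of the closely analogous two-dimensional symmetric case (Lemma~\ref{lem:BarM16A:2}), and comparing against that is instructive. There, the passage from the classical $L^p$/nontangential well-posedness results to the $\dot W^{p,\smooth}_{1,av}$/Besov setting is \emph{not} carried out through layer-potential mapping estimates: instead, Besov data is decomposed into atoms, the solution with atomic data is shown (via Lemma~\ref{lem:Bar13}) to have a nontangentially controlled gradient with spatial decay, and \cite[Theorem~7.11]{BarM16A} converts that nontangential estimate into an $\dot W^{p,\smooth}_{1,av}$ bound with the correct scaling in the atom radius; uniqueness and compatibility are then extracted through Corollaries~\ref{cor:unique:extrapolate} and~\ref{cor:compatible} and the interpolation of Lemma~\ref{lem:interpolation}. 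Your sketch hands the heavy lifting to ``pointwise kernel bounds together with square-function and tent-space estimates'' for the layer potentials, but for $t$-independent coefficients in the Besov/weighted-averaged-Sobolev scale this is exactly the gap: the tent-space estimates in \cite{AusAM10,HofMitMor15} concern $\widetilde T^p_\infty$-type bounds appropriate to integer-smoothness data, and the nontrivial work of \cite{BarM16A} is precisely converting those into bounds on $\int_\Omega |\nabla u|^p\dist(\cdot,\partial\Omega)^{p-1-p\smooth}$. Your proposal names the right endpoint invertibility inputs (HKMP for real nonsymmetric $L^p$-Dirichlet, Kenig--Pipher Rellich identities for symmetric Neumann/regularity, the square-function technology of Alfonseca et al.\ and \cite{AusAM10}) and correctly observes that the two-dimensional region in the $(\smooth,1/p)$-plane forces multiple interpolations plus the duality of Lemma~\ref{lem:BVP:duality}; but as written the crucial transfer step is asserted, not supplied, and it is the step where the Meyers exponent $\kappa$ actually enters.

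One further caution: the compatibility clause at the end of the theorem is not quite ``automatic from layer-potential formulas'' in the way you claim, because the layer-potential representation of the solution is itself only established a~posteriori from invertibility, and the nontrivial content of compatibility is that the same function solves the problem in two different quasi-Banach spaces with inequivalent topologies (cf.\ the Axelsson counterexample cited after Lemma~\ref{lem:interpolation}). In \cite{BarM16A} compatibility is obtained through an embedding-and-uniqueness argument of the type packaged here as Corollaries~\ref{cor:unique:extrapolate} and~\ref{cor:compatible}, which is worth spelling out rather than waving at.
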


\begin{figure}
\begin{center}

\def\alph{0.35}
\def\enn{3}
\def\del{0.1}

\begin{tikzpicture}[scale=2]

% Gray background
%\fill [white!80!black] (0,0) -- (1,0) -- (1,1) -- (0,1) -- cycle;

% Region of well posedness
\fill [white!50!black] (0,0) -- (\alph,0) -- (1,1-\alph) -- (1,1+\alph/\enn) -- (1-\alph,1) -- (0,\alph) -- cycle;
\draw [thick,black] (0,0) -- (1,1);

% Axes
\draw [->] (-1/3,0)--(1.3,0) node [below] {$\smooth$};
\draw [->] (0,-1/3)--(0,1.3) node [left] {$1/p$};

%\node at (0,1) {$\circ$}; \node at (0,1) [left] {$1$};
\node at (1,1) {$\circ$}; \node at (1,1) [right] {$(1,1)$};

\end{tikzpicture}
\begin{tikzpicture}[scale=2]

% Gray background
%\fill [white!80!black] (0,0) -- (1,0) -- (1,1) -- (0,1) -- cycle;

% Region of well posedness
\fill [white!50!black] 
(0,0) -- (\alph,0) -- (1,1/2-\alph/2) -- (1,1+\alph/\enn) -- (1-\alph,1) -- (0,1/2+\alph/2) -- cycle;
\fill [black] (0,0) -- (1,1/2) -- (1,1) -- (0,1/2) -- cycle;

% Axes
\draw [->] (-1/3,0)--(1.3,0) node [below] {$\smooth$};
\draw [->] (0,-1/3)--(0,1.3) node [left] {$1/p$};

\node at (0,1/2) {$\circ$}; \node at (0,1/2) [left] {$1/2$};
\node at (1,1/2) {$\circ$}; \node at (1,1/2) [right] {$(1,1/2)$};
\node at (1,1) {$\circ$}; \node at (1,1) [right] {$(1,1)$};
%\node at (1-\alph,1) {$\circ$}; \node at (1-\alph,1) [above] {$(1-\kappa,1)\>\>\>\>$};
%\node at (1,1+\alph/\enn) {$\circ$}; \node at (1,1+\alph/\enn) [above right] {$(1,1+\kappa/\pdmnMinusOne)$};

\end{tikzpicture}
\end{center}
\caption{Theorem~\ref{thm:BarM16A}.
If $\mat A$ is real, $t$-independent, and if $L$ is a decoupled system of $N$ independent differential equations, then we have well posedness of the Dirichlet problem for all values of $(\smooth,1/p)$ shown. We have well posedness for all such $\mat A$ and all $(\smooth,1/p)$ in the black region; the size of the grey region depends on $\Omega$ and~$\mat A$. The right hand side indicates the region of well posedness if in addition $\mat A$ is symmetric or nearly symmetric; in this case the Neumann problem is well posed as well.
}\label{fig:BarM16A}
\end{figure}
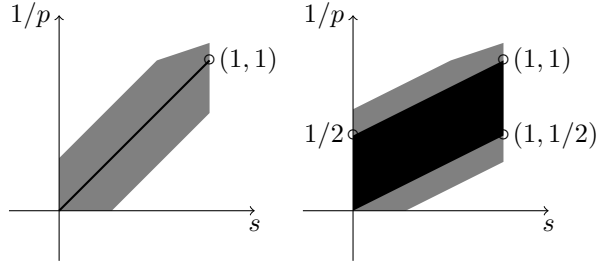

The acceptable values of $\smooth$ and $1/p$ are shown in Figure~\ref{fig:BarM16A}.

\begin{rmk}\label{rmk:BarM16A:2} If $\dmn=2$, a straightforward argument (see \cite{Pip97,KenR09,Bar13}) shows that the Dirichlet problem~\eqref{eqn:Dirichlet:p:smooth} and the Neumann problem~\eqref{eqn:Neumann:p:smooth} with $\mat A$ replaced by $(1/\det \mat A)\mat A^t$ are equivalent; thus, the Neumann problem is also well posed if $\dmn=2$ and $p$ and $\smooth$ satisfy the condition~\eqref{eqn:exponents:BarM16A}. 

If in addition $\mat A$ is symmetric, it follows from known results that  there is some $\kappa>0$ such that the Dirichlet and Neumann problems are well posed whenever
\begin{equation}\label{eqn:exponents:BarM16A:2}
0<\smooth<1,\quad 0<p\leq \infty, \quad -\frac{1}{2}-\kappa<\frac{1}{p}-\smooth<\frac{1}{2}+\kappa
.\end{equation}
See Section~\ref{sec:BarM16A:2}.
\end{rmk}

We may establish well posedness for more general second order systems using Theorem~\ref{thm:perturb}. 

\begin{thm} 
Fix some $\Lambda>\lambda>0$ and some $M>0$. Let $\smooth$ and $p$ satisfy the conditions~\eqref{eqn:exponents:BarM16A:symmetric} (if $\dmn\geq 3$) or~\eqref{eqn:exponents:BarM16A:2} (if $\dmn=2$).

Then there is some 
$\varepsilon>0$ depending on $\Lambda$, $\lambda$, $M$, $p$, $\smooth$, and the dimension $\dmn$ so that, if $\Omega$ is as in Theorem~\ref{thm:BarM16A}, if $L$ is an elliptic system of the form~\eqref{eqn:divergence} with $m=1$ associated to coefficients $\mat A$ that satisfy the ellipticity conditions~\eqref{eqn:elliptic:bounded} and~\eqref{eqn:elliptic:everywhere}, and if 
\begin{multline}
\label{eqn:BarM16A:perturb}
\smash{\sup_{{{j,\alpha,\beta,x',t}}} \abs{\im A^{jj}_{\alpha\beta}(x',t)}
+\sup_{{{j,\alpha,\beta,x',t,s}}} \abs{A^{jj}_{\alpha\beta}(x',t)-A^{jj}_{\alpha\beta}(x',s)}}
\\+\sup_{\substack{{j,k,\alpha,\beta,x',t}\\j\neq k}}  \abs{A^{jk}_{\alpha\beta}(x',t)}
<\varepsilon
\end{multline}
and
\begin{equation}
\label{eqn:BarM16A:perturb:symmetric}
\sup_{{{j,\alpha,\beta,x',t}}} \abs{A^{jj}_{\alpha\beta}(x',t)-A^{jj}_{\beta\alpha}(x',t)}
<\varepsilon\end{equation}
then the Dirichlet problem
\begin{equation}\label{eqn:Dirichlet:BarM16A}
\left\{\begin{aligned}L\vec u&=\Div \vec H\text{ in }\Omega,
\\ \vec u&=\vec f\text{ on }\partial\Omega,
\\ \doublebar{\vec u}_{\dot W^{p,\smooth}_{1,av}(\Omega)}&\leq C \doublebar{\vec f}_{\dot B^{p,p}_\smooth(\partial\Omega)}+C\doublebar{\vec H}_{L^{p,\smooth}_{av}(\Omega)}
\end{aligned}\right.\end{equation}
and the Neumann problem
\begin{equation}\label{eqn:Neumann:BarM16A}
\left\{\begin{aligned}L\vec u&=\Div \vec H\text{ in }\Omega,
\\ \nu\cdot \mat A\nabla \vec u=\mathop{\vec {\mathrm{M}}}\nolimits^\Omega_{\mat A}\vec u&=\vec g\text{ on }\partial\Omega,
\\ \doublebar{\vec u}_{\dot W^{p,\smooth}_{1,av}(\Omega)}&\leq C \doublebar{\vec g}_{\dot B^{p,p}_{\smooth-1}(\partial\Omega)} +C\doublebar{\vec H}_{L^{p,\smooth}_{av}(\Omega)}
\end{aligned}\right.\end{equation}
are well posed.

If $p$ and $\smooth$ satisfy the stronger condition~\eqref{eqn:exponents:BarM16A}, then
then the Dirichlet problem~\eqref{eqn:Dirichlet:BarM16A} is well posed even if the condition~\eqref{eqn:BarM16A:perturb:symmetric} is not satisfied.
\end{thm}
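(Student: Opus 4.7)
The plan is to reduce to Theorem~\ref{thm:BarM16A} by constructing a reference operator $L_0 = \Div \mat A_0 \nabla$ whose coefficients $\mat A_0$ are real, $t$-independent, block diagonal in the system indices, and (when needed) symmetric, and then to invoke Theorem~\ref{thm:perturb} to transfer well posedness from $L_0$ to $L$. Specifically, for the Dirichlet case I would define
\begin{equation*}
(A_0)^{jj}_{\alpha\beta}(x',t) = \re A^{jj}_{\alpha\beta}(x',0), \qquad (A_0)^{jk}_{\alpha\beta} \equiv 0 \text{ for } j\neq k,
\end{equation*}
so that $\mat A_0$ is manifestly real, $t$-independent, and decouples the system into $N$ independent scalar equations. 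For the Neumann case I would additionally symmetrize in the multi-index variables by replacing $\re A^{jj}_{\alpha\beta}(x',0)$ with $\frac{1}{2}(\re A^{jj}_{\alpha\beta}(x',0)+\re A^{jj}_{\beta\alpha}(x',0))$.

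The first step is to bound $\doublebar{\mat A-\mat A_0}_{L^\infty(\R^\dmn)}$. The triangle inequality and the definition of $\mat A_0$ show that this norm is controlled by the sum of the three suprema in the hypothesis~\eqref{eqn:BarM16A:perturb} (plus the term in~\eqref{eqn:BarM16A:perturb:symmetric} in the Neumann case), all of which are smaller than a constant times~$\varepsilon$. Next, I need to verify that $\mat A_0$ itself satisfies the ellipticity bounds~\eqref{eqn:elliptic:bounded} and~\eqref{eqn:elliptic:everywhere} on $\R^\dmn$ (and hence~\eqref{eqn:elliptic:domain} on $\Omega$): since $\mat A$ is elliptic with constants $\lambda$, $\Lambda$ and $\doublebar{\mat A-\mat A_0}_{L^\infty}$ is of order~$\varepsilon$, for $\varepsilon$ small compared to $\lambda$ the coefficients $\mat A_0$ inherit ellipticity with constants $\lambda/2$ and~$2\Lambda$.

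The second step is to apply Theorem~\ref{thm:BarM16A} to $L_0$. Since $\mat A_0$ is block diagonal, the system $L_0\vec u=\Div\vec H$ reduces to $N$ independent scalar equations $L_0^{(j)} u_j=\Div H_j$ with real, $t$-independent, Lipschitz-graph-domain coefficients; applying Theorem~\ref{thm:BarM16A} componentwise yields well posedness of the Dirichlet problem~\eqref{eqn:Dirichlet:p:smooth} (and, in the symmetric case, the Neumann problem~\eqref{eqn:Neumann:p:smooth}) for $L_0$ in the full range~\eqref{eqn:exponents:BarM16A} (respectively~\eqref{eqn:exponents:BarM16A:symmetric}), and if $\dmn=2$ the range~\eqref{eqn:exponents:BarM16A:2} via Remark~\ref{rmk:BarM16A:2}. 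In particular, the homogeneous variants \eqref{eqn:Dirichlet:0} and~\eqref{eqn:Neumann:0} are well posed with some constant $C_0$ depending only on $p$, $\smooth$, $\dmn$, $\lambda$, $\Lambda$ and $\doublebar{\nabla\psi}_{L^\infty}$.

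The final step is to invoke Theorem~\ref{thm:perturb} with the reference operator $L_0$ and the perturbed operator $L$, noting that $\doublebar{\mat A_0-\mat A}_{L^\infty(\R^\dmn)}\lesssim \varepsilon$. Choosing $\varepsilon$ small enough that $\doublebar{\mat A_0-\mat A}_{L^\infty(\R^\dmn)}<1/(C_1C_0)$, where $C_1$ is the constant from Theorem~\ref{thm:perturb}, we conclude that the Dirichlet problem~\eqref{eqn:Dirichlet:BarM16A} (and, in the symmetric case, the Neumann problem~\eqref{eqn:Neumann:BarM16A}) is well posed for~$L$, with constants depending only on $p$, $\smooth$, $\dmn$, $\lambda$, $\Lambda$ and~$M$. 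The smallness threshold $\varepsilon$ thus depends only on these same parameters, as required. The main obstacle is likely bookkeeping: verifying that our reference operator $L_0$ genuinely satisfies all the hypotheses of Theorem~\ref{thm:BarM16A} for the full range of exponents (in particular, that $\mat A_0$ is elliptic on~$\Omega$ rather than merely on $\R^\dmn$, and that the scalar versus system distinction is handled correctly via decoupling), and tracking the dependence of constants so that the final~$\varepsilon$ can be chosen uniformly in $(p,\smooth)$ over a given compact subrange of the admissible set.
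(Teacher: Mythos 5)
Your approach is correct and is essentially the argument the paper intends (the paper states this theorem without explicit proof, as an immediate application of Theorem~\ref{thm:perturb} to the reference result Theorem~\ref{thm:BarM16A} and Remark~\ref{rmk:BarM16A:2}). Constructing the decoupled, real, $t$-independent (and, when needed, symmetrized) reference coefficients $\mat A_0$, bounding $\doublebar{\mat A-\mat A_0}_{L^\infty}\lesssim\varepsilon$ from the three suprema in~\eqref{eqn:BarM16A:perturb} (plus~\eqref{eqn:BarM16A:perturb:symmetric} in the symmetric case), inheriting ellipticity for small $\varepsilon$, invoking Theorem~\ref{thm:BarM16A} componentwise, and then applying Theorem~\ref{thm:perturb} is exactly the intended reduction. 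The only minor quibble is that evaluating the $L^\infty$ coefficients at $t=0$ is not quite licit; one should replace $\re A^{jj}_{\alpha\beta}(x',0)$ by an average over~$t$, but the hypothesis $\sup_{t,s}\abs{A^{jj}_{\alpha\beta}(x',t)-A^{jj}_{\alpha\beta}(x',s)}<\varepsilon$ makes any such choice differ by at most $O(\varepsilon)$, so the argument is unaffected.
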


Observe that the size of the acceptable perturbation $\varepsilon$ depends on $p$ and~$\smooth$.
By perturbing at finitely many points and applying Lemmas~\ref{lem:interpolation} and~\ref{lem:BVP:duality}, we can  construct large regions in the $(\smooth,1/p)$-plane such that boundary value problems are well posed in the given regions.

\begin{thm} \label{thm:BarM16A:perturb}
Fix some $\Lambda>\lambda>0$ and some $M>0$. Let $\kappa$ be as in Theorem~\ref{thm:BarM16A}; notice that $0<\kappa\leq 1$ and that $\kappa$ depends only on~$\lambda$, $\Lambda$, $M$ and the dimension $\dmn$.
Fix some $\delta$ with $0<\delta<\kappa/2$. 

Then there is some $\varepsilon>0$ depending on $\Lambda$, $\lambda$, $M$, $\delta$, and the dimension $\dmn$ so that, if $\Omega$ is as in Theorem~\ref{thm:BarM16A}, if $L$ is an elliptic system of the form~\eqref{eqn:divergence} with $m=1$ associated to coefficients $\mat A$ that satisfy the ellipticity conditions~\eqref{eqn:elliptic:bounded} and~\eqref{eqn:elliptic:everywhere}, and if $\mat A$ satisfies the condition~\eqref{eqn:BarM16A:perturb},
then the Dirichlet problem
\eqref{eqn:Dirichlet:BarM16A}
is well posed whenever %$\delta\leq\smooth\leq1-\delta$ and $1/p=\smooth$.
\begin{equation*}\delta\leq\smooth\leq1-\delta, \quad \max(0,\smooth-\kappa+\delta)\leq \frac{1}{p}\leq\min\biggl(\smooth+\kappa-\delta, \frac{\dmn-2+\smooth+\kappa-\delta}{\dmnMinusOne}\biggr) .\end{equation*}

If in addition the condition~\eqref{eqn:BarM16A:perturb:symmetric} is valid,
then the Dirichlet problem~\eqref{eqn:Dirichlet:BarM16A} and the Neumann problem~\eqref{eqn:Neumann:BarM16A}
are well posed whenever $\dmn\geq 3$ and
\begin{equation*}\delta\leq\smooth\leq1-\delta,
\quad \max\biggl(0,\frac{\smooth}{2}-\kappa+\delta\biggr)\leq \frac{1}{p}\leq\min\biggl(\frac{1+\smooth}{2}+\kappa-\delta, \frac{\dmn-2+\smooth+\kappa-\delta}{\dmnMinusOne}\biggr)\end{equation*}
or $\dmn=2$ and
\begin{equation*}
\delta\leq \smooth\leq 1-\delta,\quad 0<p\leq \infty,\quad -\frac{1}{2}-(\kappa-\delta)\leq \frac{1}{p}-\smooth\leq \frac{1}{2}+\kappa-\delta
.\end{equation*}
\end{thm}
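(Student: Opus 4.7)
The plan is to reduce to Theorem~\ref{thm:BarM16A} through a real, $t$-independent, block-diagonal reference operator $L_0$; to control $\doublebar{\mat A-\mat A_0}_{L^\infty(\R^\dmn)}$ by a multiple of $\varepsilon$; and then to invoke Theorem~\ref{thm:perturb} with a perturbation radius that is \emph{uniform} across the (compact) target region.

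First I would define $\mat A_0$ by setting $(A_0)^{jk}_{\alpha\beta}\equiv 0$ whenever $j\neq k$ and $(A_0)^{jj}_{\alpha\beta}(x')=\re A^{jj}_{\alpha\beta}(x',0)$, with an additional symmetrization in $(\alpha,\beta)$ when \eqref{eqn:BarM16A:perturb:symmetric} is in force. Each summand on the left-hand side of \eqref{eqn:BarM16A:perturb} (and \eqref{eqn:BarM16A:perturb:symmetric}, when applicable) directly bounds one contribution to $\doublebar{\mat A-\mat A_0}_{L^\infty(\R^\dmn)}$, so this norm is at most a universal multiple of $\varepsilon$; for $\varepsilon$ small enough the matrix $\mat A_0$ still satisfies the ellipticity conditions with comparable constants. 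Because $\mat A_0$ is block-diagonal in the system index, $L_0=\Div\mat A_0\nabla$ decouples into $N$ independent scalar second-order equations with real $t$-independent coefficients, to which Theorem~\ref{thm:BarM16A} applies componentwise. This yields well posedness of the Dirichlet problem for $L_0$ on all of~\eqref{eqn:exponents:BarM16A}, and in the symmetric case well posedness of both the Dirichlet and Neumann problems on all of~\eqref{eqn:exponents:BarM16A:symmetric} (for $\dmn\geq 3$) or~\eqref{eqn:exponents:BarM16A:2} (for $\dmn=2$), together with the compatibility of any two such problems in the sense of Lemma~\ref{lem:interpolation}.

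The target region in the conclusion is compact and lies strictly inside the open range of well posedness for $L_0$. To upgrade the pointwise finiteness of the $L_0$-well-posedness constant $C_0(\smooth,1/p)$ to a uniform bound $\widetilde C_0=\widetilde C_0(\Lambda,\lambda,M,\delta,\dmn)$ on this region, I would choose a finite family of line segments whose endpoints lie inside the target region and whose union covers it, and apply Lemma~\ref{lem:interpolation} along each segment; the general compatibility of $L_0$'s problems ensures that the hypotheses of Lemma~\ref{lem:interpolation} hold for every choice of endpoints, so iterated interpolation produces a uniform bound depending only on the finitely many endpoint constants (which in turn depend only on $\Lambda,\lambda,M,\dmn$). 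The constant $C_1(\smooth,1/p)$ of Theorem~\ref{thm:perturb} depends only on $(\smooth,1/p)$, $\dmn$ and the Lipschitz character of $\Omega$, and so is uniformly bounded by some $\widetilde C_1$ on the target region as well. Setting $\varepsilon=1/(\widetilde C_0\widetilde C_1)$ and applying Theorem~\ref{thm:perturb} at each $(\smooth,1/p)$ in the target region then transfers well posedness from $L_0$ to $L$. In the symmetric case, Lemma~\ref{lem:BVP:duality} can be used in parallel to exchange Dirichlet data at $(\smooth,1/p)$ for Neumann data at $(1-\smooth,1-1/p)$, making the Neumann cover a consequence of the Dirichlet cover.

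The chief obstacle is the uniform control of $C_0$ on the two-dimensional compact region, since the constant produced by Lemma~\ref{lem:interpolation} depends on both endpoints and on the interpolation parameter. The fact that $L_0$'s boundary value problems are compatibly well posed \emph{across every admissible pair} of exponents (not merely across pairs lying on a common line $\pdmnMinusOne/p-\smooth=\mathrm{const}$, which is the generic sufficient condition noted after Lemma~\ref{lem:interpolation}) is what makes this feasible: it allows a planar covering of the target region by a finite set of interpolation segments that iterate in both coordinate directions, producing a bound depending only on $\Lambda,\lambda,M,\delta,\dmn$ as required in the statement.
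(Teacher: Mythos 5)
Your overall plan — construct a real, $t$-independent, block-diagonal reference $\mat A_0$ controlled by $\varepsilon$, invoke Theorem~\ref{thm:BarM16A} for $L_0$, and then use Theorem~\ref{thm:perturb} together with interpolation to cover a two-dimensional range of exponents with a single uniform $\varepsilon$ — is sound and matches the intent of the paper. The construction of $\mat A_0$, the estimate $\doublebar{\mat A-\mat A_0}_{L^\infty}\lesssim\varepsilon$, the preservation of ellipticity for small $\varepsilon$, and the componentwise application of Theorem~\ref{thm:BarM16A} to a decoupled block-diagonal system are all correct and exactly what is needed. One organizational difference from the paper: the text immediately preceding the theorem says ``By perturbing at finitely many points and applying Lemmas~\ref{lem:interpolation} and~\ref{lem:BVP:duality}$\ldots$,'' i.e., perturb first at finitely many $(\smooth,1/p)$ and then interpolate the perturbed operator~$M$; you instead interpolate the unperturbed operator~$L_0$ to obtain a uniform $\widetilde C_0$, then perturb pointwise with the fixed radius. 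Both orders work; the paper's order produces the uniform $\varepsilon$ as a minimum over finitely many values, at the cost of requiring the compatibility statement for the perturbed problems from the last paragraph of Theorem~\ref{thm:exist:perturb}; your order avoids perturbed compatibility but needs the uniform two-dimensional bound on $C_0(\smooth,1/p)$ for $L_0$ via iterated interpolation, an issue you correctly flag and address by using the unrestricted compatibility asserted in the last sentence of Theorem~\ref{thm:BarM16A}.

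There is, however, one genuine error. You write that ``Lemma~\ref{lem:BVP:duality} can be used in parallel to exchange Dirichlet data at $(\smooth,1/p)$ for Neumann data at $(1-\smooth,1-1/p)$, making the Neumann cover a consequence of the Dirichlet cover.'' This is not what Lemma~\ref{lem:BVP:duality} does: it takes a Dirichlet problem for $L$ at $(\smooth,1/p)$ to a \emph{Dirichlet} problem for $L^*$ at $(1-\smooth,1-1/p)$, and similarly a Neumann problem for $L$ to a Neumann problem for $L^*$; the boundary condition type is preserved, only the operator is replaced by its adjoint. So the Neumann well-posedness does not follow from the Dirichlet one via that lemma. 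Fortunately this is a dispensable shortcut rather than a load-bearing step: in the symmetric case Theorem~\ref{thm:BarM16A} (and Remark~\ref{rmk:BarM16A:2} when $\dmn=2$) gives well posedness of the $L_0$-Neumann problem directly in the same range of exponents, so your interpolate-then-perturb argument can simply be run a second time, verbatim, with $\M_{\mat A_0,\arr H}^\Omega$ in place of $\Tr_{0}^\Omega$. You should replace the duality remark with that observation.
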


\begin{figure}
\begin{center}

\def\alph{0.35}
\def\enn{3}
\def\del{0.1}

\begin{tikzpicture}[scale=2]

% Gray background
%\fill [white!80!black] (0,0) -- (1,0) -- (1,1) -- (0,1) -- cycle;

% Region of well posedness
\fill [white!80!black] (0,0) -- (\alph,0) -- (1,1-\alph) -- (1,1+\alph/\enn) -- (1-\alph,1) -- (0,\alph) -- cycle;
\fill [white!50!black] (\del,0) -- (\alph-\del,0) -- (1-\del,1-\alph) -- (1-\del,1+\alph/\enn-2*\del/\enn) -- (1-\alph+\del,1) -- (\del,\alph);
\draw [thick,black] (\del,\del) -- (1-\del,1-\del);

% Axes
\draw [->] (-1/3,0)--(1.3,0) node [below] {$\smooth$};
\draw [->] (0,-1/3)--(0,1.3) node [left] {$1/p$};

%\node at (0,1) {$\circ$}; \node at (0,1) [left] {$1$};
%\node at (1,0) {$\circ$}; \node at (1,0) [below] {$1$};

\end{tikzpicture}
\begin{tikzpicture}[scale=2]

% Gray background
%\fill [white!80!black] (0,0) -- (1,0) -- (1,1) -- (0,1) -- cycle;

% Region of well posedness
\fill [white!80!black] (0,0) -- (\alph,0) -- (1,1/2-\alph/2) -- (1,1+\alph/\enn) -- (1-\alph,1) -- (0,1/2+\alph/2) -- cycle;
\fill [white!50!black] (\del,0) -- (\alph-\del,0) -- (1-\del,1/2-\alph/2) -- (1-\del,1+\alph/\enn-2*\del/\enn) -- (1-\alph+\del,1) -- (\del,1/2+\alph/2);
\fill [black] (\del,\del/2) -- (1-\del,1/2-\del/2) -- (1-\del,1-\del/2) -- (\del,1/2+\del/2) -- cycle;

% Axes
\draw [->] (-1/3,0)--(1.3,0) node [below] {$\smooth$};
\draw [->] (0,-1/3)--(0,1.3) node [left] {$1/p$};

%\node at (0,1) {$\circ$}; \node at (0,1) [left] {$1$};
%\node at (1,0) {$\circ$}; \node at (1,0) [below] {$1$};

\end{tikzpicture}

\end{center}
\caption{Theorem~\ref{thm:BarM16A:perturb}.
If $\mat A$ is near to satisfying the conditions of Theorem~\ref{thm:BarM16A}, then we have well posedness of the Dirichlet problem for all indicated values of $(\smooth,1/p)$.
}\label{fig:BarM16A:perturb}
\end{figure}
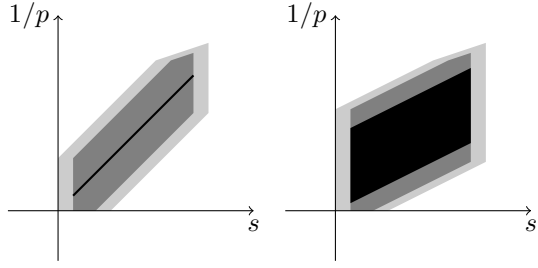

The acceptable values of $p$ and $\smooth$ are shown in Figure~\ref{fig:BarM16A:perturb}.

\subsubsection{Perturbations of the biharmonic operator}

In \cite{MitM13B}, I.~Mit\-rea and M.~Mit\-rea established well posedness of boundary value problems for the biharmonic operator $\Delta^2$ in bounded Lipschitz domains. Their results may be shown to imply the following. See Section~\ref{sec:biharmonic}. 

\begin{thm}[{\cite{MitM13B}}]\label{thm:biharmonic:introduction}
Let $\Omega\subset\R^\dmn$ be a bounded Lipschitz domain with connected boundary. Let $-1/\pdmnMinusOne<\rho<1$. Then there is some $\kappa>0$ depending on~$\Omega$ and $\rho$ such that if $\dmn\geq 4$ and
% $0<\smooth<1$, $0<p\leq\infty$ and
\begin{align}\label{eqn:biharmonic:stripe}
0<\smooth<1,\quad 1< p< \infty, \quad
\frac{1}{2}-\frac{1}{\dmnMinusOne}-\kappa<\frac{1}{p}-\frac{\smooth}{\dmnMinusOne} < \frac{1}{2}+\kappa,
\end{align}
or if $\dmn=2$ or $\dmn=3$ and
\begin{align}
\label{eqn:biharmonic:stripe:2}
0<\smooth<1,\quad 1< p< \infty,\quad
0<\frac{1}{p}-\biggl(\frac{1-\kappa}{2}\biggr) \smooth < \frac{1+\kappa}{2},
\end{align}
then the biharmonic Dirichlet problem
\begin{equation}\label{eqn:Dirichlet:biharmonic}\left\{\begin{aligned} \Delta^2 u &= \Div_2\arr H \quad\text{in }\Omega,\\
\nabla u &= \arr f \quad\text{on }\partial\Omega,\\
\doublebar{u}_{\dot W^{p,\smooth}_{2,av}(\Omega)} 
& \leq
C \doublebar{\arr f}_{\dot W\!A^p_{1,\smooth}(\partial\Omega)}
+C \doublebar{\arr H}_{L^{p,\smooth}_{av}(\Omega)}
\end{aligned}\right.
\end{equation}
and the biharmonic Neumann problem
\begin{equation}\label{eqn:Neumann:biharmonic}\left\{\begin{aligned} \Delta^2 u &= \Div_2 \arr H \quad\text{in }\Omega,\\
\M_{\mat A_\rho,\arr H}^\Omega u &= \arr g ,\\
\doublebar{u}_{\dot W^{p,\smooth}_{2,av}(\Omega)} 
& \leq
C \doublebar{\arr g}_{\dot N\!A^p_{1,\smooth-1}(\partial\Omega)}
+C \doublebar{\arr H}_{L^{p,\smooth}_{av}(\Omega)}
\end{aligned}\right.
\end{equation}
are both well posed.
\end{thm}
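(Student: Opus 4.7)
The plan is to start from the well posedness results of I.~Mitrea and M.~Mitrea for the biharmonic Dirichlet and Neumann problems in \cite{MitM13B}. In that paper, well posedness is established in the Bessel potential scale: for an appropriate range of $(p,\smooth)$, the problem $\Delta^2 u=0$ in $\Omega$ with Whitney data $\nabla u=\arr f\in W\!A^p_{1,\smooth}(\partial\Omega)$ has a unique solution in $L^p_{1+\smooth+1/p}(\Omega)$, and likewise for the Neumann problem with the pressure tensor $\mat A_\rho$. The ranges they identify are exactly the sets \eqref{eqn:biharmonic:stripe} and~\eqref{eqn:biharmonic:stripe:2} once the parameters are transcribed; $\kappa$ depends on the Lipschitz character of $\Omega$ and on the parameter $\rho$ of the quadratic form.

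The first translation step is to identify the Bessel potential solution space with the weighted averaged space $\dot W^{p,\smooth}_{2,av}(\Omega)$. For solutions of $\Delta^2 u=0$, Adolfsson--Pipher \cite{AdoP98} (see also Jerison--Kenig \cite{JerK95} in the second order case, and \cite[Chapter~10]{BarM16A} for the perspective used here) show that the Bessel potential norm is comparable (modulo lower order terms) to $\bigl(\int_\Omega\abs{\nabla^2 u}^p\dist(x,\partial\Omega)^{p-1-p\smooth}\,dx\bigr)^{1/p}$. The $L^2$ Whitney averages inside $\dot W^{p,\smooth}_{2,av}$ are comparable to pointwise values of $\nabla^2 u$ for biharmonic $u$ by interior Caccioppoli and reverse H\"older estimates, so the $\dot W^{p,\smooth}_{2,av}$-norm and the Adolfsson--Pipher norm coincide in that setting. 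This gives well posedness of the homogeneous Dirichlet problem (with $\arr H=0$) and the homogeneous Neumann problem in the spaces used here. The identification of $\dot W\!A^p_{1,\smooth}(\partial\Omega)$ and $\dot N\!A^p_{1,\smooth-1}(\partial\Omega)$ as the Whitney--Besov spaces that appear in \cite{MitM13B} is straightforward, being essentially the content of \cite[Section~2.2]{Bar16pB}.

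The second step is to pass from the homogeneous to the inhomogeneous problem. Given $\arr H\in L^{p,\smooth}_{av}(\Omega)$, we construct a particular solution $v$ to $\Delta^2 v=\Div_2\arr H$ on all of $\R^\dmn$ (for instance by a Newton-type convolution, as discussed in Section~\ref{sec:newton:bounded}) satisfying $\doublebar{v}_{\dot W^{p,\smooth}_{2,av}(\R^\dmn)}\lesssim\doublebar{\arr H}_{L^{p,\smooth}_{av}(\Omega)}$ together with analogous control of its Dirichlet trace and of $\M_{\mat A_\rho,\arr H}^\Omega v$. Writing $u=v+w$ and solving for $w$ via the homogeneous problems already handled (this is Lemma~\ref{lem:zero:interior}) reduces matters to the case already considered, and yields \eqref{eqn:Dirichlet:biharmonic} and~\eqref{eqn:Neumann:biharmonic}. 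Existence and uniqueness follow by the same splitting, since uniqueness for the homogeneous problem combined with the $\R^\dmn$ construction eliminates ambiguity.

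The main obstacle is bookkeeping rather than any genuinely new ingredient: carefully transcribing the indices from the Bessel potential parametrization of \cite{MitM13B} into the $(\smooth,1/p)$ format of \eqref{eqn:biharmonic:stripe}--\eqref{eqn:biharmonic:stripe:2}, justifying the norm equivalences on null solutions in the precise range of $p$ (including $p<2$), and verifying that the Newton-type potential for $\Delta^2$ produces boundary data in the correct Whitney--Besov or $\dot N\!A$ space with the correct estimate. Once these identifications are in hand, the theorem follows directly; the details of each piece are collected in Section~\ref{sec:biharmonic}.
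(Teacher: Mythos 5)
Your overall strategy matches the paper's: start from the well posedness results of \cite{MitM13B} in Triebel--Lizorkin scales, translate the solution norm to $\dot W^{p,\smooth}_{2,av}(\Omega)$ via the Adolfsson--Pipher characterization (Lemma~\ref{lem:AdoP98}) together with interior Caccioppoli/reverse H\"older estimates for biharmonic functions, and pass from the homogeneous to the inhomogeneous problem via the Newton potential and Lemma~\ref{lem:zero:interior}.

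However, there are two places where what you describe as ``bookkeeping'' is in fact the main technical content, and one genuine gap. First, the norm comparisons do not come for free: \cite{MitM13B} and Lemma~\ref{lem:AdoP98} are phrased in \emph{inhomogeneous} norms (the right-hand side in Lemma~\ref{lem:AdoP98} includes $\doublebar{\nabla u}_{L^p(\Omega)}$ and $\doublebar{u}_{L^p(\Omega)}$, and the boundary space in \cite{MitM13B} is the inhomogeneous $B^{p,p}_{1,\smooth}(\partial\Omega)$, with data a \emph{pair} $(f_0,\arr f)$ subject to a compatibility condition), while the statement you are proving uses the purely homogeneous $\dot W^{p,\smooth}_{2,av}$ and $\dot W\!A^p_{1,\smooth}$. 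Closing this gap requires a weighted Poincar\'e inequality (Lemma~\ref{lem:poincare:av}) to absorb the lower-order terms into the top-order weighted integral, and a separate argument (Lemma~\ref{lem:poincare:whitney}) constructing a compatible $f_0$ from $\arr f$ alone, with $\doublebar{f_0}_{L^p(\partial\Omega)}$ controlled by the homogeneous seminorm $\doublebar{\arr f}_{\dot W\!A^p_{1,\smooth}(\partial\Omega)}$ after subtracting an appropriate linear polynomial. Your proposal waves at these as ``straightforward,'' which undersells them.

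Second, and more importantly, the direct translation only works in the subrange $p\geq 1/(1-\smooth)$: this restriction is exactly what is needed for the weighted Poincar\'e argument and the boundary-data bounds to close. The remaining range $1<p<1/(1-\smooth)$ is \emph{not} reached by the splitting $u=v+w$; the paper obtains it by duality, applying Theorems~\ref{thm:exist:unique} and~\ref{thm:unique:exist} to transfer well posedness from $(p',\smooth')$ (which lies in the already-handled subrange) to $(p,\smooth)$. Your proposal omits this step entirely, so as written it only establishes the theorem on roughly half the claimed $(\smooth,1/p)$ region.
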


\begin{figure}
\begin{center}

\def\alph{0.15}
\def\enn{3}

\begin{tikzpicture}[scale=2]

% Gray background
%\fill [white!80!black] (0,0) -- (1,0) -- (1,1) -- (0,1) -- cycle;

% Region of well posedness
\fill [white!50!black] (0,1/2-1/\enn-\alph) -- (1,1/2-\alph)--(1,1/2+1/\enn+\alph) -- (0,1/2+\alph) -- cycle;
\fill [black] (0,1/2-1/\enn) -- (1,1/2)--(1,1/2+1/\enn) -- (0,1/2) -- cycle;

% Axes
\draw [->] (-1/3,0)--(1.3,0) node [below] {$\smooth$};
\draw [->] (0,-1/3)--(0,1.3) node [left] {$1/p$};

\node at (0,1/2) {$\circ$}; \node at (0,1/2) [left] {$\frac12$};
\node at (1,1/2) {$\circ$}; \node at (1,1/2) [right] {$(1,\frac12)$};
\node at (1,1/2+1/\enn) {$\circ$}; \node at (1,1/2+1/\enn) [right] {$(1,\frac12+\frac1\dmnMinusOne)$};
\node at (0,1/2-1/\enn) {$\circ$}; \node at (0,1/2-1/\enn) [left] {$\frac12-\frac1\dmnMinusOne$};

\end{tikzpicture}
\def\alph{0.25}
\begin{tikzpicture}[scale=2]

% Gray background
%\fill [white!80!black] (0,0) -- (1,0) -- (1,1) -- (0,1) -- cycle;

% Region of well posedness
\fill [white!50!black] (0,0) -- (1,1/2-\alph)--(1,1) -- (0,1/2+\alph) -- cycle;
\fill [black] (0,0) -- (1,1/2)--(1,1) -- (0,1/2) -- cycle;

% Axes
\draw [->] (-1/3,0)--(1.3,0) node [below] {$\smooth$};
\draw [->] (0,-1/3)--(0,1.3) node [left] {$1/p$};

\node at (0,1/2) {$\circ$}; \node at (0,1/2) [left] {$\frac12$};
\node at (1,1) {$\circ$}; \node at (1,1) [right] {$(1,1)$};
\node at (1,1/2) {$\circ$}; \node at (1,1/2) [right] {$(1,\frac12)$};

\end{tikzpicture}
\end{center}
\caption{Theorem~\ref{thm:biharmonic:introduction}.
The Dirichlet and Neumann problems~\eqref{eqn:Dirichlet:biharmonic} and~\eqref{eqn:Neumann:biharmonic} are well posed provided $(\smooth,1/p)$ lie in the region shown on the left (for $\dmn\geq4$) or on the right (for $\dmn=2$ or $\dmn=3$).
}\label{fig:MitM13B}
\end{figure}
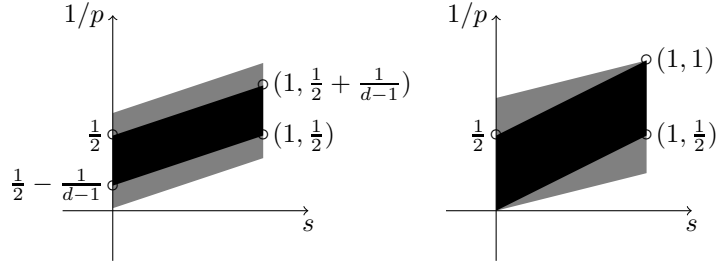

The acceptable values of $\smooth$ and $1/p$ are shown in Figure~\ref{fig:MitM13B}. 
Here
$\mat A_\rho$ is the symmetric constant coefficient matrix such that 
\begin{equation}
\label{eqn:biharmonic:coefficients}
\langle \nabla^2 \psi(x),\mat A_\rho \nabla^2 \varphi(x)\rangle = 
\rho\, \overline{\Delta \psi(x)}\,\Delta\varphi(x)
+ (1-\rho) \sum_{j=1}^\dmn \sum_{k=1}^\dmn \overline{\partial_j \partial_k \psi(x)}\,\partial_j\partial_k \varphi(x).
\end{equation}

We remark that \cite{MitMW11} contained some well posedness results in the case $p\leq 1$ for the Dirichlet problem  if $\dmn=3$.  In a forthcoming paper, we intend to consider the case $p\leq 1$ extensively; we will apply comparable results therein.

Using Theorem~\ref{thm:perturb}, we may derive new well posedness results for operators whose coefficients are close to those of the biharmonic operator.

\begin{thm}\label{thm:biharmonic:perturb}
Let $N\geq 1$ be an integer, and for each $1\leq j\leq N$, let $\rho_j\in \R$; in the case of the Neumann problem we additionally require $-1/\pdmnMinusOne<\rho_j<1$. Let $\Omega\subset\R^\dmn$ be a bounded simply connected Lipschitz domain,  and let $\kappa_j$ be as in Theorem~\ref{thm:biharmonic:introduction}. Let $\kappa=\min_j\kappa_j$. 
Let $0<\delta<\kappa$.

Let $L$ be an operator of the form~\eqref{eqn:divergence}, with $m=2$, associated to coefficients~$\mat A$.
Then there is some $\varepsilon>0$ such that, if $L$ is an operator of the form~\eqref{eqn:divergence} with $m=2$, and if
\begin{equation*}\sup_{j,\alpha,\beta,x}\abs{A^{jj}_{\alpha\beta}(x)-(A_{\rho_j})_{\alpha\beta}} +\sup_{\substack{{j,k,\alpha,\beta,x}\\j\neq k}}\abs{A^{jk}_{\alpha\beta}(x)}<\varepsilon\end{equation*}
then the Dirichlet problem~\eqref{eqn:Dirichlet:p:smooth}
and the Neumann problem~\eqref{eqn:Neumann:p:smooth}, with $m=2$,
are well posed whenever $\delta\leq\smooth\leq 1-\delta$, $1/(1-\delta)\leq p\leq 1/\delta$ and
\begin{gather*}
\dmn\geq 4 \quad\text{and}\quad\frac{1}{2}-\frac{1}{\dmnMinusOne}-(\kappa-\delta) \leq \frac{1}{p}-\frac{\smooth}{\dmnMinusOne} \leq \frac{1}{2}+(\kappa-\delta),
\\
\dmn=2\text{ or }\dmn=3\text{ and }
0\leq \frac{1}{p}-\biggl(\frac{1-(\kappa-\delta)}{2}\biggr) \smooth \leq  \frac{1+(\kappa-\delta)}{2}
.\end{gather*}
\end{thm}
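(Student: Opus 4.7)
The strategy is to apply Theorem~\ref{thm:perturb} with $\mat A$ replaced by a suitable block-diagonal reference matrix $\mat A_0$ for which Theorem~\ref{thm:biharmonic:introduction} supplies well posedness componentwise, and then to use Lemma~\ref{lem:interpolation} to pass from finitely many exponents to the full region $\mathcal R$ described in the conclusion.

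Define $(\mat A_0)^{jk}_{\alpha\beta}=\delta_{jk}(A_{\rho_j})_{\alpha\beta}$, and let $L_0$ be the associated operator. Then $L_0$ decouples: $L_0\vec u=\Div_2\arr H$ in $\Omega$ if and only if $\Delta^2 u_j=\Div_2 \arr H_j$ for every $j$, and $\M_{\mat A_0,\arr H}^\Omega \vec u=(\M_{\mat A_{\rho_j},\arr H_j}^\Omega u_j)_{j=1}^N$. Componentwise application of Theorem~\ref{thm:biharmonic:introduction} therefore yields well posedness of both problems~\eqref{eqn:Dirichlet:p:smooth} and \eqref{eqn:Neumann:p:smooth} for $L_0$ throughout the open region $\mathcal R_0$ in the $(\smooth,1/p)$ plane determined by $\kappa=\min_j\kappa_j$; the Neumann statement uses the assumption $-1/\pdmnMinusOne<\rho_j<1$. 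Since $\delta<\kappa$, the target region $\mathcal R$ is a closed convex polygon compactly contained in $\mathcal R_0$.

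Choose a finite collection of vertices $\{(\smooth_i,1/p_i)\}_{i=1}^K\subset \mathcal R_0$ with $1<p_i<\infty$ whose convex hull contains $\mathcal R$. At each $i$ let $C_0^{(i)}$ denote the well posedness constant for $L_0$ at $(\smooth_i,1/p_i)$ supplied by Step~1, and $C_1^{(i)}$ the associated perturbation constant from Theorem~\ref{thm:perturb}; set
\begin{equation*}\varepsilon=\min_{1\leq i\leq K}\frac{1}{2\,C_0^{(i)}C_1^{(i)}}.\end{equation*}
Because
\begin{equation*}\doublebar{\mat A-\mat A_0}_{L^\infty(\R^\dmn)}\leq \sup_{j,\alpha,\beta,x}\abs{A^{jj}_{\alpha\beta}(x)-(A_{\rho_j})_{\alpha\beta}}+\sup_{\substack{j,k,\alpha,\beta,x\\j\neq k}}\abs{A^{jk}_{\alpha\beta}(x)},\end{equation*}
the smallness hypothesis of Theorem~\ref{thm:biharmonic:perturb} with this $\varepsilon$ allows us to invoke Theorem~\ref{thm:perturb} at each $(\smooth_i,1/p_i)$ to obtain well posedness of both~\eqref{eqn:Dirichlet:p:smooth} and~\eqref{eqn:Neumann:p:smooth} for the perturbed operator $L$ at those exponents.

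It remains to extend well posedness from the finite set $\{(\smooth_i,1/p_i)\}$ to its convex hull, and in particular to $\mathcal R$. Since $\Omega$ is bounded, $0<\smooth_i<1$, and $1<p_i<\infty$ throughout, the compatibility hypothesis required in Lemma~\ref{lem:interpolation} holds automatically by the bounded-domain half of Corollary~\ref{cor:compatible}. Iterated pairwise interpolation along line segments inside the convex hull then yields well posedness at every point of $\mathcal R$, with uniqueness delivered by the $1<p<\infty$ portion of the same lemma. The main technical step is arranging the finite endpoint family so that every point of $\mathcal R$ lies on a chain of interpolable segments whose endpoints have already been obtained, and verifying that compatibility is preserved through the iteration; both are controlled by the convexity of $\mathcal R$ and $\mathcal R_0$ together with the monotonicity characterization of compatibility in bounded domains.
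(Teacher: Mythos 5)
Your outline matches the paper's intended approach exactly: decouple the reference operator $L_0$ into $N$ copies of $\Delta^2$ built from the $\mat A_{\rho_j}$, invoke Theorem~\ref{thm:biharmonic:introduction} componentwise, apply Theorem~\ref{thm:perturb} at finitely many exponent pairs with a uniform smallness parameter, and then interpolate via Lemma~\ref{lem:interpolation} and Corollary~\ref{cor:compatible} to fill in the region $\mathcal R$. The paper itself does not write out a proof of Theorem~\ref{thm:biharmonic:perturb} beyond indicating that it follows from these ingredients, so in spirit your proof is the one the author has in mind.

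One step is stated too casually. The claim that ``the compatibility hypothesis required in Lemma~\ref{lem:interpolation} holds automatically by the bounded-domain half of Corollary~\ref{cor:compatible}'' is not correct as written. The bounded-domain case of Lemma~\ref{lem:embedding} (which underlies Corollary~\ref{cor:compatible}) imposes a genuine geometric restriction: writing the two exponent pairs as $(\smooth_0,1/p_0)$ and $(\smooth_1,1/p_1)$ with $\smooth_0<\smooth_1$, compatibility requires $(d-1)(1/p_1-1/p_0)\leq \smooth_1-\smooth_0$, i.e.\ the segment connecting the two points must have slope at most $1/(d-1)$ in the $(\smooth,1/p)$-plane, together with $0<\smooth_0<\smooth_1<1$. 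So the covering of $\mathcal R$ by interpolable segments cannot use arbitrary chords: you must restrict to segments whose slope does not exceed $1/(d-1)$, and you must iterate (for instance, first interpolate along two horizontal cuts of $\mathcal R_0$ lying just above and just below $\mathcal R$, establishing well posedness there, and then sweep out $\mathcal R$ by slope-$1/(d-1)$ segments joining those cuts). This covering does succeed for each of the regions in the theorem -- for $d\geq 4$ the boundary lines of $\mathcal R$ have slopes $0$, $\infty$, and exactly $1/(d-1)$, and for $d=2,3$ the slanted boundary lines have slope $\tfrac{1-(\kappa-\delta)}{2}<\tfrac1{d-1}$ -- but the slope constraint must be respected in arranging the segments and should not be described as automatic. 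You do gesture at this at the very end, but the earlier sentence asserting automaticity contradicts it; tightening that paragraph would close the gap.
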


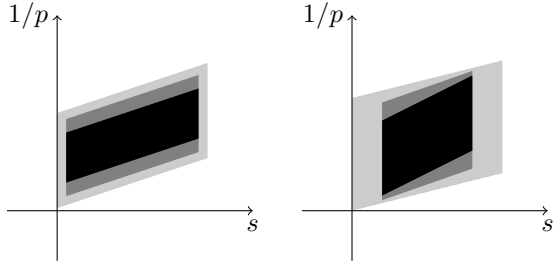
\begin{figure}
\begin{center}

\def\alph{0.15}
\def\del{0.06}
\def\enn{3}

\begin{tikzpicture}[scale=2]

% Gray background
%\fill [white!80!black] (0,0) -- (1,0) -- (1,1) -- (0,1) -- cycle;
\fill [white!80!black] (0,1/2-1/\enn-\alph) -- (1,1/2-\alph)--(1,1/2+1/\enn+\alph) -- (0,1/2+\alph) -- cycle;

% Region of well posedness
\fill [white!50!black] 
(\del,1/2-1/\enn-\alph+\del+\del/\enn) -- 
(1-\del,1/2-\alph+\del-\del/\enn)--
(1-\del,1/2+1/\enn-\del/\enn+\alph-\del) -- 
(\del,1/2+\del/\enn+\alph-\del) -- cycle;

\fill [black] 
(\del,1/2-1/\enn+\del/\enn) -- 
(1-\del,1/2-\del/\enn)--
(1-\del,1/2+1/\enn-\del/\enn) -- 
(\del,1/2+\del/\enn) -- cycle;

% Axes
\draw [->] (-1/3,0)--(1.3,0) node [below] {$\smooth$};
\draw [->] (0,-1/3)--(0,1.3) node [left] {$1/p$};

%\node at (0,1) {$\circ$}; \node at (0,1) [left] {$1$};
%\node at (1,0) {$\circ$}; \node at (1,0) [below] {$1$};

\end{tikzpicture}
\def\alph{0.25}
\def\del{0.2}
\begin{tikzpicture}[scale=2]

% Gray background
\fill [white!80!black] (0,0) -- (1,1/2-\alph)--(1,1) -- (0,1/2+\alph) -- cycle;
%\fill [white!80!black] (0,0) -- (1,1/2)--(1,1) -- (0,1/2) -- cycle;

% Region of well posedness
\fill [white!50!black] 
(\del,\del/2-\alph*\del+\del*\del/2)
--(1-\del,1/2-\alph+\alph*\del-\del*\del/2)
--(1-\del,1-\del/2+\alph*\del-\del*\del/2)
--(\del,1/2+\alph-\alph*\del+\del*\del/2)
--cycle
;
\fill [black] 
(\del,\del/2)
--(1-\del,1/2-\del/2)
--(1-\del,1-\del/2)
--(\del,1/2+\del/2)
--cycle;

% Axes
\draw [->] (-1/3,0)--(1.3,0) node [below] {$\smooth$};
\draw [->] (0,-1/3)--(0,1.3) node [left] {$1/p$};

%\node at (0,1) {$\circ$}; \node at (0,1) [left] {$1$};
%\node at (1,0) {$\circ$}; \node at (1,0) [below] {$1$};

\end{tikzpicture}
\end{center}
\caption{Theorem~\ref{thm:biharmonic:perturb}. If $L$ is close to the biharmonic operator, then the Dirichlet and Neumann problems~\eqref{eqn:Dirichlet:biharmonic} and~\eqref{eqn:Neumann:biharmonic} are well posed provided $(\smooth,1/p)$ lie in the region shown on the left (for $\dmn\geq4$) or on the right (for $\dmn=2$ or $\dmn=3$).
}\label{fig:MitM13B:perturb}
\end{figure}

The acceptable values of $\smooth$ and $1/p$ are shown in Figure~\ref{fig:MitM13B:perturb}.

\subsection{Outline of the paper}

The paper is organized as follows. In Section~\ref{sec:dfn} we will define our terminology. In Section~\ref{sec:L} we will discuss some properties of the function spaces $L^{p,\smooth}_{av}(\Omega)$ and $\dot W^{p,\smooth}_{m,av}(\Omega)$; many of these properties were established in \cite{Bar16pB} but some are new to the present paper. In Sections~\ref{sec:perturb} and \ref{sec:neighborhood} we will prove Theorems~\ref{thm:perturb} and \ref{thm:neighborhood}, respectively. Finally, in Section~\ref{sec:A0} we will resolve some differences between well posedness results as stated in the literature, and the well posedness results required by Theorem~\ref{thm:perturb}; the results of Section~\ref{sec:A0} were used in Section~\ref{sec:A0:introduction} above.

\subsection*{Acknowledgements}

The author would like to thank Steve Hofmann and Svitlana Mayboroda for many useful discussions concerning the theory of higher-order elliptic equations. The author would also like to thank the American Institute of Mathematics for hosting the SQuaRE workshop on ``Singular integral operators and solvability of boundary problems for elliptic equations with rough coefficients,'' at which many of these discussions occurred and the Mathematical Sciences Research Institute for hosting a Program on Harmonic Analysis, during which substantial parts of this paper were written.

\section{Definitions}
\label{sec:dfn}

Throughout we work with a divergence-form elliptic system of $N$ partial differential equations of order~$2m$ in dimension $\dmn$.
The notation of multiindices, function spaces, and Lipschitz domains used in this paper will be that of \cite[Section~2]{Bar16pB}.

%If $\arr H$ is an array of functions defined in~$\Omega$ and indexed by multiindices~$\alpha$ with $\abs{\alpha}=m$, then $\Div_m\arr H$ is the distribution given by
%\begin{equation}
%\label{eqn:Div}
%\bigl\langle \vec\varphi, \Div_m\arr H\bigr\rangle_\Omega
%=
%(-1)^m\bigl\langle\nabla^m\vec\varphi, \arr H\bigr\rangle_\Omega
%\end{equation}
%for all smooth test functions~$\vec\varphi$ supported in~$\Omega$.
%In particular, if the right-hand side is zero for all $\vec\varphi$ then we say that $\Div_m\arr H=0$.

If $U\subset\R^\dmn$ is a measurable set, we let $\1_U$ be the characteristic function of~$U$. If $F$ is a function defined on~$U$, we let $\mathcal{E}^0_U F$ be the extension of~$F$ to $\R^\dmn$ by zero, that is,
\begin{equation*}\mathcal{E}^0_U F(x)= \begin{cases} F(x), & x\in U, \\ 0, &x\notin U.\end{cases}\end{equation*}
If $F$ is defined on some $V\supsetneq U$, we let $F\big\vert_U$ be the restriction of~$F$ to~$U$.

We now introduce some notation and standard bounds for elliptic operators.
Let $\mat A = \bigl( A^{jk}_{\alpha\beta} \bigr)$ be measurable coefficients defined on $\R^\dmn$, indexed by integers $1\leq j\leq N$, $1\leq k\leq N$ and by multtiindices $\alpha$, $\beta$ with $\abs{\alpha}=\abs{\beta}=m$. If $\arr H$ is an array, then $\mat A\arr H$ is the array given by 
\begin{equation*}(\mat A\arr H)_{j,\alpha} = \sum_{k=1}^N
\sum_{\abs{\beta}=m}
A^{jk}_{\alpha\beta} H_{k,\beta}.\end{equation*}

Throughout we consider coefficients that satisfy the bound
%\begin{align}
%\label{eqn:elliptic:bounded}
%\abs[big]{\bigl\langle\nabla^m\vec g,\mat A \nabla^m\vec f\bigr\rangle_{\R^\dmn}}
%&\leq 
%	\Lambda \doublebar{\nabla^m\vec f}_{L^2(\R^\dmn)} \doublebar{\nabla^m\vec g}_{L^2(\R^\dmn)}
%\end{align}
%for some $\Lambda>\lambda>0$, and for all $\vec f$, $\vec g$, $\vec \varphi\in \dot W^2_m(\R^\dmn)$. %Occasionally we will need to replace the bound \eqref{eqn:elliptic:bounded} with the stronger bound
\begin{align}
\label{eqn:elliptic:bounded}
\doublebar{\mat A}_{L^\infty(\R^\dmn)}
&\leq 
	\Lambda
\end{align}
and the G\r{a}rding inequality
\begin{align}
\label{eqn:elliptic:everywhere}
\re {\bigl\langle\nabla^m\vec \varphi,\mat A \nabla^m\vec \varphi\bigr\rangle_{\R^\dmn}} 
&\geq 
	\lambda\doublebar{\nabla^m\vec\varphi}_{L^2(\R^\dmn)}^2
	\quad\text{for all $\vec\varphi\in\dot W^2_m(\R^\dmn)$}
\end{align}
for some $\Lambda>\lambda>0$. When studying the Neumann problem in a domain~$\Omega$, we will often require that $\mat A$ satisfy the local G\r{a}rding inequality
\begin{align}
\label{eqn:elliptic:domain}
\re {\bigl\langle\nabla^m\vec \varphi,\mat A \nabla^m\vec \varphi\bigr\rangle_\Omega} 
&\geq 
	\lambda\doublebar{\nabla^m\vec\varphi}_{L^2(\Omega)}^2
	\quad\text{for all $\vec\varphi\in\dot W^2_m(\Omega)$}
.\end{align}

Here the inner product $\bigl\langle \,\cdot\, , \,\cdot\,\bigr\rangle$ is given by
\begin{equation*}
\bigl\langle \arr F,\arr G\bigr\rangle = 
\sum_{\abs{\gamma}=m}
\overline{F_{\gamma}}\, G_{\gamma},
\>\>\>
\bigl\langle \arr F,\arr G\bigr\rangle_\Omega = 
\sum_{\abs{\gamma}=m}
\int_{\Omega} \overline{F_{\gamma}}\, G_{\gamma},
\>\>\>
\bigl\langle \arr F,\arr G\bigr\rangle_{\partial\Omega} = 
\sum_{\abs{\gamma}=m}
\int_{\partial\Omega} \overline{F_{\gamma}}\, G_{\gamma}\,d\sigma\end{equation*}
where $\sigma$ denotes surface measure. (In this paper we will consider  only domains  with rectifiable boundary.)
The norm $\abs{\mat A}$ of $\mat A$ in formula~\eqref{eqn:elliptic:bounded} is the operator norm, that is, $\abs{\mat A}=\sup_{{\arr H}\neq 0} \abs{\mat A\arr H}/\abs{\arr H}$, where $\abs{\arr H}^2=\langle \arr H,\arr H\rangle$.

We let $L$ be the operator of the form~\eqref{eqn:divergence} associated with the coefficients~$\mat A$. %; notice that the weak formulation~\eqref{eqn:Div} of the divergence implies that $L\vec u$ is given by formula~\eqref{eqn:weak}. %; that is, we say that $L\vec u=\Div_m \arr H$ in~$\Omega$ if, for every $\vec\varphi$ smooth and compactly supported in~$\Omega$, we have that
%%\begin{equation}
%%\label{eqn:L}
%%\bigl\langle\nabla^m\vec\varphi, \mat A \nabla^m\vec u\bigr\rangle_\Omega
%%=
%%\bigl\langle\nabla^m\vec\varphi, \arr H\bigr\rangle_\Omega,
%%\end{equation}
%%that is, we have that
%\begin{equation*}
%\sum_{j=1}^N \sum_{k=1}^N
%\sum_{\abs{\alpha}=\abs{\beta}=m}
%\int_{\Omega}\partial^\alpha \bar \varphi_j\, A^{jk}_{\alpha\beta}\,\partial^\beta u_k
%=
%\sum_{j=1}^N
%\sum_{\abs{\alpha}=m}
%\int_{\Omega} \partial^\alpha \bar \varphi_j \, H_{j,\alpha}
%.
%\end{equation*}
%In particular, if the left-hand side is zero for all such~$\vec\varphi$ then we say that $L\vec u=0$.

Throughout the paper we will let $C$ denote a constant whose value may change from line to line, but which depends only on the dimension $\dmn$, the ellipticity constants $\lambda$ and $\Lambda$ in the bounds~\eqref{eqn:elliptic:bounded}, \eqref{eqn:elliptic:everywhere} and \eqref{eqn:elliptic:domain}, and the Lipschitz character $(M, n, c_0)$ of any relevant domains. Any other dependencies will be indicated explicitly. We say that $ A\approx B$ if, for some such constant $C$, $ A\leq CB$ and $B\leq CA$.

\section{Properties of function spaces}
\label{sec:L}

In order to investigate boundary value problems with solutions in the spaces $\dot W^{p,\smooth}_{m,av}(\Omega)$, we will need a number of properties of the spaces $\dot W^{p,\smooth}_{m,av}(\Omega)$ and $L^{p,\smooth}_{av}(\Omega)$. 

%The main results of the author's paper \cite{Bar16pB} are trace and extension theorems for the spaces $\dot W^{p,\smooth}_{m,av}(\Omega)$ and $L^{p,\smooth}_{av}(\Omega)$. 
%The paper \cite{Bar16pB} begins with some preliminary results, many of which we will also need; for the reader's convenience, we will now summarize the important results of \cite[Section~3]{Bar16pB}.

Let $\Omega$ be an open set in $\R^\dmn$, and let $\mathcal{G}$ be a grid of Whitney cubes; then $\Omega=\cup_{Q\in\mathcal{G}} Q$, the cubes in~$\mathcal{G}$ have pairwise-disjoint interiors, and if $Q\in\mathcal{G}$ then the side-length $\ell(Q)$ of~$Q$ satisfies $\ell(Q)\approx\dist(Q,\partial\Omega)$.
If $0<p<\infty$ and $\arr H\in L^{p,\smooth}_{av}(\Omega)$, then
\begin{equation}
\label{eqn:L:norm:whitney}
\doublebar{\arr H}_{L^{p,\smooth}_{av}(\Omega)}
\approx \biggl( \sum_{Q\in\mathcal{G}} \biggl(\fint_Q \abs{\arr H}^2\biggr)^{p/2} \ell(Q)^{\dmnMinusOne+p-p\smooth}\biggr)^{1/p}
\end{equation}
where the comparability constants depend on~$p$, $\smooth$, and the comparability constants for Whitney cubes in the relation $\ell(Q)\approx\dist(Q,\partial\Omega)$. (This equivalence is still valid in the case $p=\infty$ if we replace the sum over cubes by an appropriate supremum.) This implies that we may replace the balls $B(x,\dist(x,\partial\Omega)/2)$ in the definition~\eqref{eqn:L:norm:2} of $L^{p,\smooth}_{av}(\Omega)$ by balls $B(x,a\dist(x,\partial\Omega))$ for any $0<a<1$, and produce an equivalent norm.

We have two important consequences. 
First, 
\begin{equation}
\label{eqn:L2:2:half}
L^{2,1/2}_{av}(\Omega)=L^2(\Omega)\quad\text{ and so }\quad\dot W^{2,1/2}_{m,av}(\Omega)=\dot W^2_m(\Omega).\end{equation}
Second, if $1\leq p < \infty$, then we have the duality relation
\begin{equation}\label{eqn:L:dual}
(L^{p,\smooth}_{av}(\Omega))^*=L^{p',1-\smooth}_{av}(\Omega)
\end{equation}
where $1/p+1/p'=1$. (As usual $L^{1,1-\smooth}_{av}(\Omega)$ is not the dual space to $L^{\infty,\smooth}_{av}(\Omega)$.) 

We have the following result showing that $L^{p,\smooth}_{av}(\Omega)$-functions are locally integrable up to the boundary.
\begin{lem}[{\cite[Lemma~3.7]{Bar16pB}}]\label{lem:L:L1}
%Let $\smooth\in\R$ and let $0<p\leq\infty$, $1\leq q\leq 2$. Suppose that either $0<p\leq q$ and $1/q> (\dmnMinusOne+p-p\smooth)/\pdmn p$, or that $0<q\leq p$ and $1/q>1-\smooth$. (In particular, if $q=1$ and $0<\smooth<1$ then then these conditions are valid whenever $\pmin<p\leq\infty$.)
Suppose that $\Omega\subset\R^\dmn$ is a Lipschitz domain, and that
$\smooth>0$ and $\pmin<p\leq \infty$. 
If $\arr H\in L_{av}^{p,\smooth}(\Omega)$, if $x_0\in\partial\Omega$, and if $R>0$, then
\begin{equation}%\label{eqn:L:L1:1}
\doublebar{\arr H}_{L^1(B(x_0,R)\cap\Omega)} 
\leq C\doublebar{\arr H}_{L_{av}^{p,\smooth}(\Omega)} R^{\dmnMinusOne+\smooth-\pdmnMinusOne/p} 
.\end{equation}

%Furthermore, let $V=\{(x',t):t>\psi(x')\}$ be a Lipschitz graph domain. If $Q\subset \R^\dmnMinusOne$ is a cube, let 
%\begin{align*}
%T(Q)&=\{(x',t):x'\in Q,\>\psi(x')<t<\psi(x')+c_0\ell(Q)\}
%\end{align*}
%for some constant $c_0>0$.
%Let $\mathcal{D}$ be the grid of dyadic cubes in $\R^\dmnMinusOne$; that is,
%\begin{equation*}\mathcal{D} = \{(2^k n_1,2^k n_1+2^k )\times(2^k n_2,2^k n_2+2^k )\times\dots\times (2^k n_\dmn,2^k n_\dmn+2^k ):n_j,k\in\Z\}.\end{equation*}
%Then
%\begin{equation}%\label{eqn:L:L1:2}
%\sum_{Q\in\mathcal{D}} \biggl(\int_{T(Q)} \abs{\arr H}^q\biggr)^{p/q} \ell(Q)^{\pdmn/q-1-p\smooth-p\pdmnMinusOne}
%\approx
%\doublebar{\arr H}_{L_{av}^{p,\smooth,q}(V)}^p
%\end{equation}
%where
%\begin{equation}%\label{eqn:L:average:1}
%\doublebar{\arr H}_{L^{p,\smooth,q}_{av}(\Omega)}  = \int_\Omega \biggl(\fint_{B(x,\dist(x,\partial\Omega)/2)} \abs{\arr H}^q\biggr)^{p/q}\dist(x,\partial\Omega)^{p-1-p\smooth}\,dx
%.\end{equation}
\end{lem}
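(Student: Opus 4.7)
The plan is to use the Whitney cube characterization~\eqref{eqn:L:norm:whitney} of the $L^{p,\smooth}_{av}$-norm, decompose $B(x_0,R)\cap\Omega$ along a Whitney grid $\mathcal{G}$, and estimate the $L^1$-mass on each cube by Cauchy--Schwarz. On any cube $Q\in\mathcal{G}$ we have
\begin{equation*}
\int_Q \abs{\arr H} \leq \abs{Q}^{1/2}\, \doublebar{\arr H}_{L^2(Q)}
= \ell(Q)^{\pdmn}\biggl(\fint_Q \abs{\arr H}^2\biggr)^{1/2}.
\end{equation*}
Setting $a_Q = (\fint_Q \abs{\arr H}^2)^{1/2}\, \ell(Q)^{(\dmnMinusOne)/p + 1 - \smooth}$ and $\gamma = \dmnMinusOne + \smooth - \pdmnMinusOne/p$, this reads $\int_Q \abs{\arr H} \leq a_Q\, \ell(Q)^\gamma$, while \eqref{eqn:L:norm:whitney} gives $\sum_{Q\in\mathcal{G}} a_Q^p \approx \doublebar{\arr H}_{L^{p,\smooth}_{av}(\Omega)}^p$ (with the usual supremum modification if $p=\infty$). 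The hypothesis $p > \pmin$ is exactly the statement that $\gamma > 0$, which is what makes the geometric summation below converge.

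Since $x_0\in\partial\Omega$, any Whitney cube $Q$ that meets $B(x_0,R)$ satisfies $\ell(Q)\approx\dist(Q,\partial\Omega)\leq CR$, so such cubes all have side length $\leq CR$. For $0<p\leq 1$, the quasi-norm inequality $\sum a_Q \leq (\sum a_Q^p)^{1/p}$ and the bound $\ell(Q)\leq CR$ give immediately
\begin{equation*}
\int_{B(x_0,R)\cap\Omega}\abs{\arr H}
\leq \sum_{Q\cap B(x_0,R)\neq\emptyset} a_Q\,\ell(Q)^\gamma
\leq (CR)^\gamma\, \doublebar{\arr H}_{L^{p,\smooth}_{av}(\Omega)},
\end{equation*}
as required.

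For $1<p<\infty$, I would apply H\"older's inequality with exponents $p$ and $p'$:
\begin{equation*}
\int_{B(x_0,R)\cap\Omega}\abs{\arr H}
\leq \doublebar{\arr H}_{L^{p,\smooth}_{av}(\Omega)}\,
\biggl(\sum_{Q\cap B(x_0,R)\neq\emptyset} \ell(Q)^{\gamma p'}\biggr)^{1/p'}.
\end{equation*}
The main technical point is then to control this geometric sum using the Lipschitz character of $\Omega$: for each dyadic scale $2^{-k}\leq CR$, the Whitney cubes of side length comparable to $2^{-k}$ that meet $B(x_0,R)$ all lie in a shell of thickness $\approx 2^{-k}$ around $\partial\Omega$, intersected with $B(x_0,CR)$, a region of volume $\lesssim R^{\dmnMinusOne}\, 2^{-k}$; dividing by the cube volume yields at most $C(R\,2^k)^{\dmnMinusOne}$ such cubes. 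Hence
\begin{equation*}
\sum_{Q\cap B(x_0,R)\neq\emptyset}\ell(Q)^{\gamma p'}
\leq C R^{\dmnMinusOne}\sum_{2^{-k}\leq CR} 2^{-k(\gamma p'-\dmnMinusOne)}.
\end{equation*}
A direct computation gives $\gamma p' = \dmnMinusOne + \smooth p/(p-1) > \dmnMinusOne$, so the geometric series sums to $CR^{\gamma p'-\dmnMinusOne}$, producing the bound $CR^{\gamma p'}$ and hence the claim. The case $p=\infty$ is handled identically, replacing H\"older by pulling out $\sup_Q a_Q$ and summing $\sum_{Q\cap B(x_0,R)\neq\emptyset}\ell(Q)^{\dmnMinusOne+\smooth}$ with the same shell-counting argument. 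The only step requiring care is this Whitney cube counting in the Lipschitz geometry; everything else is bookkeeping of the exponent $\gamma$.
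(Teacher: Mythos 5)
Your proof is correct. Since the paper cites this lemma from \cite{Bar16pB} rather than reproving it, there is no in-paper argument to compare against; but your route — Whitney characterization~\eqref{eqn:L:norm:whitney}, Cauchy--Schwarz on each cube to reduce to the sequence $a_Q$, then $\ell^p \hookrightarrow \ell^1$ for $p\leq 1$ or H\"older plus a dyadic shell count for $p>1$ — is the natural and standard one, and all the exponent bookkeeping checks out: $\gamma = (d-1)+s-(d-1)/p$ is positive exactly when $p>\pmin$ (which is what makes the $p\leq 1$ case work), and for $p>1$ one has $\gamma p' - (d-1) = sp'>0$ unconditionally, so the geometric series converges. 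The one place where the Lipschitz hypothesis genuinely enters is the cube count $\#\{Q : \ell(Q)\approx 2^{-k},\ Q\cap B(x_0,R)\neq\emptyset\}\lesssim (R2^k)^{d-1}$, which rests on Ahlfors regularity of $\partial\Omega$ (equivalently, that the $2^{-k}$-collar of the boundary inside $B(x_0,CR)$ has volume $\lesssim R^{d-1}2^{-k}$); you flag this correctly as the only nontrivial geometric input. The $p=\infty$ case is handled consistently. No gaps.
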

%Observe that $\doublebar{\arr H}_{L^{p,\smooth}_{av}(\Omega)}= \doublebar{\arr H}_{L^{p,\smooth,2}_{av}(\Omega)}$.

Conversely, bounded compactly supported functions are contained in $L^{p,\smooth}_{av}(\Omega)$.
\begin{lem}\label{lem:L:L-infinity}
Suppose that $\Omega\subset\R^\dmn$ is a Lipschitz domain, and that 
$\smooth<1$ and $0<p\leq\infty$. 
If $\arr H\in L^\infty(\Omega)$, if $x_0\in\partial\Omega$, and if $R>0$, then $\1_{B(x_0,R)}\arr H$ is in ${L^{p,\smooth}_{av}(\Omega)}$, with
\begin{equation*}%\label{eqn:L:L1:1}
\doublebar{\1_{B(x_0,R)}\arr H}_{L^{p,\smooth}_{av}(\Omega)} 
\leq C\doublebar{\arr H}_{L^\infty(\Omega)} R^{1-\smooth+\pdmnMinusOne/p} 
.\end{equation*}
\end{lem}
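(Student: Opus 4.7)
The plan is to reduce the estimate to a purely geometric sum over Whitney cubes. By the characterization~\eqref{eqn:L:norm:whitney} (with the obvious supremum interpretation when $p=\infty$), we have
\begin{equation*}
\doublebar{\1_{B(x_0,R)}\arr H}_{L^{p,\smooth}_{av}(\Omega)}^p
\approx \sum_{\substack{Q\in\mathcal{G}\\ Q\cap B(x_0,R)\neq\emptyset}}
\biggl(\fint_Q \abs{\arr H}^2\biggr)^{p/2}\ell(Q)^{\dmnMinusOne+p-p\smooth}.
\end{equation*}
The trivial inequality $(\fint_Q\abs{\arr H}^2)^{1/2}\leq \doublebar{\arr H}_{L^\infty(\Omega)}$ pulls the $L^\infty$-norm out of every term and leaves a purely geometric quantity to bound.

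First, I would observe that if a Whitney cube $Q$ meets $B(x_0,R)$ then $\ell(Q)\approx\dist(Q,\partial\Omega)\leq\dist(Q,x_0)\leq R$, so $\ell(Q)\leq CR$. I would then group such cubes by dyadic scale: let $\mathcal{G}_k=\{Q\in\mathcal{G}: Q\cap B(x_0,R)\neq\emptyset,\;\ell(Q)\approx 2^{-k}R\}$ for $k\geq 0$. Using the Lipschitz character of $\Omega$, the surface-ball estimate $\sigma(\partial\Omega\cap B(x_0,CR))\lesssim R^{\dmnMinusOne}$ implies that the strip $\{x\in\Omega:\dist(x,\partial\Omega)\approx 2^{-k}R\}\cap B(x_0,2R)$ has Lebesgue measure at most $CR^{\dmnMinusOne}\cdot 2^{-k}R$. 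Since each cube in $\mathcal{G}_k$ lies in this strip, has volume $\approx(2^{-k}R)^{\dmn}$, and the cubes have disjoint interiors, we obtain $\#\mathcal{G}_k\lesssim 2^{k\dmnMinusOne}$.

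Next, I would assemble the bound by summing over~$k$:
\begin{equation*}
\sum_{k\geq 0}\#\mathcal{G}_k\cdot(2^{-k}R)^{\dmnMinusOne+p-p\smooth}
\lesssim R^{\dmnMinusOne+p-p\smooth}\sum_{k\geq 0} 2^{-kp(1-\smooth)}.
\end{equation*}
The geometric series converges precisely because $\smooth<1$ and $p>0$; taking $p$-th roots produces the claimed exponent $R^{\pdmnMinusOne/p+1-\smooth}$. The endpoint $p=\infty$ is even easier, since every individual term is already bounded by $\doublebar{\arr H}_{L^\infty(\Omega)}\ell(Q)^{1-\smooth}\leq C\doublebar{\arr H}_{L^\infty(\Omega)}R^{1-\smooth}$.

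The only nontrivial ingredient is the cube count $\#\mathcal{G}_k\lesssim 2^{k\dmnMinusOne}$, which rests on the Ahlfors regularity of $\partial\Omega$ furnished by the Lipschitz hypothesis; without this, arbitrarily many small Whitney cubes could cluster near a single boundary ball and the estimate would fail. Everything else is routine geometric summation under the hypothesis $\smooth<1$.
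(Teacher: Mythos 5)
Your proof is correct, but it takes a slightly different technical route than the paper. The paper works directly with the integral definition~\eqref{eqn:L:norm:2}: after pulling out $\doublebar{\arr H}_{L^\infty(\Omega)}$, it reduces the claim to convergence of the weighted integral
\begin{equation*}
\int_{\Omega\cap B(x_0,(3/2)R)}\dist(x,\partial\Omega)^{p-1-p\smooth}\,dx,
\end{equation*}
and simply asserts that this converges for a Lipschitz domain. You instead pass to the Whitney-cube characterization~\eqref{eqn:L:norm:whitney} and explicitly count cubes at each dyadic scale, using Ahlfors regularity of $\partial\Omega$ to get $\#\mathcal{G}_k\lesssim 2^{k\dmnMinusOne}$ and then summing the geometric series. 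These are really two presentations of the same geometry: the dyadic-layer cube count is the discretization of the integral over the weighted strip, and both ultimately need $p-1-p\smooth>-1$, i.e.\ $\smooth<1$ and $p>0$. What your version buys is that it makes the dependence on the Lipschitz character explicit (through the cube count) and spells out where the exponent $R^{\dmnMinusOne/p+1-\smooth}$ comes from, whereas the paper leaves the final $R$-dependence and the convergence of the integral as a one-line assertion. Your cube count, the observation that only cubes meeting $B(x_0,R)$ contribute, and the treatment of $p=\infty$ are all correct.
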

\begin{proof}
By the definition~\eqref{eqn:L:norm:2} of ${L^{p,\smooth}_{av}(\Omega)}$,
\[\doublebar{\1_{B(x_0,R)}\arr H}_{L^{p,\smooth}_{av}(\Omega)} 
\leq
\doublebar{\arr H}_{L^\infty(\Omega)}\biggl(\int_{\Omega\cap B(x_0,(3/2)R)} \dist(x,\partial\Omega)^{p-1-p\smooth}\,dx\biggr)^{1/p}
.\]
If $\Omega$ is a Lipschitz domain, then the integral clearly converges, and so the proof is complete.
%If $\Omega=\R^\dmn_+$, this may be verified immediately by using a dyadic Whitney decomposition and applying the norm~\eqref{eqn:L:norm:whitney}. If $\Omega$ is a Lipschitz graph domain, the result follows by using a change of variables. 
%
%There remains the case where $\Omega$ is a domain with compact boundary. 
%We may control the ${L^{p,\smooth}_{av}(\Omega)}$ norm of~$\arr H$ near $\partial\Omega$ using the bound for Lipschitz graph domains. If $R$ is sufficiently small (compared with the natural length scale $r=r_\Omega$ of Definition~\ref{dfn:domain}), this completes the proof. 
%
%If $R>r_\Omega/C$, then we may control the ${L^{p,\smooth}_{av}(\Omega)}$ norm of~$\arr H$ far from $\partial\Omega$ by using the norm~\eqref{eqn:L:norm:whitney} and the observation that there are at most $C(1+r_\Omega/2^j)^\dmn$ dyadic Whitney cubes of side-length $2^j$. 
\end{proof}

\subsection{Embedding results, compatibility and uniqueness}

In this section we will show that $L^{r,\omega}_{av}(\Omega)\subset L^{q,\sigma}_{av}(\Omega)$ for appropriate values of  $q$, $r$, $\sigma$, $\omega$ and~$\Omega$. Furthermore, we will state some useful consequences of this embedding result.

\begin{lem}\label{lem:embedding} Let $\Omega$ be an open set with $\Omega\subset\R^\dmn$. Let $0<\sigma<\omega<1$ and let $\pmin[\sigma]<q\leq \infty$.

Suppose that $ r$ is such that one of the following conditions is true.
\begin{itemize}
\item $\frac{\dmnMinusOne}{q}-{\sigma} = \frac{\dmnMinusOne}{r}-{\omega}$.
\item $\Omega$ is bounded, and 
$0\leq \frac{\dmnMinusOne}{ r} \leq \frac{\dmnMinusOne}{q}+{ \omega-\sigma}$.
\end{itemize}

Then $L^{r,\omega}_{av}(\Omega)\subsetneq L^{q,\sigma}_{av}(\Omega)$, and if $\arr\Psi\in L^{r,\omega}_{av}(\Omega)$ then 
\begin{equation*}\doublebar{\arr\Psi}_{L^{q,\sigma}_{av}(\Omega)}\leq C(r,q,\omega,\sigma) \diam \Omega^{\pdmnMinusOne/q-\pdmnMinusOne/r+\omega-\sigma}\doublebar{\arr\Psi}_{L^{r,\omega}_{av}(\Omega)}.\end{equation*}
\end{lem}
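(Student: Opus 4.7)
The plan is to pass to the Whitney-cube reformulation~\eqref{eqn:L:norm:whitney} and thereby reduce the problem to an elementary inequality on sequences. Fix a Whitney grid $\mathcal{G}$ for $\Omega$, set $a_Q=\bigl(\fint_Q\abs{\arr\Psi}^2\bigr)^{1/2}$, and set $b_Q = a_Q^r\,\ell(Q)^{\dmnMinusOne+r(1-\omega)}$, so that $\sum_Q b_Q\approx \doublebar{\arr\Psi}_{L^{r,\omega}_{av}(\Omega)}^r$. The quantity to be bounded is
\begin{equation*}
\sum_{Q\in\mathcal{G}} a_Q^q\,\ell(Q)^{\dmnMinusOne+q(1-\sigma)} = \sum_{Q\in\mathcal{G}} b_Q^{q/r}\,\ell(Q)^{\gamma},
\qquad
\gamma := \dmnMinusOne(1-q/r) + q(\omega-\sigma).
\end{equation*}
A direct computation gives $\gamma/q = \pdmnMinusOne/q - \pdmnMinusOne/r + \omega - \sigma$, so the exponent appearing in the claimed inequality is $\gamma/q$; in both alternatives of the hypothesis one has $\gamma\geq 0$, with equality precisely in the scale-invariant alternative.

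If $q\geq r$, then $q/r\geq 1$ and the monotonicity $\doublebar{c}_{\ell^{q/r}}\leq\doublebar{c}_{\ell^1}$ gives $\sum_Q b_Q^{q/r}\leq\bigl(\sum_Q b_Q\bigr)^{q/r}$; since $\ell(Q)\leq C\diam\Omega$, the factor $\ell(Q)^\gamma$ is absorbed into $C(\diam\Omega)^\gamma$, and taking $q$-th roots yields the claim. In the scale-invariant alternative, $\sigma<\omega$ forces $q>r$ strictly, $\gamma=0$, and the $\diam\Omega$ factor is trivial, matching the stated exponent. If $q<r$ I instead apply H\"older's inequality with conjugate exponents $r/q>1$ and $r/(r-q)$:
\begin{equation*}
\sum_{Q\in\mathcal{G}} b_Q^{q/r}\,\ell(Q)^{\gamma}
\leq
\biggl(\sum_{Q\in\mathcal{G}} b_Q\biggr)^{q/r}
\biggl(\sum_{Q\in\mathcal{G}} \ell(Q)^{\gamma r/(r-q)}\biggr)^{(r-q)/r}.
\end{equation*}
A short calculation gives $\gamma\, r/(r-q) = \dmnMinusOne + qr(\omega-\sigma)/(r-q)$, which strictly exceeds $\dmnMinusOne$ since $\omega>\sigma$.

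The main obstacle is the convergence of $\sum_Q\ell(Q)^{\dmnMinusOne+\eta}$, with $\eta=qr(\omega-\sigma)/(r-q)>0$, and the extraction of the correct power of $\diam\Omega$. For this I would use that $\Omega$ is a bounded Lipschitz domain: the Whitney cubes at dyadic distance $\approx 2^{-k}\diam\Omega$ from $\partial\Omega$ have side length $\approx 2^{-k}\diam\Omega$ and number at most $C\cdot 2^{k\dmnMinusOne}$, so the sum is a convergent geometric series bounded by $C(\diam\Omega)^{\dmnMinusOne+\eta}$. A final bookkeeping step, checking that $(\dmnMinusOne+\eta)(r-q)/r = \gamma$, shows that the total power of $\diam\Omega$ obtained is exactly $\gamma/q = \pdmnMinusOne/q-\pdmnMinusOne/r+\omega-\sigma$, as desired. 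The strictness of the inclusion $L^{r,\omega}_{av}(\Omega)\subsetneq L^{q,\sigma}_{av}(\Omega)$ may be verified by choosing a test function of the form $\dist(\,\cdot\,,\partial\Omega)^\tau$ (or a suitable lacunary modification thereof) with $\tau$ chosen so that the relevant Whitney sum converges in the $L^{q,\sigma}_{av}$-norm but diverges in the $L^{r,\omega}_{av}$-norm.
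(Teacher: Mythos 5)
Your proposal is correct and takes essentially the same route as the paper: the paper also splits into the cases $q\geq r$ and $q<r$, handles $q\geq r$ by the Whitney-cube reformulation together with the $\ell^r\hookrightarrow\ell^q$ embedding (exactly as you do), and handles $q<r$ by H\"older's inequality. The one cosmetic difference is that for $q<r$ the paper applies H\"older directly to the integral $\int_\Omega (\arr\Psi)_W^q\dist(\cdot,\partial\Omega)^{q-1-q\sigma}$, reducing matters to the convergence of $\int_\Omega\dist(x,\partial\Omega)^{-1+\eta}\,dx$ for $\eta>0$, whereas you apply H\"older to the Whitney sum and then count cubes dyadically; these are equivalent. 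Note that your counting step explicitly invokes Ahlfors regularity of $\partial\Omega$, i.e.\ a Lipschitz (or similarly regular) bounded domain, while the lemma as stated only assumes $\Omega$ open — but the paper's terse assertion that the second integral ``converges'' tacitly relies on the same geometric hypothesis, so your argument is no less rigorous than the paper's. You also sketch the proof of the strict inclusion $L^{r,\omega}_{av}\subsetneq L^{q,\sigma}_{av}$ via a $\dist(\cdot,\partial\Omega)^\tau$ test function; the paper omits that verification entirely.
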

If $\Omega$ is unbounded, then the indeterminate form $\diam \Omega^{\pdmnMinusOne/q-\pdmnMinusOne/r+\omega-\sigma}=\infty^0$ in the above expression is taken to be~$1$.

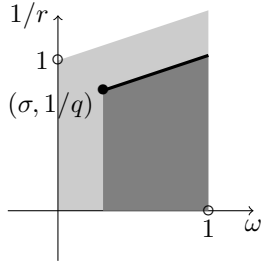
\begin{figure}
\begin{center}
\begin{tikzpicture}[scale=2]

\def\eps{0.3}
\def\enn{3}
\def\P{0.8}
\def\Th{0.3}

% Gray background
\fill [white!80!black] (0,0) -- (1,0) -- (1,1+1/\enn) -- (0,1) -- cycle;

% Axes
\draw [->] (-1/3,0)--(1.3,0) node [below] {$\omega$};
\draw [->] (0,-1/3)--(0,1.3) node [left] {$1/r$};

\fill [white!50!black] (\Th,0) -- (1,0) -- (1, \P + 1/\enn-\Th/\enn) -- (\Th,\P) -- cycle;

\draw [very thick] (\Th,\P) -- (1, \P + 1/\enn-\Th/\enn);

\node at (\Th,\P) {$\bullet$}; \node [left] at (\Th,\P-0.1) {$(\sigma,1/q)$};

\node at (0,1) {$\circ$}; \node at (0,1) [left] {$1$};
\node at (1,0) {$\circ$}; \node at (1,0) [below] {$1$};
%\node at (1, \P + 1/\enn-\Th/\enn) [above] {slope $1/\pdmnMinusOne\qquad$};

\end{tikzpicture}
\end{center}
\caption{For a given value of $(\sigma,1/q)$, the acceptable values of $(1/\omega,1/ r)$ for Lemma~\ref{lem:embedding}. The black line has slope $1/\pdmnMinusOne$.}
\end{figure}

\begin{proof}[Proof of Lemma~\ref{lem:embedding}]
Begin with the case where $0<q< r$. In this case $\pdmnMinusOne/r< \pdmnMinusOne/q < \pdmnMinusOne/q + \omega-\sigma$ and so $\Omega$ must be bounded.
Recall that
\begin{align*}\doublebar{\arr \Psi}_{L^{ q, \sigma}_{av}(\Omega)}^q
&=
\int_\Omega (\arr \Psi)_W(x)^{ q} \dist(x,\partial\Omega)^{ q-1-q\sigma}\,dx
\end{align*}
where $(\arr\Psi)_W(x)=\bigl(\fint_{B(x,\dist(x,\partial\Omega)/2)} \abs{\arr\Psi}^2\bigr)^{1/2}$. By H\"older's inequality, if $\arr \Psi\in L^{r,\omega}_{av}(\Omega)$ then
\begin{align*}\doublebar{\arr \Psi}_{L^{ q, \sigma}_{av}(\Omega)}^q
&\leq
	\biggl(\int_\Omega {\arr \Psi}_W(x)^{r} \dist(x,\partial\Omega)^{r-1-r\omega}\,dx\biggr)^{q/r}
	\\&\qquad\times
	\biggl(\int_\Omega \dist(x,\partial\Omega)^{-1+(\omega-\sigma)qr/(r-q)}\,dx\biggr)^{1-q/r}
.\end{align*}
Because $\omega>\sigma$ and $r>q>0$, the second integral converges and we may derive the desired inequality.

We now consider the case $r\leq q$.
Let $\mathcal{G}$ be as in formula~\eqref{eqn:L:norm:whitney}.
Then
\begin{equation*}
\doublebar{\arr \Psi}_{L^{ q, \sigma}_{av}(\Omega)}
\approx
\biggl(\sum_{Q\in\mathcal{G}}\biggl(\fint_Q \abs{\arr \Psi}^2\biggr)^{q/2}\ell(Q)^{\dmnMinusOne+q-q\sigma}\biggr)^{1/q}
.\end{equation*}

%We begin with the case ${\pdmnMinusOne}/{q}-{\sigma} = {\pdmnMinusOne}/{r}-{\omega}$. We have that
%\begin{align*}\doublebar{\arr \Psi}_{L_{av}^{q,\sigma}(\Omega)}
%&\approx 
%%\biggl(\sum_{Q\in\mathcal{G}} \biggl(\biggl(\fint_Q \abs{\arr \Psi}^2\biggr)^{1/2} \ell(Q)^{\pdmnMinusOne/q+1-\sigma}\biggr)^q
%%\biggr)^{1/q}
%%\\&=
%\biggl(\sum_{Q\in\mathcal{G}} \biggl(\biggl(\fint_Q \abs{\arr \Psi}^2\biggr)^{1/2} \ell(Q)^{\pdmnMinusOne/r+1-\omega}\biggr)^q
%\biggr)^{1/q}
%.\end{align*}
%Observe that in this case $r<q$; thus, we may use basic properties of 
%sequence spaces to bound an $\ell^q$ norm by an $\ell^r$ norm. This yields the result that
%\begin{equation}
%\label{eqn:embedding:L}
%\doublebar{\arr \Psi}_{L_{av}^{q,\sigma}(\Omega)}
%\leq C(r,q,\omega,\sigma)\doublebar{\arr \Psi}_{L_{av}^{r,\omega}(\Omega)}
%\end{equation}
%provided  $0<r<q\leq \infty$ and ${\pdmnMinusOne}/{q}-{\sigma} = {\pdmnMinusOne}/{r}-{\omega}$.
%Notice that if we use the norm \eqref{eqn:L:norm:whitney} rather than the norm~\eqref{eqn:L:norm:2}, then the constant $C(r,q,\omega,\sigma)$ is equal to~$1$. 
%
%Now suppose that $\Omega$ is a bounded open set and $1/r <1/q+(\omega-\sigma)/\pdmnMinusOne$. We will consider several cases.
%

Because $\omega-\sigma+\pdmnMinusOne/q-\pdmnMinusOne/r\geq 0$, we have that if $\diam \Omega<\infty$ then
\begin{equation*}
\doublebar{\arr \Psi}_{L^{ q, \sigma}_{av}(\Omega)}^q
\lesssim\sum_{Q\in\mathcal{G}}\biggl(\fint_Q \abs{\arr \Psi}^2\biggr)^{q/2}\ell(Q)^{\pdmnMinusOne(q/r)+q-q\omega}
\diam \Omega^{\pdmnMinusOne(1-q/r)+q\omega-q\sigma}
.\end{equation*}
If $\diam\Omega=\infty$, then we consider only the case ${\pdmnMinusOne(1-q/r)+q\omega-q\sigma}=0$, and so the above formula is valid if we take $\diam \Omega^{\pdmnMinusOne(1-q/r)+q\omega-q\sigma}=1$.

Rewriting, we see that
\begin{multline*}
\doublebar{\arr \Psi}_{L^{ q, \sigma}_{av}(\Omega)}
\\\lesssim
\biggl(\sum_{Q\in\mathcal{G}}\biggl(\biggl(\fint_Q \abs{\arr \Psi}^2\biggr)^{1/2}\ell(Q)^{\pdmnMinusOne/r+1-\omega}\biggr)^q
\biggr)^{1/q}
\diam \Omega^{\pdmnMinusOne(1/q-1/r)+\omega-\sigma}
.\end{multline*}
If $r\leq q$, then we may bound the norm in the sequence space $\ell^q$ by the norm in~$\ell^r$. This completes the proof.
\end{proof}

This embedding result has two useful corollaries. The first allows us to extrapolate uniqueness of solutions; the second is a compatibility condition of the type required by Lemma~\ref{lem:interpolation}.

\begin{cor}\label{cor:unique:extrapolate} Let $L$ be an elliptic operator of order~$2m$. Let $\Omega$, $q$, $\sigma$, $r$ and $\omega$ satisfy the conditions of Lemma~\ref{lem:embedding}.

Suppose that the only solution to the problem 
\begin{equation}\label{eqn:Dirichlet:extrapolate:unique}
L\vec v=0\text{ in }\Omega,\quad \Tr_{m-1}^\Omega\vec v=0,\quad \vec v\in \dot W^{q,\sigma}_{m,av}(\Omega)\end{equation}
is $\vec v=0$ (as an element of $\dot W^{q,\sigma}_{m,av}(\Omega)$; that is, $\vec v$ is the equivalence class of functions $\{\vec V: \nabla^m\vec V=0$ in $\Omega\}$.)

Then for any $\arr H\in L^{r,\omega}_{av}(\Omega)$ and any $\arr f\in \dot W\!A^{ r}_{m-1,\omega}(\partial\Omega)$, there is at most one $\vec u\in  \dot W^{r,\omega}_{m,av}(\Omega)$ that satisfies
\begin{equation*}%\label{eqn:Dirichlet:extrapolate:unique:2}
L\vec u=\Div_m\arr H\text{ in }\Omega,\quad \Tr_{m-1}^\Omega\vec u=\arr f,\quad \vec u\in \dot W^{ r, \omega}_{m,av}(\Omega).\end{equation*}

A similar statement is valid for the Neumann problem.
\end{cor}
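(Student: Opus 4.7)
The plan is to deduce uniqueness at the indices $(r,\omega)$ from uniqueness at the indices $(q,\sigma)$ by invoking the embedding in Lemma~\ref{lem:embedding}. Suppose $\vec u_1$ and $\vec u_2$ are two solutions to the Dirichlet problem in $\dot W^{r,\omega}_{m,av}(\Omega)$ with the same data $\arr H$ and $\arr f$, and set $\vec w = \vec u_1 - \vec u_2$. Then $L\vec w = 0$ in $\Omega$, $\Tr_{m-1}^\Omega\vec w = 0$ in $\dot W\!A^r_{m-1,\omega}(\partial\Omega)$, and $\nabla^m\vec w\in L^{r,\omega}_{av}(\Omega)$. By Lemma~\ref{lem:embedding}, $\nabla^m\vec w \in L^{q,\sigma}_{av}(\Omega)$, so $\vec w \in \dot W^{q,\sigma}_{m,av}(\Omega)$.

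The next step is to verify that $\Tr_{m-1}^\Omega\vec w = 0$ also holds as an element of $\dot W\!A^q_{m-1,\sigma}(\partial\Omega)$, so that $\vec w$ is an admissible solution of the homogeneous Dirichlet problem~\eqref{eqn:Dirichlet:extrapolate:unique}. This consistency of the trace operator across the embedding $\dot W^{r,\omega}_{m,av}(\Omega)\hookrightarrow \dot W^{q,\sigma}_{m,av}(\Omega)$ is the main technical point; it should follow from the intrinsic definition of the trace via Whitney averages (together with Lemma~\ref{lem:L:L1} to guarantee local integrability up to the boundary) and the identification of Whitney arrays established in \cite{Bar16pB}. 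Given the standing hypothesis that the only solution to~\eqref{eqn:Dirichlet:extrapolate:unique} is $\vec v = 0$, we conclude that $\vec w = 0$ as an element of $\dot W^{q,\sigma}_{m,av}(\Omega)$, i.e., $\nabla^m\vec w = 0$ almost everywhere in $\Omega$.

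Since $\Omega$ has connected boundary (and hence is connected), $\nabla^m\vec w = 0$ forces $\vec w$ to be a polynomial of degree at most $m-1$, which is identified with $0$ in the homogeneous space $\dot W^{r,\omega}_{m,av}(\Omega)$; thus $\vec u_1 = \vec u_2$ in $\dot W^{r,\omega}_{m,av}(\Omega)$. The Neumann case is parallel: replacing $\Tr_{m-1}^\Omega\vec u$ by $\M_{\mat A,\arr H}^\Omega\vec u$ defined through the weak formulation~\eqref{dfn:Neumann}, the difference $\vec w$ again lies in the smaller space $\dot W^{q,\sigma}_{m,av}(\Omega)$ and has vanishing Neumann data (the pairing on the right of~\eqref{dfn:Neumann} is manifestly intrinsic to $\vec w$ and independent of the ambient norm), so the assumed uniqueness at $(q,\sigma)$ again yields $\nabla^m\vec w = 0$ and hence $\vec u_1=\vec u_2$ in $\dot W^{r,\omega}_{m,av}(\Omega)$.

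The hard part of the argument is the trace/Neumann-data compatibility in the middle step; the rest is a direct combination of Lemma~\ref{lem:embedding} with the quotient structure of the homogeneous space $\dot W^{r,\omega}_{m,av}(\Omega)$.
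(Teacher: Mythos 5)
Your argument is correct and is the intended one: the paper gives no separate proof of this corollary, presenting it as an immediate consequence of Lemma~\ref{lem:embedding}, and your deduction (difference of two solutions lies in the smaller space by the embedding, then invoke the assumed uniqueness there) is exactly that. One small remark: the trace-consistency step you flag as "the main technical point" is actually immediate here, because $\Tr_{m-1}^\Omega$ is defined intrinsically as the Sobolev boundary restriction of $\nabla^{m-1}\vec w$ (not by density or completion relative to a particular ambient norm), so "$\Tr_{m-1}^\Omega\vec w=0$" is a single assertion about $\vec w$ that transfers for free once $\nabla^m\vec w$ is known to be locally integrable up to the boundary via Lemma~\ref{lem:L:L1}; the same intrinsic-character observation you already make for $\M_{\mat A,\arr H}^\Omega$ applies to the Dirichlet trace as well.
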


\begin{cor}\label{cor:compatible}
Let $L$ be an elliptic operator of order~$2m$. Let $\Omega$, $q$, $\sigma$, $r$ and $\omega$ satisfy the conditions of Lemma~\ref{lem:embedding}.

Suppose that the problem~\eqref{eqn:Dirichlet:extrapolate:unique} has only the trivial solution.

Suppose that $\arr H\in L^{r,\omega}_{av}(\Omega)\cap L^{q,\sigma}_{av}(\partial\Omega)$ and $\arr f\in \dot W\!A^r_{m-1,\omega}(\partial\Omega)\cap \dot W\!A^q_{m-1,\sigma}(\partial\Omega)$. Let $\vec u\in \dot W^{q,\sigma}_{m,av}(\Omega)$ and $\vec w\in \dot W^{r,\omega}_{m,av}(\Omega)$ satisfy $L\vec u= L\vec w=\Div_m\arr H$ in~$\Omega$ and $\Tr_{m-1}^\Omega\vec u=\Tr_{m-1}^\Omega\vec w=\arr f$. 

Then $\nabla^m \vec u=\nabla^m \vec w$ in~$\Omega$.

A similar result is valid for the Neumann problem.
\end{cor}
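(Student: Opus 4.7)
The plan is to reduce this corollary to the hypothesized uniqueness statement by forming the difference $\vec v = \vec u - \vec w$ and showing that $\vec v$ lies in the \emph{larger} solution space $\dot W^{q,\sigma}_{m,av}(\Omega)$ where uniqueness is assumed. The only real ingredient is the embedding $L^{r,\omega}_{av}(\Omega)\subseteq L^{q,\sigma}_{av}(\Omega)$ supplied by Lemma~\ref{lem:embedding}.

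First I would set $\vec v=\vec u-\vec w$. By linearity of the weak formulation~\eqref{eqn:weak}, $L\vec v = \Div_m\arr H-\Div_m\arr H = 0$ in $\Omega$, and by linearity of the trace, $\Tr_{m-1}^\Omega\vec v = \arr f-\arr f=0$. The key step is then to verify that $\vec v$ belongs to $\dot W^{q,\sigma}_{m,av}(\Omega)$. Since $\vec u\in \dot W^{q,\sigma}_{m,av}(\Omega)$ by assumption, it suffices to show $\vec w\in \dot W^{q,\sigma}_{m,av}(\Omega)$; but $\nabla^m\vec w\in L^{r,\omega}_{av}(\Omega)$, and under the stated conditions on $(q,\sigma,r,\omega,\Omega)$, Lemma~\ref{lem:embedding} gives $L^{r,\omega}_{av}(\Omega)\subseteq L^{q,\sigma}_{av}(\Omega)$, so $\nabla^m\vec w\in L^{q,\sigma}_{av}(\Omega)$. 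Therefore $\vec w$, and thus $\vec v$, lies in $\dot W^{q,\sigma}_{m,av}(\Omega)$.

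Having $\vec v\in \dot W^{q,\sigma}_{m,av}(\Omega)$ with $L\vec v=0$ and $\Tr_{m-1}^\Omega\vec v=0$, I would invoke the assumed uniqueness for problem~\eqref{eqn:Dirichlet:extrapolate:unique} to conclude that $\vec v=0$ as an element of the quotient space $\dot W^{q,\sigma}_{m,av}(\Omega)$. By the definition of this space recalled in the introduction, this is precisely the statement that $\nabla^m\vec v=0$ in $\Omega$, i.e., $\nabla^m\vec u=\nabla^m\vec w$ in $\Omega$, as required.

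For the Neumann statement the argument is identical, under the analogous Neumann uniqueness hypothesis; the only additional verification is that $\M_{\mat A,0}^\Omega\vec v=0$. This follows directly from~\eqref{dfn:Neumann} applied to each of $\vec u$ and $\vec w$: for every $\vec\varphi\in C^\infty_0(\R^\dmn)$,
\[\langle \nabla^m\vec\varphi,\mat A\nabla^m\vec v\rangle_\Omega = \langle \nabla^m\vec\varphi,\mat A\nabla^m\vec u-\arr H\rangle_\Omega - \langle \nabla^m\vec\varphi,\mat A\nabla^m\vec w-\arr H\rangle_\Omega = \langle \nabla^{m-1}\vec\varphi,\arr g\rangle_{\partial\Omega}-\langle \nabla^{m-1}\vec\varphi,\arr g\rangle_{\partial\Omega}=0.\]
I do not expect any serious obstacle: the entire weight of the argument is carried by Lemma~\ref{lem:embedding}, and the only minor point worth flagging is that the conclusion $\nabla^m\vec v=0$ is the correct interpretation of ``$\vec v=0$ in $\dot W^{q,\sigma}_{m,av}(\Omega)$'' because elements of this space are equivalence classes modulo functions with vanishing $m$-th gradient.
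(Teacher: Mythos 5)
Your proof is correct and takes what is clearly the intended route: reduce to the uniqueness hypothesis by forming the difference $\vec v=\vec u-\vec w$, and use the embedding $L^{r,\omega}_{av}(\Omega)\subseteq L^{q,\sigma}_{av}(\Omega)$ from Lemma~\ref{lem:embedding} to place $\vec w$ (and hence $\vec v$) in $\dot W^{q,\sigma}_{m,av}(\Omega)$. The paper states this corollary without an explicit proof, treating it as immediate from Lemma~\ref{lem:embedding}, and your argument supplies exactly the missing details, including the correct reading of ``$\vec v=0$'' as $\nabla^m\vec v=0$ and the verification of vanishing Neumann data via formula~\eqref{dfn:Neumann}.
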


%\begin{proof} By Lemma~\ref{lem:embedding}, $\vec w\in \dot W^{ r, \omega}_{m,av}(\Omega)\subset \dot W^{q, \sigma}_{m,av}(\Omega)$, and so $\vec w-\vec u\in \dot W^{q, \sigma}_{m,av}(\Omega)$ and $L(\vec w-\vec u)=0$, $\Tr_{m-1}^\Omega(\vec w-\vec u)=0$. Thus $\vec w-\vec u=0$.
%\end{proof}

\section{\texorpdfstring{$L^\infty$}{Bounded} perturbation and well posedness}
\label{sec:perturb}

In this section we will prove Theorem~\ref{thm:perturb}. We will also prove Lemma~\ref{lem:BVP:duality}.

%Recall if the Dirichlet or Neumann problem for an operator ${L}=\Div_m\mat A\nabla^m$ is well posed in a Lipschitz domain~$\Omega$, then the corresponding boundary value problem for the operator ${M}=\Div_m\mat B\nabla^m$ is well posed provided $\doublebar{\mat A-\mat B}_{L^\infty(\Omega)}$ is small enough. 

We will begin (Lemma~\ref{lem:zero:boundary}) by reducing to the case of homogeneous boundary values.
Theorem~\ref{thm:exist:perturb} will establish that if solutions to ${L}\vec u=\Div_m\arr\Phi$ exist, then so must solutions to ${M}\vec u=\Div_m\arr H$. In Section~\ref{sec:duality} we will prove a generalization of Lemma~\ref{lem:BVP:duality}; specifically, we %Theorems~\ref{thm:exist:unique} and~\ref{thm:unique:exist} 
will show that uniqueness of solutions to  the Dirichlet or Neumann problem $L\vec u=\Div_m\arr H$, for data $\arr H\in L^{p,\smooth}_{av}(\Omega)$, is equivalent to existence of solutions to $L^*\vec u=\Div_m\arr \Phi$, for data $\arr \Phi\in L^{p',\smooth'}_{av}(\Omega)$. In Section~\ref{sec:perturb:full} we will combine these results to establish that uniqueness of solutions, like existence of solutions, is stable under $L^\infty$ perturbation.

\subsection{Reduction to the case of homogeneous boundary values}
\label{sec:extension}

In this subsection we will prove the following lemma. 

\begin{lem}\label{lem:zero:boundary} Let $\Omega$ be a Lipschitz domain with connected boundary. Let $0<\smooth<1$, let $\pmin<p\leq \infty$, and suppose that for every $\arr \Phi \in L^{p,\smooth}_{av}(\Omega)$ there exists a solution $\vec u$ to the Dirichlet problem with homogeneous boundary data
\begin{equation}\label{eqn:Dirichlet:interior}
L  \vec u = \Div_m \arr \Phi \text{ in }\Omega,
\quad \Tr_{m-1}^\Omega\vec u = 0,
\quad
\doublebar{\vec u}_{\dot W^{p,\smooth}_{m,av}(\Omega)} \leq C \doublebar{\arr \Phi }_{L^{p,\smooth}_{av}(\Omega)}
\end{equation}
or the Neumann problem
\begin{equation}\label{eqn:Neumann:interior}
L \vec u = \Div_m \arr \Phi  \text{ in }\Omega,
\quad \M_{\mat A,\arr \Phi}^\Omega\vec u = 0,
\quad
\doublebar{\vec u}_{\dot W^{p,\smooth}_{m,av}(\Omega)} \leq C \doublebar{\arr \Phi }_{L^{p,\smooth}_{av}(\Omega)}
.\end{equation}
Then for each $\arr H\in  L^{p,\smooth}_{av}(\Omega)$ and for each $\arr f\in {\dot W\!A^p_{m-1,\smooth}(\partial\Omega)}$ or $\arr g\in {\dot N\!A^p_{m-1,\smooth-1}(\partial\Omega)}$, respectively, there is a solution to the full Dirichlet problem
\begin{multline}
\label{eqn:Dirichlet:full}
%\left\{\begin{aligned}
L \vec u = \Div_m \arr H \text{ in }\Omega,
\quad \Tr_{m-1}^\Omega\vec u = \arr f,
\\
\doublebar{\vec u}_{\dot W^{p,\smooth}_{m,av}(\Omega)} \leq C \doublebar{\arr H}_{L^{p,\smooth}_{av}(\Omega)}
+C\doublebar{\arr f}_{\dot W\!A^p_{m-1,\smooth}(\partial\Omega)}
.%\end{aligned}\right.
\end{multline}
or the full Neumann problem
\begin{multline}
\label{eqn:Neumann:full}
L \vec u = \Div_m \arr H \text{ in }\Omega,
\quad \M_{\mat A,\arr H}^\Omega\vec u = \arr g,
\\
\doublebar{\vec u}_{\dot W^{p,\smooth}_{m,av}(\Omega)} \leq C \doublebar{\arr H}_{L^{p,\smooth}_{av}(\Omega)}
+C\doublebar{\arr g}_{\dot N\!A^p_{m-1,\smooth-1}(\partial\Omega)}
.\end{multline}

If solutions to the problems~\eqref{eqn:Dirichlet:interior} or~\eqref{eqn:Neumann:interior} are unique, then so are solutions to the problems~\eqref{eqn:Dirichlet:full} or~\eqref{eqn:Neumann:full}, respectively.
\end{lem}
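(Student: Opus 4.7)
I would prove this by extending the boundary data into $\Omega$ and subtracting, thereby reducing the full inhomogeneous problem to the homogeneous-boundary-data problem that is assumed to be solvable. The main result of \cite{Bar16pB} is precisely that $\dot W\!A^p_{m-1,\smooth}(\partial\Omega)$ and $\dot N\!A^p_{m-1,\smooth-1}(\partial\Omega)$ are the trace spaces of $\dot W^{p,\smooth}_{m,av}(\Omega)$ functions; in particular, each trace operator admits a bounded right inverse. For the Dirichlet case I would pick $\vec F\in\dot W^{p,\smooth}_{m,av}(\Omega)$ with $\Tr_{m-1}^\Omega\vec F=\arr f$ and $\doublebar{\vec F}_{\dot W^{p,\smooth}_{m,av}(\Omega)}\leq C\doublebar{\arr f}_{\dot W\!A^p_{m-1,\smooth}(\partial\Omega)}$. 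For the Neumann case I would select a pair $(\vec F,\arr J)$ with $\vec F\in\dot W^{p,\smooth}_{m,av}(\Omega)$, $\arr J\in L^{p,\smooth}_{av}(\Omega)$, $L\vec F=\Div_m\arr J$ in the weak sense, $\M^\Omega_{\mat A,\arr J}\vec F=\arr g$, and both norms controlled by $\doublebar{\arr g}_{\dot N\!A^p_{m-1,\smooth-1}(\partial\Omega)}$.

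Next, writing $\vec u=\vec v+\vec F$ and setting $\arr\Phi:=\arr H-\mat A\nabla^m\vec F$ (Dirichlet) or $\arr\Phi:=\arr H-\arr J$ (Neumann), I would check that $\vec v$ must satisfy the homogeneous-boundary-data problem with interior datum $\arr\Phi$. The Dirichlet reduction is immediate from linearity of the trace, together with the estimate $\doublebar{\arr\Phi}_{L^{p,\smooth}_{av}(\Omega)}\leq \doublebar{\arr H}_{L^{p,\smooth}_{av}(\Omega)}+\Lambda\doublebar{\vec F}_{\dot W^{p,\smooth}_{m,av}(\Omega)}$. For the Neumann reduction, the definition \eqref{dfn:Neumann} gives
\begin{equation*}
\langle\nabla^m\vec\varphi,\mat A\nabla^m\vec v-\arr\Phi\rangle_\Omega
=\langle\nabla^m\vec\varphi,\mat A\nabla^m\vec u-\arr H\rangle_\Omega
-\langle\nabla^m\vec\varphi,\mat A\nabla^m\vec F-\arr J\rangle_\Omega,
\end{equation*}
and each term on the right equals $\langle\nabla^{m-1}\vec\varphi,\arr g\rangle_{\partial\Omega}$ by the Neumann identities for $\vec u$ and $\vec F$ respectively; they cancel, giving $\M^\Omega_{\mat A,\arr\Phi}\vec v=0$. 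Applying the hypothesized well posedness to this reduced data produces $\vec v$, and $\vec u=\vec v+\vec F$ solves the full problem with the stated norm bound obtained by the triangle inequality (for $p\geq 1$) or its $p$-subadditive analog (for $p<1$).

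For uniqueness, if $\vec u_1,\vec u_2$ are two solutions to \eqref{eqn:Dirichlet:full} or \eqref{eqn:Neumann:full} with the same data, then $\vec w:=\vec u_1-\vec u_2$ satisfies $L\vec w=0$ in $\Omega$ together with $\Tr_{m-1}^\Omega\vec w=0$ or $\M^\Omega_{\mat A,0}\vec w=0$, which is exactly \eqref{eqn:Dirichlet:interior} or \eqref{eqn:Neumann:interior} with $\arr\Phi=0$; the uniqueness hypothesis forces $\vec w=0$ in $\dot W^{p,\smooth}_{m,av}(\Omega)$. The main delicate step is the Neumann reduction: the cancellation above requires that the extension pair $(\vec F,\arr J)$ satisfy the weak equation $L\vec F=\Div_m\arr J$, so that $\M^\Omega_{\mat A,\arr J}\vec F$ is even well defined as a functional of $\nabla^{m-1}\vec\varphi$ alone. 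Verifying that the Neumann trace theorem of \cite{Bar16pB} produces such a compatible pair, and that it does so with the stated norm control, is the one nontrivial input; once that is in hand, the rest of the argument is a direct bookkeeping of norms.
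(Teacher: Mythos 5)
Your Dirichlet reduction is identical to the paper's: extend $\arr f$ to $\vec F\in\dot W^{p,\smooth}_{m,av}(\Omega)$ via \cite[Theorem~4.1]{Bar16pB}, set $\arr\Phi=\arr H-\mat A\nabla^m\vec F$, solve the homogeneous-data problem, and add $\vec F$ back. The uniqueness step by linearity is also the same.

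For the Neumann case you set up a strictly more general framework than the paper: you ask for a pair $(\vec F,\arr J)$ with $L\vec F=\Div_m\arr J$ and $\M^\Omega_{\mat A,\arr J}\vec F=\arr g$, and you correctly flag that producing such a compatible pair with the right norm control is the one unverified input. The paper sidesteps this entirely by taking $\vec F=0$: \cite[Theorem~6.1]{Bar16pB} produces not a function but an \emph{array} $\arr G\in L^{p,\smooth}_{av}(\Omega)$ with $\langle\nabla^m\vec\varphi,\arr G\rangle_\Omega=\langle\Tr_{m-1}^\Omega\vec\varphi,\arr g\rangle_{\partial\Omega}$ for all $\vec\varphi\in C^\infty_0(\R^\dmn)$, and $\doublebar{\arr G}_{L^{p,\smooth}_{av}(\Omega)}\leq C\doublebar{\arr g}_{\dot N\!A^p_{m-1,\smooth-1}(\partial\Omega)}$. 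In your notation this is $(\vec F,\arr J)=(\vec 0,-\arr G)$, and the compatibility $L\vec F=\Div_m\arr J$ (i.e., $\Div_m\arr G=0$ weakly in $\Omega$) holds for free: test functions $\vec\varphi\in C^\infty_0(\Omega)$ have $\Tr_{m-1}^\Omega\vec\varphi=0$, so the defining identity forces $\langle\nabla^m\vec\varphi,\arr G\rangle_\Omega=0$. The paper then solves the homogeneous Neumann problem with data $\arr\Phi=\arr H+\arr G$ and uses $\vec u$ itself (no addition of $\vec F$), which avoids the extra bookkeeping. One other presentational remark: your displayed identity derives $\M^\Omega_{\mat A,\arr\Phi}\vec v=0$ by assuming $\M^\Omega_{\mat A,\arr H}\vec u=\arr g$, which is the conclusion; as a derivation of the correct $\arr\Phi$ this is fine, but in the formal proof the computation must run forward: start from $\M^\Omega_{\mat A,\arr\Phi}\vec v=0$ and $\M^\Omega_{\mat A,\arr J}\vec F=\arr g$, add, and conclude $\M^\Omega_{\mat A,\arr H}\vec u=\arr g$.
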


\begin{proof}
The uniqueness follows by linearity; we need only establish existence.

Begin with the Dirichlet case. Let $\vec F$ satisfy $\Tr_{m-1}^\Omega\vec F=\arr f$; by \cite[Theorem~4.1]{Bar16pB}, there exists some such $\vec F$ that in addition satisfies $\doublebar{\vec F}_{\dot W^{p,\smooth}_{m,av}(\Omega)}\leq C\doublebar{\arr f}_{\dot W\!A^p_{m-1,\smooth}(\partial\Omega)}$.  Let $\arr\Phi = \arr H - \mat A\nabla^m\vec F$, and let $\vec v$ be the solution to the Dirichlet problem~\eqref{eqn:Dirichlet:interior} with data $\arr\Phi$.
Let $\vec u=\vec v+\vec F$. Then 
\begin{gather*}\Tr_{m-1}^\Omega \vec u=\Tr_{m-1}^\Omega \vec v+\Tr_{m-1}^\Omega \vec F = 0+\arr f
,\\
L\vec u = L\vec v+L\vec F = \Div_m \arr \Phi + \Div_m \mat A\nabla^m\vec F = \Div_m\arr H \text{ in }\Omega
,\\
\begin{aligned}
\doublebar{\vec u}_{\dot W^{p,\smooth}_{m,av}(\Omega)}
&\leq 
c_p\doublebar{\vec v}_{\dot W^{p,\smooth}_{m,av}(\Omega)}
+c_p\doublebar{\vec F}_{\dot W^{p,\smooth}_{m,av}(\Omega)}
\\&\leq 
C\doublebar{\arr H}_{L^{p,\smooth}_{av}(\Omega)}
+C\doublebar{\arr f}_{\dot W\!A^p_{m-1,\smooth}(\partial\Omega)}
\end{aligned}
\end{gather*}
as desired. (If $p\geq 1$ then $c_p=1$.)

The Neumann case is similar. Let $\arr G$ be the extension of $\arr g$ given by \cite[Theorem~6.1]{Bar16pB}. Let $\arr \Phi=\arr H+\arr G$ and let $\vec u$ be the solution to the Neumann problem~\eqref{eqn:Neumann:interior} with data $\arr\Phi$.

Then $\M_{\mat A,\arr\Phi}^\Omega \vec u=0$. 
If $\vec\varphi\in C^\infty_0(\R^\dmn)$ is a smooth testing function, then 
by the definition~\eqref{dfn:Neumann} of Neumann boundary values,
\begin{align*}
\langle \nabla^m\vec\varphi, \mat A\nabla^m \vec u\rangle_\Omega
&= \langle \nabla^m\vec\varphi, \arr \Phi\rangle_\Omega
= \langle \nabla^m\vec\varphi, \arr H\rangle_\Omega
+
\langle \nabla^m\vec\varphi, \arr G\rangle_\Omega
\end{align*}
and by \cite[Theorem~6.1]{Bar16pB}, $\langle \nabla^m\vec\varphi, \arr G\rangle_\Omega=\langle \Tr_{m-1}^\Omega\vec\varphi, \arr g\rangle_{\partial\Omega}$.
In particular, $L\vec u=\Div_m\arr H$ in~$\Omega$, and
\begin{equation*}\langle \Tr_{m-1}^\Omega\vec\varphi,\M_{\mat A,\arr H}^\Omega \vec u\rangle_{\partial\Omega}
=
\langle \nabla^m\vec\varphi, \mat A\nabla^m \vec u\rangle_\Omega
-
\langle \nabla^m\vec\varphi, \arr H\rangle_\Omega
=
\langle \Tr_{m-1}^\Omega\vec\varphi, \arr g\rangle_{\partial\Omega}
\end{equation*}
and so $\M_{\mat A,\arr H}^\Omega\vec u=\arr g$. Furthermore,
\begin{equation*}\doublebar{\vec u}_{\dot W^{p,\smooth}_{m,av}(\Omega)}
\leq 
C_1\doublebar{\arr \Phi}_{L^{p,\smooth}_{av}(\Omega)}
\leq
C_2\doublebar{\arr H}_{L^{p,\smooth}_{av}(\Omega)}
+C_2\doublebar{\arr g}_{\dot N\!A^p_{m-1,\smooth-1}(\partial\Omega)}
\end{equation*}
as desired.
\end{proof}

\subsection{Perturbation of existence}

In this section we will prove the following theorem. This theorem provides the existence component of Theorem~\ref{thm:perturb}. %We remark that we also wish to derive some results for compatibility of perturbed solutions (in the sense of Lemma~\ref{lem:interpolation}); thus, the notation of the 

\begin{thm} \label{thm:exist:perturb}
Suppose that $L$ is a differential operator of the form~\eqref{eqn:divergence}, of order $2m$ and acting on $\C^N$-valued functions, associated to bounded coefficients~$\mat A$.
Let $M$ be another operator of order $2m$, also acting on $\C^N$-valued functions, and associated to the coefficients~$\mat B$. Let $\doublebar{\mat A-\mat B}_{L^\infty}=\varepsilon$. 

Let $0<\smooth<1$ and let $\pmin<p\leq \infty$. Let $\Omega\subset\R^\dmn$ be a Lipschitz domain.
%Suppose that $\mat A$ satisfies the ellipticity condition \eqref{eqn:elliptic:everywhere} for some $\lambda>0$. 
Suppose that for every $\arr \Phi\in L^{p,\smooth}_{av}(\Omega)$ there exists a solution $\vec u$ to the Dirichlet problem~\eqref{eqn:Dirichlet:0}.
If $\varepsilon<1/C_0$, where $C_0$ is as in the problem~\eqref{eqn:Dirichlet:0}, then for each $\arr H\in L^{p,\smooth}_{av}(\Omega)$ there exists a solution $\vec u$ to the Dirichlet problem~\eqref{eqn:Dirichlet:1} with $\arr f=0$.

Similarly, if %$\mat A$ satisfies the ellipticity condition \eqref{eqn:elliptic:domain} for some $\lambda>0$, and if 
for every $\arr \Phi\in L^{p,\smooth}_{av}(\Omega)$ there exists a solution $\vec u$ to the Neumann problem~\eqref{eqn:Neumann:0},
then whenever  $\varepsilon<1/C_0$, we have that for each $\arr H\in L^{p,\smooth}_{av}(\Omega)$ there exists a solution $\vec u$ to the Neumann problem~\eqref{eqn:Neumann:1} with $\arr g=0$.

%Finally, suppose that $\mat A$ satisfies the ellipticity condition \eqref{eqn:elliptic:everywhere} for some $\lambda>0$, and that the Newton potential $\vec \Pi^{L}$ given by formula~\eqref{dfn:newton} extends to an operator defined on all of $L^{p,\smooth}_{av}(\Omega)$ that satisfies the bound 
%\begin{equation*}\doublebar{\vec\Pi^{L}\arr H}_{\dot W^{p,\smooth}_{m,av}(\R^\dmn\setminus\partial\Omega)}\leq C_0\doublebar{\arr H}_{\dot W^{p,\smooth}_{m,av}(\Omega)}.\end{equation*}
%If $\varepsilon<\min(\lambda,1/C_0)$, then $\vec\Pi^{M}$ also extends to an operator on $L^{p,\smooth}_{av}(\R^\dmn\setminus\partial\Omega)$, and we have the bound
%\begin{equation*}\doublebar{\vec\Pi^{M}\arr H}_{\dot W^{p,\smooth}_{m,av}(\R^\dmn\setminus\partial\Omega)}\leq C(C_0,p,\smooth)\doublebar{\arr H}_{\dot W^{p,\smooth}_{m,av}(\Omega)}.\end{equation*}

Finally, let $0<\smooth_j<1$ and $\pmin[\smooth_j]<p_j\leq \infty$ for $j=0$, $1$. Suppose that $\mat A$ satisfies the condition \eqref{eqn:elliptic:everywhere} or \eqref{eqn:elliptic:domain}, and that  the Dirichlet problems~\textup{(\ref{eqn:interpolation:Dirichlet:0}--\ref{eqn:interpolation:Dirichlet:1})} or Neumann problems~\textup{(\ref{eqn:interpolation:Neumann:0}--\ref{eqn:interpolation:Neumann:1})}, respectively, are compatibly well posed in the sense of Lemma~\ref{lem:interpolation}. If $\varepsilon<\min(1/C_0,1/C_1)$, then the perturbed solutions are compatible; that is, for each $\arr H\in L^{p_0,\smooth_0}_{av}(\Omega)\cap L^{p_1,\smooth_1}_{av}(\Omega)$ there exists a function $\vec u$ with 
\begin{equation*}
\left\{
\begin{gathered}
M  \vec u = \Div_m \arr H \text{ in }\Omega,
\quad \Tr_{m-1}^\Omega \vec u=0 \text{ or }\M_{\mat B,\arr H}\vec u=0,
\\
\doublebar{\vec u}_{\dot W^{p_0,\smooth_0}_{m,av}(\Omega)} \leq C(C_0,p,\varepsilon)\doublebar{\arr H}_{L^{p_0,\smooth_0}_{av}(\Omega)}
,\\
\doublebar{\vec u}_{\dot W^{p_1,\smooth_1}_{m,av}(\Omega)} \leq C(C_1,p,\varepsilon)\doublebar{\arr H}_{L^{p_1,\smooth_1}_{av}(\Omega)}.
\end{gathered}\right.
\end{equation*}

Here
\begin{equation*}C(c,p,\varepsilon)=\frac{c}{1-c\varepsilon}\text{ if }p\geq 1,\qquad
C(c,p,\varepsilon)=\biggl(\frac{c^p}{1-c^p\varepsilon^p}\biggr)^{1/p}\text{ if }p\leq 1.\end{equation*}
\end{thm}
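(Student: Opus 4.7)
The strategy is a Neumann-series / fixed-point iteration based on the algebraic identity
\begin{equation*}
M\vec v = L\vec v + \Div_m\bigl((\mat B-\mat A)\nabla^m \vec v\bigr),
\end{equation*}
so that $M\vec u=\Div_m\arr H$ is equivalent to $L\vec u=\Div_m\bigl(\arr H+(\mat A-\mat B)\nabla^m\vec u\bigr)$. Set $\arr\Phi_0=\arr H$, and iteratively let $\vec u_k$ be any solution to $L\vec u_k=\Div_m\arr\Phi_k$ with the required homogeneous boundary condition (provided by the hypothesis, with $\|\vec u_k\|_{\dot W^{p,\smooth}_{m,av}(\Omega)} \le C_0\|\arr\Phi_k\|_{L^{p,\smooth}_{av}(\Omega)}$). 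Then define $\arr\Phi_{k+1}=\arr H+(\mat A-\mat B)\nabla^m\vec u_k$, and note that $\vec v_k=\vec u_{k+1}-\vec u_k$ solves $L\vec v_k=\Div_m((\mat A-\mat B)\nabla^m\vec v_{k-1})$ with zero boundary data, so
\begin{equation*}
\doublebar{\vec v_k}_{\dot W^{p,\smooth}_{m,av}(\Omega)} \le C_0\,\doublebar{(\mat A-\mat B)\nabla^m\vec v_{k-1}}_{L^{p,\smooth}_{av}(\Omega)} \le C_0\varepsilon\,\doublebar{\vec v_{k-1}}_{\dot W^{p,\smooth}_{m,av}(\Omega)}.
\end{equation*}
Under $C_0\varepsilon<1$ this contracts, and summing the resulting geometric series (using the ordinary triangle inequality for $p\ge 1$, or the $p$-triangle inequality $\|x+y\|^p\le\|x\|^p+\|y\|^p$ for $p\le 1$, which $L^{p,\smooth}_{av}$ satisfies by its definition via an $\ell^p$-sum over Whitney cubes as in~\eqref{eqn:L:norm:whitney}) produces $\vec u=\lim_k \vec u_k\in \dot W^{p,\smooth}_{m,av}(\Omega)$ with exactly the stated constant $C(C_0,p,\varepsilon)$.

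\textbf{Verification.} In the Dirichlet case, $\Tr_{m-1}^\Omega\vec u=0$ follows from continuity of the trace operator on $\dot W^{p,\smooth}_{m,av}(\Omega)$ (from \cite{Bar16pB}). To obtain the equation, I pass to the limit in the weak identity $\langle\nabla^m\vec\varphi,\mat A\nabla^m\vec u_k\rangle_\Omega=\langle\nabla^m\vec\varphi,\arr\Phi_k\rangle_\Omega$, which holds for all $\vec\varphi\in C^\infty_0(\Omega)$ in the Dirichlet case and for all $\vec\varphi\in C^\infty_0(\R^\dmn)$ in the Neumann case. Convergence $\arr\Phi_k\to \arr H+(\mat A-\mat B)\nabla^m\vec u$ in $L^{p,\smooth}_{av}(\Omega)$ together with the local integrability guaranteed by Lemma~\ref{lem:L:L1} lets me pair against the smooth $\nabla^m\vec\varphi$, yielding
\begin{equation*}
\langle\nabla^m\vec\varphi,\mat A\nabla^m\vec u\rangle_\Omega=\langle\nabla^m\vec\varphi,\arr H+(\mat A-\mat B)\nabla^m\vec u\rangle_\Omega,
\end{equation*}
which rearranges to $\langle\nabla^m\vec\varphi,\mat B\nabla^m\vec u-\arr H\rangle_\Omega=0$. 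In the Dirichlet case this gives $M\vec u=\Div_m\arr H$; in the Neumann case, allowing $\vec\varphi\in C^\infty_0(\R^\dmn)$ gives additionally $\M_{\mat B,\arr H}^\Omega\vec u=0$ by definition~\eqref{dfn:Neumann}.

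\textbf{Compatibility.} For the final assertion, suppose the $L$-problem is compatibly well posed in the sense of Lemma~\ref{lem:interpolation} for the two parameter pairs $(p_0,\smooth_0)$ and $(p_1,\smooth_1)$. Starting the iteration from $\arr\Phi_0=\arr H\in L^{p_0,\smooth_0}_{av}(\Omega)\cap L^{p_1,\smooth_1}_{av}(\Omega)$, the compatibility hypothesis lets me choose a single function $\vec u_k$ at each step satisfying both bounds, so the same sequence converges in both $\dot W^{p_j,\smooth_j}_{m,av}(\Omega)$ (provided $C_j\varepsilon<1$ for $j=0,1$), producing a single compatible limit $\vec u$.

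\textbf{Main obstacle.} The principal technical issue is the quasi-Banach case $p<1$: absolute convergence of $\sum_k\vec v_k$ must be established via the $p$-triangle inequality, and one must ensure that $\dot W^{p,\smooth}_{m,av}(\Omega)$ is a complete $p$-normed space in which such series converge. Secondary care is needed in the Neumann case, where test functions $\vec\varphi$ need not vanish near $\partial\Omega$: there I must confirm via Lemma~\ref{lem:L:L1} that convergence of $\arr\Phi_k$ and $\nabla^m\vec u_k$ in $L^{p,\smooth}_{av}(\Omega)$ is strong enough to pass to the limit against smooth test functions supported in a neighborhood of $\overline\Omega$.
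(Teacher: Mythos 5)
Your Neumann-series iteration is the same strategy as the paper's, but as written there is a gap in the key estimate. You choose, at each step $k$, \emph{some} solution $\vec u_k$ to $L\vec u_k = \Div_m\arr\Phi_k$ satisfying the $C_0$ bound, and then claim $\doublebar{\vec v_k}_{\dot W^{p,\smooth}_{m,av}(\Omega)} \le C_0\doublebar{(\mat A - \mat B)\nabla^m\vec v_{k-1}}_{L^{p,\smooth}_{av}(\Omega)}$ for the difference $\vec v_k = \vec u_{k+1} - \vec u_k$. While $\vec v_k$ does satisfy $L\vec v_k = \Div_m((\mat A - \mat B)\nabla^m\vec v_{k-1})$ with zero Dirichlet or Neumann data, the existence hypothesis only guarantees that \emph{some} solution to that equation obeys the $C_0$ bound. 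Since uniqueness of solutions is not assumed at this stage (it is precisely what Theorems~\ref{thm:exist:unique} and~\ref{thm:unique:exist} are later used to supply), your $\vec v_k$, being the difference of two independently selected solutions, need not be that particular one, and the geometric decay you invoke does not follow. What you do have is $\doublebar{\vec u_k} \le C_0\doublebar{\arr\Phi_k}$, which only shows the sequence is bounded, not Cauchy.

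The fix is small and reorders the construction in the way the paper does. Let $\vec w_0$ be a solution with the $C_0$ bound for data $\arr H$, and inductively let $\vec w_{j+1}$ be a solution with the $C_0$ bound for data $(\mat A - \mat B)\nabla^m\vec w_j$; then $\doublebar{\vec w_{j+1}}_{\dot W^{p,\smooth}_{m,av}(\Omega)} \le C_0\varepsilon\doublebar{\vec w_j}_{\dot W^{p,\smooth}_{m,av}(\Omega)}$ holds by construction, and $\vec u := \sum_j \vec w_j$ converges with the claimed constant $C(C_0,p,\varepsilon)$ by the triangle (or $p$-triangle) inequality in the complete quasi-normed space $\dot W^{p,\smooth}_{m,av}(\Omega)$. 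Your partial sums become $\sum_{j\le k}\vec w_j$ automatically rather than independent selections, and your $\vec v_k$ is then $\vec w_{k+1}$. With this reordering, the rest of your argument --- passing to the limit in the weak identity via Lemma~\ref{lem:L:L1}, reading off the Dirichlet or Neumann boundary condition, and the compatibility claim --- carries over unchanged and coincides with the paper's proof.
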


\begin{proof}
Choose some $\arr H\in L^{p,\smooth}_{av}(\Omega)$. Let $\vec u_0$ be a solution to the Dirichlet problem~\eqref{eqn:Dirichlet:0} or the Neumann problem~\eqref{eqn:Neumann:0}
with data~$\arr H$. For each $j\geq 0$, let $\vec u_{j+1}$ be a solution to the given problem with data $(\mat A-\mat B)\nabla^m \vec u_{j}$. 

We then have that
\begin{equation*}
\doublebar{\nabla^m\vec u_0}_{L^{p,\smooth}_{av}({\Omega})} \leq C_0\doublebar{\arr H}_{L^{p,\smooth}_{av}({\Omega})},
\qquad
\doublebar{\nabla^m\vec u_{j+1}}_{L^{p,\smooth}_{av}({\Omega})} \leq C_0\varepsilon\doublebar{\nabla^m\vec u_j}_{L^{p,\smooth}_{av}({\Omega})}.\end{equation*}
We have that $\dot W^{p,\smooth}_{m,av}({\Omega})$ is a quasi-Banach space and thus is complete. Let $\vec u=\sum_{j=0}^\infty \vec u_j$.
If $p\geq 1$, then we have that
\begin{equation*}\doublebar{\vec u}_{\dot W^{p,\smooth}_{m,av}({\Omega})}
\leq \sum_{j=0}^\infty \doublebar{\vec u_j}_{\dot W^{p,\smooth}_{m,av}({\Omega})}
\leq \sum_{j=0}^\infty C_0(C_0\varepsilon)^j\doublebar{\arr H}_{L^{p,\smooth}_{av}(\Omega)}
\leq \frac{C_0}{1-C_0\varepsilon}\doublebar{\arr H}_{L^{p,\smooth}_{av}(\Omega)}.\end{equation*}
If $p\leq 1$, then $\dot W^{p,\smooth}_{m,av}({\Omega})$ is a quasi-Banach space and satisfies the $p$-norm inequality
\begin{equation*}
%\label{eqn:norm:p}
\doublebar{\vec u+\vec v}_{\dot W^{p,\smooth}_{m,av}({\Omega})}^p \leq 
\doublebar{\vec u}_{\dot W^{p,\smooth}_{m,av}({\Omega})}^p 
+\doublebar{\vec v}_{\dot W^{p,\smooth}_{m,av}({\Omega})}^p \end{equation*}
and so we have that
\begin{equation*}\doublebar{\vec u}_{\dot W^{p,\smooth}_{m,av}({\Omega})}^p
\leq \sum_{j=0}^\infty \doublebar{\vec u_j}_{\dot W^{p,\smooth}_{m,av}({\Omega})}^p
\leq \frac{C_0^p}{1-C_0^p\varepsilon^p}\doublebar{\arr H}_{L^{p,\smooth}_{av}(\Omega)}^p.\end{equation*}

Let $\vec\varphi$ be smooth and compactly supported in~$\R^\dmn$; if we seek to establish well posedness of the Dirichlet problem \eqref{eqn:Dirichlet:1}, we further require that $\vec\varphi$ be supported in~${\Omega}$.

By Lemma~\ref{lem:L:L1}, we have that $\langle \nabla^m\vec\varphi,\arr\Psi\rangle_{\Omega}$ represents an absolutely convergent integral whenever $\arr\Psi\in L^{p,\smooth}_{m,av}(\Omega)$, and so the following computations are valid.

By bilinearity of the inner product,
%\begin{align*}
%\langle \nabla^m\vec\varphi, \mat B\nabla^m \vec u\rangle_{\Omega}
%&=
%	\sum_{j=0}^\infty \langle \nabla^m\vec\varphi, \mat B\nabla^m \vec u_j\rangle_{\Omega}
%\\&=
%	\sum_{j=0}^\infty \langle \nabla^m\vec\varphi, (\mat B-\mat A)\nabla^m \vec u_j\rangle_{\Omega}
%	+
%	\sum_{j=0}^\infty \langle \nabla^m\vec\varphi, \mat A\nabla^m \vec u_j\rangle_{\Omega}
%.\end{align*}
%Reindexing the sum, we have that
\begin{align*}
\langle \nabla^m\vec\varphi, \mat B\nabla^m \vec u\rangle_{\Omega}
&=
	\langle \nabla^m\vec\varphi, \mat A\nabla^m \vec u_0\rangle_{\Omega}
	\\&\qquad+
	\sum_{j=0}^\infty \langle \nabla^m\vec\varphi, (\mat B-\mat A)\nabla^m \vec u_j\rangle_{\Omega}
	+
	\langle \nabla^m\vec\varphi, \mat A\nabla^m \vec u_{j+1}\rangle_{\Omega}
.\end{align*}
By definition of $\vec u_j$, we have that
\begin{align*}
\langle \nabla^m\vec\varphi, \mat B\nabla^m \vec u\rangle_{\Omega}
&=
	\langle \nabla^m\vec\varphi, \arr H\rangle_{\Omega}
	\\&\qquad+
	\sum_{j=0}^\infty \langle \nabla^m\vec\varphi, (\mat B-\mat A)\nabla^m \vec u_j\rangle_{\Omega}
	+
	\langle \nabla^m\vec\varphi, (\mat A-\mat B)\nabla^m \vec u_j\rangle_{\Omega}
\\&=
	\langle \nabla^m\vec\varphi, \arr H\rangle_{\Omega}
.\end{align*}
Recall from the definition~\eqref{dfn:Neumann} of $\M_{\mat A,\arr H}\vec u$ that $\vec u$ is a solution to the Neumann problem~\eqref{eqn:Neumann:1} if and only if $\doublebar{\vec u}_{\dot W^{p,\smooth}_{m,av}(\Omega)} \leq  C(C_0,p,\varepsilon) \doublebar{\arr \Phi}_{L^{p,\smooth}_{av}(\Omega)}$ and 
\begin{equation*}\langle \nabla^m \vec\varphi,\mat B\nabla^m \vec u\rangle_\Omega = \langle \nabla^m \vec\varphi,\arr H\rangle_\Omega\quad\text{for all $\vec\varphi\in C^\infty_0(\R^\dmn)$.}\end{equation*}

Thus, if $\vec u_j$ was a solution to the Neumann problem~\eqref{eqn:Neumann:0} then $\vec u$ is a solution to the Neumann problem~\eqref{eqn:Neumann:1}.

If $\vec u_j$ was a solution to the Dirichlet problem~\eqref{eqn:Dirichlet:0} then $M\vec u=\Div_m\arr H$ in ${\Omega}$. Furthermore, $\Tr_{m-1}^{\Omega} \vec u_j=0$ for each~$j$ and so $\Tr_{m-1}^{\Omega} \vec u=0$ as well; thus, $\vec u$ is a solution to the Dirichlet problem~\eqref{eqn:Dirichlet:1}, as desired.

Finally, if the Dirichlet problems~\textup{(\ref{eqn:interpolation:Dirichlet:0}--\ref{eqn:interpolation:Dirichlet:1})} or Neumann problems~\textup{(\ref{eqn:interpolation:Neumann:0}--\ref{eqn:interpolation:Neumann:1})} are compatibly well posed, we may choose $\vec u_j\in \dot W^{p_0,\smooth_0}_{m,av}(\Omega)\cap \dot W^{p_1,\smooth_1}_{m,av}(\Omega)$ and so $\vec u\in \dot W^{p_0,\smooth_0}_{m,av}(\Omega)\cap \dot W^{p_1,\smooth_1}_{m,av}(\Omega)$, with the desired bounds.
\end{proof}

\subsection{Duality}
\label{sec:duality}

We have shown that if a boundary value problem for $L$ is well posed, then solutions to the corresponding problem for $M$ exist. %(In fact, we have proven a slightly stronger result, as we never used uniqueness of solutions to the problem for~$L$.) 
We must now show that if a boundary value problem for $L$ is well posed, then solutions to the corresponding problem for $M$ are unique. 

We will do this by using duality results to relate uniqueness of solutions for $L$ to existence of solutions for $L^*$; we may then use Theorem~\ref{thm:exist:perturb} to produce perturbative results.

We remark that Theorems~\ref{thm:exist:unique} and~\ref{thm:unique:exist} are a generalization of Lemma~\ref{lem:BVP:duality}; they include some results for the case $p<1$.

\begin{thm}\label{thm:exist:unique} Suppose that $L$ is a differential operator of the form~\eqref{eqn:divergence}, of order $2m$ and acting on $\C^N$-valued functions, associated to bounded coefficients~$\mat A$.  Let $\Omega\subset\R^\dmn$ be a Lipschitz domain with connected boundary. Let $(A^*)^{jk}_{\alpha\beta}=\overline{A^{kj}_{\beta\alpha}}$, and let $L^*$ be the differential operator associated to~$\mat A^*$.

Let $0<\smooth<1$ and $\pmin<p\leq \infty$. If $p<\infty$, then let $\smooth'$ be the real number and $p'$ be the extended real number that satisfy
\begin{equation*}\frac{1}{p'} = \max\biggl(0,1-\frac{1}{p}\biggr),\qquad \smooth'=(1-\smooth)+\pdmnMinusOne\max\biggl(\frac{1}{p}-1,0\biggr).
\end{equation*}
If $p=\infty$, let $\smooth'$ be any number with $1-\smooth\leq \smooth'<1$ and let $p'$ satisfy
\begin{equation*}1+\frac{\smooth'-(1-\smooth)}{\pdmnMinusOne}=\frac{1}{p'}.\end{equation*}

%Suppose that $\mat A$ satisfies the ellipticity condition \eqref{eqn:elliptic:everywhere} for some $\lambda>0$. 
Suppose that for every $\arr H\in L^{p,\smooth}_{av}(\Omega)$ there exists at least one solution $\vec u$ to the Dirichlet problem 
\begin{equation}
\label{eqn:Dirichlet:p}
L \vec u = \Div_m \arr H \text{ in }\Omega,
\quad \Tr_{m-1}^\Omega\vec u = 0,
\quad {\vec u}\in{\dot W^{p,\smooth}_{m,av}(\Omega)}.
\end{equation}
Then for every $\arr \Phi\in L^{p',\smooth'}_{av}(\Omega)$ there is at most one solution to the Dirichlet problem
\begin{equation}
\label{eqn:Dirichlet:p'}
L^* \vec v = \Div_m \arr \Phi \text{ in }\Omega,
\quad \Tr_{m-1}^\Omega\vec v = 0,
\quad {\vec v}\in{\dot W^{p',\smooth'}_{m,av}(\Omega)}
.\end{equation}
Furthermore, if $p'\geq1$ and there is a constant $C_0$ such that there is at least one solution to the Dirichlet problem~\eqref{eqn:Dirichlet:p} that satisfies $\doublebar{\vec u}_{\dot W^{p,\smooth}_{m,av}(\Omega)} \leq C_0 \doublebar{\arr H}_{L^{p,\smooth}_{av}(\Omega)}$, then the  solution to the Dirichlet problem~\eqref{eqn:Dirichlet:p'}, if it exists, must satisfy \begin{equation*}\doublebar{\vec v}_{\dot W^{p',\smooth'}_{m,av}(\Omega)} \leq C_1 C_0 \doublebar{\arr \Phi}_{L^{p',\smooth'}_{av}(\Omega)}\end{equation*}
where $C_1$ is such that 
\begin{equation*}\abs{\langle \arr F,\arr G\rangle_\Omega} \leq \sqrt{C_1} \doublebar{\arr F}_{L^{p',\smooth'}_{av}(\Omega)} \doublebar{\arr G}_{L^{p,\smooth}_{av}(\Omega)},
\quad
\doublebar{\arr F}_{L^{p',\smooth'}_{av}(\Omega)}\leq \sqrt{C_1} \sup_{\arr G\neq 0} \frac{\abs{\langle \arr F,\arr G\rangle_\Omega}}{\doublebar{\arr G}_{L^{p,\smooth}_{av}(\Omega)}}
.\end{equation*}
In particular, if we use the norm~\eqref{eqn:L:norm:whitney} in $L^{p,\smooth}_{av}(\Omega)$, then $C_1=1$.

%Similarly, suppose $\mat A$ satisfies the ellipticity condition \eqref{eqn:elliptic:domain} for some $\lambda>0$.
Suppose that for every $\arr H\in L^{p,\smooth}_{av}(\Omega)$ there exists at least one solution $\vec u$ to the Neumann problem 
\begin{equation}
\label{eqn:Neumann:p}
L \vec u = \Div_m \arr H \text{ in }\Omega,
\quad \M_{\mat A,\arr H}^\Omega\vec u = 0,
\quad {\vec u}\in{\dot W^{p,\smooth}_{m,av}(\Omega)}
.\end{equation}
Then for every $\arr \Phi\in L^{p',\smooth'}_{av}(\Omega)$ there is at most one solution to the Neumann problem
\begin{equation}
\label{eqn:Neumann:p'}
L^* \vec v = \Div_m \arr \Phi \text{ in }\Omega,
\quad \M_{\mat A^*,\arr\Phi}^\Omega\vec v = 0,
\quad {\vec v}\in{\dot W^{p',\smooth'}_{m,av}(\Omega)}
.\end{equation}
Furthermore, if $p'\geq 1$ and there is a constant $C_0<\infty$ such that there is at least one solution to the Neumann problem~\eqref{eqn:Neumann:p} that satisfies $\doublebar{\vec u}_{\dot W^{p,\smooth}_{m,av}(\Omega)} \leq C_0 \doublebar{\arr H}_{L^{p,\smooth}_{av}(\Omega)}$, then every solution to the Neumann problem~\eqref{eqn:Neumann:p'}, if it exists, must satisfy $\doublebar{\vec v}_{\dot W^{p',\smooth'}_{m,av}(\Omega)} \leq C_1 C_0 \doublebar{\arr \Phi}_{L^{p',\smooth'}_{av}(\Omega)}$.

\end{thm}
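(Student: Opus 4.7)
The argument is the standard duality principle adapted to the weighted-Sobolev framework; I focus on the Dirichlet case, the Neumann case being parallel modulo which class of test functions is admissible. By linearity, uniqueness of solutions to~\eqref{eqn:Dirichlet:p'} is equivalent to the assertion that every $\vec v\in\dot W^{p',\smooth'}_{m,av}(\Omega)$ with $L^*\vec v=0$ in $\Omega$ and $\Tr_{m-1}^\Omega\vec v=0$ satisfies $\nabla^m\vec v=0$; since $\partial\Omega$ is connected, this then identifies $\vec v$ with the zero element of the homogeneous space. To prove $\nabla^m\vec v=0$, I pair $\nabla^m\vec v$ against an arbitrary $\arr G\in L^{p,\smooth}_{av}(\Omega)$ and show that $\langle\nabla^m\vec v,\arr G\rangle_\Omega=0$; by the duality relation~\eqref{eqn:L:dual} in the case $p'\geq 1$ (and a direct density argument when $p<1$, $p'=\infty$), this forces $\nabla^m\vec v=0$.

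The key step is a symmetric test-function identity. Given $\arr G$ as above, the existence hypothesis produces $\vec u\in\dot W^{p,\smooth}_{m,av}(\Omega)$ with $L\vec u=\Div_m\arr G$ and $\Tr_{m-1}^\Omega\vec u=0$. Since both $\vec u$ and $\vec v$ have vanishing Dirichlet trace, each admits approximation by $C^\infty_0(\Omega)$ functions in its respective homogeneous norm, and hence each serves as an admissible test function for the equation satisfied by the other. Using $\vec v$ to test the weak formulation of $L\vec u=\Div_m\arr G$ gives $\langle\nabla^m\vec v,\mat A\nabla^m\vec u\rangle_\Omega=\langle\nabla^m\vec v,\arr G\rangle_\Omega$, while using $\vec u$ to test $L^*\vec v=0$ gives $\langle\nabla^m\vec u,\mat A^*\nabla^m\vec v\rangle_\Omega=0$. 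The algebraic identity $\langle\arr F,\mat A\arr H\rangle=\langle\mat A^*\arr F,\arr H\rangle$, immediate from $(A^*)^{jk}_{\alpha\beta}=\overline{A^{kj}_{\beta\alpha}}$, combined with the conjugate symmetry of $\langle\cdot,\cdot\rangle_\Omega$, produces $\langle\nabla^m\vec v,\mat A\nabla^m\vec u\rangle_\Omega=\overline{\langle\nabla^m\vec u,\mat A^*\nabla^m\vec v\rangle_\Omega}=0$; therefore $\langle\nabla^m\vec v,\arr G\rangle_\Omega=0$ as desired.

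For the quantitative bound when $p'\geq 1$, I rerun the identity of the previous paragraph with $\arr\Phi\neq 0$ to obtain $\langle\nabla^m\vec v,\arr G\rangle_\Omega=\overline{\langle\nabla^m\vec u,\arr\Phi\rangle_\Omega}$; the hypothesized pairing inequality together with the estimate $\doublebar{\nabla^m\vec u}_{L^{p,\smooth}_{av}(\Omega)}\leq C_0\doublebar{\arr G}_{L^{p,\smooth}_{av}(\Omega)}$ yields $|\langle\nabla^m\vec v,\arr G\rangle_\Omega|\leq\sqrt{C_1}\,C_0\doublebar{\arr G}_{L^{p,\smooth}_{av}(\Omega)}\doublebar{\arr\Phi}_{L^{p',\smooth'}_{av}(\Omega)}$, and the supremum characterization of the $L^{p',\smooth'}_{av}$ norm (the second of the two pairing inequalities in the statement) then produces $\doublebar{\nabla^m\vec v}_{L^{p',\smooth'}_{av}(\Omega)}\leq C_1C_0\doublebar{\arr\Phi}_{L^{p',\smooth'}_{av}(\Omega)}$. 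The Neumann case is structurally identical: by the definition~\eqref{dfn:Neumann} of Neumann boundary values the relevant weak identities extend to all $\vec\varphi\in C^\infty_0(\R^\dmn)$, and the vanishing of the Neumann data for $\vec u$ and $\vec v$ ensures that the boundary terms cancel when $\vec u$ and $\vec v$ are exchanged as test functions, after a suitable density argument.

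The main obstacle is the justification of the two testing identities when $p<1$ (so that $\dot W^{p,\smooth}_{m,av}(\Omega)$ is merely quasi-Banach and $p'=\infty$), where the pairing $\langle\cdot,\cdot\rangle_\Omega$ falls outside the classical Banach duality~\eqref{eqn:L:dual}. In that regime I expect to invoke Lemma~\ref{lem:L:L1} to secure absolute convergence of both $\langle\nabla^m\vec v,\arr G\rangle_\Omega$ and $\langle\nabla^m\vec u,\arr\Phi\rangle_\Omega$ (the exponent $\smooth'=(1-\smooth)+\pdmnMinusOne(1/p-1)$ is arranged precisely so that the two weight powers of $\dist(\cdot,\partial\Omega)$ combine correctly), together with Whitney-type cutoff and mollification arguments from \cite{Bar16pB} and from Section~\ref{sec:L} to produce the required approximations by $C^\infty_0(\Omega)$ functions in the Dirichlet case, and by compactly supported smooth functions on $\R^\dmn$ in the Neumann case. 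Once these analytic preliminaries are in place, the remaining content of the theorem is algebra.
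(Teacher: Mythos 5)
Your proposal is correct and follows essentially the same route as the paper's proof. Both arguments pair $\nabla^m\vec v$ against test data, solve the $L$-problem with that data to obtain a companion $\vec u$, use density of smooth compactly supported functions (the paper invokes \cite[Theorem~3.15]{Bar16pB}) to test the two weak formulations against each other, and close with the adjoint identity $\langle\mat A\nabla^m\vec u,\nabla^m\vec v\rangle_\Omega=\langle\nabla^m\vec u,\mat A^*\nabla^m\vec v\rangle_\Omega$; the quantitative bound is then read off from the pairing inequalities exactly as you describe. The only cosmetic differences are that the paper works directly with the difference of two solutions rather than first reducing to homogeneous data, and that it tests against bounded compactly supported $\arr H$ rather than arbitrary $\arr G\in L^{p,\smooth}_{av}(\Omega)$ — a slightly cleaner choice that avoids the separate "direct density argument" you flag in the $p<1$, $p'=\infty$ regime, since bounded compactly supported arrays already separate $L^1_{loc}$ functions and lie in $L^{p,\smooth}_{av}(\Omega)$ for all admissible $p$ by Lemma~\ref{lem:L:L-infinity}.
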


\begin{proof} %We present the argument only for the Dirichlet problem; the argument for the Neumann problem is similar. 

Suppose that $\vec v$ and $\vec w$ are two solutions to the Dirichlet problem~\eqref{eqn:Dirichlet:p'} or Neumann problem~\eqref{eqn:Neumann:p'}. To show that $\nabla^m \vec v=\nabla^m\vec w$ it suffices to show that $\langle \arr H, \nabla^m \vec v\rangle_\Omega=\langle \arr H, \nabla^m \vec w\rangle_\Omega$ for all $\arr H$ bounded and compactly supported in~$\Omega$.

Choose some such $\arr H$. Observe that $\arr H\in L^{p,\smooth}_{av}(\Omega)$.
Let $\vec u$ be a solution to the Dirichlet problem~\eqref{eqn:Dirichlet:p} or Neumann problem~\eqref{eqn:Neumann:p} with data~$\arr H$; by assumption, at least one such $\vec u$ exists. If $C_0<\infty$, we require $\doublebar{\vec u}_{\dot W^{p,\smooth}_{m,av}(\Omega)} \leq C_0 \doublebar{\arr H}_{L^{p,\smooth}_{av}(\Omega)}$.
Then 
\begin{equation*}\langle \mat A\nabla^m \vec u,\nabla^m\vec\varphi\rangle_\Omega = \langle \arr H,\nabla^m\vec\varphi\rangle_\Omega\end{equation*}
for all $\vec\varphi$ smooth and compactly supported in~$\Omega$ (the Dirichlet problem) or $\R^\dmn$ (the Neumann problem). By density of smooth functions (see \cite[Theorem~3.15]{Bar16pB}), this is true for all $\vec\varphi\in\dot W^{p',\smooth'}_{m,av}(\Omega)$ (with $\Tr_{m-1}^\Omega\vec\varphi=0$ in the case of the Dirichlet problem, without restriction in the case of the Neumann problem). In particular, it is true for $\vec \varphi=\vec v$. Thus,
\begin{equation*}\langle \arr H,\nabla^m\vec v\rangle_\Omega
=\langle \mat A\nabla^m \vec u,\nabla^m\vec v\rangle_\Omega
=\langle \nabla^m \vec u,\mat A^*\nabla^m\vec v\rangle_\Omega.\end{equation*}
But because $\vec v$ is a solution to the problem~\eqref{eqn:Dirichlet:p'}, we have that
\begin{equation*}\langle \nabla^m \vec u,\mat A^*\nabla^m\vec v\rangle_\Omega
=\langle \nabla^m \vec u,\arr \Phi\rangle_\Omega.\end{equation*}
Similarly 
\begin{equation*}\langle \arr H,\nabla^m\vec w\rangle_\Omega = \langle \nabla^m \vec u,\arr \Phi\rangle_\Omega \end{equation*}
and so we have that
\begin{equation*}\langle \arr H,\nabla^m\vec v\rangle_\Omega = \langle \arr H,\nabla^m\vec w\rangle_\Omega \end{equation*}
as desired.

Furthermore, if $C_0<\infty$ then 
\begin{align*}
\abs{\langle \arr H,\nabla^m\vec v\rangle_\Omega}
&=
	\abs{\langle \nabla^m \vec u,\arr \Phi\rangle_\Omega}
\leq 
	\sqrt{C_1}\doublebar{\vec u}_{\dot W^{p,\smooth}_{m,av}(\Omega)} \doublebar{\arr \Phi}_{L^{p',\smooth'}_{av}(\Omega)}
\\&\leq 
	\sqrt{C_1}C_0 \doublebar{\arr H}_{L^{p,\smooth}_{av}(\Omega)}\doublebar{\arr \Phi}_{L^{p',\smooth'}_{av}(\Omega)}
.\end{align*}
If $p'\geq 1$ then
\begin{equation*}\doublebar{\vec v}_{\dot W^{p',\smooth'}_{m,av}(\Omega)}
\leq \sqrt{C_1} \sup_{\arr H\in L^{p,\smooth}_{av}(\Omega)} \frac{\abs{\langle \arr H,\nabla^m\vec v\rangle_\Omega}}{\doublebar{\arr H}_{L^{p,\smooth}_{av}(\Omega)}}
\leq C_1 C_0\doublebar{\arr \Phi}_{L^{p',\smooth'}_{av}(\Omega)}\end{equation*}
as desired.
\end{proof}

We now prove the converse. We remark that the converse is somewhat more delicate and that more must be assumed. 

Specifically, first, existence of solutions for $p=\infty$ implies uniqueness of solutions for a range of $p\leq 1$; uniqueness of solutions for $p=\infty$ is not at present known to imply existence results even for $p=1$.
%if $p=\infty$ then existence of solutions to the $L^{\infty,\smooth}_{av}$-boundary value problem for~$L$ implied uniqueness of solutions to the $L^{p',\smooth'}_{av}$-boundary value problem for~$L^*$ for a range of $p'\leq 1$. In the converse case, uniqueness of solutions to the $L^{\infty,\smooth}_{av}$-boundary value problem for~$L$ implies existence of solutions to the $L^{p',\smooth'}_{av}$-Dirichlet problem for~$L^*$ only for $p'=1$.

Second, if $p<1$ then we will need to assume both uniqueness and existence of solutions to the $L^{p,\smooth}_{av}$-boundary value problem for~$L$ to derive existence of solutions to the $L^{p',\smooth'}_{av}$-boundary value problem for~$L^*$.

Finally, recall that in Theorem~\ref{thm:exist:unique} we were able to derive uniqueness of solutions to the problem \eqref{eqn:Dirichlet:p'} or~\eqref{eqn:Neumann:p'} given only existence of solutions to the boundary value problem \eqref{eqn:Dirichlet:p} or~\eqref{eqn:Neumann:p}. The stronger condition $\doublebar{\vec u}_{\dot W^{p,\smooth}_{m,av}(\Omega)} \leq C_0 \doublebar{\arr H}_{L^{p,\smooth}_{av}(\Omega)}$ was not necessary for mere uniqueness (although in the case $p'\geq 1$ it did yield a stronger result). In Theorem~\ref{thm:unique:exist}, we will need to assume the condition $\doublebar{\vec u}_{\dot W^{p,\smooth}_{m,av}(\Omega)} \leq C_0 \doublebar{\arr H}_{L^{p,\smooth}_{av}(\Omega)}$ to establish existence of solutions to the problem \eqref{eqn:Dirichlet:p'} or~\eqref{eqn:Neumann:p'}; nothing will be proven given only uniqueness of solutions to the boundary value problem \eqref{eqn:Dirichlet:p} or~\eqref{eqn:Neumann:p}.

\begin{thm} \label{thm:unique:exist} Let $L$, $\mat A$, $\Omega$ be as in Theorem~\ref{thm:exist:unique}. %We further require that $\Omega$ be connected. 
Let $0<\smooth<1$, $\pmin<p<\infty$, and let 
\begin{equation*}\frac{1}{p'} = \max\biggl(0,1-\frac{1}{p}\biggr),\qquad \smooth'=(1-\smooth)+\pdmnMinusOne\max\biggl(\frac{1}{p}-1,0\biggr).
\end{equation*}

Suppose that there is some constant $C_0<\infty$ such that, if $\vec u$ is a solution to the Dirichlet problem~\eqref{eqn:Dirichlet:p} with data~$\arr H$, then $\doublebar{\vec u}_{\dot W^{p,\smooth}_{m,av}(\Omega)}\leq C_0\doublebar{\arr H}_{L^{p,\smooth}_{av}(\Omega)}$. If $p<1$, suppose in addition that a (necessarily unique) solution to the Dirichlet problem~\eqref{eqn:Dirichlet:p} exists for all $\arr H\in L^{p,\smooth}_{av}(\Omega)$.

Then for all $\arr\Phi\in L^{p',\smooth'}_{av}(\Omega)$ there is at least one solution $\vec v$ to the Dirichlet problem~\eqref{eqn:Dirichlet:p'}; furthermore, at least one such solution satisfies $\doublebar{\vec v}_{\dot W^{p',\smooth'}_{m,av}(\Omega)}\leq C_1C_0\doublebar{\arr \Phi}_{L^{p',\smooth'}_{av}(\Omega)}$, where $C_1$ is as in Theorem~\ref{thm:exist:unique}.

A similar result is valid for the Neumann problem.
\end{thm}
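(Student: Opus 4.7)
The plan is a duality / closed range theorem argument, the natural converse to Theorem~\ref{thm:exist:unique}; I focus on the Dirichlet problem for $1<p<\infty$ (hence $1<p'<\infty$), where the argument is cleanest, and comment on the regime $p\leq 1$ at the end. Let $X$ denote the subspace of $\dot W^{p,\smooth}_{m,av}(\Omega)$ of functions with $\Tr_{m-1}^\Omega\vec u=0$, and define $Y$ analogously with $(p,\smooth)$ replaced by $(p',\smooth')$; for $1<p,p'<\infty$ these are reflexive Banach spaces by the duality~\eqref{eqn:L:dual}. Using the Whitney-cube norm~\eqref{eqn:L:norm:whitney}, the pairing $\langle\,\cdot\,,\,\cdot\,\rangle_\Omega$ gives the isometric identification $(L^{p,\smooth}_{av}(\Omega))^*=L^{p',\smooth'}_{av}(\Omega)$ with pairing constant $C_1=1$. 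Consider the bounded operator $\widetilde L\colon X\to Y^*$ defined by $(\widetilde L\vec u)(\vec\varphi):=\langle\nabla^m\vec\varphi,\mat A\nabla^m\vec u\rangle_\Omega$; by the $\mat A\leftrightarrow\mat A^*$ transposition already invoked in Theorem~\ref{thm:exist:unique}, existence of $\vec v\in Y$ solving $L^*\vec v=\Div_m\arr\Phi$ is equivalent to surjectivity of the Banach-space adjoint $\widetilde L^*\colon Y^{**}=Y\to X^*$ onto every functional of the form $F_\Phi(\vec u):=\langle\arr\Phi,\nabla^m\vec u\rangle_\Omega$.

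The heart of the argument is the identity
\begin{equation*}
\|\widetilde L\vec u\|_{Y^*}=\inf\bigl\{\doublebar{\arr H}_{L^{p,\smooth}_{av}(\Omega)}:L\vec u=\Div_m\arr H\text{ weakly in }\Omega\bigr\},
\end{equation*}
which follows from Hahn-Banach in the Whitney-norm duality, using that the annihilator of $\nabla^m Y$ inside $L^{p,\smooth}_{av}(\Omega)$ coincides with the space of weakly $\Div_m$-closed elements (because $C^\infty_0(\Omega)$ is dense in $Y$). The hypothesized a priori bound $\doublebar{\vec u}_{\dot W^{p,\smooth}_{m,av}}\leq C_0\doublebar{\arr H}_{L^{p,\smooth}_{av}}$ holds for \emph{every} $\arr H$ representing $L\vec u$, so taking the infimum yields $\doublebar{\vec u}_X\leq C_0\|\widetilde L\vec u\|_{Y^*}$. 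This supplies both injectivity (take $\arr H=0$) and closed range of $\widetilde L$, so Banach's closed range theorem delivers $\widetilde L^*$ surjective onto $X^*$; the open mapping theorem then yields the quantitative bound $\doublebar{\vec v}_{\dot W^{p',\smooth'}_{m,av}}\leq C_0\|F_\Phi\|_{X^*}\leq C_0 C_1\doublebar{\arr\Phi}_{L^{p',\smooth'}_{av}}$.

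The main obstacle is the regime $p\leq 1$, where $X$ is only a quasi-Banach space and $Y=\dot W^{\infty,\smooth'}_{m,av}(\Omega)$ is non-reflexive, so Banach's closed range theorem a priori places $\vec v$ only in $Y^{**}$. The additional hypothesis that $L$-solutions exist is essential here: with the bounded solution operator $T_L\colon L^{p,\smooth}_{av}(\Omega)\to X$ in hand, one forms the linear functional $\Lambda_\Phi(\arr H):=\langle\nabla^m T_L(\arr H),\arr\Phi\rangle_\Omega$ on $L^{p,\smooth}_{av}(\Omega)$, which is bounded by $C_0 C_1\doublebar{\arr\Phi}$; duality for the Whitney-averaged space in this quasi-Banach range, combined with the identity $T_L(\mat A\nabla^m\vec\psi)=\vec\psi$ for $\vec\psi\in C^\infty_0(\Omega)$ (from uniqueness), extracts the desired representative $\vec v$ and verifies the weak equation. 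The Neumann case follows the same scheme with $X,Y$ replaced by the full spaces $\dot W^{p,\smooth}_{m,av}(\Omega)$ (no zero-trace condition) and test functions drawn from $C^\infty_0(\R^\dmn)$; injectivity and closed range come from the analogous Neumann a priori bound.
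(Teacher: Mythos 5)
Your closed-range argument for the Dirichlet problem with $1<p<\infty$ is correct and is a genuinely different route from the paper's. The paper constructs the solution operator $T\colon\arr H\mapsto\nabla^m\vec u$ on the subspace of $L^{p,\smooth}_{av}(\Omega)$ where solutions exist, extends it by Hahn--Banach, bounds the adjoint $T^*$ on $L^{p',\smooth'}_{av}(\Omega)$, and then proves that $T^*\arr\Phi$ lies in the closed subspace $W$ of $m$-gradients with vanishing trace by a separating-functional argument: if not, the separating $\arr H$ annihilating $W$ satisfies $T\arr H=0$, contradicting $\langle\arr H,T^*\arr\Phi\rangle_\Omega=1$. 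Your argument sidesteps that last step entirely: casting the a priori estimate as bounded-belowness of the form operator $\widetilde L\colon X\to Y^*$ (via the infimum identity, where you correctly use that the estimate holds for every representative $\arr H$), and using reflexivity of $Y$ when $1<p'<\infty$, the output of the closed range theorem is automatically some $\vec v\in Y$, i.e.\ already a zero-trace element of $\dot W^{p',\smooth'}_{m,av}(\Omega)$, and unwinding $\widetilde L^*\vec v=F_\Phi$ against $\vec u=\vec\psi\in C^\infty_0(\Omega)$ gives the weak equation directly. The constant tracking also comes out the same.

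The $p\leq 1$ sketch (and $p=1$, where $Y$ also fails to be reflexive), however, has a genuine gap. Duality for $L^{p,\smooth}_{av}(\Omega)$ and the bound on $\Lambda_\Phi$ do produce an array $\arr\Psi\in L^{\infty,\smooth'}_{av}(\Omega)$ with $\Lambda_\Phi(\arr H)=\langle\arr\Psi,\arr H\rangle_\Omega$, and from $T_L(\mat A\nabla^m\vec\psi)=\vec\psi$ together with $T_L\arr H=0$ for weakly $\Div_m$-free $\arr H$ (the uniqueness implied by the a priori bound) you can conclude that $\arr\Psi$ annihilates the subspace of weakly $\Div_m$-free arrays in $L^{p,\smooth}_{av}(\Omega)$. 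But that places $\arr\Psi$ only in the weak-$*$ closure of $W$ inside $L^{\infty,\smooth'}_{av}(\Omega)$; nothing you have written shows that this weak-$*$ closure is $W$ itself, i.e.\ that $\arr\Psi=\nabla^m\vec v$ for a genuine $\vec v\in\dot W^{\infty,\smooth'}_{m,av}(\Omega)$ with vanishing trace, and this is precisely the nontrivial point of the whole theorem. The paper closes the gap by embedding $L^{\infty,\smooth'}_{av}(\Omega)\hookrightarrow L^1(\Omega;\,dx/(1+\abs{x}^\dmn))$ via Lemma~\ref{lem:L:L1}, running the separating-hyperplane argument there, and then using Lemma~\ref{lem:L:L-infinity} (this is exactly where the lower bound $p>\pmin$ is needed) to show the resulting separating functional $\arr H$ lies in $L^{p,\smooth}_{av}(\Omega)$, hence in the domain of $T$, producing the contradiction. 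You would need this step, or an argument that the space of $m$-gradients is weak-$*$ closed in $L^{\infty,\smooth'}_{av}(\Omega)$, to finish the $p\leq1$ case.
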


% Injectivity of T does not imply surjectivity of T^*.
% Let Tf = (f_1, (1/2) f_2, (1/3) f_3, ... )
% T is self-adjoint, but T^{-1} f = (f_1, 2 f_2, 3 f_3, ... )
% which is not necessarily in \ell^2, so $T$ may not be onto.

\begin{proof}
Let $E$ denote the space of all $\arr H\in L^{p,\smooth}_{av}(\Omega)$ such that a solution to the problem~\eqref{eqn:Dirichlet:p} or~\eqref{eqn:Neumann:p} exists. By assumption, if $p<1$ then $E=L^{p,\smooth}_{av}(\Omega)$. 

Let $T:E\mapsto L^{p,\smooth}_{av}(\Omega)$ be given by $T\arr H=\nabla^m \vec u$, where $\vec u$ is the solution to the problem~\eqref{eqn:Dirichlet:p} or~\eqref{eqn:Neumann:p} with data~$\arr H$. If $p<1$ then $T$ is defined on $L^{p,\smooth}_{av}(\Omega)$ by assumption. If $p\geq 1$, then by the Hahn-Banach theorem we may extend $T$ to a bounded linear operator on all of $L^{p,\smooth}_{av}(\Omega)$.

Observe that there are two subspaces of $E$ for which we may easily evaluate $T$:
\begin{itemize}
\item If $\arr H= \mat A\nabla^m\vec\varphi$ for some $\vec\varphi\in \dot W^{p,\smooth}_{m,av}(\Omega)$ (with $ \Tr_{m-1}^\Omega\vec\varphi=0$ in the case of the Dirichlet problem), then $\arr H\in E$ and $T\arr H=\vec\varphi$.
\item If $\langle \nabla^m\vec \varphi,\arr H\rangle_\Omega=0$, for all $\vec\varphi \in C^\infty_0(\Omega)$ in the case of the Dirichlet problem, or all $\vec\varphi\in C^\infty_0(\R^\dmn)$ in the case of the Neumann problem, then $\arr H\in E$ and $T\arr H=0$. 
\end{itemize}

We now bound the adjoint $T^*$ to~$T$.
If $1\leq p<\infty$, then by formula~\eqref{eqn:L:dual} we have that $L^{p',\smooth'}_{av}(\Omega)$ is the dual space to $L^{p,\smooth}_{av}(\Omega)$, and so $T^*$ is a bounded linear operator 
$L^{p',\smooth'}_{av}(\Omega)\mapsto L^{p',\smooth'}_{av}(\Omega)$.

If $\pmin<p<1$, let $\mathcal{G}$ be a grid of dyadic Whitney cubes, as in the norm~\eqref{eqn:L:norm:whitney}. Let $\widetilde \smooth = \smooth-\pdmnMinusOne(1/p-1)$.
Let $\arr H\in L^{p,\smooth}_{av}(\Omega)$, so $T\arr H\in L^{p,\smooth}_{av}(\Omega)$.
By Lemma~\ref{lem:embedding}, $T\arr H\in L^{1,\widetilde\smooth}_{av}(\Omega)$. Furthermore,
\begin{equation*}\doublebar{T\arr H}_{L^{1,\widetilde\smooth}_{av}(\Omega)}
\leq \sum_{Q\in \mathcal{G}} \doublebar{T(\1_Q\arr H)}_{L^{1,\widetilde\smooth}_{av}(\Omega)}.\end{equation*}
Another application of Lemma~\ref{lem:embedding} yields that
\begin{equation*}\doublebar{T\arr H}_{L^{1,\widetilde\smooth}_{av}(\Omega)}
\leq C\sum_{Q\in \mathcal{G}} \doublebar{T(\1_Q\arr H)}_{L^{p,\smooth}_{av}(\Omega)}.\end{equation*}
By boundedness of~$T$, we have that
\begin{equation*}\doublebar{T(\1_Q\arr H)}_{L^{p,\smooth}_{av}(\Omega)} 
\leq C \doublebar{\1_Q\arr H}_{L^{p,\smooth}_{av}(\Omega)}
.\end{equation*}
Because $\1_Q\arr H$ is supported in~$Q$,
\begin{equation*}\doublebar{\1_Q\arr H}_{L^{p,\smooth}_{av}(\Omega)} 
\approx\doublebar{\1_Q\arr H}_{L^{1,\widetilde\smooth}_{av}(\Omega)}
.\end{equation*}
Thus
\begin{equation*}\doublebar{T\arr H}_{L^{1,\widetilde\smooth}_{av}(\Omega)}
\leq C\sum_{Q\in \mathcal{G}} \doublebar{\1_Q\arr H}_{L^{1,\widetilde\smooth}_{av}(\Omega)}
\approx \doublebar{\arr H}_{L^{1,\widetilde\smooth}_{av}(\Omega)}.
\end{equation*}
Thus, $T$ extends by density to a bounded operator on ${L^{1,\widetilde\smooth}_{av}(\Omega)}$. Observe that $1-\widetilde\smooth=\smooth'$ and so boundedness of $T^*$ on $L^{\infty,\smooth'}_{av}(\Omega)$ follows from the results for $p=1$.

Thus, if $\pmin<p< \infty$ then $T^*$ is a bounded operator on $L^{p',\smooth'}_{av}(\Omega)$. It is elementary to show that $\doublebar{T^*}\leq C_1C_0$. It suffices to show that if $\arr\Phi\in  L^{p',\smooth'}_{av}(\Omega)$ then $T^*\arr\Phi=\nabla^m \vec v$ for some $\vec v\in L^{p',\smooth'}_{av}(\Omega)$, and that $\vec v$ is a solution to problem~\eqref{eqn:Dirichlet:p'} or~\eqref{eqn:Neumann:p'}.

Suppose first that $p>1$. 
Let $W=\{\nabla^m \vec v: \vec v\in \dot W^{p',\smooth'}_{m,av}(\Omega)\}$ if we seek to solve the Neumann problem, or $W=\{\nabla^m \vec v: \vec v\in \dot W^{p',\smooth'}_{m,av}(\Omega),\>\Tr_{m-1}^\Omega\vec v=0\}$ if we seek to solve the Dirichlet problem. Then $W$ is a closed subspace of $L^{p',\smooth'}_{av}(\Omega)$. 

Suppose that $T^*\arr\Phi\notin W$ for some $\arr\Phi\in L^{p',\smooth'}_{av}(\Omega)$. Because $W$ is closed, there is some $\varepsilon>0$ such that $\doublebar{T^*\arr\Phi-\nabla^m \vec v}_{L^{\smash{p',\smooth'}\vphantom{p,\smooth}}_{av}(\Omega)}\geq \varepsilon$ for every $\nabla^m\vec v\in W$.

If $p>1$, then $p'<\infty$ and so the dual space of $\smash{L^{p',\smooth'}_{av}(\Omega)}$ is ${L^{p,\smooth}_{av}(\Omega)}$.
It is a standard result in functional analysis (see, for example, \cite[Theorem~4.8.3]{Fri82}) that there is some $\arr H\in {L^{p,\smooth}_{av}(\Omega)}$ such that $\langle \arr H, T^*\arr\Phi\rangle_\Omega = 1$ and $\langle \arr H, \arr w\rangle_\Omega=0$ for every $\arr w\in W$. Recalling the definition of~$W$, we have that $\langle \arr H, \nabla^m\vec v\rangle_\Omega=0$ for all $\vec v\in \dot W^{p',\smooth'}_{m,av}(\Omega)$ (possibly with the additional assumption $\Tr_{m-1}^\Omega\vec v=0$).

But if $\langle \arr H,\nabla^m \vec v\rangle_\Omega=0$ for every such~$\vec v$, then in particular $\langle \arr H,\nabla^m \vec \varphi\rangle_\Omega=0$ for any $\varphi$ smooth and compactly supported (in $\Omega$ or $\R^\dmn$); thus  $T\arr H=0$. But then $\langle \arr H, T^*\arr \Phi\rangle_\Omega = 0$, contradicting our assumption; thus, $T^*\arr \Phi\in W$. % and so $T^*\arr \Phi=\nabla^m\vec v$ for some $\vec v\in \dot W^{p',\smooth'}_{m,av}(\Omega)$, as desired. Notice that in the case of the Dirichlet problem we also have that $\Tr_{m-1}^\Omega\vec v=0$.

If $p\leq 1$ and so $p'=\infty$, then $T^*\arr \Phi\in L^{\infty,\smooth'}_{av}(\Omega)$.
By Lemma~\ref{lem:L:L1}, if $\arr F\in {L^{\infty,\smooth'}_{av}(\Omega)}$ then
\begin{equation*}
\doublebar{\arr F}_{L^1(\Omega;\, dx/(1+\abs{x}^\dmn))}
= \int_\Omega \abs{\arr F(x)} \,\frac{1}{1+\abs{x}^\dmn}\,dx
\leq  C \doublebar{\arr F}_{L^{\infty,\smooth'}_{av}(\Omega)}\end{equation*}
whenever $\smooth'<1$.

Let $W=\{\nabla^m \vec v:\nabla^m\vec v\in {L^1(\Omega;\, dx/(1+\abs{x}^\dmn))}\}$ or $W=\{\nabla^m \vec v:\nabla^m\vec v\in {L^1(\Omega;\, dx/(1+\abs{x}^\dmn))},\>\Tr_{m-1}^\Omega\vec v=0\}$. Because ${L^1(\Omega;\, dx/(1+\abs{x}^\dmn))} \subset L^1_{loc}(\overline\Omega)$, we have that $\Tr_{m-1}^\Omega\vec v$ is meaningful.

As before, if $T^*\arr \Phi\notin W$ then there is some $\arr H$ with $\esssup_\Omega \abs{\arr H(x)} (1+\abs{x}^\dmn)<\infty$ such that $\langle \arr H,T^*\arr \Phi\rangle_\Omega=1$ and $\langle \arr H,\nabla^m \vec v\rangle_\Omega=0$ for all $\vec v$ smooth and compactly supported. 
By Lemma~\ref{lem:L:L-infinity}, if $p>\pmin$ then $\arr H \in L^{p,\smooth}_{av}(\Omega)$, and so $\langle \arr H,T^*\arr \Phi\rangle_\Omega=\langle T\arr H,\arr \Phi\rangle_\Omega$. We may derive a contradiction as before.

Thus, in either case, $T^*\arr \Phi=\nabla^m\vec v$ for some $\vec v\in \dot W^{p',\smooth'}_{m,av}(\Omega)$, as desired. Notice that in the case of the Dirichlet problem we also have that $\Tr_{m-1}^\Omega\vec v=0$.

Finally, suppose that $\vec\varphi\in C^\infty_0(\Omega)$ (the Dirichlet problem) or $\vec\varphi\in C^\infty_0(\R^\dmn)$ (the Neumann problem).
Then
\begin{equation*}
\langle \nabla^m \vec\varphi,\mat A^*\nabla^m \vec v\rangle_\Omega
=
\langle \mat A \nabla^m \vec\varphi,T^*\arr\Phi\rangle_\Omega
=
\langle T(\mat A \nabla^m \vec\varphi),\arr\Phi\rangle_\Omega
=
\langle  \nabla^m \vec\varphi,\arr\Phi\rangle_\Omega
\end{equation*}
and so $\vec v$ is a solution to the Dirichlet problem~\eqref{eqn:Dirichlet:p'} or the Neumann problem problem~\eqref{eqn:Neumann:p'}, as desired.
\end{proof}

\subsection{Perturbation of full well posedness}
\label{sec:perturb:full}

We now prove Theorem~\ref{thm:perturb}.

By Theorem~\ref{thm:exist:perturb}, there is at least one solution to the problem \eqref{eqn:Dirichlet:1} or~\eqref{eqn:Neumann:1}. Furthermore, by Theorems~\ref{thm:exist:unique} and~\ref{thm:unique:exist}, if $p<\infty$, if $p'$ and $\smooth'$ are as in Theorem~\ref{thm:unique:exist}, if and $\arr \Phi\in L^{p',\smooth'}_{av}(\Omega)$, then there is a unique solution to the Dirichlet problem
\begin{equation*}
{L}^* \vec u = \Div_m \arr \Phi \text{ in }\Omega,
\quad \Tr_{m-1}^\Omega\vec u = 0,
\quad
\doublebar{\vec u}_{\dot W^{p',\smooth'}_{m,av}(\Omega)} \leq C_1C_0 \doublebar{\arr \Phi}_{L^{p',\smooth'}_{av}(\Omega)}
\end{equation*}
or the Neumann problem 
\begin{equation*}
{L}^* \vec u = \Div_m \arr \Phi \text{ in }\Omega,
\quad \M_{\mat A^*,\arr\Phi}^\Omega\vec u = 0,
\quad
\doublebar{\vec u}_{\dot W^{p,\smooth}_{m,av}(\Omega)} \leq  C_1C_0 \doublebar{\arr \Phi}_{L^{p,\smooth}_{av}(\Omega)}
.\end{equation*}
Again by Theorem~\ref{thm:exist:perturb}, there must exist solutions to the corresponding problems for the operator ${M}^*$, and so another application of Theorem~\ref{thm:exist:unique} implies that the solutions to the problems \eqref{eqn:Dirichlet:1} or~\eqref{eqn:Neumann:1} must be unique, as desired.

\section{Energy solutions and well posedness near \texorpdfstring{$p=2$, $\smooth=1/2$}{a special point}}
\label{sec:neighborhood}

In this section we will prove Theorem~\ref{thm:neighborhood}. Interpolation methods will be essential to our argument; thus, we will also prove Lemma~\ref{lem:interpolation}.

In Section~\ref{sec:newton:bounded}, we will define the Newton potential and bound it for constant coefficients on our weighted averaged Lebesgue spaces $L^{p,\smooth}_{av}(\Omega)$. %We will use boundedness of the Newton potential in Section~\ref{sec:interpolation}; we will also use boundedness of the Newton potential in Section~\ref{sec:biharmonic} to establish well posedness results for biharmonic operators.

We will review interpolation theory and establish interpolation results for $L^{p,\smooth}_{av}(\Omega)$ and related spaces in Section~\ref{sec:interpolation}; in particular, we will use boundedness of the Newton potential to establish interpolation results for the spaces $\dot W^{p,\smooth}_{m,av}(\Omega)$. (We will also use boundedness of the Newton potential in Section~\ref{sec:biharmonic} to establish well posedness results for biharmonic operators.) We will then use these interpolation results to prove Lemma~\ref{lem:interpolation}.

Finally, we will complete the proof of Theorem~\ref{thm:neighborhood} in Section~\ref{sec:neighborhood:proof}.

\subsection{Boundedness of the Newton potential for constant coefficients}
\label{sec:newton:bounded}

Suppose that $L$ is an elliptic operator of the form~\eqref{eqn:divergence} associated to some coefficients $\mat A$ that satisfy the bound~\eqref{eqn:elliptic:bounded} and the ellipticity condition~\eqref{eqn:elliptic:everywhere}. By the Lax-Milgram lemma, if $\arr H\in L^2(\R^\dmn)$, then there is a unique function $\vec u=\vec\Pi^L\arr H$  in $\dot W^2_m(\R^\dmn)$ that satisfies $L(\vec\Pi^L\arr H)=\Div_m\arr H$, that is, that satisfies
\begin{equation}
\label{dfn:newton}
\bigl\langle\nabla^m\vec\varphi, \mat A \nabla^m(\vec\Pi^L\arr H) \bigr\rangle_{\R^\dmn} 
=
\bigl\langle\nabla^m\vec\varphi, \arr H \bigr\rangle_{\R^\dmn}
\end{equation}
for all $\vec\varphi\in \dot W^2_m(\R^\dmn)$.
Furthermore, $\vec\Pi^L$ is a linear operator and is bounded $L^2(\R^\dmn)\mapsto \dot W^2_m(\R^\dmn)$, with operator norm at most $1/\lambda$. The kernel of $\vec\Pi^L$ is called the fundamental solution and was constructed for general higher order operators in \cite{Bar16}; we refer the interested reader to \cite{Bar16} for a more detailed discussion of the Newton potential~$\vec\Pi^L$.

By formula~\eqref{eqn:L2:2:half}, $\arr H\mapsto \vec \Pi^L (\mathcal{E}^0_\Omega\arr H)\big\vert_\Omega$ is bounded $L^{2,1/2}_{av}(\Omega)\mapsto \dot W^{2,1/2}_{m,av}(\Omega)$, where $\mathcal{E}^0_\Omega$ denotes extension by zero.
Under some circumstances we can bound this operator on 
$L^{p,\smooth}_{av}(\Omega)$ for more general $p$ and~$\smooth$. 
Observe the presence of the extension operator $\mathcal{E}_\Omega^0$ and the restriction operator $\smash{\big\vert_\Omega}$ in the above expression. It will be more convenient to consider $\smash{\vec\Pi^L}$ without extension and restriction operators. To this end, we will work with with global analogues of $L^{p,\smooth}_{av}(\Omega)$ and~$\dot W^{p,\smooth}_{m,av}(\Omega)$.

Observe that if $\Omega$ is an open set and $\partial\Omega$ has measure zero, then the norm in the space $L^{p,\smooth}_{av}(\R^\dmn\setminus\partial\Omega)$ satisfies
\begin{equation*}\doublebar{\arr H}_{L^{p,\smooth}_{av}(\R^\dmn\setminus\partial\Omega)}^p = 
\doublebar{\arr H\big\vert_\Omega}_{L^{p,\smooth}_{av}(\Omega)}^p
+\doublebar{\arr H\big\vert_{\R^\dmn\setminus\overline\Omega}}_ {L^{p,\smooth}_{av}(\R^\dmn\setminus\overline\Omega)}^p
.\end{equation*}
We define the global analogue of $\dot W^{p,\smooth}_{m,av}(\Omega)$ as follows.
\begin{align}
\label{eqn:W:global}
\widetilde W_{m,av}^{p,\smooth}(\partial\Omega)
&=\{\vec F\in  W^1_{m,loc}(\R^\dmn): \nabla^m\vec F\in L^{p,\smooth}_{av}(\R^\dmn\setminus\partial\Omega)\}
%\\&= \{\vec F: \1_\Omega\vec F\in \dot W^{p,\smooth}_{m,av}(\Omega),\> \1_{\R^\dmn\setminus\overline\Omega}\vec F\in \dot W^{p,\smooth}_{m,av}({\R^\dmn\setminus\overline\Omega}), 
%\\&\qquad\qquad \qquad\qquad\qquad
%\Tr_{m-1}^\Omega (\1_\Omega\vec F) = \Tr_{m-1}^{\R^\dmn\setminus\overline\Omega} (\1_{\R^\dmn\setminus\overline\Omega}\vec F)\}
.\end{align}
Notice that $\vec F\in \smash{\widetilde W_{m,av}^{p,\smooth}(\partial\Omega)}$ is a stronger condition than $\smash{\nabla^m\vec F\big\vert_{\R^\dmn\setminus\partial\Omega}}\in L_{av}^{p,\smooth}(\R^\dmn\setminus\partial\Omega)$; specifically, we require some compatibility of $\vec F$ across the boundary.

%
%Because $\vec \Pi^L\arr H$ is the \emph{unique} solution to formula~\eqref{dfn:newton}, we have that
%\begin{align}
%\label{eqn:newton:identity}
%\vec \Pi^L(\mat A \nabla^m\vec F)=\vec F &\text{ for all $\vec F\in\dot W^2_{m}(\R^\dmn)$}
%.\end{align}
%
%

We may now state a boundedness result for the Newton potential.

\begin{lem}\label{lem:polyharmonic:bounded} Let $\Omega\subset\R^\dmn$ be a Lipschitz domain. Let $L_0$ be an operator of the form~\eqref{eqn:divergence} that satisfies the ellipticity condition~\eqref{eqn:elliptic:everywhere} and has constant coefficients.

Then the operator $\vec\Pi^{L_0}$ defined by formula~\eqref{dfn:newton} extends to an operator that is bounded $L_{av}^{p,\smooth}(\R^\dmn\setminus\partial\Omega)\mapsto \widetilde W_{m,av}^{p,\smooth}(\partial\Omega)$ for any $0<\smooth<1$ and any $\pmin<p\leq\infty$. 
\end{lem}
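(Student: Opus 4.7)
The plan is to combine the $L^2$ bound coming from Lax--Milgram with Calder\'on--Zygmund-type off-diagonal estimates for the fundamental solution of $L_0$, and then sum up using the Whitney characterisation~\eqref{eqn:L:norm:whitney} of the $L^{p,\smooth}_{av}$-norm. By formula~\eqref{eqn:L2:2:half} we have $L^{2,1/2}_{av}(\R^\dmn\setminus\partial\Omega)=L^2(\R^\dmn)$, so the Lax--Milgram construction~\eqref{dfn:newton} gives the case $(p,\smooth)=(2,1/2)$ directly with norm at most $1/\lambda$. Moreover, for $\arr H\in L^2(\R^\dmn)$ we have $\vec\Pi^{L_0}\arr H\in\dot W^2_m(\R^\dmn)\subset W^1_{m,loc}(\R^\dmn)$, which settles the global Sobolev-membership condition built into the definition~\eqref{eqn:W:global} of $\widetilde W^{p,\smooth}_{m,av}(\partial\Omega)$; the general $(p,\smooth)$-case will then only require establishing the quantitative seminorm bound on $\nabla^m\vec\Pi^{L_0}\arr H$.

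For arbitrary $(p,\smooth)$ I would proceed as follows. Fix a family $\mathcal G$ of Whitney cubes for $\R^\dmn\setminus\partial\Omega$ and decompose $\arr H=\sum_{Q\in\mathcal G}\1_Q\arr H$. Because $L_0$ has constant coefficients, its fundamental solution $E$ is smooth away from the origin with $\abs{\partial^\gamma E(x)}\leq C\abs{x}^{2m-\dmn-\abs{\gamma}}$ (up to harmless log factors when $2m\geq\dmn$), so $\nabla^m\vec\Pi^{L_0}$ is convolution with a zero-order Calder\'on--Zygmund kernel. Combined with the Cauchy--Schwarz inequality, this yields off-diagonal bounds of the shape
\begin{equation*}
\biggl(\fint_{Q'}\bigl\lvert\nabla^m\vec\Pi^{L_0}(\1_Q\arr H)\bigr\rvert^2\biggr)^{1/2}
\leq C\,\frac{\ell(Q)^{\dmn}}{\bigl(\ell(Q)+\ell(Q')+\dist(Q,Q')\bigr)^{\dmn}}\biggl(\fint_Q\abs{\arr H}^2\biggr)^{1/2}
\end{equation*}
whenever $Q,Q'\in\mathcal G$ are not near-neighbours, while the $L^2$ bound of the base case handles the near-diagonal block.

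Substituting these estimates into the discrete formula~\eqref{eqn:L:norm:whitney} reduces the lemma to the boundedness, on the weighted sequence $\ell^p$-space with weight $\ell(Q)^{\dmnMinusOne+p-p\smooth}$ indexed by $\mathcal G$, of the discrete integral operator with kernel $K(Q',Q)\approx \ell(Q)^{\dmn}(\ell(Q)+\ell(Q')+\dist(Q,Q'))^{-\dmn}$. A Schur test with test functions of the form $\phi(Q)=\ell(Q)^a$ for a suitable $a=a(p,\smooth)$ establishes this bound; the relevant Schur integrals converge exactly in the range $\pmin<p\leq\infty$, $0<\smooth<1$ singled out in the lemma. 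For $p\leq 1$ the Schur test is replaced by the $p$-quasi-norm inequality for $\ell^p$-sums together with localised applications of the same off-diagonal estimate. The main obstacle will be this last step: tracking the discrete kernel against the weight $\ell(Q)^{\dmnMinusOne+p-p\smooth}$ across Whitney cubes of widely differing scales (with some possibly sitting on opposite sides of $\partial\Omega$), and verifying that the Schur or atomic argument runs uniformly across the full claimed range of $p$ and $\smooth$; everything else reduces to routine bookkeeping together with the $L^2$-identification $L^{2,1/2}_{av}=L^2$.
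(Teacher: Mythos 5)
Your route is genuinely different from the paper's for $1<p\leq\infty$. The paper's proof of Lemma~\ref{lem:polyharmonic:bounded} has four pieces: Lemma~\ref{lem:polyharmonic:Ap} shows $\nabla^m\vec\Pi^{L_0}$ is a Calder\'on--Zygmund operator and invokes the Muckenhoupt $A_p$ theory (the weight $\dist(\cdot,\partial\Omega)^{p-1-ps}$ is $A_p$) to bound it on the \emph{unaveraged} weighted $L^p$ for all $1<p<\infty$; Lemma~\ref{lem:polyharmonic:1:2} then converts this into the averaged $L^{p,\smooth}_{av}$ bound via the Meyers-type reverse H\"older inequality (Theorem~\ref{thm:Meyers:interior}), but only for $1<p\leq 2$; Lemma~\ref{lem:polyharmonic:<1} handles $\pmin<p\leq1$ by exactly the off-diagonal-decay plus $\ell^p$-quasi-norm argument you describe; and the remaining range $2<p\leq\infty$ comes from duality, using $(\nabla^m\vec\Pi^L)^*=\nabla^m\vec\Pi^{L^*}$ (Lemma~\ref{lem:newton:adjoint}). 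You propose instead to prove the whole range $1<p\leq\infty$ in one stroke by a Schur test on the weighted sequence space attached to the Whitney grid, avoiding $A_p$ weights, reverse H\"older, and duality entirely. The arithmetic does check out: with test sequence $h_Q=\ell(Q)^\theta$ the two Schur conditions reduce to $\gamma-1<\theta p'<\gamma$ and $d-1-\gamma<\theta p<d-\gamma$ with $\gamma=(d-1)/p+1-\smooth$, and these intervals intersect exactly when $0<\smooth<1$, for every $1<p<\infty$; for $p=\infty$ only the first Schur condition is needed, and it also closes. What the paper's route buys is reliance on well-documented off-the-shelf machinery (weighted CZ theory); what yours buys is a self-contained, purely discrete argument uniform across the range.

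Two spots need more care than the proposal gives them. First, your off-diagonal bound is derived from the pointwise kernel estimate $\abs{\nabla^{2m}\mat E^{L_0}(x)}\lesssim\abs{x}^{-d}$, which only applies once $\dist(Q,Q')\gtrsim\max(\ell(Q),\ell(Q'))$; for Whitney cubes of $\R^\dmn\setminus\partial\Omega$ this is automatic as soon as $Q$ and $Q'$ do not touch (even when they sit on opposite sides of $\partial\Omega$ or have disparate sizes), but it is a fact requiring verification rather than ``routine bookkeeping,'' and it is precisely what lets you split cleanly into a touching block (finitely many cubes of comparable size, killed by the $L^2$ bound) and a non-touching block (killed by the kernel bound). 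Second, for $p=\infty$ the claim that the extended operator lands in $\widetilde W^{\infty,\smooth}_{m,av}(\partial\Omega)$ (i.e.\ that $\nabla^m\vec\Pi^{L_0}\arr H$ is a weak $m$-th gradient of something in $W^1_{m,loc}$) cannot be deduced from ordinary density of $L^2$ data, since $L^2\cap L^{\infty,\smooth}_{av}$ is not norm-dense in $L^{\infty,\smooth}_{av}$; the paper closes Section~\ref{sec:newton:bounded} with a weak-$*$ compactness argument for exactly this reason, and your write-up should not gloss over it.
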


The remainder of this subsection will be devoted to a proof of Lemma~\ref{lem:polyharmonic:bounded}. We begin with the following bound (in unaveraged spaces) in the case $1<p<\infty$.

\begin{lem}\label{lem:polyharmonic:Ap}
Let $L_0$ and $\Omega$ be as in Lemma~\ref{lem:polyharmonic:bounded}. Let $0<\smooth<1$ and $1<p<\infty$, and let $\arr H\in L^2(\R^\dmn)\cap L^{p,\smooth}_{av}(\R^\dmn\setminus\partial\Omega)$.
Then 
\begin{equation*}\int_{\R^\dmn} \abs{\nabla^m \vec\Pi^{L_0} \arr H(x)}^p \dist(x,\partial\Omega)^{p-1-p\smooth}\,dx
\leq C \int_{\R^\dmn} \abs{\arr H(x)}^p \dist(x,\partial\Omega)^{p-1-p\smooth}\,dx.\end{equation*}
\end{lem}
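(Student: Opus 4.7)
The plan is to recognize $\nabla^m \vec\Pi^{L_0}$ as a Calder\'on--Zygmund convolution operator and then apply the standard theory of weighted norm inequalities, after verifying that $\dist(\,\cdot\,,\partial\Omega)^{p-1-p\smooth}$ is a Muckenhoupt $A_p$ weight.

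\textbf{Step 1: CZ structure.} Since $L_0$ has constant coefficients, it admits a standard fundamental solution $E$ whose derivatives $\partial^\gamma E$ are homogeneous of degree $2m-\abs{\gamma}-\dmn$ (with logarithmic corrections in low dimensions) away from the origin. Integration by parts in the defining relation~\eqref{dfn:newton} shows that $\vec\Pi^{L_0}\arr H$ is, up to a polynomial, the convolution $\sum_{\abs{\beta}=m}(\partial^\beta E)* H_\beta$, so $\nabla^m \vec\Pi^{L_0}\arr H$ is given by a matrix of convolutions with kernels of the form $\partial^{\alpha+\beta}E$ with $\abs{\alpha}=\abs{\beta}=m$; these kernels are homogeneous of degree $-\dmn$, smooth away from the origin, and have mean zero on spheres. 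Thus $\nabla^m \vec\Pi^{L_0}$ is a Calder\'on--Zygmund operator of convolution type.

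\textbf{Step 2: $L^2$ boundedness.} The Lax--Milgram bound yields $\doublebar{\nabla^m \vec\Pi^{L_0}\arr H}_{L^2(\R^\dmn)} \le \lambda^{-1}\doublebar{\arr H}_{L^2(\R^\dmn)}$ (equivalently, this may be read off from Plancherel since the Fourier multiplier of $\nabla^m \vec\Pi^{L_0}$ is bounded by the ellipticity constant). Combined with Step~1, $\nabla^m \vec\Pi^{L_0}$ is an $L^2$-bounded Calder\'on--Zygmund operator in the sense of the standard weighted theory.

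\textbf{Step 3: $A_p$ property of the weight.} Since $\Omega$ is Lipschitz, $\partial\Omega$ is $(\dmn-1)$-Ahlfors regular. It is a classical fact that for any closed $(\dmn-1)$-Ahlfors regular set $E\subset\R^\dmn$, the weight $w_\alpha(x)=\dist(x,E)^\alpha$ belongs to $A_p$ if and only if $-1<\alpha<p-1$. With $\alpha=p-1-p\smooth$, the condition $0<\smooth<1$ and $1<p<\infty$ translates exactly into $-1<\alpha<p-1$, with an $A_p$ characteristic depending only on $p$, $\smooth$, and the Lipschitz character of~$\Omega$.

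\textbf{Step 4: Weighted CZ bound.} Apply the standard theorem (e.g.\ Coifman--Fefferman) that an $L^2$-bounded Calder\'on--Zygmund operator is bounded on $L^p(w)\to L^p(w)$ for every $w\in A_p$, $1<p<\infty$, with operator norm controlled by the $A_p$ characteristic of~$w$. This yields the desired inequality. The approximation $\arr H\in L^2\cap L^{p,\smooth}_{av}(\R^\dmn\setminus\partial\Omega)$ in the hypothesis ensures that $\vec\Pi^{L_0}\arr H$ is a priori well defined via Lax--Milgram, so that the left-hand side of the claimed inequality is meaningful and the weighted CZ bound may be applied directly.

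The main obstacle is Step~3: verifying the $A_p$ membership of the distance-to-boundary weight with a characteristic depending only on the Lipschitz character of~$\Omega$, and, relatedly, ensuring that the weighted CZ theorem is applied in the correct form. Once the weight is identified as $A_p$, the remainder of the argument is a direct invocation of classical harmonic analysis.
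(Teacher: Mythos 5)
Your proposal follows essentially the same route as the paper: identify $\nabla^m\vec\Pi^{L_0}$ as an $L^2$-bounded Calder\'on--Zygmund operator via decay/homogeneity of the $2m$-th and $(2m+1)$-th derivatives of the constant-coefficient fundamental solution, observe that $\dist(\,\cdot\,,\partial\Omega)^{p-1-p\smooth}$ is a Muckenhoupt $A_p$ weight precisely under $0<\smooth<1$, $1<p<\infty$ (the paper cites \cite{BreM13} and \cite{MitT06}, you invoke Ahlfors regularity of $\partial\Omega$ -- the same fact), and then apply the Coifman--Fefferman weighted CZ theorem. The argument is correct and matches the paper's.
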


\begin{proof}
We claim that $\nabla^m\vec\Pi^{L_0}$ is a Calder\'on-Zygmund operator. By construction, $\nabla^m\vec\Pi^{L_0}$ is bounded on $L^2(\R^\dmn)$, and so we need only study its kernel.

For constant coefficients, an elementary argument involving Plancherel's theorem yields that the ellipticity condition~\eqref{eqn:elliptic:everywhere} is equivalent to %the statement that $\mat{\widetilde A}(\omega)$ is positive definite with $\langle \vec v, \mat{\widetilde A}(\omega)\vec v\rangle \geq \lambda \abs{\vec v}^2 \abs{\omega}^{2m}$. This is 
the ellipticity condition (4.15) of \cite{MitM13A}. By results of \cite{Sha45,Mor54,Joh55,Hor03}, assembled as \cite[Theorem~4.2]{MitM13A}, we have that $L$ has a fundamental solution $\mat E^{L_0}$ that satisfies the conditions
\begin{equation*}
\partial^\alpha\vec\Pi^{L_0}_j\arr H(x) 
	= \sum_{k=1}^N \sum_{\abs{\beta}=m} \int_{\R^\dmn} 	\partial_x^\alpha\partial_y^\beta E^{L_0}_{j,k}(x-y)\,H_{k,\beta}(y)\,dy
	\quad\text{for a.e.\ $x\notin\supp \arr H$}
\end{equation*}
for $\abs\alpha=m$, $1\leq j\leq N$, and
\begin{equation}\label{eqn:fundamental:decay}
\abs{\nabla^{2m} \mat E^{L_0}(x)}\leq \frac{C}{\abs{x}^\dmn},
\quad \abs{\nabla^{2m+1} \mat E^{L_0}(x)}\leq \frac{C}{\abs{x}^{\dmn+1}}.\end{equation}

This may also be verified by considering the fundamental solution $\mat E^{L_0}(x,y)$ of \cite{Bar16} (constructed for general variable coefficients); by translational symmetry of $L_0$, we have that $\partial_x^\alpha\partial_y^\beta \mat E^{L_0}(x,y)=\partial_x^\alpha\partial_y^\beta \mat E^{L_0}(x-y,0)$. The bound \cite[formula~(63)]{Bar16} yields an $L^2$ estimate on $\nabla^{2m} \mat E^{L_0}$, and the Caccioppoli inequality \cite[Corollary~22]{Bar16}, Morrey's inequality, and the fact that any derivative of a solution $\vec u$ to $L_0\vec u=0$ is itself a solution, allows us to pass to pointwise bounds on $\nabla^{2m} \mat E^{L_0}$ and~$\nabla^{2m+1} \mat E^{L_0}$.

Thus, $\nabla^m\vec\Pi^{L_0}$ is a Calder\'on-Zygmund operator.
Recall (see, for example, \cite[Chapter~V]{Ste93}) that a function $\omega$ defined on $\R^\dmn$ is a Muckenhoupt $A_p$ weight if, for every ball $B\subset\R^\dmn$, we have that
\begin{equation*}\fint_B \omega(x)\,dx \biggl(\fint_B \omega(x)^{-p'/p}\,dx\biggr)^{p/p'}\leq A\end{equation*}
for some constant $A=A_p(\omega)$ independent of the choice of ball~$B$, where $1/p+1/p'=1$. By a corollary in \cite[Chapter~V, Section~4.2]{Ste93}, if $T$ is a Calder\'on-Zygmund operator and $\omega$ is an $A_p$ weight for some $1<p<\infty$, then $T$ is bounded from $L^p(\omega(x)\,dx)$ to itself.

As noted in \cite[formula~(2.5)]{BreM13} and the proof of~\cite[Lemma~2.3]{MitT06}, if $\Omega\subset\R^\dmn$ is a Lipschitz domain, then $\omega(x)=\dist(x,\partial\Omega)^{p-1-p\smooth}$ is an $A_p$ weight for any $1<p<\infty$ and any $0<\smooth<1$. Furthermore, observe that the constant $A=A_p(\omega)$ depends only on the Lipschitz character of~$\Omega$.
Thus, $\nabla^m\vec\Pi^{L_0}$ is bounded from $L^p(\omega(x)\,dx)$ to itself, as desired.
\end{proof}

We now must pass to weighted averaged spaces.
The following theorem was established in \cite{Bar16}; it is a straightforward consequence of \cite[Lemma~33]{Bar16} and the proof of \cite[Theorem~24]{Bar16}.
\begin{thm}
\label{thm:Meyers:interior} 
Let $L$ be an operator of the form~\eqref{eqn:divergence} of order~$2m$ associated to coefficients~$\mat A$ that satisfy the ellipticity conditions~\eqref{eqn:elliptic:bounded} and~\eqref{eqn:elliptic:everywhere}.
%Then there is some number $p^+=p^+_L>2$ depending only on $m$, the dimension $\dmn$, and the constants $\lambda$ and~$\Lambda$ in the bounds \eqref{eqn:elliptic:bounded} and~\eqref{eqn:elliptic:everywhere}, such that the following statement is true.

Let $x_0\in\R^\dmn$ and let $r>0$. Suppose that $\vec u\in \dot W^2_m(B(x_0,2r))$, $\arr H\in L^2(B(x_0,2r))$, and that $L\vec u=\Div_m\arr H$ in $B(x_0,2r)$.

If $0<p<2$, then
\begin{align*}
%\label{eqn:Meyers}
\biggl(\fint_{B(x,r)}\abs{\nabla^m \vec  u}^2\biggr)^{1/2}
&\leq 
	C(p)
	\biggl(\fint_{B(x,2r)}\abs{\nabla^m  \vec u}^p\biggr)^{1/p} 
	%\\&\qquad\nonumber
	+ C(p) \biggl(\fint_{B(x,2r)}\abs{\arr H}^2\biggr)^{1/2}
\end{align*}
for some constant $C(p)$ depending only on $p$ and the standard parameters.
\end{thm}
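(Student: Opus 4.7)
The plan is to prove the estimate in two stages: first a reverse Hölder-type inequality at a specific exponent $p_0\in(1,2)$ obtained from Caccioppoli plus Poincaré-Sobolev, and then a downward self-improvement to all $p\in(0,2)$ via interpolation and absorption.

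For the first stage, fix $r\leq\rho<\rho'\leq 2r$ and let $\vec P$ be the averaged Taylor polynomial of $\vec u$ of degree less than~$m$ on $B(x_0,\rho')$. Since $L\vec P=0$, we have $L(\vec u-\vec P)=\Div_m\arr H$ in $B(x_0,2r)$, and the higher-order Caccioppoli inequality (a standard consequence of the G\r{a}rding condition~\eqref{eqn:elliptic:everywhere}, as in \cite[Corollary~22]{Bar16}) yields
\[
\fint_{B(x_0,\rho)}\abs{\nabla^m\vec u}^2
\leq C\sum_{k=0}^{m-1}\frac{1}{(\rho'-\rho)^{2(m-k)}}\fint_{B(x_0,\rho')}\abs{\nabla^k(\vec u-\vec P)}^2
+C\fint_{B(x_0,\rho')}\abs{\arr H}^2.
\]
Iterated Poincaré-Sobolev, applied with $\vec P$ chosen to kill the relevant lower-order averages, produces $\bigl(\fint_{B(x_0,\rho')}\abs{\nabla^k(\vec u-\vec P)}^2\bigr)^{1/2}\leq C(\rho')^{m-k}\bigl(\fint_{B(x_0,\rho')}\abs{\nabla^m\vec u}^{p_0}\bigr)^{1/p_0}$ for each $0\leq k\leq m-1$, provided $p_0\in(1,2)$ is small enough that $m$ iterations of the Sobolev star yield an exponent at least~$2$. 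Concretely one may take $p_0=2\dmn/(\dmn+2m)$ when $\dmn>2m$, and any $p_0\in(1,2)$ works when $\dmn\leq 2m$ via a Morrey-type embedding. Setting $\rho=r$, $\rho'=2r$ in the resulting composite inequality proves the theorem for the specific exponent $p=p_0$.

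For the second stage, fix $0<p<p_0$. The standard $L^p$-interpolation $\doublebar{f}_{L^{p_0}}\leq \doublebar{f}_{L^2}^{\theta}\doublebar{f}_{L^p}^{1-\theta}$ with $1/p_0=\theta/2+(1-\theta)/p$, combined with Young's inequality, gives for every $\varepsilon>0$
\[
\biggl(\fint_{B(x_0,\rho')}\abs{\nabla^m\vec u}^{p_0}\biggr)^{1/p_0}
\leq \varepsilon\biggl(\fint_{B(x_0,\rho')}\abs{\nabla^m\vec u}^2\biggr)^{1/2}
+C_\varepsilon\biggl(\fint_{B(x_0,\rho')}\abs{\nabla^m\vec u}^p\biggr)^{1/p}.
\]
Inserting this into the Stage~1 inequality on concentric balls $B(x_0,\rho)\subset B(x_0,\rho')$ with $r\leq\rho<\rho'\leq 2r$, the function $\Phi(\rho):=\bigl(\fint_{B(x_0,\rho)}\abs{\nabla^m\vec u}^2\bigr)^{1/2}$ obeys
\[
\Phi(\rho)\leq C\varepsilon\,\Phi(\rho')+C_\varepsilon A+B
\quad\text{for all such }\rho,\rho',
\]
where $A$ and $B$ denote the $L^p$- and $L^2$-averages over $B(x_0,2r)$. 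A standard iteration lemma (Giaquinta-type absorption) then lets us choose $\varepsilon$ small enough to absorb $C\varepsilon\Phi(\rho')$ and conclude $\Phi(r)\leq CA+CB$, which is the stated inequality.

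The main technical delicacy is Stage~1: verifying the $m$th-order Caccioppoli inequality under only the global G\r{a}rding condition~\eqref{eqn:elliptic:everywhere} (rather than a pointwise ellipticity) requires careful testing of the weak formulation against $(\vec u-\vec P)\phi^{2m}$ for a smooth cutoff $\phi$ and a delicate control of the many commutator terms arising from the Leibniz expansion of $\nabla^m((\vec u-\vec P)\phi^{2m})$. For second-order systems this is classical; the higher-order case is precisely the content of \cite[Corollary~22]{Bar16}. The Poincaré-Sobolev step, the interpolation, and the absorption iteration are entirely routine.
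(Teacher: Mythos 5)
The paper does not actually prove Theorem~\ref{thm:Meyers:interior}; it simply cites \cite{Bar16}. Your proposal supplies the standard Gehring/Giaquinta--Modica machinery (higher-order Caccioppoli, Poincar\'e--Sobolev to drop the exponent to some $p_0<2$, then interpolation plus absorption to reach any $p<2$), which is precisely the technique that underlies the cited results of \cite{Bar16}; so the overall route is correct and consistent with the source.

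Two minor imprecisions are worth flagging, neither of which is fatal. First, with the Caccioppoli inequality written as a sum over all orders $k=0,\dots,m-1$, the binding Poincar\'e--Sobolev constraint is the $k=m-1$ term: to reach $L^2$ of $\nabla^{m-1}(\vec u-\vec P)$ from $L^{p_0}$ of $\nabla^m\vec u$ in a single Sobolev step you need $p_0\geq 2\dmn/(\dmn+2)$. Your stated value $p_0 = 2\dmn/(\dmn+2m)$ is the constraint coming only from the $k=0$ term and would fail to control the $k=m-1$ term for $m\geq 2$. This does not affect the conclusion (any fixed $p_0\in(1,2)$ works; Stage 2 extends downward and H\"older covers $p_0\leq p<2$, a range your write-up leaves implicit), but the stated exponent is inconsistent with the inequality as you wrote it. Second, the displayed absorption inequality $\Phi(\rho)\leq C\varepsilon\Phi(\rho')+C_\varepsilon A+B$ hides the $(\rho'-\rho)^{-2m}$ factor from Caccioppoli. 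The correct form keeps that factor, and one must choose $\varepsilon$ proportional to a power of $(\rho'-\rho)/r$ so that the coefficient of $\Phi(\rho')$ becomes a fixed $\theta<1$; the Giaquinta iteration lemma you invoke is designed exactly for that resulting form, so the argument closes, but the displayed inequality as written is not quite what gets fed into the lemma.
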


This theorem allows us to bound $\vec\Pi^{L_0}$ on $L^{p,\smooth}_{av}(\Omega)$ for $1<p\leq 2$.

\begin{lem}\label{lem:polyharmonic:1:2}
Let $L_0$ and $\Omega$ be as in Lemma~\ref{lem:polyharmonic:bounded}. Let $0<\smooth<1$ and $1<p\leq 2$. Then $\vec\Pi^{L_0}$ extends to an operator that is bounded ${L^{p,\smooth}_{av}(\R^\dmn\setminus\partial\Omega)}\mapsto {\widetilde W^{p,\smooth}_{av}(\partial\Omega)}$.
%\begin{equation*}\doublebar{\nabla^m\vec\Pi^{L_0}\arr H}_{L^{p,\smooth}_{av}(\R^\dmn\setminus\partial\Omega)}
%\leq
%C\doublebar{\arr H}_{L^{p,\smooth}_{av}(\R^\dmn\setminus\partial\Omega)}
%.\end{equation*}
\end{lem}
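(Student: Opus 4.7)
The plan is to combine the weighted unaveraged $L^p$ bound from Lemma~\ref{lem:polyharmonic:Ap} with the interior reverse-H\"older inequality of Theorem~\ref{thm:Meyers:interior} to pass to the averaged norm. Given $\arr H \in L^2(\R^\dmn) \cap L^{p,\smooth}_{av}(\R^\dmn\setminus\partial\Omega)$, set $\vec u = \vec\Pi^{L_0}\arr H$, so that $L_0\vec u = \Div_m\arr H$ in $\R^\dmn$. Using the observation after~\eqref{eqn:L:norm:whitney} that the balls appearing in the definition of $L^{p,\smooth}_{av}$ may be replaced by $B(x,a\dist(x,\partial\Omega))$ for any $0<a<1$, I will use $r(x)=\dist(x,\partial\Omega)/8$, so that $B(x,2r(x))\subset\R^\dmn\setminus\partial\Omega$ with distance to $\partial\Omega$ still comparable to $\dist(x,\partial\Omega)$.

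I will then apply Theorem~\ref{thm:Meyers:interior} on $B(x,2r(x))$, which for $p\leq 2$ yields
\begin{equation*}
\biggl(\fint_{B(x,r(x))}\abs{\nabla^m\vec u}^2\biggr)^{p/2} \leq C\fint_{B(x,2r(x))}\abs{\nabla^m\vec u}^p + C\biggl(\fint_{B(x,2r(x))}\abs{\arr H}^2\biggr)^{p/2}.
\end{equation*}
Multiplying by $\dist(x,\partial\Omega)^{p-1-p\smooth}$ and integrating over $\R^\dmn\setminus\partial\Omega$, the second term is comparable to $\doublebar{\arr H}_{L^{p,\smooth}_{av}(\R^\dmn\setminus\partial\Omega)}^p$ by~\eqref{eqn:L:norm:whitney}. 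For the first term, Fubini combined with the observation that $\dist(x,\partial\Omega)\approx\dist(y,\partial\Omega)$ whenever $y\in B(x,2r(x))$ produces a bound by the unaveraged weighted norm $\doublebar{\nabla^m\vec u}_{L^p(\omega)}^p$ with weight $\omega(x)=\dist(x,\partial\Omega)^{p-1-p\smooth}$.

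Lemma~\ref{lem:polyharmonic:Ap} then gives $\doublebar{\nabla^m\vec u}_{L^p(\omega)}\lesssim\doublebar{\arr H}_{L^p(\omega)}$. A second use of the Whitney-cube characterization together with Jensen's inequality (valid since $p\leq 2$) on each cube $Q$ shows that $\int_Q \abs{\arr H}^p \leq \abs{Q}^{1-p/2}(\int_Q\abs{\arr H}^2)^{p/2}$, and summing with the weight yields $\doublebar{\arr H}_{L^p(\omega)}\lesssim\doublebar{\arr H}_{L^{p,\smooth}_{av}}$. Combining these estimates yields the desired bound on $\doublebar{\nabla^m\vec u}_{L^{p,\smooth}_{av}}$. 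Since bounded, compactly supported functions lie in $L^2(\R^\dmn)\cap L^{p,\smooth}_{av}$ by Lemma~\ref{lem:L:L-infinity} and are dense in $L^{p,\smooth}_{av}$ for $1<p<\infty$, the operator $\vec\Pi^{L_0}$ extends by continuity; the compatibility across $\partial\Omega$ required to be in $\widetilde W^{p,\smooth}_{m,av}(\partial\Omega)$ (as opposed to merely having $L^{p,\smooth}_{av}$ gradient on each side) follows from the fact that $\vec\Pi^{L_0}\arr H$ is a single function in $W^1_{m,loc}(\R^\dmn)$.

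The main difficulty is bookkeeping the three-way comparison (averaged Whitney norms $\leftrightarrow$ unaveraged weighted $L^p$ $\leftrightarrow$ Muckenhoupt $A_p$ estimate) in the correct direction at each step: the restriction $p\leq 2$ is essential both for the Meyers-type reverse H\"older (which exchanges $L^2$ averages for $L^p$ averages on larger balls) and for the Jensen comparison that controls the unaveraged weighted norm of $\arr H$ by its averaged norm. The case $p>2$, treated separately in later results, fails one of these two directions and requires a different argument.
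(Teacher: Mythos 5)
Your proposal is correct and follows essentially the same route as the paper: apply the Meyers-type interior reverse H\"older estimate (Theorem~\ref{thm:Meyers:interior}) to pass from $L^2$ to $L^p$ averages on Whitney regions, invoke the unaveraged weighted $A_p$ bound of Lemma~\ref{lem:polyharmonic:Ap}, then use Jensen/H\"older (requiring $p\le 2$) to compare the unaveraged weighted norm of $\arr H$ with its averaged norm, and close by density. The paper phrases the argument in terms of a sum over Whitney cubes rather than a pointwise Whitney-ball estimate plus Fubini, but these are equivalent bookkeeping devices; all the key inequalities and the role of $p\le 2$ match.
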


\begin{proof}%[Proof of Lemma~\ref{lem:polyharmonic:1:2}]
Let $\arr H\in L^2(\R^\dmn)\cap L^{p,\smooth}_{av}(\R^\dmn\setminus\partial\Omega)$.
Divide $\R^\dmn\setminus\partial\Omega$ into a grid $\mathcal{G}$ of Whitney cubes, as in the norm~\eqref{eqn:L:norm:whitney}.
By Theorem~\ref{thm:Meyers:interior} with $\vec u=\vec\Pi^{L_0}\arr H$, we have that
\begin{multline*}
\sum_{Q\in\mathcal{G}} \biggl(\fint_Q \abs{\nabla^m \vec\Pi^{L_0}\arr H}^2\biggr)^{p/2}\ell(Q)^{\dmnMinusOne+p-p\smooth}
\\\leq
C(p,\eta)\sum_{Q\in\mathcal{G}} \biggl(\fint_{\eta Q} \abs{\nabla^m \vec\Pi^{L_0}\arr H}^p\biggr)\ell(Q)^{\dmnMinusOne+p-p\smooth}
\\+
C(p,\eta)\sum_{Q\in\mathcal{G}} \biggl(\fint_{\eta Q} \abs{\arr H}^2\biggr)^{p/2}\ell(Q)^{\dmnMinusOne+p-p\smooth}
\end{multline*}
for any $\eta>1$, where $\eta Q$ is the cube concentric to $Q$ with side-length $\eta\ell(Q)$. If $\eta-1$ is small enough, then  $\dist(x,\partial\Omega)\approx \ell(Q)$ whenever $x\in\eta Q$ and $Q\in\mathcal{G}$, and furthermore if $x\in\R^\dmn\setminus\partial\Omega$ then $x\in\eta Q$ for at most $C$ cubes $Q\in\mathcal{G}$. Thus,
\begin{multline*}
\sum_{Q\in\mathcal{G}} \biggl(\fint_Q \abs{\nabla^m \vec\Pi^{L_0}\arr H}^2\biggr)^{p/2}\ell(Q)^{\dmnMinusOne+p-p\smooth}
\\\leq
C(p,\eta)\int_{\R^\dmn} \abs{\nabla^m \vec\Pi^{L_0} \arr H(x)}^p \dist(x,\partial\Omega)^{p-1-p\smooth}\,dx
\\+
C(p,\eta)\sum_{Q\in\mathcal{G}} \biggl(\fint_{\eta Q} \abs{\arr H}^2\biggr)^{p/2}\ell(Q)^{\dmnMinusOne+p-p\smooth}
\end{multline*}
By Lemma~\ref{lem:polyharmonic:Ap}, the norm~\eqref{eqn:L:norm:whitney}, and H\"older's inequality, we have that
\begin{equation*}\doublebar{\nabla^m\vec\Pi^{L_0}\arr H}_{L^{p,\smooth}_{av}(\Omega)}
\leq C(p)\doublebar{\arr H}_{L^{p,\smooth}_{av}(\Omega)}\end{equation*}
for any $0<\smooth<1$ and $1<p\leq 2$.

By density $\nabla^m\vec\Pi^{L_0}$ extends to an operator bounded from ${L^{p,\smooth}_{av}(\R^\dmn\setminus\partial\Omega)}$ to itself. If $\arr H\in L^2(\R^\dmn)$, then $\vec\Pi^{L_0}\in \dot W^2_m(\R^\dmn)\subset \dot W^1_{m,loc}(\R^\dmn)$. Furthermore, by Lemma~\ref{lem:L:L1}, $\nabla^m\vec\Pi^{L_0}$ is bounded from ${L^{p,\smooth}_{av}(\R^\dmn\setminus\partial\Omega)}$ to $L^1(K)$ for any compact set $K\subset\R^\dmn$, and so by density $\vec\Pi^{L_0}$ extends to a bounded operator ${L^{p,\smooth}_{av}(\R^\dmn\setminus\partial\Omega)}\mapsto \widetilde W^{p,\smooth}_{m,av}(\partial\Omega)$.
\end{proof}

We now consider the case $p\leq 1$.

\begin{lem}\label{lem:polyharmonic:<1}
Let $L_0$ and $\Omega$ be as in Lemma~\ref{lem:polyharmonic:bounded}. Let $0<\smooth<1$ and $\pmin<p\leq 1$. %, and let $\arr H\in L^{p,\smooth}_{av}(\R^\dmn\setminus\partial\Omega)$ be compactly supported in $\R^\dmn\setminus\partial\Omega$.
Then $\vec\Pi^{L_0}$ extends to a bounded operator ${L^{p,\smooth}_{av}(\R^\dmn\setminus\partial\Omega)}\mapsto \widetilde W^{p,\smooth}_{m,av}(\partial\Omega)$.
\end{lem}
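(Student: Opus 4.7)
The plan is to emulate the strategy of Lemma~\ref{lem:polyharmonic:1:2}, but to replace the $A_p$-weighted Calder\'on--Zygmund bound (unavailable when $p\le1$) with an atomic-style decomposition on the Whitney cubes of $\R^\dmn\setminus\partial\Omega$. Fix a Whitney grid $\mathcal{G}$ of $\R^\dmn\setminus\partial\Omega$, write $\arr H=\sum_{Q\in\mathcal{G}}\arr H\,\1_Q$, and set $\vec u_Q:=\vec\Pi^{L_0}(\arr H\,\1_Q)$. Using the Whitney-cube form of the norm \eqref{eqn:L:norm:whitney}, the $p$-subadditivity $\doublebar{\vec u+\vec v}^p\le\doublebar{\vec u}^p+\doublebar{\vec v}^p$ (valid since $p\le1$), and the fact that the supports $Q$ are pairwise disjoint, the problem reduces to an ``atomic'' estimate: for each single Whitney cube $Q$ one needs
\begin{equation*}
\doublebar{\nabla^m\vec u_Q}_{L^{p,\smooth}_{av}(\R^\dmn\setminus\partial\Omega)}^p \le C\,h_Q^p\,\ell(Q)^{\dmnMinusOne+p-p\smooth}\approx\doublebar{\arr H\,\1_Q}_{L^{p,\smooth}_{av}(\R^\dmn\setminus\partial\Omega)}^p,
\end{equation*}
where $h_Q:=(\fint_Q|\arr H|^2)^{1/2}$.

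To establish this atomic bound I split $\mathcal{G}$ into dyadic annuli $\mathcal{A}_k=\{Q':|x_{Q'}-x_Q|+\ell(Q)\approx 2^k\ell(Q)\}$ for $k\ge0$. On the $O(1)$ near cubes in $\mathcal{A}_0$ (all with $\ell(Q')\approx\ell(Q)$), I use the global $L^2$-bound $\doublebar{\nabla^m\vec u_Q}_{L^2(\R^\dmn)}\le\lambda^{-1}h_Q\,\ell(Q)^{\dmn/2}$ coming from the Lax-Milgram construction of $\vec\Pi^{L_0}$. For $k\ge 1$, I use the pointwise kernel decay $|\nabla^{2m}\mat E^{L_0}(x)|\le C|x|^{-\dmn}$ from the proof of Lemma~\ref{lem:polyharmonic:Ap}, together with Cauchy-Schwarz on $Q$, to obtain for every $x\in Q'\in\mathcal{A}_k$,
\begin{equation*}
|\nabla^m\vec u_Q(x)|\le C\int_Q\frac{|\arr H(y)|}{|x-y|^\dmn}\,dy\le C\,2^{-k\dmn}\,h_Q.
\end{equation*}
Applying Lemma~\ref{lem:L:L-infinity} to the constant function $\arr H\equiv 1$ on $B(x_0,2^{k+2}\ell(Q))$, for a boundary point $x_0\in\partial\Omega$ with $|x_0-x_Q|\approx\ell(Q)$, yields $\sum_{Q'\in\mathcal{A}_k}\ell(Q')^{\dmnMinusOne+p-p\smooth}\le C\,(2^k\ell(Q))^{\dmnMinusOne+p-p\smooth}$, so that the total contribution of $\mathcal{A}_k$ is
\begin{equation*}
C\,h_Q^p\,\ell(Q)^{\dmnMinusOne+p-p\smooth}\,2^{k(\dmnMinusOne+p-p\smooth-\dmn p)}.
\end{equation*}

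The critical point, and the main technical obstacle to identify, is that the exponent $\dmnMinusOne+p-p\smooth-\dmn p$ is strictly negative precisely when $p(\dmnMinusOne+\smooth)>\dmnMinusOne$, i.e.\ exactly when $p>\pmin$; this is the sole place where the lower endpoint of the stated range of $p$ enters. Given this, the geometric series in $k$ sums, the atomic bound follows, and summing in $Q\in\mathcal{G}$ via $p$-subadditivity gives boundedness of $\nabla^m\vec\Pi^{L_0}$ on the dense subclass $L^2(\R^\dmn)\cap L^{p,\smooth}_{av}(\R^\dmn\setminus\partial\Omega)$. A density argument, together with Lemma~\ref{lem:L:L1} applied to approximating sequences in order to control $\vec\Pi^{L_0}\arr H$ in $W^1_{m,\mathrm{loc}}(\R^\dmn)$ modulo the usual polynomial ambiguity of degree less than $m$, then extends $\vec\Pi^{L_0}$ to a bounded operator $L^{p,\smooth}_{av}(\R^\dmn\setminus\partial\Omega)\to\widetilde W^{p,\smooth}_{m,av}(\partial\Omega)$, as required.
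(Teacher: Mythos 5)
Your proof is correct and follows essentially the same route as the paper's: both decompose $\arr H$ over Whitney cubes, apply $p$-subadditivity of $\doublebar{\cdot}^p$ (valid since $p\leq 1$) to reduce to a single-cube atomic estimate, use the global $L^2$ boundedness of $\nabla^m\vec\Pi^{L_0}$ near the cube and the pointwise kernel decay $|\nabla^{2m}\mat E^{L_0}(x)|\lesssim|x|^{-\dmn}$ away from it, and invoke Lemma~\ref{lem:L:L-infinity} together with the constraint $p>\pmin$ to sum the far contributions. You merely make explicit the dyadic annular decomposition that the paper's one-line appeal to Lemma~\ref{lem:L:L-infinity} compresses.
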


\begin{proof}%[Proof of Lemma~\ref{lem:polyharmonic:<1}]
Let $\arr H\in L^{p,\smooth}_{av}(\R^\dmn\setminus\partial\Omega)$ be compactly supported in $\R^\dmn\setminus\partial\Omega$.
Again, let $\mathcal{G}$ be a grid of dyadic Whitney cubes in~$\R^\dmn\setminus\partial\Omega$. Choose some $Q\in\mathcal{G}$. 
Because $\nabla^m\vec\Pi^{L_0}$ is bounded on $L^2(\R^\dmn)$, we have that
\begin{multline*}\smash{\sum_{\substack{R\in\mathcal{G}\\\dist(R,Q)=0}}}
\biggl(\fint_R \abs{\nabla^m\vec \Pi^{L_0}(\1_Q \arr H)}^2\biggr)^{p/2} \ell(R)^{\dmnMinusOne+p-p\smooth}
\\\leq C \ell(Q)^{\dmnMinusOne+p-p\smooth-\pdmn p/2}\doublebar{\nabla^m\vec \Pi^{L_0}(\1_Q \arr H)}_{L^2(\R^\dmn)}^p
\\\leq 
C \ell(Q)^{\dmnMinusOne+p-p\smooth} \biggl(\fint_Q \abs{\arr H}^2\biggr)^{p/2}.
\end{multline*}
We seek to bound ${\nabla^m\vec \Pi^{L_0}(\1_Q \arr H)}$ far from~$Q$.
By the bound~\eqref{eqn:fundamental:decay} on the fundamental solution, if $\dist(x,Q)>0$ then
\begin{equation*}%\label{eqn:Pi:pointwise}
\abs{\nabla^m\vec \Pi^{L_0}(\1_Q \arr H)(x)} \leq \biggl(\frac{\ell(Q)}{\dist(x,Q)}\biggr)^\dmn \biggl(\fint_Q \abs{\arr H}^2\biggr)^{1/2}.\end{equation*}
Thus, by Lemma~\ref{lem:L:L-infinity}, and because $p>\pmin$,
\begin{equation*}\doublebar{\nabla^m\vec \Pi^{L_0}(\1_Q \arr H)}_{L^{p,\smooth}_{av}(\R^\dmn\setminus\partial\Omega)}
\leq
C \ell(Q)^{\pdmnMinusOne/p+1-\smooth} \biggl(\fint_Q \abs{\arr H}^2\biggr)^{1/2}
.\end{equation*}
Because $p\leq 1$, we have that
\begin{equation*}\doublebar{\nabla^m\vec \Pi^{L_0}\arr H}_{L^{p,\smooth}_{av}(\R^\dmn\setminus\partial\Omega)}^p
\leq \sum_{Q\in\mathcal{G}}\doublebar{\nabla^m\vec \Pi^{L_0}(\1_Q \arr H)}_{L^{p,\smooth}_{av}(\R^\dmn\setminus\partial\Omega)}^p
\end{equation*}
and so by the norm~\eqref{eqn:L:norm:whitney} we have the bound \begin{equation*}\doublebar{\nabla^m\vec \Pi^{L_0}\arr H}_{L^{p,\smooth}_{av}(\R^\dmn\setminus\partial\Omega)}\leq C\doublebar{\arr H}_{L^{p,\smooth}_{av}(\R^\dmn\setminus\partial\Omega)}.\end{equation*}
Again $\vec\Pi^{L_0}$ extends by density to a bounded operator ${L^{p,\smooth}_{av}(\R^\dmn\setminus\partial\Omega)}\mapsto \widetilde W^{p,\smooth}_{m,av}(\partial\Omega)$, as desired.
\end{proof}

Finally, we turn to the case $2<p\leq \infty$. We seek to use the duality relation~\eqref{eqn:L:dual}. We need only bound the adjoint to $(\nabla^m\vec\Pi^L)^*$ on $L^{p',1-\smooth}_{av}(\Omega)$. We will use the following formula from \cite{Bar16}.

\begin{lem}[{\cite[Lemma~42]{Bar16}}] \label{lem:newton:adjoint}
Suppose $L$ is an operator of the form~\eqref{eqn:divergence} associated to coefficients~$\mat A$ that satisfy the bounds~\eqref{eqn:elliptic:bounded} and~\eqref{eqn:elliptic:everywhere}.

Let $(A^*)^{jk}_{\alpha\beta}=\overline{A^{kj}_{\beta\alpha}}$ and let $L^*$ be the associated elliptic operator.

Then the adjoint $(\nabla^m\vec\Pi^L)^*$ to the operator $\nabla^m\vec\Pi^L$ is $\nabla^m\vec\Pi^{L^*}$.
\end{lem}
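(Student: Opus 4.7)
The plan is to verify directly that for all $\arr G,\arr H\in L^2(\R^\dmn)$,
\begin{equation*}
\langle \nabla^m\vec\Pi^{L^*}\arr G,\arr H\rangle_{\R^\dmn}
=\langle \arr G,\nabla^m\vec\Pi^{L}\arr H\rangle_{\R^\dmn},
\end{equation*}
which is the defining relation for the adjoint. Since both operators are bounded on $L^2(\R^\dmn)$ by the Lax-Milgram setup, this identity on the dense class $L^2\cap L^2$ suffices.

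First I would establish the pointwise algebraic identity
\begin{equation*}
\langle \arr F,\mat A\arr G\rangle=\langle \mat A^*\arr F,\arr G\rangle
\end{equation*}
for any arrays $\arr F,\arr G$, which is an immediate index computation from the definition $(A^*)^{jk}_{\alpha\beta}=\overline{A^{kj}_{\beta\alpha}}$ together with the hermitian convention $\langle \arr F,\arr G\rangle=\sum_\gamma \overline{F_\gamma}G_\gamma$. Integrating gives the same identity against the pairing $\langle\,\cdot\,,\,\cdot\,\rangle_{\R^\dmn}$.

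Next, set $\vec u=\vec\Pi^L\arr H$ and $\vec v=\vec\Pi^{L^*}\arr G$; by construction both lie in $\dot W^2_m(\R^\dmn)$. The defining equation \eqref{dfn:newton} for $\vec\Pi^L$ applied with test function $\vec\varphi=\vec v$ gives
\begin{equation*}
\langle \nabla^m\vec v,\mat A\nabla^m\vec u\rangle_{\R^\dmn}
=\langle \nabla^m\vec v,\arr H\rangle_{\R^\dmn},
\end{equation*}
while the analogous identity for $\vec\Pi^{L^*}$ with test function $\vec\psi=\vec u$ gives
\begin{equation*}
\langle \nabla^m\vec u,\mat A^*\nabla^m\vec v\rangle_{\R^\dmn}
=\langle \nabla^m\vec u,\arr G\rangle_{\R^\dmn}.
\end{equation*}
Both substitutions are legal because $\vec u,\vec v\in\dot W^2_m(\R^\dmn)$, which is exactly the class of admissible test functions. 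By the algebraic identity above the two left-hand sides are equal, hence so are the right-hand sides; taking complex conjugates then yields $\langle \nabla^m\vec v,\arr H\rangle_{\R^\dmn}=\langle \arr G,\nabla^m\vec u\rangle_{\R^\dmn}$, which is exactly the adjoint identity.

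There is no real obstacle here: the argument is a one-line symmetrization using the weak formulation, and the only thing to be careful about is the placement of complex conjugates in the hermitian pairing and the fact that $\dot W^2_m(\R^\dmn)$ is large enough to accommodate the substitutions $\vec\varphi=\vec v$, $\vec\psi=\vec u$ without any density step.
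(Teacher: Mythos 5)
The paper does not prove this lemma itself; it is cited verbatim from \cite[Lemma~42]{Bar16}. Your proof is the standard argument one would give for it — cross-test the two weak formulations and use the transpose relation for $\mat A$ — and the overall strategy is correct. There is one bookkeeping slip worth fixing: the two left-hand sides you produce,
\begin{equation*}
\langle \nabla^m\vec v, \mat A\nabla^m\vec u\rangle_{\R^\dmn}
\quad\text{and}\quad
\langle \nabla^m\vec u, \mat A^*\nabla^m\vec v\rangle_{\R^\dmn},
\end{equation*}
are not equal but are complex conjugates of each other. Indeed, applying your algebraic identity to the first one gives $\langle \mat A^*\nabla^m\vec v,\nabla^m\vec u\rangle_{\R^\dmn}$, which is the conjugate of the second (since $\langle\arr F,\arr G\rangle=\overline{\langle\arr G,\arr F\rangle}$). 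With this correction, the two right-hand sides are conjugates, so $\langle\nabla^m\vec v,\arr H\rangle_{\R^\dmn}=\overline{\langle\nabla^m\vec u,\arr G\rangle_{\R^\dmn}}=\langle\arr G,\nabla^m\vec u\rangle_{\R^\dmn}$ follows immediately, with no additional conjugation step needed; your stated chain (equal LHS, equal RHS, then conjugate) would instead produce $\langle\arr H,\nabla^m\vec v\rangle=\langle\arr G,\nabla^m\vec u\rangle$, which is not quite the adjoint identity. The substitution $\vec\varphi=\vec v$, $\vec\psi=\vec u$ is legitimate exactly as you say, since \eqref{dfn:newton} holds for all test functions in $\dot W^2_m(\R^\dmn)$ and both $\vec u$ and $\vec v$ live there by the Lax--Milgram construction.
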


Thus, $\arr T=\nabla^m\vec\Pi^{L_0}$ extends to a bounded operator on ${L^{p,\smooth}_{av}(\R^\dmn\setminus\partial\Omega)}$. It remains to show that $\arr T\arr H$ is in fact the gradient of a $\dot W^1_{m,loc}(\R^\dmn)$-function. If $p<\infty$, this is true by density as usual. 

If $p=\infty$, this is true by weak density. That is, let $\arr H\in L^{\infty,\smooth}_{av}(\R^\dmn\setminus\partial\Omega)$ and let $\arr H_n\in L^{\infty,\smooth}_{av}(\R^\dmn\setminus\partial\Omega)\cap L^2(\R^\dmn)$ converge weakly to $\arr H$ (in $L^{\infty,\smooth}_{av}(\R^\dmn\setminus\partial\Omega)= (L^{1,1-\smooth}_{av}(\R^\dmn\setminus\partial\Omega))^*$). We may require $\doublebar{\arr H_n}_{L^{\infty,\smooth}_{av}(\R^\dmn\setminus\partial\Omega)} \leq \doublebar{\arr H}_{L^{\infty,\smooth}_{av}(\R^\dmn\setminus\partial\Omega)}$.
By \cite[Lemma~3.7]{Bar16pB} and the Poincar\'e inequality, we have that $\vec\Pi^{L_0}\arr H_n$ is locally in $\dot W_m^p(\R^\dmn)$ for some $p>1$. By the Poincar\'e inequality, $\vec \Pi^{L_0}\arr H_n-P_{n,B}\in L^p(B)$ for all balls $B\subset\R^\dmn$, where $P_{n,B}$ is an appropriate polynomial of degree at most $m-1$. Then $\vec \Pi^{L_0}\arr H_n-P_{n,B}$ is a bounded sequence in a reflexive Banach space, and so has a weak limit $\vec F$. It is elementary to show that $\arr T\arr H=\nabla^m\vec F$ (in the sense of weak derivatives); thus, the proof of Lemma~\ref{lem:polyharmonic:bounded} is complete.

\subsection{Interpolation}
\label{sec:interpolation}

In this subsection we will discuss interpolation theory and its application to the weighted averaged spaces $L^{p,\smooth}_{m,av}(\Omega)$, weighted averaged Sobolev spaces $\dot W^{p,\smooth}_{m,av}(\Omega)$, and boundary spaces $\dot W\!A^p_{m-1,\smooth}(\partial\Omega)$.

We will use interpolation theory, and in particular stability of invertibility on interpolation scales, to prove Theorem~\ref{thm:neighborhood}.
We will also use interpolation to prove Lemma~\ref{lem:interpolation}; recall that we used this lemma in Section~\ref{sec:A0:introduction} to establish well posedness results.

We refer the reader to the classic reference \cite{BerL76} for an extensive background on interpolation theory; in this section we will provide some definitions and summarize a few results.

Following \cite{BerL76}, we say that two quasi-normed vector spaces $A_0$, $A_1$ are \emph{compatible} if there is a Hausdorff topological vector space $\mathfrak{A}$ such that $A_0\subset\mathfrak{A}$, $A_1\subset\mathfrak{A}$. Then $A_0\cap A_1$ and $A_0+A_1$ may be defined in the natural way.

We will use two interpolation functors, the real method of Lions and Peetre, and the complex interpolation method of Lions, Calder\'on and Krejn.
We refer the reader to \cite{BerL76} for a precise definition of these interpolation functors. Loosely speaking, if $A_0$ and $A_1$ are compatible, these functors produce spaces that in some sense lie between $A_0$ and~$A_1$.
More precisely, for any number $\sigma$ with $0<\sigma<1$, any number $r$ with $0<r\leq\infty$, and any compatible quasi-normed spaces $A_0$ and $A_1$, the real interpolation functor produces a space  $(A_0,A_1)_{\sigma,r}$ contained in $A_0+A_1$ and containing $A_0\cap A_1$, and if $A_0$ and $A_1$ are normed vector spaces then the complex interpolation functor produces a (possibly different) function space $[A_0,A_1]_\sigma$ also contained in $A_0+A_1$ and containing $A_0\cap A_1$.

The following is a fundamental and very useful result of interpolation theory.
Let $A_0$, $A_1$ and $B_0$, $B_1$ be two compatible pairs. Then by \cite[Theorems~3.11.2 and 4.1.2]{BerL76}, we have that if $T:A_0+A_1\mapsto B_0+B_1$ is a linear operator such that $T(A_0)\subseteq B_0$ and $T(A_1)\subseteq B_1$, then $T$ is bounded on appropriate interpolation spaces, with
\begin{align}
\label{eqn:interpolation:real}
\doublebar{T}_{(A_0,A_1)_{\sigma,r}\mapsto(B_0,B_1)_{\sigma,r}} &\leq 
\doublebar{T}_{A_0\mapsto B_0}^{1-\sigma}\doublebar{T}_{A_1\mapsto B_1}^{\sigma},
\\
\label{eqn:interpolation:complex}
\doublebar{T}_{[A_0,A_1]_{\sigma}\mapsto[B_0,B_1]_{\sigma}} &\leq 
\doublebar{T}_{A_0\mapsto B_0}^{1-\sigma}\doublebar{T}_{A_1\mapsto B_1}^{\sigma}
\end{align}
for $0<\sigma<1$ and $0<r\leq\infty$.

In order to use this result, we will need to identify the spaces $[A_0,A_1]_\sigma$ for various known spaces $A_j$, for instance, in the case where $A_j=L^{p_j,\smooth_j}_{av}(\Omega)$.
We begin with some known interpolation properties for sequence spaces. Let $\mathcal{G}$ be a grid of Whitney cubes in~$\Omega$ and recall the norm \eqref{eqn:L:norm:whitney}. Let $Q_0$ be the unit cube, and define the sequence space $\ell^{p,\smooth}_\Omega=\ell^{p,\smooth}_\Omega(L^2(Q_0))$ by
\begin{equation*}\ell^{p,\smooth}_\Omega(L^2(Q_0)) = \Bigl\{\bigl(\arr H_Q\bigr)_{Q\in\mathcal{G}}: \Bigl(\sum_{Q\in\mathcal{G}} \doublebar{\arr H_Q}_{L^2(Q_0)}^p \ell(Q)^{\dmnMinusOne+p-p\smooth}\Bigr)^{1/p}<\infty\Bigr\}\end{equation*}
with the natural norm.
%Specifically, let $A$ be a Banach space, and let
%\begin{equation*}\ell^p(A) = \Bigl\{(a_k)_{k=1}^\infty: a_k\in A,\>\Bigl(\sum_{k=1}^\infty \doublebar{a_k}_A^p\Bigr)^{1/p}<\infty\Bigr\}.\end{equation*}
%If $p=\infty$ then instead of summing we take an appropriate supremum.
%Let $0<\sigma<1$, and let $1/p_\sigma=(1-\sigma)/p_0+\sigma/p_1$.
%By \cite[Theorems~5.6.1 and~5.6.3]{BerL76}, we see that if $0<p_j\leq \infty$, then
%\begin{equation*}(\ell^{p_0}(A),\ell^{p_0}(A))_{\sigma,p_\sigma} = \ell^{p_\sigma}(A)\end{equation*}
%with equivalent norms, and if $1\leq p_j\leq \infty$ then 
%\begin{equation*}[\ell^{p_0}(A),\ell^{p_0}(A)]_{\sigma} = \ell^{p_\sigma}(A)\end{equation*}
%with equal norms. We will need the equality of norms in the proof of Theorem~\ref{thm:neighborhood}, and so in this case the complex method is preferable; however, we will also want to study boundary value problems with solutions in $\dot W^{p,\smooth}_{m,av}(\Omega)$ for $\pmin<p<1$ as well as $1\leq p\leq\infty$, and only the real interpolation method is available in this case.

%Specifically, let $A$ be a Banach space, and let $S^j=(S_k^j)_{k=1}^\infty$, $j=0$, $1$ be two sequences of positive real numbers. 
Let $0<p_j<\infty$ and let $\smooth_j\in \R$. Let $0<\sigma<1$, let $1/p_\sigma=(1-\sigma)/p_0+\sigma/p_1$, and let $\smooth_\sigma=(1-\sigma)\smooth_0+\sigma\smooth_1$. 
%Define
%\begin{equation*}\ell^p_{S}(A) = \Bigl\{(a_k)_{k=1}^\infty: a_k\in A,\>\Bigl(\sum_{k=1}^\infty (S_k)^{p} \doublebar{a_k}_A^p\Bigr)^{1/p}<\infty\Bigr\}\end{equation*}
%with the obvious norm.

By \cite[Theorem~5.5.1]{BerL76}, %if $0<p_j<\infty$, then
%Examining the proof of \cite[Theorem~5.6.1]{BerL76}, except not, because this requires that we sum over all $Q$ a quantity that might not converge
\begin{equation}\label{eqn:l-l:interpolation:real} 
(\ell^{p_0,\smooth_0}_\Omega,\ell^{p_1,\allowbreak \smooth_1}_\Omega)_{\sigma,\allowbreak p_\sigma} = \ell^{p_\sigma,\smooth_\sigma}_\Omega
\end{equation}
with equivalent norms.
%Examining the proof of \cite[Theorem~5.6.3]{BerL76}, we see that if $1\leq p_j\leq \infty$, then
By \cite[Theorem~5.5.3]{BerL76},  if in addition $p_j\geq 1$, then
\begin{equation}\label{eqn:l-l:interpolation:complex} 
[\ell^{p_0,\smooth_0}_\Omega,\ell^{p_1,\smooth_1}_\Omega]_{\sigma} = \ell^{p_\sigma,\smooth_\sigma}_\Omega
\end{equation}
with equal norms.

We will use the following lemma to extend these results from $\ell^{p,\smooth}_\Omega$ to $L^{p,\smooth}_{av}(\Omega)$.
This is essentially \cite[Theorem~6.4.2]{BerL76} and \cite[Theorem~1.2.4]{Tri78}; see also \cite[Section~3]{Pee71}.

\begin{lem}\label{lem:retract} Suppose that $(A_0,A_1)$ and $(B_0,B_1)$ are two compatible couples, and that there are linear operators $\mathcal{I}:A_0+A_1\mapsto B_0+B_1$ and $\mathcal{P}:B_0+B_1\mapsto A_0+A_1$ such that $\mathcal{P}\circ\mathcal{I}$ is the identity operator on $A_0+A_1$, and such that $\mathcal{I}:A_0\mapsto B_0$, $\mathcal{I}:A_1\mapsto B_1$, $\mathcal{P}:A_0\mapsto B_0$, and $\mathcal{P}:A_1\mapsto B_1$ are all bounded operators.

If $0<\sigma<1$ and $0<r\leq\infty$, then
\begin{equation}\label{eqn:retract:1}
[A_0,A_1]_{\sigma}=\mathcal{P}([B_0,B_1]_{\sigma})
\quad\text{and}\quad
(A_0,A_1)_{\sigma,\allowbreak r}=\mathcal{P}((B_0,\allowbreak B_1)_{\sigma,r})
\end{equation}
with equivalent norms; that is,
\begin{gather*}
%\frac{1}{\doublebar{\mathcal{I}}_0^{1-\sigma} \doublebar{\mathcal{I}}_1^{\sigma}}
%\doublebar{\mathcal{I}a}_{[B_0,B_1]_{\sigma}}
%&\leq
\frac{1}{\doublebar{\mathcal{I}}_\sigma}
\doublebar{\mathcal{I}a}_{[B_0,B_1]_{\sigma}}
%\\&
\leq
\doublebar{a}_{[A_0,A_1]_{\sigma}} 
%\\&
\leq \bigl(\doublebar{\mathcal{P}}_\sigma\bigr)\doublebar{\mathcal{I}a}_{[B_0,B_1]_{\sigma}}
%\leq \bigl(\doublebar{\mathcal{P}}_0^{1-\sigma} \doublebar{\mathcal{P}}_1^{\sigma} \bigr) \doublebar{\mathcal{I}a}_{[B_0,B_1]_{\sigma}}
,\\
\frac{1}{\doublebar{\mathcal{I}}_{\sigma,r}}
\doublebar{\mathcal{I}a}_{(B_0,B_1)_{\sigma,r}}
%\\&
\leq
\doublebar{a}_{(A_0,A_1)_{\sigma,r}} 
%\\&
\leq \bigl(\doublebar{\mathcal{P}}_{\sigma,r}\bigr)\doublebar{\mathcal{I}a}_{(B_0,B_1)_{\sigma,r}}
\end{gather*}
where $\doublebar{\mathcal{I}}_\sigma$ and  $\doublebar{\mathcal{P}}_\sigma$ denote operator norms between appropriate interpolation spaces.
\end{lem}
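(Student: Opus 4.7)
My plan is to apply the interpolation bounds \eqref{eqn:interpolation:real} and \eqref{eqn:interpolation:complex}, already stated in the excerpt, directly to the operators $\mathcal{I}$ and $\mathcal{P}$, and then exploit the identity $\mathcal{P} \circ \mathcal{I} = \mathrm{Id}_{A_0+A_1}$. This is a standard ``retract'' argument; the setup is essentially the assertion that $(A_0,A_1)$ is an interpolation retract of $(B_0,B_1)$, and functoriality does all the work.

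First, by hypothesis $\mathcal{I}$ is bounded $A_j \mapsto B_j$ for $j=0,1$, so by \eqref{eqn:interpolation:complex} and \eqref{eqn:interpolation:real} it extends to bounded operators $[A_0,A_1]_\sigma \mapsto [B_0,B_1]_\sigma$ and $(A_0,A_1)_{\sigma,r}\mapsto (B_0,B_1)_{\sigma,r}$, with operator norms $\doublebar{\mathcal{I}}_\sigma$ and $\doublebar{\mathcal{I}}_{\sigma,r}$ controlled by the geometric means $\doublebar{\mathcal{I}}_{A_0\to B_0}^{1-\sigma}\doublebar{\mathcal{I}}_{A_1\to B_1}^{\sigma}$ and similarly for $\mathcal{P}$ in the reverse direction.

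Next, I would establish the two inclusions. For $a\in [A_0,A_1]_\sigma$, the operator bound gives $\mathcal{I}a\in [B_0,B_1]_\sigma$, and since $\mathcal{P}\circ\mathcal{I}=\mathrm{Id}$ on $A_0+A_1$, we have $a=\mathcal{P}(\mathcal{I}a)\in \mathcal{P}([B_0,B_1]_\sigma)$. Conversely, for any $b\in [B_0,B_1]_\sigma$, boundedness of $\mathcal{P}$ on the interpolation spaces gives $\mathcal{P}b\in [A_0,A_1]_\sigma$. The same argument works verbatim with $[\,\cdot\,,\,\cdot\,]_\sigma$ replaced by $(\,\cdot\,,\,\cdot\,)_{\sigma,r}$, yielding the set equalities in \eqref{eqn:retract:1}.

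Finally, to obtain the equivalent-norm inequalities, I would sandwich: for $a\in [A_0,A_1]_\sigma$, the upper bound comes from writing $a=\mathcal{P}(\mathcal{I}a)$ and applying boundedness of $\mathcal{P}$, giving
\begin{equation*}
\doublebar{a}_{[A_0,A_1]_\sigma} \leq \doublebar{\mathcal{P}}_\sigma \doublebar{\mathcal{I}a}_{[B_0,B_1]_\sigma},
\end{equation*}
while the lower bound comes directly from boundedness of $\mathcal{I}$:
\begin{equation*}
\doublebar{\mathcal{I}a}_{[B_0,B_1]_\sigma} \leq \doublebar{\mathcal{I}}_\sigma \doublebar{a}_{[A_0,A_1]_\sigma}.
\end{equation*}
The analogous pair of inequalities in the real-method case is identical. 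There is no real obstacle here; the only point demanding care is that the conclusion is phrased in terms of the norm $\doublebar{\mathcal{I}a}_{[B_0,B_1]_\sigma}$ rather than the quotient norm on $\mathcal{P}([B_0,B_1]_\sigma)$, which is why I avoid passing through a quotient and instead compare directly using $\mathcal{P}\circ\mathcal{I}=\mathrm{Id}$.
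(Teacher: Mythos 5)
Your proposal is correct and follows the same retract argument as the paper: apply the interpolation bounds \eqref{eqn:interpolation:real}--\eqref{eqn:interpolation:complex} to $\mathcal{I}$ and $\mathcal{P}$, obtain the two set inclusions via $\mathcal{P}\circ\mathcal{I}=\mathrm{Id}$, and read off both norm inequalities from $a=\mathcal{P}\mathcal{I}a$ together with boundedness of $\mathcal{I}$ and $\mathcal{P}$ on the interpolation spaces. (You also correctly noted that the hypotheses on $\mathcal{P}$ should be read as boundedness in the direction $B_j\mapsto A_j$; the statement's phrasing there is a typo.)
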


\begin{proof}
By the bound~\eqref{eqn:interpolation:complex},  $\mathcal{P}([B_0,B_1]_{\sigma})\subseteq [A_0,A_1]_{\sigma}$. Conversely, $[A_0,A_1]_{\sigma}=\mathcal{P}\mathcal{I}([A_0,A_1]_{\sigma})$ and $\mathcal{I}([A_0,A_1]_{\sigma})\subseteq [B_0,B_1]_{\sigma}$, and so $[A_0,A_1]_{\sigma}\subseteq \mathcal{P}([B_0,B_1]_\sigma)$. 
An analogous argument involving the bound~\eqref{eqn:interpolation:real} is valid for the real interpolation method.
The norm inequalities follow from the relation $a=\mathcal{P}\mathcal{I}a$ and the bound \eqref{eqn:interpolation:real} or~\eqref{eqn:interpolation:complex}.
\end{proof}

We now consider the spaces $L^{p,\smooth}_{av}(\Omega)$. Interpolation results for the spaces $L^{p,\smooth}_{av}(\R^\dmn_+)$
were established in \cite[Theorem~4.13]{BarM16A}; generalizing to arbitrary Lipschitz domains is straightforward. %, but we will need a bit more care to establish that the norms in these two spaces are equal rather than merely equivalent.

\begin{lem}\label{lem:L-L:interpolation}
Let $\Omega\subset\R^\dmn$ be a Lipschitz domain.
Let $\smooth_j\in\R$, $0<p_j<\infty$ and $0<\sigma<1$,  and let $1/p_\sigma=(1-\sigma)/p_0+\sigma/p_1$ and  $\smooth_\sigma=(1-\sigma)\smooth_0+\sigma\smooth_1$.
Then 
\begin{equation}\label{eqn:L-L:interpolation:real} 
(L^{p_0,\smooth_0}_{av}(\Omega),L^{p_1,\smooth_1}_{av}(\Omega))_{\sigma,p_\sigma} = 
L^{p_\sigma,\smooth_\sigma}_{av}(\Omega).
\end{equation}
If in addition $p_j\geq 1$, then
\begin{equation}\label{eqn:L-L:interpolation:complex} 
[L^{p_0,\smooth_0}_{av}(\Omega),L^{p_1,\smooth_1}_{av}(\Omega)]_{\sigma} = 
L^{p_\sigma,\smooth_\sigma}_{av}(\Omega).
\end{equation}
If we let the $L^{p,\smooth}_{av}(\Omega)$ norm be given by formula~\eqref{eqn:L:norm:whitney} rather than~\eqref{eqn:L:norm:2}, then we have equal norms in formula~\eqref{eqn:L-L:interpolation:complex}.
\end{lem}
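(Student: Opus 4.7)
The plan is to reduce the claim to the known sequence-space interpolation identities \eqref{eqn:l-l:interpolation:real} and \eqref{eqn:l-l:interpolation:complex} via the retraction principle in Lemma~\ref{lem:retract}. Fix a grid $\mathcal{G}$ of Whitney cubes for $\Omega$, and for each $Q \in \mathcal{G}$ let $\phi_Q : Q_0 \to Q$ be the affine bijection $\phi_Q(y) = x_Q + \ell(Q)\,y$, where $x_Q$ is a distinguished corner of~$Q$. Define an analysis operator and a synthesis operator by
\begin{equation*}
\mathcal{I}\arr H = \bigl(\arr H \circ \phi_Q\bigr)_{Q\in\mathcal{G}}, \qquad \bigl(\mathcal{P}(\arr H_Q)_{Q\in\mathcal{G}}\bigr)(x) = \arr H_Q\bigl(\phi_Q^{-1}(x)\bigr) \text{ for } x \in Q.
\end{equation*}
Since the cubes in $\mathcal{G}$ have disjoint interiors and cover $\Omega$ up to a null set, $\mathcal{P}$ is well-defined almost everywhere, and a direct computation shows that $\mathcal{P}\circ\mathcal{I}$ is the identity on the ambient space of measurable functions on~$\Omega$.

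Next I would verify the boundedness hypotheses of Lemma~\ref{lem:retract}. A change of variables yields $\doublebar{\arr H \circ \phi_Q}_{L^2(Q_0)}^2 = \fint_Q \abs{\arr H}^2$, and so adopting the equivalent norm \eqref{eqn:L:norm:whitney} we obtain
\begin{equation*}
\doublebar{\mathcal{I}\arr H}_{\ell^{p,\smooth}_\Omega}^p = \sum_{Q\in\mathcal{G}} \biggl(\fint_Q \abs{\arr H}^2\biggr)^{p/2} \ell(Q)^{\dmnMinusOne + p - p\smooth} = \doublebar{\arr H}_{L^{p,\smooth}_{av}(\Omega)}^p.
\end{equation*}
An entirely analogous calculation shows $\doublebar{\mathcal{P}(\arr H_Q)_Q}_{L^{p,\smooth}_{av}(\Omega)} = \doublebar{(\arr H_Q)_Q}_{\ell^{p,\smooth}_\Omega}$. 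Thus, with the norm~\eqref{eqn:L:norm:whitney}, the maps $\mathcal{I}$ and $\mathcal{P}$ are isometries between the indicated endpoints, and with the norm~\eqref{eqn:L:norm:2} they are bounded with constants depending only on $p_j$, $\smooth_j$ and the Whitney comparability parameters.

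Now I would invoke Lemma~\ref{lem:retract} with $A_j = L^{p_j,\smooth_j}_{av}(\Omega)$ and $B_j = \ell^{p_j,\smooth_j}_\Omega(L^2(Q_0))$, combined with the sequence-space identities \eqref{eqn:l-l:interpolation:real} (in general) and \eqref{eqn:l-l:interpolation:complex} (when $p_j\geq 1$), to conclude
\begin{equation*}
(L^{p_0,\smooth_0}_{av}(\Omega), L^{p_1,\smooth_1}_{av}(\Omega))_{\sigma,p_\sigma} = \mathcal{P}\bigl(\ell^{p_\sigma,\smooth_\sigma}_\Omega\bigr) = L^{p_\sigma,\smooth_\sigma}_{av}(\Omega),
\end{equation*}
with the analogous conclusion in the complex case. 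Here the inclusion $\mathcal{P}(\ell^{p_\sigma,\smooth_\sigma}_\Omega) \subseteq L^{p_\sigma,\smooth_\sigma}_{av}(\Omega)$ follows from the boundedness of $\mathcal{P}$ at the new exponents $(p_\sigma,\smooth_\sigma)$, while the reverse inclusion follows from $\mathcal{P}\mathcal{I} = \mathrm{id}$ applied to any $\arr H \in L^{p_\sigma,\smooth_\sigma}_{av}(\Omega)$.

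For the equal-norms statement, observe that under the norm \eqref{eqn:L:norm:whitney} the interpolation bound \eqref{eqn:interpolation:complex} gives $\doublebar{\mathcal{I}}_{[A_0,A_1]_\sigma \to [B_0,B_1]_\sigma} \leq 1$ and $\doublebar{\mathcal{P}}_{[B_0,B_1]_\sigma \to [A_0,A_1]_\sigma} \leq 1$. Applying this to $\arr H = \mathcal{P}\mathcal{I}\arr H$ forces
\begin{equation*}
\doublebar{\arr H}_{[A_0,A_1]_\sigma} = \doublebar{\mathcal{P}\mathcal{I}\arr H}_{[A_0,A_1]_\sigma} \leq \doublebar{\mathcal{I}\arr H}_{[B_0,B_1]_\sigma} \leq \doublebar{\arr H}_{[A_0,A_1]_\sigma},
\end{equation*}
so equality holds throughout; combined with the equal-norms clause of \eqref{eqn:l-l:interpolation:complex}, this yields the desired equality of norms in \eqref{eqn:L-L:interpolation:complex}. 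I anticipate no serious obstacle: the entire argument is a routine retraction onto a weighted sequence space, with the geometry of Whitney cubes doing all of the substantive work.
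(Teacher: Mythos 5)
Your proposal is correct and takes essentially the same route as the paper: a retraction onto the weighted sequence space $\ell^{p,\smooth}_\Omega(L^2(Q_0))$ via affine Whitney-cube rescalings, followed by Lemma~\ref{lem:retract} and formulas~\eqref{eqn:l-l:interpolation:real}--\eqref{eqn:l-l:interpolation:complex}. You spell out the isometry computation and the equal-norms argument in more detail than the paper does, but the underlying idea and the operators $\mathcal{I}$, $\mathcal{P}$ are the same.
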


\begin{proof}
Let $\mathcal{I}:L^{p,\smooth}_{av}(\Omega)\mapsto \ell^{p,\smooth}_\Omega$ be given by $(\mathcal{I}\arr H)_Q(x)=\arr H(x_Q+\ell(Q) x)$ for some appropriate $x_Q\in Q$. Observe that $\mathcal{I}$ is an isomorphism; let $\mathcal{P}$ be its inverse. Then the result follows by Lemma~\ref{lem:retract} and formulas~\eqref{eqn:l-l:interpolation:real} and~\eqref{eqn:l-l:interpolation:complex}.
\end{proof}

We remark that, in the $(\smooth,1/p)$-plane, the set of points $\{(\smooth_\sigma,1/p_\sigma):0<\sigma<1\}$ is the line segment connecting $(\smooth_0,1/p_0)$ and $(\smooth_1,1/p_1)$.

%We will eventually need to establish interpolation results for the weighted, averaged Sobolev spaces $\dot W_{m,av}^{p,\smooth}(\Omega)$. This may be done using Lemma~\ref{lem:retract} and formulas \eqref{eqn:l-l:interpolation:real} and~\eqref{eqn:l-l:interpolation:complex}, with the map $\mathcal{I}:\dot W_{m,av}^{p,\smooth}(\Omega)\mapsto L_{av}^{p,\smooth}(\Omega)$ given by $\mathcal{I}\vec F=\nabla^m\vec F$. We will need a globally defined, bounded left inverse~$\mathcal{P}$ to complete the argument. The most convenient choice of $\mathcal{P}$ is the Newton potential for the polyharmonic operator $\Delta^m$. We will define the Newton potential in Section~\ref{sec:green:newton}, and certain properties thereof will be useful to us; thus, we will defer interpolation results for the spaces $\dot W_{m,av}^{p,\smooth}(\Omega)$ until Section~\ref{sec:interpolation:W}.

We now use Lemmas~\ref{lem:retract} and~\ref{lem:L-L:interpolation} to produce interpolation results for other spaces.

\begin{lem}\label{lem:interpolation:W}
Let $\Omega\subset\R^\dmn$ be a Lipschitz domain with connected boundary.
Let $0<\smooth_j<1$, $0<p_j<\infty$ and $0<\sigma<1$,  and let  $p_\sigma$ and $\smooth_\sigma$ be as in Lemma~\ref{lem:L-L:interpolation}.

%Define the equivalence relation $\sim$ on $L^{p,\smooth}_{av}(\Omega)$ by
%\begin{equation}\label{eqn:sim}
%\arr H\sim\arr\Phi \text{ if } \int_\Omega \langle \nabla^m \vec\varphi,\arr H\rangle = \int_\Omega \langle \nabla^m \vec\varphi,\arr \Phi\rangle \text{ for all }\vec\varphi\in C^\infty_0(\R^\dmn)\end{equation}
%and let $L_{av}^{p,\smooth}(\R^\dmn\setminus\partial\Omega)/\sim$ be the quotient space of $L_{av}^{p,\smooth}(\R^\dmn\setminus\partial\Omega)$ under the relation~$\sim$.

If $\pmin[\smooth_j]<p_j<\infty$, then we have the real interpolation formulas
\begin{align}\label{eqn:W-W:interpolation:real} 
(\dot W_{m,av}^{p_0,\smooth_0}(\Omega), \dot W_{m,av}^{p_1,\smooth_1}(\Omega))_{\sigma,p_\sigma} &= 
\dot W_{m,av}^{p_\sigma,\smooth_\sigma}(\Omega)
,\\
\label{eqn:WA-WA:interpolation:real} 
(\dot W\!A^{p_0}_{m-1,\smooth_0}(\partial\Omega), \dot W\!A^{p_1}_{m-1,\smooth_1}(\partial\Omega))_{\sigma,p_\sigma} &= 
\dot W\!A^{p_\sigma}_{m-1,\smooth_\sigma}(\partial\Omega)
.\end{align}
If in addition $p_j\geq 1$,
then we have the complex interpolation formulas
\begin{align}\label{eqn:W-W:interpolation:complex} 
[\dot W_{m,av}^{p_0,\smooth_0}(\Omega), \dot W_{m,av}^{p_1,\smooth_1}(\Omega)]_{\sigma} &= 
\dot W_{m,av}^{p_\sigma,\smooth_\sigma}(\Omega)
,\\
\label{eqn:WA-WA:interpolation:complex} 
[\dot W\!A^{p_0}_{m-1,\smooth_0}(\partial\Omega), \dot W\!A^{p_1}_{m-1,\smooth_1}(\partial\Omega)]_{\sigma} &= 
\dot W\!A^{p_\sigma}_{m-1,\smooth_\sigma}(\partial\Omega)
.\end{align}
If $0<\smooth_j<1$ and $1\leq p_j<\infty$ for both $j=0$ and $j=1$,  and if $1<p_j<\infty$ for at least one of $j=0$ and $j=1$, then
\begin{equation}\label{eqn:W-W:interpolation:dual}
[(\dot W_{m,av}^{p_0,\smooth_0}(\Omega))^*, (\dot W_{m,av}^{p_1,\smooth_1}(\Omega))^*]_{\sigma} = 
(\dot W_{m,av}^{p_\sigma,\smooth_\sigma}(\Omega))^*
\end{equation}
where $(\dot W_{m,av}^{p,\smooth}(\Omega))^*$ is the dual space to~$\dot W_{m,av}^{p,\smooth}(\Omega)$.

% Svante Janson 1993:
% It is well known that even if we know the interpolation spaces of a certain couple of spaces, by the real or complex method, say, there is no general formula that enables us to directly obtain the interpolation spaces of a couple of subspaces or quotient spaces of a given couple. Indeed, there are examples that show that interpolation of subspaces (or quotient spaces) in general may be ill-behaved, see Triebel \cite{Tri70} and Wallst\'en \cite{Wal88}.

\end{lem}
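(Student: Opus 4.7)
The plan is to derive all five identities from Lemma~\ref{lem:retract}, together with the interpolation of $L^{p,\smooth}_{av}$ spaces already established in Lemma~\ref{lem:L-L:interpolation} (applied both on $\Omega$ and on $\R^\dmn\setminus\partial\Omega$, where the norm splits as a direct sum over the two Lipschitz pieces).

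For \eqref{eqn:W-W:interpolation:real} and~\eqref{eqn:W-W:interpolation:complex}, I will realize $\dot W_{m,av}^{p,\smooth}(\Omega)$ as a retract of $L^{p,\smooth}_{av}(\R^\dmn\setminus\partial\Omega)$, uniformly across the range of exponents in play. Fix the block-diagonal polyharmonic operator $L_0=(-\Delta)^m I_N$ with constant coefficient matrix $\mat A_0$, and set $\mathcal{P}\arr H:=\vec\Pi^{L_0}(\mat A_0\arr H)\big\vert_\Omega$; by Lemma~\ref{lem:polyharmonic:bounded}, $\mathcal{P}$ is bounded $L^{p,\smooth}_{av}(\R^\dmn\setminus\partial\Omega)\to\dot W_{m,av}^{p,\smooth}(\Omega)$. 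Take $\mathcal{I}\vec u:=\nabla^m(E\vec u)$, where $E:\dot W_{m,av}^{p,\smooth}(\Omega)\to\widetilde W_{m,av}^{p,\smooth}(\partial\Omega)$ is a bounded extension operator. The defining property of the Newton potential combined with uniqueness in $\dot W^2_m(\R^\dmn)$ gives $\vec\Pi^{L_0}(\mat A_0\nabla^m\vec U)=\vec U$ for any $\vec U\in\dot W^2_m(\R^\dmn)$, and a density argument extends this identity to $\vec U\in\widetilde W_{m,av}^{p,\smooth}(\partial\Omega)$; thus $\mathcal{P}\mathcal{I}\vec u=(E\vec u)\big\vert_\Omega=\vec u$. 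Lemma~\ref{lem:retract} then transfers the interpolation identities of Lemma~\ref{lem:L-L:interpolation} to $\dot W_{m,av}^{p,\smooth}(\Omega)$.

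For the boundary-space identities \eqref{eqn:WA-WA:interpolation:real} and~\eqref{eqn:WA-WA:interpolation:complex}, I will apply Lemma~\ref{lem:retract} a second time with $\mathcal{P}=\Tr_{m-1}^\Omega$ (bounded by the main result of \cite{Bar16pB}) and $\mathcal{I}$ equal to the bounded right inverse supplied by the extension theorem of that same reference; this realizes $\dot W\!A^p_{m-1,\smooth}(\partial\Omega)$ as a retract of $\dot W_{m,av}^{p,\smooth}(\Omega)$, and invoking the identities just proved for the ambient space closes the argument. For the dual identity \eqref{eqn:W-W:interpolation:dual}, I will invoke the complex-interpolation duality theorem \cite[Theorem~4.5.1]{BerL76}: under the hypotheses that $A_0\cap A_1$ is dense in both endpoints and at least one of $A_0$, $A_1$ is reflexive, one has $[A_0^*,A_1^*]_\sigma=[A_0,A_1]_\sigma^*$. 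Since $\dot W_{m,av}^{p,\smooth}(\Omega)$ embeds isometrically via $\nabla^m$ into the reflexive space $L^{p,\smooth}_{av}(\Omega)$ for $1<p<\infty$, the hypothesis that at least one of $p_0, p_1$ strictly exceeds~$1$ supplies the required reflexivity; density of the intersection is routine; and combining with \eqref{eqn:W-W:interpolation:complex} yields the claim.

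The main obstacle is the construction and uniform boundedness, across all admissible $(p,\smooth)$, of the extension operator $E:\dot W_{m,av}^{p,\smooth}(\Omega)\to\widetilde W_{m,av}^{p,\smooth}(\partial\Omega)$, together with the density argument needed to lift the identity $\vec\Pi^{L_0}\circ(\mat A_0\,\cdot\,)\circ\nabla^m=\mathrm{id}$ from the $L^2$-based Lax--Milgram setting to the weighted averaged scales. Once these two ingredients are in hand, the remaining verifications are direct bookkeeping with the retract lemma and the already-known $L^{p,\smooth}_{av}$ interpolation.
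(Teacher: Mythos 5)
Your proposal is essentially the paper's proof, merely with the two retract arguments collapsed into one: the paper first exhibits $\widetilde W^{p,\smooth}_{m,av}(\partial\Omega)$ as a retract of $L^{p,\smooth}_{av}(\R^\dmn\setminus\partial\Omega)$ via $\mathcal{I}=\nabla^m$, $\mathcal{P}=\vec\Pi^{L_0}(\mat A\,\cdot\,)$, and then $\dot W^{p,\smooth}_{m,av}(\Omega)$ as a retract of $\widetilde W^{p,\smooth}_{m,av}(\partial\Omega)$, whereas you compose the two maps into a single retract; the boundary-space and duality arguments are word-for-word the same (the paper cites \cite[Corollary~4.5.2]{BerL76} rather than Theorem~4.5.1, but these encode the same duality statement). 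The ``main obstacle'' you name — a bounded extension $E:\dot W^{p,\smooth}_{m,av}(\Omega)\to \widetilde W^{p,\smooth}_{m,av}(\partial\Omega)$ — is exactly what the paper supplies via $E\vec F = \mathcal{E}^0_\Omega\vec F+\mathcal{E}^0_{\R^\dmn\setminus\overline\Omega} \Ext^{\R^\dmn\setminus\overline\Omega} \Tr_{m-1}^{\Omega} \vec F$, using the trace and extension theorems of \cite{Bar16pB}, so your outline is correct and complete once that construction is inserted.
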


\begin{proof}
Recall the space $\widetilde W_{m,av}^{p,\smooth}(\partial\Omega)$ of formula~\eqref{eqn:W:global}. We will use $\widetilde W_{m,av}^{p,\smooth}(\partial\Omega)$ as an intermediate space between $L^{p,\smooth}_{av}(\R^\dmn\setminus\partial\Omega)$ and~$\dot W^{p,\smooth}_{m,av}(\Omega)$.

Let $\mathcal{I}\vec F=\nabla^m\vec F$; then $\mathcal{I}:\widetilde W_{m,av}^{p,\smooth}\mapsto L^{p,\smooth}_{av}(\R^\dmn\setminus\partial\Omega)$ 
is bounded and one-to-one. Let $\mathcal{P}\arr H = \vec\Pi^{L_0}(\mat A\arr H)$ for some constant coefficients~$L_0$. By Lemma~\ref{lem:polyharmonic:bounded},  $\mathcal{P}$ is bounded $L^{p,\smooth}_{av}(\R^\dmn\setminus\partial\Omega) \mapsto \widetilde W_{m,av}^{p,\smooth}(\partial\Omega)$.

Recall that if $\arr H\in L^2(\R^\dmn)$, then $\vec \Pi^L\arr H$ is the \emph{unique} solution to formula~\eqref{dfn:newton}; we then have that
\begin{align*}
%\label{eqn:newton:identity}
\vec \Pi^L(\mat A \nabla^m\vec F)=\vec F &\text{ for all $\vec F\in\dot W^2_{m}(\R^\dmn)$}
.\end{align*}
By density of smooth functions (see \cite[Theorem~3.15]{Bar16pB}) this is true for all $\vec F\in \widetilde W_{m,av}^{p,\smooth}(\partial\Omega)$.
Thus $\mathcal{P}\circ\mathcal{I}$ is the identity, and by Lemma~\ref{lem:polyharmonic:bounded} and the definition of $\widetilde W_{m,av}^{p,\smooth}(\partial\Omega)$ we have that $\mathcal{I}$ and $\mathcal{P}$ are bounded whenever $0<\smooth<1$ and $\pmin<p\leq\infty$.
Thus, by Lemma~\ref{lem:retract} and formulas~\eqref{eqn:L-L:interpolation:real} and~\eqref{eqn:L-L:interpolation:complex}, with appropriate restrictions on~$p_j$,
\begin{equation*}%\label{eqn:W-W:interpolation}
(\widetilde W_{m,av}^{p_0,\smooth_0}(\partial\Omega), \widetilde W_{m,av}^{p_1,\smooth_1}(\partial\Omega))_{\sigma,p_\sigma} = 
[\widetilde W_{m,av}^{p_0,\smooth_0}(\partial\Omega), \widetilde W_{m,av}^{p_1,\smooth_1}(\partial\Omega)]_{\sigma} = 
\widetilde W_{m,av}^{p_\sigma,\smooth_\sigma}(\partial\Omega),
\end{equation*}
where $p_\sigma$ and $\smooth_\sigma$ are as in Lemma~\ref{lem:L-L:interpolation}.

We now pass to the familiar weighted spaces $\dot W_{m,av}^{p,\smooth}(\Omega)$.
Let $\mathcal{P}: \widetilde W_{m,av}^{p,\smooth}(\partial\Omega) \mapsto  \dot W_{m,av}^{p,\smooth}(\Omega)$ be simply the restriction map; $\mathcal{P}$ is clearly bounded. Let 
\begin{equation*}\mathcal{I}\vec F = \mathcal{E}^0_\Omega\vec F+\mathcal{E}^0_{\R^\dmn\setminus\overline\Omega} \Ext^{\R^\dmn\setminus\overline\Omega} \Tr_{m-1}^{\Omega} \vec F,\end{equation*}
where $\Ext$ is the extension operator of \cite[Theorem~4.1]{Bar16pB} and $\mathcal{E}^0_\Omega$ denotes extension by zero. An examination of the proof of \cite[Theorem~4.1]{Bar16pB}  reveals that if $\Omega$ is a Lipschitz domain with connected boundary then $\Ext$ is a bounded linear operator; by \cite[Theorem~5.1]{Bar16pB}, $\Tr_{m-1}^\Omega$ is also bounded, and so $\mathcal{I}$ is bounded. Clearly $\mathcal{P}\circ\mathcal{I}\vec F=\vec F$, and so by Lemma~\ref{lem:retract}, the interpolation formulas~\eqref{eqn:W-W:interpolation:real} and~\eqref{eqn:W-W:interpolation:complex} are valid.

To establish the interpolation results \eqref{eqn:WA-WA:interpolation:real} and~\eqref{eqn:WA-WA:interpolation:complex} for Whitney spaces, let $\mathcal{I}=\Ext^{\Omega}$ and $\mathcal{P}=\Tr_{m-1}^{\Omega}$. By \cite[Theorems~4.1 and~5.1]{Bar16pB}, these operators are bounded and $\mathcal{P}\circ\mathcal{I}$ is the identity, and so Lemma~\ref{lem:retract} yields the desired results.

Finally, by \cite[Corollary~4.5.2]{BerL76}, if $(A_0,A_1)$ is a compatible couple of Banach spaces, and if at least one of $A_0$ and $A_1$ is reflexive, then the dual space $[A_0,A_1]_\sigma^*$ to $[A_0,A_1]_\sigma$ is $[A_0^*,A_1^*]_\sigma$. We may identify $\dot W^{p,\smooth}_{m,av}(\Omega)$ with a closed subspace of $L^{p,\smooth}_{av}(\Omega)$; thus, if $1<p<\infty$ then $\dot W^{p,\smooth}_{m,av}(\Omega)$ is reflexive, and so
\begin{equation*}
[(\dot W_{m,av}^{p_0,\smooth_0}(\Omega))^*, (\dot W_{m,av}^{p_1,\smooth_1}(\Omega))^*]_{\sigma} = 
(\dot W_{m,av}^{p_\sigma,\smooth_\sigma}(\Omega))^*
\end{equation*}
as desired.
\end{proof}

We will conclude this subsection by proving Lemma~\ref{lem:interpolation}.

\begin{proof}[Proof of Lemma~\ref{lem:interpolation}]
By Lemma~\ref{lem:zero:boundary}, we may consider only the case of homogeneous boundary data (that is, $\arr f=0$ and $\arr g=0$).

Define the operator $T$ as follows. If $\arr H\in L^{p_0,\smooth_0}_{av}(\Omega)$ or $\arr H\in L^{p_1,\smooth_1}_{av}(\Omega)$, let $T\arr H=\vec u$, where $\vec u$ is the solution to the Dirichlet problem~\eqref{eqn:interpolation:Dirichlet:0} or~\eqref{eqn:interpolation:Dirichlet:1} or Neumann problem \eqref{eqn:interpolation:Neumann:0} or~\eqref{eqn:interpolation:Neumann:1} with data $\arr\Phi=\arr H$ or $\arr\Psi=\arr H$. 

Then $T$ is a bounded linear operator $L^{p_j,\smooth_j}_{av}(\Omega)\mapsto \dot W^{p_j,\smooth_j}_{m,av}(\Omega)$ for $j=0$ and~$j=1$.
By the compatibility condition, we have that $T$ extends to a well-defined linear operator $L^{p_0,\smooth_0}_{av}(\Omega)+L^{p_1,\smooth_1}_{av}(\Omega) \mapsto \dot W^{p_0,\smooth_0}_{m,av}(\Omega)+\dot W^{p_1,\smooth_1}_{m,av}(\Omega)$. Thus, by formulas~\eqref{eqn:interpolation:real}, \eqref{eqn:L-L:interpolation:real}, and~\eqref{eqn:W-W:interpolation:real}, %(if $p_j<\infty$) or by formulas~\eqref{eqn:interpolation:complex}, \eqref{eqn:L-L:interpolation:complex}, and~\eqref{eqn:W-W:interpolation:complex} (if $p_j\geq 1$), 
$T$ is a bounded linear operator $L^{p_\sigma,\smooth_\sigma}_{av}(\Omega)\mapsto \dot W^{p_\sigma,\smooth_\sigma}_{m,av}(\Omega)$ for any $0<\sigma<1$.

Thus, for each $\arr H\in L^{p_\sigma,\smooth_\sigma}_{av}(\Omega)$ there exists a solution to the boundary value problem \eqref{eqn:interpolation:Dirichlet} or~\eqref{eqn:interpolation:Neumann}.

We must now establish uniqueness. %If $\pdmnMinusOne/p_0-\smooth_0=\pdmnMinusOne/p_1-\smooth_1$, then $\pdmnMinusOne/p_\sigma-\smooth_\sigma$ is constant and uniqueness follows by Corollary~\ref{cor:unique:extrapolate}.
Suppose that $p_j>1$.
By Theorems~\ref{thm:exist:unique} and~\ref{thm:unique:exist}, we have that the corresponding boundary value problems for $p_j'$, $\smooth_j'$, with $p'$, $\smooth'$ as in Theorem~\ref{thm:unique:exist}, are well posed. Then $(p')_\sigma=(p_\sigma)'$ and $(\smooth')_\sigma=(\smooth_\sigma)'$, and so by the same interpolation argument, we have that the boundary value problem \eqref{eqn:Dirichlet:p:smooth} or~\eqref{eqn:Neumann:p:smooth},  with $p=p_\sigma'$ and $\smooth=\smooth_\sigma'$, are well posed. Another application of Theorem~\ref{thm:exist:unique} completes the proof.
\end{proof}

\subsection{Proof of Theorem~\ref*{thm:neighborhood}}
\label{sec:neighborhood:proof}

It is a well known consequence of the Lax-Milgram lemma that, if $L$ is a divergence form elliptic operator, then the %Dirichlet problem with homogeneous boundary data
%\begin{equation*}L\vec u=\Div_m \arr H \text{ in }\Omega,\quad \vec u \in \mathring W^2_m(\Omega),\quad \doublebar{\nabla^m \vec u}_{L^2(\Omega)} \leq \frac{1}{\lambda} \doublebar{\arr H}_{L^2(\Omega)}\end{equation*}
%and the 
Neumann problem  with homogeneous boundary data
\begin{equation*}L\vec u=\Div_m \arr H \text{ in }\Omega,\quad \M_{\mat A,\arr H}^\Omega\vec u =0,\quad \doublebar{\nabla^m \vec u}_{L^2(\Omega)} \leq \frac{1}{\lambda} \doublebar{\arr H}_{L^2(\Omega)}\end{equation*}
is well posed. %Here $\mathring W^2_m(\Omega)$ is the completion in $\dot W^2_m(\Omega)$ of the set of all smooth, compactly supported functions with compact support. 
%
%If $\Omega$ is a Lipschitz domain with connected boundary, then $\mathring W^2_m(\Omega)=\{\vec v\in \dot W^2_m(\Omega):\Tr_{m-1}^\Omega\vec v=0\}$. Furthermore, by the trace and extension theorems of Section~\ref{sec:trace}, we may extend to well posedness of the corresponding problems with inhomogeneous boundary values.
By formula~\eqref{eqn:L2:2:half}, we have that the Neumann problem
\begin{gather}%L\vec u=\Div_m \arr H \text{ in }\Omega,\quad \Tr_{m-1}^\Omega\vec u =0,\>\> \doublebar{\vec u}_{\dot W^{p,\smooth}_{m,av}(\Omega)} \leq C \doublebar{\arr H}_{L^{p,\smooth}_{av}(\Omega)} %+ C\doublebar{\arr f}_{\dot W\!A^p_{m-1,\smooth}(\partial\Omega)}
%,\\
\label{eqn:Neumann:energy}
L\vec u=\Div_m \arr H \text{ in }\Omega,\quad \M_{\mat A,\arr H}^\Omega\vec u =0,\quad \doublebar{\vec u}_{\dot W^{p,\smooth}_{m,av}(\Omega)} \leq C \doublebar{\arr H}_{L^{p,\smooth}_{av}(\Omega)} %+ C\doublebar{\arr g}_{\dot N\!A^p_{m-1,\smooth-1}(\partial\Omega)}
\end{gather}
is well posed for $p=2$ and $\smooth=1/2$, in any Lipschitz domain with connected boundary.
We must show that well posedness still holds if $\abs{p-2}$ and $\abs{\smooth-1/2}$ are small but not necessarily zero.

We will use the following lemma. This lemma appeared originally as {\cite[Lemma~7]{Sne74}}; see also \cite[Section~2]{KalM98}.
\begin{lem}
\label{lem:invertible:stable}
Let $(A_0,A_1)$ and $(B_0,B_1)$ be two compatible couples. For each $0<\sigma<1$, let $A_\sigma=[A_0,A_1]_\sigma$ and $B_\sigma=[B_0,B_1]_\sigma$, where $[\,\cdot\,,\,\cdot\,]_\sigma$ is the complex interpolation functor of Lions, Calder\'on, and Krejn.

Suppose that $T:A_0+A_1\mapsto B_0+B_1$ is a linear operator with $T(A_0)\subseteq B_0$ and $T(A_1)\subseteq B_1$; by the bound~\eqref{eqn:interpolation:complex}, $T$ is bounded $A_\sigma\mapsto B_\sigma$ for any $0<\sigma<1$.

Suppose that for some $\sigma_0$ with $0<\sigma_0<1$, we have that $T:A_{\sigma_0}\mapsto B_{\sigma_0}$ is invertible. Then  there is some $\varepsilon>0$ such that $T:A_\sigma\mapsto B_\sigma$ is invertible for all $\sigma$ with $\sigma_0-\varepsilon<\sigma<\sigma_0+\varepsilon$. 
\end{lem}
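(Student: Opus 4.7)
The plan is to follow Sneiberg's original argument, which exploits the definition of the complex interpolation space via holomorphic functions on the strip $S=\{z\in\C:0<\re z<1\}$. Recall that $a\in[A_0,A_1]_\sigma$ if and only if there is a function $F$ holomorphic on $S$, continuous and bounded on $\overline{S}$, taking values in $A_0+A_1$, with $F(\sigma)=a$ and $\max_{j=0,1}\sup_t\doublebar{F(j+it)}_{A_j}<\infty$; the infimum of this maximum over all such $F$ equals $\doublebar{a}_{A_\sigma}$, and an analogous description holds for $B_\sigma$. Set $M=\max\bigl(\doublebar{T}_{A_0\to B_0},\doublebar{T}_{A_1\to B_1}\bigr)$ and $C_0=\doublebar{T^{-1}}_{B_{\sigma_0}\to A_{\sigma_0}}$; both are finite by hypothesis, and the bound~\eqref{eqn:interpolation:complex} gives $\doublebar{T}_{A_\sigma\to B_\sigma}\leq M$ uniformly in $\sigma\in(0,1)$.

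The main step is to construct a right inverse for $T:A_\sigma\to B_\sigma$ when $|\sigma-\sigma_0|$ is sufficiently small, by a Neumann-series type iteration. Given $b\in B_\sigma$ with $\doublebar{b}_{B_\sigma}\leq 1$ and any small $\delta>0$, choose $g$ holomorphic on $\overline S$ with $g(\sigma)=b$ and boundary norms at most $1+\delta$. Then $\doublebar{g(\sigma_0)}_{B_{\sigma_0}}\leq 1+\delta$ by the log-convexity of the interpolation norm, so $a^{(0)}:=T^{-1}_{\sigma_0}g(\sigma_0)\in A_{\sigma_0}$ satisfies $\doublebar{a^{(0)}}_{A_{\sigma_0}}\leq C_0(1+\delta)$. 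Choose $h^{(0)}$ holomorphic on $\overline S$ with $h^{(0)}(\sigma_0)=a^{(0)}$ and boundary norms at most $C_0(1+\delta)^2$. The holomorphic function $\Phi^{(0)}:=g-Th^{(0)}$ then vanishes at $\sigma_0$ and has boundary norms at most $(1+\delta)\bigl(1+MC_0(1+\delta)\bigr)$.

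The key technical ingredient is to estimate $\doublebar{\Phi^{(0)}(\sigma)}_{B_\sigma}$ using the vanishing at $\sigma_0$. Composing with a conformal map $\varphi:S\to\{|\zeta|<1\}$ that sends $\sigma_0$ to $0$ (for example $\varphi(z)=\frac{e^{i\pi z}-e^{i\pi\sigma_0}}{e^{i\pi z}-e^{-i\pi\sigma_0}}$), the function $\Phi^{(0)}\circ\varphi^{-1}$ is holomorphic on the disk and vanishes at the origin, so a Schwarz-type estimate applied inside the definition of the interpolation norm gives $\doublebar{\Phi^{(0)}(\sigma)}_{B_\sigma}\leq |\varphi(\sigma)|\cdot(1+\delta)\bigl(1+MC_0(1+\delta)\bigr)$. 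Since $|\varphi(\sigma)|\to 0$ as $\sigma\to\sigma_0$, there exists $\varepsilon>0$ (depending on $M$ and $C_0$, essentially $\varepsilon\approx 1/(MC_0)$ up to absolute constants from the strip-to-disk map) such that for $|\sigma-\sigma_0|<\varepsilon$ one has $\doublebar{\Phi^{(0)}(\sigma)}_{B_\sigma}\leq\theta\doublebar{b}_{B_\sigma}$ with a fixed $\theta<1$. Iterating the construction with $b$ replaced by $\Phi^{(n)}(\sigma)$ and summing the resulting geometric series yields $a:=\sum_{n\geq 0}h^{(n)}(\sigma)\in A_\sigma$ with $Ta=b$ and $\doublebar{a}_{A_\sigma}\leq\frac{C_0}{1-\theta}\doublebar{b}_{B_\sigma}$, establishing surjectivity with a quantitative bound on $\doublebar{T^{-1}}_{B_\sigma\to A_\sigma}$. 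Injectivity follows from the analogous iteration applied to the equation $Ta=0$ (starting from a candidate $a\in A_\sigma$ and showing it must vanish), or equivalently by running the same argument on the adjoint couples.

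The main obstacle is the quantitative Schwarz-type estimate on the strip together with careful bookkeeping of the constants so that the geometric series converges: one must verify that the factor extracted from vanishing at $\sigma_0$ is genuinely a dilation constant in the interpolation norm (not merely a pointwise bound), which requires that the quotient $\Phi^{(0)}(z)/\varphi(z)$ continue to serve as an admissible representative in the definition of $B_\sigma$. Once this is verified, the proof reduces to the standard Neumann-series iteration.
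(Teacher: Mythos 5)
The paper does not prove this lemma: immediately after the statement it says ``This lemma appeared originally as \cite[Lemma~7]{Sne74}; see also \cite[Section~2]{KalM98}'' and relies on those references. Your proposal reconstructs Sneiberg's argument from first principles, which is exactly what those references do, so you are following the paper's intended source rather than taking a different route.

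Your Neumann-series construction of a right inverse is correct, and you have correctly isolated the key technical point: if $\Phi\in\mathcal{F}(B_0,B_1)$ vanishes at $\sigma_0$, then $\Phi/\varphi$ (with $\varphi$ the strip-to-disk conformal map sending $\sigma_0$ to $0$, which is unimodular on $\partial S$) is again an admissible representative with the same boundary norms, so the dilation factor $|\varphi(\sigma)|$ genuinely enters the interpolation norm, not merely a pointwise estimate. The bookkeeping of constants giving $\theta<1$ and the geometric series are fine.

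The one genuinely weak spot is injectivity. As literally stated, ``the analogous iteration applied to the equation $Ta=0$'' does not describe a workable procedure: you are trying to prove $a=0$, not to build a convergent sum, so there is nothing to iterate. What actually does the job is a single self-improving estimate: given $a\in A_\sigma$ with $Ta=0$, choose $f\in\mathcal{F}(A_0,A_1)$ with $f(\sigma)=a$ and boundary norms $\leq(1+\delta)\doublebar{a}_{A_\sigma}$; then $Tf$ vanishes at $\sigma$, so the Schwarz estimate (using the conformal map sending $\sigma$ to $0$ this time) gives $\doublebar{Tf(\sigma_0)}_{B_{\sigma_0}}\lesssim\eta\,M\doublebar{a}_{A_\sigma}$ with $\eta$ the pseudo-hyperbolic distance between $\sigma$ and $\sigma_0$; inverting at $\sigma_0$ gives $\doublebar{f(\sigma_0)}_{A_{\sigma_0}}\lesssim C_0M\eta\doublebar{a}_{A_\sigma}$; choosing $F$ with $F(\sigma_0)=f(\sigma_0)$ and small boundary norms, the function $f-F$ vanishes at $\sigma_0$, so another Schwarz estimate yields $\doublebar{a}_{A_\sigma}=\doublebar{f(\sigma)}_{A_\sigma}\leq\doublebar{(f-F)(\sigma)}_{A_\sigma}+\doublebar{F(\sigma)}_{A_\sigma}\lesssim\eta(1+C_0M)\doublebar{a}_{A_\sigma}$, forcing $a=0$ for $\eta$ small. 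This also shows $T$ is bounded below, so the right inverse you constructed is a genuine two-sided inverse. Your alternative suggestion of passing to the adjoint couples does work, but be aware it imports the hypotheses of the duality theorem for the Calder\'on functor (e.g.\ $A_0\cap A_1$ dense in both endpoint spaces with one of them reflexive, as in \cite[Corollary~4.5.2]{BerL76}); the direct estimate above avoids those assumptions and is what Sneiberg and Kalton--Mitrea actually do.
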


Thus, our goal is to reframe well posedness as invertibility of some bounded linear operator on an interpolation scale.

If $\vec u \in \dot W^{p,\smooth}_{m,av}(\Omega)$, we will let $T\vec u$ be the element of $(\dot W^{p',\smooth'}_{m,av}(\Omega))^*$ given by 
$T\vec u(\vec\varphi)=\langle\mat A\nabla^m \vec u, \nabla^m\vec \varphi \rangle_\Omega$. By the bound~\eqref{eqn:elliptic:bounded} on $\mat A$, the duality relation~\eqref{eqn:L:dual}, and the definition of $\dot W^{p,\smooth}_{m,av}(\Omega)$, if $1<p<\infty$ and $0<\smooth<1$ then $T$ is bounded $\dot W^{p,\smooth}_{m,av}(\Omega)\mapsto (\dot W^{p',\smooth'}_{m,av}(\Omega))^*$.

If $\arr H\in L^{p,\smooth}_{av}(\Omega)$, then we may identify $\arr H$ with the element of $(\dot W^{p',\smooth'}_{m,av}(\Omega))^*$ given by $\vec\varphi\mapsto \langle \arr H,\nabla^m\vec\varphi\rangle_\Omega$. By the definition~\eqref{dfn:Neumann} of Neumann boundary values, $\vec u$ is a solution to the Neumann problem~\eqref{eqn:Neumann:energy} if and only if $T\vec u=\arr H$ as elements of $(\dot W^{p',\smooth'}_{m,av}(\Omega))^*$. Thus, the Neumann problem~\eqref{eqn:Neumann:energy} is well posed if and only if $T:\dot W^{p,\smooth}_{m,av}(\Omega)\mapsto (\dot W^{p',\smooth'}_{m,av}(\Omega))^*$ is invertible.

In particular, by the above remarks, $T$ is invertible if $p=2$ and $\smooth=1/2$. By Lemma~\ref{lem:invertible:stable}, we have that if $\ell$ is a line in the $(\smooth,1/2)$-plane then there is some $\varepsilon(\ell)>0$ such that if $\abs{1/p-1/2}+\abs{\smooth-1/2}<\varepsilon(\ell)$ and $(\smooth,1/p)\in \ell$ then the Neumann problem~\eqref{eqn:Neumann:energy} is well posed. Examining the arguments in \cite[Section~2]{KalM98}, we see that $\varepsilon(\ell)$ can be bounded from below, independently of~$\ell$. This completes the proof.

\section{Known results reformulated in the notation of the present paper}
\label{sec:A0}

Recall that Theorem~\ref{thm:perturb} allows us to establish well posedness for certain coefficients $\mat B$ given well posedness for nearby coefficients~$\mat A$. In Section~\ref{sec:A0:introduction}, we described new well posedness results arising from these theorems and from known results of \cite{BarM16A} and \cite{MitMW11,MitM13B}.

The results of \cite{BarM16A} were stated in terms of the spaces $L^{p,\smooth}_{m,av}(\R^\dmn_+)$, $\dot W^{p,\smooth}_{m,av}(\R^\dmn_+)$, $\dot W\!A^p_{m-1,\smooth}(\R^\dmnMinusOne)$, and $\dot N\!A^p_{m-1,\smooth-1}(\R^\dmnMinusOne)$ used in the present paper. However, the results of \cite{MitMW11,MitM13B} were stated in terms of other, related spaces. In this section we will convert the results of \cite{MitM13B} into equivalent results in terms of our spaces; we will also (Section~\ref{sec:BarM16A:2}) complete the argument of Remark~\ref{rmk:BarM16A:2}.

\subsection{The biharmonic equation}
\label{sec:biharmonic}

We begin by recalling the following result from \cite{MitM13B}.
%\cite{MitMW11,MitM13B}.

%\begin{thm}
%\textup{({\cite[Theorem~1.1]{MitMW11}} and {\cite[Theorem~6.10]{MitM13B}})}.\label{thm:MitM13B}
%Let $\Omega\subset\R^\dmn$, $\dmn\geq 2$, be a bounded Lipschitz domain with connected boundary. % If \dmn=3, of arbitrary topology
%Then there is some $\kappa>0$ such that if $p$ and $\smooth$ satisfy the inequalities~\eqref{eqn:biharmonic:stripe} (if $\dmn\geq 3$) or~\eqref{eqn:biharmonic:stripe:2} (if $\dmn=2$),
%then the Dirichlet problem 
%\begin{equation}
%\label{eqn:Dirichlet:MitM13B}\left\{\begin{gathered} \Delta^2 u = w \quad\text{in }\Omega,\quad
%\Trace u = f_0,\quad
%\Tr_1^\Omega u = \arr f,\\
%\doublebar{u}_{B^{p,p}_{\smooth+1/p+1}(\Omega)} 
% \leq
%C \bigl(\doublebar{f_0}_{L^p(\partial\Omega)}
%+ \doublebar{\arr f}_{B^{p,p}_\smooth(\partial\Omega)}
%+ \doublebar{\arr f}_{L^p(\partial\Omega)}
%+ \doublebar{w}_{B^{p,p}_{\smooth+1/p-3}(\Omega)}\bigr)
%\end{gathered}\right.
%\end{equation}
%has a unique solution for each $w\in {B^{p,p}_{s+1/p-3}(\Omega)}$, each $\arr f\in L^p(\partial\Omega)\cap \dot W\!A^p_{1,\smooth}(\partial\Omega)$, and each $f_0\in \dot W_1^p(\partial\Omega)$ which satisfies the compatibility condition $\nu_j\partial_k f_0-\nu_k\partial_j f_0 = \nu_j f_k-\nu_k f_j$ for $1\leq j\leq \dmn$ and $1\leq k\leq \dmn$.
%\end{thm}

\begin{thm}[{\cite[Theorems~ 6.10 and~6.16]{MitM13B}}]\label{thm:MitM13B}
Let $\Omega$ be a bounded Lipschitz domain  in $\R^\dmn$, $\dmn\geq 2$, with connected boundary.  Then there is some $\kappa>0$, depending on $\rho$, $\dmn$, and the Lipschitz character of~$\Omega$, such that if $\dmn\geq 4$ and the condition~\eqref{eqn:biharmonic:stripe} is valid, or if $\dmn=2$ or $\dmn=3$ and the condition~\eqref{eqn:biharmonic:stripe:2} is valid, 
then the Dirichlet problem 
\begin{equation}
\label{eqn:Dirichlet:MitM13B}\left\{\begin{gathered} \Delta^2 u = w \quad\text{in }\Omega,\quad
\Trace u = f_0,\quad
\Tr_1^\Omega u = \arr f,\\
\doublebar{u}_{F^{p,2}_{\smooth+1/p+1}(\Omega)} 
\leq
C \bigl(\doublebar{(f_0,\arr f)}_{ B^{p,p}_{1,\smooth}(\partial\Omega)}
+ \doublebar{w}_{F^{p,2}_{\smooth+1/p-3}(\Omega)}\bigr)
\end{gathered}\right.
\end{equation}
has a unique solution for each $w\in {B^{p,p}_{s+1/p-3}(\Omega)}$ and each $(f_0,\arr f)\in { B^{p,p}_{1,\smooth}(\partial\Omega)}$. % which satisfies the compatibility condition $\nu_j\partial_k f_0-\nu_k\partial_j f_0 = \nu_j f_k-\nu_k f_j$ for $1\leq j\leq \dmn$ and $1\leq k\leq \dmn$.

Furthermore, let $-1/\pdmnMinusOne<\rho<1$  and let $\mat A_\rho$ be as in formula~\eqref{eqn:biharmonic:coefficients}. Under the above assumptions on $\Omega$, $p$ and~$\smooth$, the Neumann problem 
\begin{equation}\label{eqn:Neumann:MitM13B}\left\{\begin{aligned} \Delta^2 u &= w \quad\text{in }\Omega,\\
\langle \nabla^2\varphi,\mat A_\rho \nabla^2 u\rangle_{\Omega} &= \langle \Trace^\Omega \varphi, g_0\rangle_{\partial\Omega} + \langle \Tr_1^\Omega \varphi, \arr g\rangle_{\partial\Omega}\text{ for all $\varphi\in C^\infty_0(\R^\dmn)$},\\
\doublebar{u}_{F^{p,2}_{\smooth+1/p+1}(\Omega)} & \leq
C \doublebar{(g_0,\arr g)}_{(B^{p',p'}_{1,(1-\smooth)}(\partial\Omega))^*}
+ C\doublebar{w}_{(F^{p',2}_{2-\smooth+1/p}(\Omega))^*}
\end{aligned}\right.
\end{equation}
has a unique solution for each $w\in {(F^{p',2}_{2-\smooth+1/p}(\Omega))^*}$ and  $(g_0,\arr g)\in {(B^{p',p'}_{1,(1-\smooth)}(\partial\Omega))^*}$ with $\langle (g_0,\arr g),(P,\nabla P)\rangle_{\partial\Omega}=\langle w,P\rangle_\Omega$ for every linear function~$P$. %Here $\mat A_\rho$ is the symmetric constant coefficient matrix such that 
%\begin{equation*}\langle \nabla^2 \psi(x),\mat A_\rho \nabla^2 \varphi(x)\rangle = 
%\rho\, \overline{\Delta \psi(x)}\,\Delta\varphi(x)
%+ (1-\rho) \sum_{j=1}^\dmn \sum_{k=1}^\dmn \overline{\partial_j \partial_k \psi(x)}\,\partial_j\partial_k \varphi(x).
%\end{equation*}

\end{thm}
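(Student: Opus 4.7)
The plan is to follow the standard layer-potential approach for higher-order constant-coefficient boundary value problems in Lipschitz domains, which is the method used in \cite{MitM13B}. The first step is to construct the relevant boundary layer potentials for the biharmonic operator: starting from the fundamental solution $E(x)$ of $\Delta^2$ (explicit, with the standard logarithmic or power-law form depending on dimension), one builds multi-layer potentials whose boundary traces $\Trace$ and $\Tr_1^\Omega$ produce operators on $\partial\Omega$. For the Dirichlet problem, the natural ansatz writes $u$ as the Newton potential $\vec\Pi^{\Delta^2} w$ plus a biharmonic corrector given by a pair of single/double layer potentials with densities $(\lambda_0,\vec\lambda)$, chosen so that the boundary operator acting on $(\lambda_0,\vec\lambda)$ to produce $(\Trace u, \Tr_1^\Omega u)$ is Fredholm of index zero and invertible on $B^{p,p}_{1,\smooth}(\partial\Omega)$; analogously for the Neumann problem with the conormal derivative determined by the matrix $\mat A_\rho$.

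Second, I would establish the $L^2$-based endpoint (corresponding to $p=2$, $\smooth=1/2$) using a Rellich-type identity adapted to $\Delta^2$ and to the one-parameter family $\mat A_\rho$. This is where the range $-1/\pdmnMinusOne<\rho<1$ enters: it is exactly the range in which $\mat A_\rho$ satisfies the Legendre--Hadamard-type coercivity required on second gradients modulo polynomials, so that the Rellich identity produces the invertibility of the boundary operator on $L^2$ Whitney data. Equivalently, one can obtain this endpoint via the Lax--Milgram lemma on the energy space $\dot W^2_2(\Omega)$, interpreted through the equivalence between $F^{2,2}_{3/2}(\Omega)$ and $W^2_2(\Omega)$ on bounded Lipschitz domains.

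Third, once invertibility of the boundary layer operators is known at the $L^2$ endpoint, one propagates it to a two-parameter neighborhood in $(\smooth,1/p)$ by the stability-of-invertibility lemma on complex interpolation scales (as in Lemma~\ref{lem:invertible:stable}), combined with the fact that the relevant boundary operators are bounded on the Besov and Triebel--Lizorkin spaces $B^{p,p}_{1,\smooth}(\partial\Omega)$ and their duals for all $1<p<\infty$ and $0<\smooth<1$. Duality between the Dirichlet and Neumann problems (via the self-adjointness of $\mat A_\rho$) then lets one reflect the region in the $(\smooth,1/p)$-plane and gives the strip described in \eqref{eqn:biharmonic:stripe} and~\eqref{eqn:biharmonic:stripe:2}. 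The dimension split ($\dmn\geq 4$ versus $\dmn\in\{2,3\}$) arises because in low dimensions the fundamental solution is no longer homogeneous of negative degree, forcing additional normalization modulo polynomials that shifts the acceptable Besov-index region.

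The main obstacle is the invertibility of the boundary layer operators at the $L^2$ endpoint. The standard trick -- a Rellich identity using a constant vector field $\vec h$ transverse to $\partial\Omega$ -- is delicate for fourth order operators because one must control tangential and conormal components of $\nabla^2 u$ simultaneously against each other, and the coercivity of $\mat A_\rho$ (hence the restriction on $\rho$) is used in exactly this commutation. After the Rellich identity one still needs a Fredholm argument (typically by comparison with the half-space case, where explicit Fourier calculations give injectivity) together with index-zero results to conclude actual invertibility rather than merely a one-sided estimate. The remaining extension to general $(\smooth,1/p)$ and the construction of $u$ from the boundary densities are then routine consequences of the interpolation and duality machinery assembled in Section~\ref{sec:L} and Section~\ref{sec:interpolation}.
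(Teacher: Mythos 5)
The statement you set out to prove is not proved in this paper at all: it is quoted, with citation, from \cite[Theorems~6.10 and~6.16]{MitM13B}, and enters the argument of Section~\ref{sec:biharmonic} only as an imported black box, which Lemmas~\ref{lem:AdoP98}--\ref{lem:zero:interior} then translate into the weighted averaged function spaces $\dot W^{p,\smooth}_{2,av}(\Omega)$, $L^{p,\smooth}_{av}(\Omega)$, $\dot W\!A^p_{1,\smooth}(\partial\Omega)$, $\dot N\!A^p_{1,\smooth-1}(\partial\Omega)$ used in the present paper (giving Theorem~\ref{thm:MitM13B:translated}). There is therefore no internal proof against which to measure your sketch; within this paper the correct move is simply to cite \cite{MitM13B}.

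As an account of what is done in the referenced work, your outline is broadly faithful: multi-layer potentials for $\Delta^2$ built from the fundamental solution, an $L^2$ endpoint driven by a Rellich-type identity whose coercivity is exactly what constrains $\rho$ to lie in $(-1/\pdmnMinusOne,1)$, and propagation of invertibility through interpolation scales together with duality between the Dirichlet and Neumann families. One caveat: your explanation of the dimension split (loss of homogeneity of the fundamental solution in low dimensions) is not quite how that threshold arises. The fundamental solution's logarithm appears at $\dmn=2$ and $\dmn=4$, whereas the split in Theorem~\ref{thm:MitM13B} is between $\dmn\geq 4$ and $\dmn\in\{2,3\}$; the actual source of the split is the known sharp $p$-ranges for the $L^p$- and $W^p_1$-Dirichlet problems for $\Delta^2$ in Lipschitz domains (in the tradition of Dahlberg--Kenig--Verchota, Pipher--Verchota, and Shen), which are substantially wider when $\dmn\leq 3$ and thereby enlarge the region of Figure~\ref{fig:MitM13B} from the strip of \eqref{eqn:biharmonic:stripe} to the parallelogram of \eqref{eqn:biharmonic:stripe:2}.
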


In the case $\dmn=3$, the Dirichlet problem~\eqref{eqn:Dirichlet:MitM13B} was shown in \cite{MitMW11} to be well posed for all $p$ and $\smooth$ satisfying the condition~\eqref{eqn:biharmonic:stripe}. 
Well posedness was also established for solutions in the Besov spaces $\smash{B^{p,q}_{\smooth+1/p+1}(\Omega)}$ and more general Triebel-Lizorkin spaces ${F^{p,q}_{\smooth+1/p+1}(\Omega)}$; however, the stated particular case of ${F^{p,2}_{\smooth+1/p+1}(\Omega)}$ suffices for our purposes.
%One of the listed new results of Section~\ref{sec:A0:introduction} in the present paper is well posedness of the Dirichlet problem in the case $\dmn=2$, and for the Neumann problem for $\dmn=2$ or~$\dmn=3$, for such $p$ and~$\smooth$. %We will use the same argument in this subsection to pass from Theorem~\ref{thm:MitM13B} to Theorem~\ref{thm:biharmonic:introduction} in the Dirichlet case  with $\dmn=3$ and where $p$ and $\smooth$ satisfy the condition \eqref{eqn:biharmonic:stripe} but not~\eqref{eqn:biharmonic:stripe:2}; however, as this range of $p$ and $\smooth$ has been considered, albeit with different function spaces ${F^{p,2}_{\smooth+1/p+1}(\Omega)}$ rather than $\dot W^{p,\smooth}_{m,av}(\Omega)$, this cannot be viewed as a novel result.

For convenience, we will apply this theorem only in the case $w=0$.
The space ${B^{p,p}_{1,\smooth}(\partial\Omega)}$ defined in \cite[Section~2]{MitM13B} % p. 333
is given by
\begin{multline*}B^{p,p}_{1,\smooth}(\partial\Omega)=\{(f_0,\arr f):
f_0\in B^{p,p}_\smooth(\partial\Omega), \>\arr f\in B^{p,p}_\smooth(\partial\Omega), \\ \nu_j\partial_k f_0-\nu_k\partial_j f_0 = \nu_j f_k-\nu_k f_j\text{ for }1\leq j\leq \dmn\text{ and }1\leq k\leq \dmn.\}\end{multline*}
The (inhomogeneous) Besov space $B^{p,p}_\smooth(\partial\Omega)$ is defined via interpolation in \cite{MitM13B}; it is well known  (see, for example, \cite[formulas~(2.401), (2.421), (2.490)]{MitM13A}) that this definition means that
\begin{align*}\doublebar{f}_{B^{p,p}_\smooth(\partial\Omega)}
&\approx 
	\doublebar{f}_{L^p(\partial\Omega)}+\doublebar{f}_{\dot B^{p,p}_\smooth(\partial\Omega)}
%\\&=
%	\biggl(\int_{\partial\Omega} \abs{f(x)}^p\,dx\biggr)^{1/p}
%	+\biggl(\int_{\partial\Omega}\int_{\partial\Omega} \frac{\abs{f(x)-f(y)}}{\abs{x-y}^{\dmnMinusOne+p\smooth}}\,d\sigma(x)\,d\sigma(y)\biggr)^{1/p}.
\end{align*}
where ${\dot B^{p,p}_\smooth(\partial\Omega)}$ is as defined in \cite[Section~2.2]{Bar16pB} and used elsewhere in this paper.

We comment upon the condition $\nu_j\partial_k f_0-\nu_k\partial_j f_0 = \nu_j f_k-\nu_k f_j$. In this expression $\nu_j$ denotes the $j$th component of the unit outward normal $\nu$ to~$\Omega$, and so if $f_0$ is defined on~$\partial\Omega$ and lies in the boundary Sobolev space $\dot W^p_1(\partial\Omega)$ then the derivative $\nu_j\partial_k f_0-\nu_k\partial_j f_0$ is meaningful. The functions $f_k$ have the following significance: if $\arr f\in \dot W\!A^p_{1,\smooth}(\partial\Omega)$, then $\arr f$ is an array of functions indexed by multiindices of length~$1$, that is, by unit vectors; we let $f_k=f_{\vec e_k}$.

If $f_0$ is continuous and $\arr f=\Tr_1^\Omega \varphi$ for some $\varphi\in C^\infty_0(\R^\dmn)$, then the compatibility condition $\nu_j\partial_k f_0-\nu_k\partial_j f_0 = \nu_j f_k-\nu_k f_j$ almost everywhere on~$\partial\Omega$, for all $1\leq j\leq \dmn$, $1\leq k\leq \dmn$, is true if and only if $f_0=c_0+\Tr_0^\Omega\varphi=c_0+\varphi\big\vert_{\partial\Omega}$ for some constant~$c_0$.

In lieu of defining the Triebel-Lizorkin spaces ${F^{p,2}_{\smooth+1/p+1}(\Omega)}$ appearing in Theorem~\ref{thm:MitM13B}, we will simply state the following result relating these norms to more familiar norms.
\begin{lem}[{\cite[Proposition~S, p.~162]{AdoP98}}]\label{lem:AdoP98}
Let $\Omega$ be a bounded Lipschitz domain and let $u$ satisfy $\Delta^2 u=0$ in~$\Omega$. Let $1< p< \infty$.

If $k\geq 0$ is an integer and if
$k\leq \smooth+1/p+1\leq k+1$, then
\begin{equation*}\int_\Omega \abs{\nabla^{k+1} u(x)}^p \dist(x,\partial\Omega)^{pk-1-p\smooth}\,dx + \doublebar{\nabla^k u}_{L^p(\Omega)}^p + \doublebar{u}_{L^p(\Omega)}^p
\approx \doublebar{u}_{F^{p,2}_{\smooth+1/p+1}(\Omega)}
\end{equation*}
provided either the left-hand side or the right-hand side is finite.
\end{lem}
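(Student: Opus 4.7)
The lemma asserts an equivalence, for biharmonic $u$ in a bounded Lipschitz domain, between the Triebel--Lizorkin norm $\doublebar{u}_{F^{p,2}_{\smooth+1/p+1}(\Omega)}$ and a concrete ``weighted $W^{k+1}_p$''--type expression. Since this is a characterization of a function space in terms of a weighted integral involving only the derivative $\nabla^{k+1}u$, the overall strategy is to pass through an intrinsic square--function characterization of $F^{p,2}_\alpha(\Omega)$ with $\alpha=\smooth+1/p+1$, and use the biharmonic equation to move derivatives around through Caccioppoli--type estimates.

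First I would normalize by the integer part: at the endpoints $\alpha=k$ and $\alpha=k+1$, the space $F^{p,2}_\alpha(\Omega)$ coincides with the classical Sobolev space $W^p_\alpha(\Omega)$ since $1<p<\infty$, so the claim in those cases is just the statement that $\nabla^{k+1}u\in L^p$ is equivalent to the weighted integral being finite with the limiting exponent $\dist^0$ or $\dist^p$; both can be handled by Hardy's inequality. The substantive case is $k<\alpha<k+1$. Here I would use the Littlewood--Paley characterization
\begin{equation*}
\doublebar{v}_{F^{p,2}_\alpha(\R^\dmn)}^p\approx \doublebar{v}_{L^p}^p+\int_{\R^\dmn}\biggl(\int_0^1 t^{-2(\alpha-k-1)}\abs{\partial_t^2 P_t(\nabla^{k}v)(x)}^2\,\frac{dt}{t}\biggr)^{p/2}dx,
\end{equation*}
(with $P_t$ the Poisson semigroup) and transfer it to $\Omega$ via a Rychkov/Stein extension operator for Lipschitz domains; equivalently, one can compare the Whitney--average quantity $(\fint_{B(x,\dist(x,\partial\Omega)/2)}|\nabla^{k+1}u|^2)^{1/2}\dist(x,\partial\Omega)^{1-(\alpha-k)}$ to the Triebel--Lizorkin $g$--function.

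For the direction $\doublebar{u}_{F^{p,2}_\alpha(\Omega)}\gtrsim$ (weighted integral), no biharmonicity is needed: a direct Hedberg/Hardy--type argument, together with the fact that for any $v\in F^{p,2}_\alpha(\Omega)$ the trace $\nabla^k v$ lies in $L^p(\Omega)$, gives the desired estimate. For the reverse direction $\lesssim$, the biharmonic equation is essential. I would use the interior Caccioppoli inequality for $\Delta^2u=0$ iteratively to obtain the Whitney--ball estimate
\begin{equation*}
\dist(x,\partial\Omega)^{j}\biggl(\fint_{B(x,\dist(x,\partial\Omega)/4)}\abs{\nabla^{k+1+j}u}^2\biggr)^{1/2}\leq C_j\biggl(\fint_{B(x,\dist(x,\partial\Omega)/2)}\abs{\nabla^{k+1}u}^2\biggr)^{1/2}
\end{equation*}
for every $j\geq 0$, and then use this to bound the Poisson--semigroup square function of $\nabla^k u$ pointwise by an average of $\abs{\nabla^{k+1}u}\dist^{(\alpha-k-1)\cdot\text{scale}}$. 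Integrating gives exactly the weighted integral on the right, plus lower-order $L^p$ terms which are controlled by $\doublebar{\nabla^ku}_{L^p}+\doublebar{u}_{L^p}$.

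The main obstacle I expect is the interaction between the Lipschitz (non-smooth) boundary and the square--function/extension characterization: the naive Littlewood--Paley decomposition requires a smooth domain, and transferring it to a Lipschitz domain requires an extension operator whose mapping properties on $F^{p,2}_\alpha$ for the full range of $\alpha\in[k,k+1]$ and $1<p<\infty$ must be established (this is where one typically invokes Rychkov's universal extension). A subsidiary difficulty is the bookkeeping of the weight exponent $pk-1-ps$ at the endpoints, where degenerate limits must be interpreted correctly.
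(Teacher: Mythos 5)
The paper does not prove Lemma~\ref{lem:AdoP98} at all: the statement is cited verbatim from Adolfsson--Pipher (Proposition~S, p.~162 of \cite{AdoP98}), and the sentence preceding it (``In lieu of defining the Triebel--Lizorkin spaces \dots we will simply state the following result'') makes explicit that no argument is given in this paper. There is therefore no in-paper proof to compare your sketch against.

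Evaluating the sketch on its own merits, the broad plan (a square-function characterization of $F^{p,2}_\alpha$ together with iterated Caccioppoli estimates for $\Delta^2 u=0$ on Whitney balls) is sensible, but there is at least one genuine gap. You claim that the direction $\doublebar{u}_{F^{p,2}_\alpha(\Omega)}\gtrsim$ (weighted integral) ``needs no biharmonicity.'' That cannot be right. When $k<\alpha<k+1$, membership in $F^{p,2}_\alpha(\Omega)$ gives fractional smoothness $\alpha-k<1$ above order $k$; for a generic $v\in F^{p,2}_\alpha(\Omega)$ the distribution $\nabla^{k+1}v$ is not a locally integrable function, so the weighted integral on the left is not even well defined without interior regularity, which here comes precisely from the equation $\Delta^2 u=0$. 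Both directions of the equivalence genuinely use the PDE. Beyond this, the proposed reduction to $\R^\dmn$ by way of a universal (Rychkov/Stein) extension operator, while plausible, is not obviously the right route on a Lipschitz domain: the classical proofs of such characterizations for harmonic and biharmonic functions (Jerison--Kenig, Adolfsson--Pipher) work intrinsically in $\Omega$ via Whitney decompositions and interior estimates rather than by extension, and your sketch leaves the mapping properties of the extension operator for the full range $k\leq\alpha\leq k+1$ entirely to be supplied. As written, neither direction of the stated equivalence is established.
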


In this section we will derive the following well posedness result.
\begin{thm}\label{thm:MitM13B:translated}
Let $\Omega\subset\R^\dmn$ be a bounded Lipschitz domain with connected boundary. Then there is some $\kappa>0$ such that if $0<\smooth<1$, $1<p<\infty$, and if $\dmn\geq 4$ and the condition~\eqref{eqn:biharmonic:stripe} is valid, or if $\dmn=2$ or $\dmn=3$ and the condition~\eqref{eqn:biharmonic:stripe:2} is valid,
then the Dirichlet problem 
\begin{multline}
\label{eqn:Dirichlet:MitM13B:translated}
\Delta^2 u = \Div_2\arr H \text{ in }\Omega,\quad
\Tr_1^\Omega u = \arr f,\\
\doublebar{u}_{\dot W^{p,\smooth}_{2,av}(\Omega)} 
 \leq
C \doublebar{\arr f}_{\dot W\!A^p_{1,\smooth}(\partial\Omega)}
+ C\doublebar{\arr H}_{L^{p,\smooth}_{av}(\Omega)}
\end{multline}
has a unique solution for each ${\arr H}\in{L^{p,\smooth}_{av}(\Omega)}$ and each $\arr f\in {\dot W\!A^p_{1,\smooth}(\partial\Omega)}$.

Let $-1/\pdmnMinusOne<\rho<1$ and let $\mat A_\rho$ be as in formula~\eqref{eqn:biharmonic:coefficients}. Then there is some $\kappa>0$, depending on $\rho$, $\dmn$, and the Lipschitz character of~$\Omega$, such that under the above conditions on $\smooth$ and~$p$, the Neumann problem 
\begin{multline}
\label{eqn:Neumann:MitM13B:converted}
\Delta^2 u =\Div_2\arr H\text{ in }\Omega,\quad
\M_{\mat A_\rho,\arr H}^\Omega u=\arr g, \\ \doublebar{u}_{\dot W^{p,\smooth}_{2,av}(\Omega)} 
 \leq
C \doublebar{\arr g}_{\dot N\!A^p_{1,\smooth-1}(\partial\Omega)}
+ C\doublebar{\arr H}_{L^{p,\smooth}_{av}(\Omega)}
\end{multline}
has a unique solution for each ${\arr H}\in{L^{p,\smooth}_{av}(\Omega)}$ and each $\arr g\in {\dot N\!A^p_{1,\smooth-1}(\partial\Omega)}$.
\end{thm}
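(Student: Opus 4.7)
The proof reframes both problems in the setting of \cite{MitM13B}, applies Theorem~\ref{thm:MitM13B}, and transfers the estimates back. The plan has three main steps (reduction to homogeneous interior data, translation of boundary data, and conversion of the solution norms), followed by an identification of the main obstacle.

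First, I would absorb $\arr H$ using the Newton potential. By Lemma~\ref{lem:polyharmonic:bounded} with $L_0=\Delta^2$, there is a particular solution $\vec u_0 \in \widetilde W^{p,\smooth}_{2,av}(\partial\Omega)$ with $\Delta^2 \vec u_0 = \Div_2 \arr H$ on $\R^\dmn \setminus \partial\Omega$ and norm controlled by $\doublebar{\arr H}_{L^{p,\smooth}_{av}(\Omega)}$. Subtracting $\vec u_0$ reduces the Dirichlet problem to one with zero inhomogeneity and modified boundary datum $\arr f - \Tr_1^\Omega \vec u_0 \in \dot W\!A^p_{1,\smooth}(\partial\Omega)$. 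For the Neumann problem, \cite[Theorem~6.1]{Bar16pB} produces an array $\arr G \in L^{p,\smooth}_{av}(\Omega)$ with $\langle \nabla^2\vec\varphi, \arr G\rangle_\Omega = \langle \Tr_1^\Omega \vec\varphi, \arr g\rangle_{\partial\Omega}$; setting $\arr \Phi = \arr H + \arr G$ reduces to solving $\M_{\mat A_\rho, \arr \Phi}^\Omega u = 0$.

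For the second step, on the Dirichlet side, given $\arr f \in \dot W\!A^p_{1,\smooth}(\partial\Omega)$ I take its extension $F$ from \cite[Theorem~4.1]{Bar16pB} and set $f_0 = F\vert_{\partial\Omega}$, normalized by an additive constant. Both sides of the $B^{p,p}_{1,\smooth}(\partial\Omega)$ compatibility identity $\nu_j \partial_k f_0 - \nu_k \partial_j f_0 = \nu_j f_k - \nu_k f_j$ equal $\nu_j \partial_k F - \nu_k \partial_j F$ on $\partial\Omega$, so $(f_0,\arr f) \in B^{p,p}_{1,\smooth}(\partial\Omega)$, and the boundary Poincar\'e inequality on the bounded set $\partial\Omega$ yields $\doublebar{(f_0,\arr f)}_{B^{p,p}_{1,\smooth}(\partial\Omega)} \lesssim \doublebar{\arr f}_{\dot W\!A^p_{1,\smooth}(\partial\Omega)}$. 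On the Neumann side, integrating by parts in the reduced identity $\langle \nabla^2 \varphi, \mat A_\rho \nabla^2 u - \arr \Phi\rangle_\Omega = 0$ and using the duality~\eqref{eqn:L:dual} produces an inhomogeneity $w = \Div_2 \arr \Phi \in (F^{p',2}_{2-\smooth+1/p}(\Omega))^*$ together with a boundary datum $(g_0, \arr g')$ in $(B^{p',p'}_{1,1-\smooth}(\partial\Omega))^*$; the compatibility $\langle (g_0,\arr g'), (P,\nabla P)\rangle_{\partial\Omega} = \langle w, P\rangle_\Omega$ for linear $P$ is recovered by taking $\varphi = P$ in the variational identity.

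For the third step, Theorem~\ref{thm:MitM13B} yields a unique solution $v \in F^{p,2}_{\smooth+1/p+1}(\Omega)$. By Lemma~\ref{lem:AdoP98}, for biharmonic $v$ this norm is equivalent modulo lower-order terms to $\int_\Omega \abs{\nabla^{k+1} v}^p \dist(x,\partial\Omega)^{pk-1-p\smooth}\,dx$, where $k \in \{1,2\}$ is the integer with $k \leq \smooth + 1/p + 1 \leq k+1$. For $k=1$ this is exactly the unaveraged analog of $\doublebar{v}_{\dot W^{p,\smooth}_{2,av}(\Omega)}^p$, which agrees with the averaged form via Meyers' reverse H\"older inequality applied to $\nabla^2 v$. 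For $k=2$, the pointwise Caccioppoli bound $\abs{\nabla^3 v(x)}^2 \lesssim \dist(x,\partial\Omega)^{-2}\fint_{B(x,\dist(x,\partial\Omega)/2)}\abs{\nabla^2 v}^2$, applied to the biharmonic function $\nabla v$, converts the $k=2$ integral into the $\dot W^{p,\smooth}_{2,av}(\Omega)$ norm. Uniqueness in our framework transfers from \cite{MitM13B} through the same equivalences, completing the argument.

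The main obstacle is the Neumann translation in the second step: the array $\arr g$ lives in the quotient space $\dot N\!A^p_{1,\smooth-1}(\partial\Omega)$, and one must verify that the pair $(g_0,\arr g')$ produced is well-defined modulo the corresponding dual equivalence on $(B^{p',p'}_{1,1-\smooth}(\partial\Omega))^*$ and satisfies precisely the compatibility with linear polynomials required by Theorem~\ref{thm:MitM13B}. Tracking the constants that the two quotient structures annihilate, and verifying that they match, is the most delicate point of the argument.
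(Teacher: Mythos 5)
Your plan for the Dirichlet problem in the case $p\geq 1/(1-\smooth)$ (the $k=1$ case of Lemma~\ref{lem:AdoP98}) tracks the paper's argument closely: absorb $\arr H$ by the Newton potential, produce $f_0$ so that $(f_0,\arr f)\in B^{p,p}_{1,\smooth}(\partial\Omega)$, invoke Theorem~\ref{thm:MitM13B}, and convert norms using Lemma~\ref{lem:AdoP98} together with Meyers' reverse H\"older inequality (\cite[Theorem~24]{Bar16}). However, there are two genuine gaps.

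First, your Neumann reduction runs in the wrong direction. You move the boundary datum $\arr g$ into the interior by setting $\arr\Phi=\arr H+\arr G$, and then must feed $w=\Div_2\arr\Phi$ into the inhomogeneity slot of Theorem~\ref{thm:MitM13B}. That requires knowing that $\Div_2$ maps $L^{p,\smooth}_{av}(\Omega)$ boundedly into $(F^{p',2}_{2-\smooth+1/p}(\Omega))^*$, an embedding that is neither established in the paper nor obvious; the paper deliberately applies Theorem~\ref{thm:MitM13B} only with $w=0$, taking $g_0=0$ and checking directly that $(0,\arr g)$ is a bounded functional on $B^{p',p'}_{1,1-\smooth}(\partial\Omega)$. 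You should do the same for Neumann as you already do for Dirichlet: subtract the Newton potential $\vec\Pi^{\Delta^2}(\mat A_\rho\arr H)$ first, so that the remaining problem is $\Delta^2 v=0$ with modified Neumann data, never touching the $(F^{p',2})^*$ scale.

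Second, your treatment of the $k=2$ regime ($p<1/(1-\smooth)$) only proves one direction of the claimed norm equivalence. The pointwise Caccioppoli bound $\abs{\nabla^3 v(x)}\lesssim \dist(x,\partial\Omega)^{-1}(\fint_{B(x,\dist(x,\partial\Omega)/2)}\abs{\nabla^2 v}^2)^{1/2}$ shows that the weighted integral of $\abs{\nabla^3 v}^p$ (hence the $F^{p,2}_{\smooth+1/p+1}$-norm) is controlled by $\doublebar{v}_{\dot W^{p,\smooth}_{2,av}(\Omega)}$. That is the direction needed for uniqueness. For existence you need the opposite inequality: starting from a finite $F$-norm, i.e.\ from a weighted bound on $\nabla^3 v$, you must control the averaged weighted integral of $\nabla^2 v$, which amounts to a weighted Hardy inequality that you do not supply. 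The paper avoids this entirely by establishing the theorem only for $1/(1-\smooth)\leq p<\infty$ and then extending to $1<p<1/(1-\smooth)$ by duality, via Theorems~\ref{thm:exist:unique} and~\ref{thm:unique:exist}; that duality step is absent from your proposal and cannot be replaced by the one-sided Caccioppoli estimate.
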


Recall that by \cite{MitMW11}, if $\dmn=3$ then the Dirichlet problem~\eqref{eqn:Dirichlet:MitM13B} is well posed if $p$ and $\smooth$ satisfy the weaker condition~\eqref{eqn:biharmonic:stripe}.
In a forthcoming paper, we intend to treat the issue of well posedness for $p<1$ in much more detail. Therein we will establish well posedness of the revised problem~\eqref{eqn:Dirichlet:MitM13B:translated} if the condition~\eqref{eqn:biharmonic:stripe}, and not merely~\eqref{eqn:biharmonic:stripe:2}, is valid; we will also establish similar results if $\dmn=2$ or for the Neumann problem.

The remainder of this section will be devoted to the proof of Theorem~\ref{thm:MitM13B:translated}.
We begin with the following lemma; this lemma will allow us to contend with the $L^p$ norms of $\nabla^k u$ and $u$ in Lemma~\ref{lem:AdoP98}.

\begin{lem}\label{lem:poincare:av} Let $\Omega\subset\R^\dmn$ be a bounded Lipschitz domain, let $0<\smooth<1$ and $1<p<\infty$, and let $w\in \dot W^1_{1,loc}(\overline\Omega)$. There is some constant $c$ such that
\begin{equation*}\int_\Omega \abs{w-c}^p\leq (\diam\Omega)^{1+p\smooth}\int_\Omega \abs{\nabla w(x)}^p\dist(x,\partial\Omega)^{p-1-p\smooth}\,dx\end{equation*}
provided the right-hand side is finite.
\end{lem}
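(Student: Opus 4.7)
The plan is to use a Whitney decomposition of $\Omega$ combined with a chain argument. Fix a Whitney decomposition $\mathcal{G}$ of $\Omega$ with $\ell(Q)\approx\dist(Q,\partial\Omega)$, and select a central cube $Q_0\in\mathcal{G}$ with $\ell(Q_0)\approx\diam\Omega$; such a cube exists because a bounded Lipschitz domain is a John domain. Set $c=w_{Q_0}:=\fint_{Q_0}w$ and split via the triangle inequality
\begin{equation*}
\int_\Omega|w-c|^p\leq 2^{p-1}\sum_{Q\in\mathcal{G}}\int_Q|w-w_Q|^p+2^{p-1}\sum_{Q\in\mathcal{G}}|Q|\,|w_Q-c|^p.
\end{equation*}

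For the local part, the classical Poincar\'e inequality on each Whitney cube gives $\int_Q|w-w_Q|^p\leq C\ell(Q)^p\int_Q|\nabla w|^p$. Factoring $\ell(Q)^p=\ell(Q)^{p-1-p\smooth}\cdot\ell(Q)^{1+p\smooth}$, one uses $\ell(Q)\approx\dist(x,\partial\Omega)$ for $x\in Q$ to control the first factor regardless of the sign of $p-1-p\smooth$, and $\ell(Q)\leq\diam\Omega$ together with $1+p\smooth>0$ to bound the second factor by $(\diam\Omega)^{1+p\smooth}$. Summing in $Q$ yields
\begin{equation*}
\sum_{Q\in\mathcal{G}}\int_Q|w-w_Q|^p\leq C(\diam\Omega)^{1+p\smooth}\int_\Omega|\nabla w(x)|^p\dist(x,\partial\Omega)^{p-1-p\smooth}\,dx.
\end{equation*}

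For the non-local part, use that $\Omega$ is a John (in fact uniform) domain to construct, for each $Q\in\mathcal{G}$, a chain $Q_0=Q^0,Q^1,\dots,Q^{N(Q)}=Q$ of Whitney cubes in which consecutive cubes are neighbors of comparable side-length. Poincar\'e on the union of two consecutive Whitney cubes gives
\begin{equation*}
|w_{Q^i}-w_{Q^{i+1}}|\leq C\,\ell(Q^i)^{1-\dmn/p}\,\biggl(\int_{(Q^i)^*}|\nabla w|^p\biggr)^{1/p}
\end{equation*}
for a slight enlargement $(Q^i)^*$. Telescoping $|w_Q-c|\leq\sum_i|w_{Q^i}-w_{Q^{i+1}}|$, applying H\"older with a weight $\ell(Q^i)^\alpha$ chosen so that the resulting power of $\ell(Q^i)$ matches $p-1-p\smooth$, then interchanging the two sums via the standard shadow (finite-overlap) estimate for Whitney chains in John domains, reduces $\sum_Q|Q|\,|w_Q-c|^p$ to a constant multiple of $(\diam\Omega)^{1+p\smooth}\int_\Omega|\nabla w|^p\dist(x,\partial\Omega)^{p-1-p\smooth}$, completing the proof.

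The main obstacle will be the last step: choosing the H\"older exponent $\alpha$ so that the geometric decay of Whitney cube sizes along chains, together with the shadow estimate, produces exactly the weight $\dist^{p-1-p\smooth}$ and exactly one surviving factor $(\diam\Omega)^{1+p\smooth}$. The John chain condition, which holds precisely because $\Omega$ is bounded Lipschitz, is what makes this cancellation possible; the remaining manipulations are routine.
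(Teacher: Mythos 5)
Your proposal takes a genuinely different route from the paper. The paper exploits the Lipschitz graph structure directly: in local coordinates it writes $w(x',t)-w(x',\tau)$ as a one-dimensional integral of $\partial_r w$ along a vertical segment, applies H\"older with the weight $r^{p-1-p\smooth}$ against $r^{p'\smooth-1}$ (convergent precisely because $\smooth>0$), averages in $\tau$, and then patches finitely many boundary charts together with an interior Poincar\'e inequality. This is concrete and self-contained; the one-dimensional Hardy-type inequality handles both signs of $p-1-p\smooth$ uniformly. Your Whitney/John chain decomposition is the standard machinery for weighted Poincar\'e inequalities on John domains and is certainly a viable alternative; it has the advantage of not requiring a graph parametrization of $\partial\Omega$ and would generalize beyond Lipschitz domains.

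However, you have outsourced the crux. The entire content of the lemma is in the case $p-1-p\smooth\geq0$, where the weight degenerates at the boundary (when $p-1-p\smooth<0$ one simply bounds $\dist(x,\partial\Omega)^{p-1-p\smooth}\geq(\diam\Omega)^{p-1-p\smooth}$ and invokes unweighted Poincar\'e). In that degenerate case, your ``non-local part'' --- telescoping along chains, weighted H\"older on the chain sum, and the shadow/finite-overlap interchange of the $Q$- and $i$-sums --- is exactly where the argument must use $\smooth>0$ to make a geometric series converge, and this is not carried out; you acknowledge as much by calling it the ``main obstacle.'' Without specifying $\alpha$ and verifying the convergence and the overlap bound, the proof is incomplete. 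Either carry out that computation or cite a result on weighted Poincar\'e inequalities for power-of-distance weights on John domains (e.g.\ work of Hurri-Syrj\"anen or Chua) explicitly. As written, the ``routine'' label is doing too much work.
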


\begin{proof}
Let $x_0\in\partial\Omega$ and let $r_\Omega$, $c_0$ and $V$ be as in the definition \cite[Definition~2.2]{Bar16pB} of Lipschitz domain. Then there are some coordinates and some Lipschitz function~$\psi$ such that $x_0=(x_0',t_0)$, such that $V=\{(x',t):x'\in\R^\dmnMinusOne,\allowbreak\>t>\psi(x')\}$ and such that $B(x_0,r_\Omega/c_0)\cap V=B(x_0,r_\Omega/c_0)\cap\Omega$.

Let $\Delta=\{x'\in\R^\dmnMinusOne:\abs{x'-x_0'}<r_\Omega/C_1\}$, and let $Q=\{(x',t): x'\in\Delta,\allowbreak\> \psi(x')<t<\psi(x')+r_\Omega/C_1\}$ for some large constant~$C_1$. If $C_1$ is large enough then $Q\subset B(x_0,r_\Omega/c_0)\cap\Omega$.

Let $\tau$ satisfy $r_\Omega/2C_1<\tau<r_\Omega/C_1$. Then
\begin{multline*}\biggl(\int_Q \abs{w-c}^p\biggr)^{1/p}
\leq 
\biggl(\int_\Delta 
\int_0^{r_\Omega/C_1} \abs[bigg]{\int_t^\tau \partial_r w(x',\psi(x')+r)\,dr}^p\,dt\,dx'\biggr)^{1/p}
\\+
\biggl(\frac{r_\Omega}{C_1}\int_\Delta  \abs{w(x',\psi(x')+\tau)-c}^p\,dx'\biggr)^{1/p}
.\end{multline*}
By H\"older's inequality,
\begin{equation*}\abs[bigg]{\int_t^\tau \partial_r w(x',\psi(x')+r)\,dr}^p\!
\leq \abs[bigg]{\int_t^\tau \abs{\nabla w(x',\psi(x')+r)}^p r^{p-1-p\smooth}\,dr}
\abs[bigg]{\int_t^\tau r^{p'\smooth-1}\,dr}^{p/p'}\!
\!.\end{equation*}
If $\smooth>0$, then the second integral converges and so
{\multlinegap=0pt\begin{multline*}\biggl(\int_Q \abs{w-c}^p\biggr)^{1/p}\!
\leq 
C_p(r_\Omega)^{\smooth+1/p}
\biggl(\int_\Delta 
{\int_0^{r_\Omega/C_1} \! \abs{\nabla w(x',\psi(x')+r)}^p r^{p-1-p\smooth}\,dr}
\,dx'\biggr)^{1/p}
\\+
\biggl(\frac{r_\Omega}{C_1}\int_\Delta  \abs{w(x',\psi(x')+\tau)-c}^p\,dx'\biggr)^{1/p}
\end{multline*}}%
whenever $r_\Omega/2C_1<\tau<r_\Omega/C_1$.
Averaging in~$\tau$, we see that 
{\multlinegap=0pt\begin{multline*}\biggl(\int_Q \abs{w-c}^p\biggr)^{1/p}
\leq 
C_p(r_\Omega)^{\smooth+1/p}
\biggl(\int_Q \abs{\nabla w(x)}^p \dist(x,\partial\Omega)^{p-1-p\smooth}\,dx
\biggr)^{1/p}
\\+
C\biggl(\int_{\{x\in Q:\dist(x,\partial\Omega)>r_\Omega/C\}}\abs{w-c}^p\biggr)^{1/p}
.\end{multline*}}%
Applying a standard patching argument, we see that
\begin{multline*}\int_\Omega
\abs{w-c}^p \leq C_p (\diam\Omega)^{1+p\smooth} \int_\Omega \abs{\nabla w(x)}^p\dist(x,\partial\Omega)^{p-1-p\smooth}\,dx
\\+C_p\int_{\{x\in \Omega:\dist(x,\partial\Omega)>r_\Omega/C\}}
\abs{w-c}^p.
\end{multline*}
Applying the Poincar\'e inequality to bound the final term completes the proof.
\end{proof}

We now establish uniqueness of solutions.

\begin{lem}\label{lem:biharmonic:unique} Let $p$ and $\smooth$ be as in Theorem~\ref{thm:MitM13B:translated}, and suppose in addition that $1/(1-\smooth)\leq p<\infty$.

Then solutions to the problems~\eqref{eqn:Dirichlet:MitM13B:translated} and~\eqref{eqn:Neumann:MitM13B:converted} are unique.
\end{lem}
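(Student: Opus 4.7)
The plan is to reduce Lemma~\ref{lem:biharmonic:unique} to the known uniqueness in Theorem~\ref{thm:MitM13B} by identifying any solution $u\in\dot W^{p,\smooth}_{2,av}(\Omega)$ with homogeneous boundary data, up to a polynomial of degree at most one, with a function in the Triebel-Lizorkin space $F^{p,2}_{\smooth+1/p+1}(\Omega)$. Because $\Delta^2 u=0$, interior regularity (the Caccioppoli inequality plus the mean value property for $\nabla^2 u$) gives $\bigl(\fint_{B(x,\dist(x,\partial\Omega)/2)}|\nabla^2 u|^2\bigr)^{1/2}\approx|\nabla^2 u(x)|$ pointwise, so the finiteness of $\doublebar{u}_{\dot W^{p,\smooth}_{2,av}(\Omega)}$ is equivalent to that of $\int_\Omega|\nabla^2 u|^p\dist(x,\partial\Omega)^{p-1-p\smooth}\,dx$. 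The extra assumption $p\geq 1/(1-\smooth)$ gives $\smooth+1/p\leq 1$, so $\smooth+1/p+1\in[1,2]$, which legitimizes the use of Lemma~\ref{lem:AdoP98} with $k=1$.

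Successive applications of Lemma~\ref{lem:poincare:av} first to each $\partial_j u$ and then to $u$ itself (absorbing the resulting constants into a single polynomial) produce a polynomial $P$ of degree at most one with $u-P\in L^p(\Omega)$ and $\nabla(u-P)\in L^p(\Omega)$. Because $\nabla^2(u-P)=\nabla^2 u$ satisfies the weighted integrability above, Lemma~\ref{lem:AdoP98} places $u-P\in F^{p,2}_{\smooth+1/p+1}(\Omega)$. Note that $-P$ itself also lies in $F^{p,2}_{\smooth+1/p+1}(\Omega)$, and both $u-P$ and $-P$ are biharmonic.

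In the Dirichlet case, since $\partial\Omega$ is connected and $\Tr_1^\Omega u=0$, we may choose the representative with both $u|_{\partial\Omega}=0$ and $\nabla u|_{\partial\Omega}=0$ in the Whitney trace sense. Then $u-P$ and $-P$ carry identical Dirichlet data in the formulation of~\cite{MitM13B}, namely $(\Trace(-P),\Tr_1^\Omega(-P))$, so uniqueness in Theorem~\ref{thm:MitM13B} forces $u-P\equiv -P$, i.e., $u\equiv 0$. In the Neumann case, $\nabla^2 P=0$ combined with $\M_{\mat A_\rho,0}^\Omega u=0$ gives $\langle\nabla^2\varphi,\mat A_\rho\nabla^2(u-P)\rangle_\Omega=0$ for all $\varphi\in C^\infty_0(\R^\dmn)$, which translates to the vanishing of the Neumann data $(g_0,\arr g)$ in~\cite{MitM13B} (the compatibility condition with $w=0$ is trivial); uniqueness gives $u-P\equiv 0$, whence $\nabla^2 u=0$ and $u$ represents zero in $\dot W^{p,\smooth}_{2,av}(\Omega)$. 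The main technical difficulty is reconciling the Whitney traces from~\cite{Bar16pB} with the Sobolev-type traces underlying the Dirichlet and Neumann data of~\cite{MitM13B}; for functions in $F^{p,2}_{\smooth+1/p+1}(\Omega)$ both traces agree via density of smooth functions (cf.\ \cite[Theorem~3.15]{Bar16pB}), but care is required to match the two formulations precisely.
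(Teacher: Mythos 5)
Your proposal is correct and follows essentially the same route as the paper's proof: reduce to the uniqueness in Theorem~\ref{thm:MitM13B} by using the interior pointwise bound on $\nabla^2 u$ (Caccioppoli plus Morrey), Lemma~\ref{lem:poincare:av} and Poincar\'e to control the lower-order terms, and Lemma~\ref{lem:AdoP98} with $k=1$ to identify the relevant $F^{p,2}_{\smooth+1/p+1}$-norm. The paper streamlines by normalizing $u$ directly (adding an appropriate polynomial before starting) rather than carrying $u-P$ and $-P$ through the argument, but the content is the same; your explicit note that $p\geq 1/(1-\smooth)$ is exactly what legitimizes $k=1$ in Lemma~\ref{lem:AdoP98} is a correct and useful clarification that the paper leaves implicit.
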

\begin{proof}
Suppose that $\Delta^2 u=0$ in~$\Omega$, that $u\in \dot W^{p,\smooth}_{2,av}(\Omega)$, and that $\Tr_1^\Omega u=0$ or $\M_{\mat A_\rho,0}^\Omega u=0$. In the Dirichlet case we may normalize $u$ so that $\Trace^\Omega u=0$ as well.

By Theorem~\ref{thm:MitM13B} and Lemma~\ref{lem:AdoP98}, it suffices to show that 
\begin{equation*}\int_\Omega \abs{\nabla^2 u(x)}^p \dist(x,\partial\Omega)^{p-1-p\smooth}\,dx + \doublebar{\nabla u}_{L^p(\Omega)}^p + \doublebar{u}_{L^p(\Omega)}^p
<\infty.
\end{equation*}

Observe that $\partial^\alpha u$ is biharmonic in~$\Omega$ for any multiindex~$\alpha$. By the Caccioppoli inequality (\cite{Cam80,Bar16}), we have that if $x\in\Omega$, then
\begin{equation*}\doublebar{\nabla^{k} u}_{L^2(B(x,\dist(x,\partial\Omega)/4))}\leq C_{j,k}\dist(x,\partial\Omega)^{j-k} \doublebar{\nabla^j u}_{L^2(B(x,\dist(x,\partial\Omega)/2))}\end{equation*}
for any integers $k>j\geq 0$. Thus, by Morrey's inequality,
\begin{equation*}\abs{\nabla^2 u(x)}
\leq C\biggl(\fint_{B(x,\dist(x,\partial\Omega)/2)} \abs{\nabla^2 u}^2\biggr)^{1/2}\end{equation*}
and so 
\begin{equation*}%\label{eqn:biharmonic:1}
\int_\Omega \abs{\nabla^2 u(x)}^p \dist(x,\partial\Omega)^{p-1-p\smooth}\,dx 
\leq 
C\doublebar{u}_{\dot W^{p,\smooth}_{2,av}(\Omega)}^p
.\end{equation*}

By Lemma~\ref{lem:poincare:av}, we have that $\nabla u\in L^p(\Omega)$. By the Poincar\'e inequality, we have that $u\in L^p(\Omega)$. This completes the proof.
\end{proof}

The following lemma shows that, if $\arr f\in \dot  W\!A^p_{1,\smooth}(\Omega)$, then there is some $f_0$ such that $f_0$, $\arr f$ satisfy the conditions of Theorem~\ref{thm:MitM13B}; we will use this lemma and Theorem~\ref{thm:MitM13B} to establish existence of solutions.

\begin{lem}\label{lem:poincare:whitney} Let $\Omega\subset\R^\dmn$ be a bounded Lipschitz domain with connected boundary. Suppose that $0<\smooth<1$, $1\leq p< \infty$ and that $\arr f\in \dot W\!A^p_{1,\smooth}(\Omega)$.

Let $P$ be the linear function that satisfies
\begin{equation*}\fint_{\partial\Omega} \arr f -\nabla P \,d\sigma=0.\end{equation*}
Then 
\begin{equation*}\doublebar{\arr f-\nabla P}_{L^p(\Omega)} \leq C(\diam\Omega)^\smooth \doublebar{\arr f}_{\dot W\!A^p_{1,\smooth}(\Omega)}.\end{equation*}
Furthermore, there is some $f_0\in L^p(\partial\Omega)\cap \dot W^p_1(\partial\Omega)$ with 
$\nu_j\partial_k f_0-\nu_k\partial_j f_0 = \nu_j (f_k-\partial_k P)-\nu_k (f_j-\partial_j P)$ and with
\begin{equation*}\doublebar{f_0}_{L^p(\partial\Omega)}\leq C(\diam\Omega)^{1+\smooth} \doublebar{\arr f}_{\dot W\!A^p_{1,\smooth}(\Omega)}.\end{equation*}
\end{lem}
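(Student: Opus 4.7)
The plan is to derive both bounds from Poincar\'e-Sobolev estimates on the boundary after subtracting constants. Throughout, I interpret the stated norms $L^p(\Omega)$ and $\dot W\!A^p_{1,\smooth}(\Omega)$ on the right-hand sides as $L^p(\partial\Omega)$ and $\dot W\!A^p_{1,\smooth}(\partial\Omega)$ respectively, since $\arr f$ lives on $\partial\Omega$. Setting $\arr g := \arr f - \nabla P$, the key observation is that $\nabla P$ is a constant array, so $\doublebar{\arr g}_{\dot B^{p,p}_\smooth(\partial\Omega)} = \doublebar{\arr f}_{\dot W\!A^p_{1,\smooth}(\partial\Omega)}$, while $\fint_{\partial\Omega} \arr g\,d\sigma = 0$ by the choice of~$P$.

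For the first bound I will invoke the standard Besov-Poincar\'e inequality on the connected Lipschitz boundary: every zero-mean $u \in \dot B^{p,p}_\smooth(\partial\Omega)$ satisfies $\doublebar{u}_{L^p(\partial\Omega)} \leq C(\diam\Omega)^\smooth \doublebar{u}_{\dot B^{p,p}_\smooth(\partial\Omega)}$, with the diameter dependence produced by rescaling to unit size. Applying this componentwise to $\arr g$ gives the first inequality.

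To construct $f_0$, I will apply the extension operator of \cite[Theorem~4.1]{Bar16pB} to produce $\vec G \in \dot W^{p,\smooth}_{2,av}(\Omega)$ with $\Tr_1^\Omega \vec G = \arr g$, and then normalize by subtracting a suitable affine polynomial so that $\fint_{\partial\Omega} \Tr_0^\Omega \vec G\,d\sigma = 0$. Setting $f_0 := \Tr_0^\Omega \vec G$, the operator $\nu_j\partial_k-\nu_k\partial_j$ is purely tangential (its normal-direction parts cancel), hence depends only on boundary values; applied to $\vec G$ this yields
\begin{equation*}
\nu_j\partial_k f_0 - \nu_k\partial_j f_0 = \nu_j g_k - \nu_k g_j = \nu_j(f_k-\partial_k P) - \nu_k(f_j-\partial_j P).
\end{equation*}
For the $L^p(\partial\Omega)$-bound on $f_0$, I then combine the Poincar\'e inequality on the connected boundary with the pointwise domination $\abs{\nabla_{\mathrm{tan}} f_0} \leq \abs{\arr g}$ (the tangential gradient of $f_0$ is just the tangential projection of $\arr g$), obtaining
\begin{equation*}
\doublebar{f_0}_{L^p(\partial\Omega)} \leq C(\diam\Omega)\doublebar{\nabla_{\mathrm{tan}} f_0}_{L^p(\partial\Omega)} \leq C(\diam\Omega)\doublebar{\arr g}_{L^p(\partial\Omega)} \leq C(\diam\Omega)^{1+\smooth}\doublebar{\arr f}_{\dot W\!A^p_{1,\smooth}(\partial\Omega)},
\end{equation*}
with the first part of the lemma used in the last step. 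The same gradient bound also places $f_0$ in $\dot W^p_1(\partial\Omega)$.

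The main technical obstacle will be justifying that the normalized extension $\vec G$ admits a well-defined $L^p(\partial\Omega)$-trace $\Tr_0^\Omega \vec G$: the extension theorem only controls $\nabla^2 \vec G$, so to pin down $\vec G$ (and hence its boundary values) I must layer the automatic normalization $\fint_{\partial\Omega}\nabla\vec G = \fint_{\partial\Omega}\arr g = 0$ with the chosen $\fint_{\partial\Omega}\vec G = 0$, and then invoke Poincar\'e estimates (such as Lemma~\ref{lem:poincare:av} together with its tangential counterpart applied to $\nabla\vec G$) to propagate these mean-zero conditions into $L^p$ control of $\vec G$ up to the boundary.
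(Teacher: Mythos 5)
Your proposal is essentially correct, and your handling of the first estimate coincides with the paper's: you invoke the zero-mean Besov--Poincar\'e inequality on $\partial\Omega$, which the paper derives in-line by writing $\int_{\partial\Omega}\abs{f}^p = \int_{\partial\Omega}\abs{\fint_{\partial\Omega}(f(x)-f(y))\,d\sigma(y)}^p\,d\sigma(x)$, applying H\"older, and comparing $\abs{x-y}$ to $\diam\Omega$ (using that $\sigma(\partial\Omega)\approx(\diam\Omega)^{\dmnMinusOne}$ for Lipschitz domains of bounded character). You are also correct that the stated $L^p(\Omega)$ and $\dot W\!A^p_{1,\smooth}(\Omega)$ norms should be read as boundary norms.

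Where you diverge is the construction of $f_0$. You build $f_0$ by extending $\arr g=\arr f-\nabla P$ into $\Omega$ via \cite[Theorem~4.1]{Bar16pB} and taking the zeroth-order boundary trace, which forces you to establish that the extension lies in $W^{1,1}_{loc}(\overline\Omega)$ --- hence has an $L^p(\partial\Omega)$ trace --- by chaining Poincar\'e-type estimates downward from $\nabla^2\vec G$; you flag this yourself as the main obstacle. The paper takes the shorter route: when $\arr f=\Tr_1^\Omega\varphi$ for $\varphi\in C_0^\infty(\R^\dmn)$, simply set $f_0=(\varphi-P)\big\vert_{\partial\Omega}$, verify the compatibility identity immediately (both sides are tangential derivatives of $\varphi-P$), get the $L^p$ bound from the boundary Poincar\'e inequality together with the first estimate, and then pass to general $\arr f$ by density of such traces in $\dot W\!A^p_{1,\smooth}(\partial\Omega)$ (which is how that space is defined in \cite[Section~2.2]{Bar16pB}). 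This sidesteps the extension machinery entirely and avoids the trace-existence issue you wrestle with; note that even in your route you would likely still need a smooth-approximation step to justify that the tangential derivative of $\Tr_0^\Omega\vec G$ coincides with the tangential part of $\Tr_1^\Omega\vec G$, so density does not really disappear from your argument, it is just pushed to a different place. Both strategies work, but the density argument is strictly more economical here.
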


\begin{proof}
If $\arr f=\Tr_1^\Omega\varphi$ for some smooth compactly supported function~$\varphi$, then $f_0$ exists and satisfies $f_0=(\varphi-P)\big\vert_{\partial\Omega}$. The bound on  $\doublebar{f_0}_{L^p(\partial\Omega)}$ follows from the claimed bound on $\doublebar{\arr f-\nabla P}_{L^p(\Omega)}$ by the Poincar\'e inequality.
Existence of $f_0$ for general~$\arr f$ follows by density of such arrays in $\dot W\!A^p_{1,\smooth}(\partial\Omega)$; see the definition of $\dot W\!A^p_{1,\smooth}(\partial\Omega)$ in \cite[Section~2.2]{Bar16pB}.

We are left with the given estimates on~$\arr f-\nabla P$.
Suppose without loss of generality that $\int_{\partial\Omega} \arr f\,d\sigma=0$. Then
\begin{equation*}\int_{\partial\Omega} \abs{f(x)}^p\,d\sigma(x) 
= \int_{\partial\Omega} \abs[bigg]{\fint_{\partial\Omega}f(x)-f(y)\,d\sigma(y)}^p\,d\sigma(x) .\end{equation*}
By H\"older's inequality,
\begin{align*}\int_{\partial\Omega} \abs{\arr f(x)}^p\,d\sigma(x) 
&\leq
\frac{1}{\sigma(\partial\Omega)}
\int_{\partial\Omega} \int_{\partial\Omega}\abs{\arr f(x)-\arr f(y)}^p\,d\sigma(y)\,d\sigma(x) 
\\&\leq
\frac{(\diam\Omega)^{\dmnMinusOne+p\smooth}}{\sigma(\partial\Omega)}
\int_{\partial\Omega} \int_{\partial\Omega}\frac{\abs{\arr f(x)-\arr f(y)}^p}{\abs{x-y}^{\dmnMinusOne+p\smooth}}\,d\sigma(y)\,d\sigma(x) 
.\end{align*}
Applying the definition of $\dot B^{p,p}_\smooth(\partial\Omega)$ (see \cite[Section~2.2]{Bar16pB}) completes the proof.
\end{proof}

The following lemma establishes existence of solutions in the $\arr H=0$, $p\geq 1/( 1-\smooth)$ case. 

\begin{lem}\label{lem:MitM13B:translated}
Let $\Omega$ be as in Theorem~\ref{thm:MitM13B:translated}.
Suppose that $0<\smooth<1$, that $1/(1-\smooth)\leq p<\infty$, and that $-1/\pdmnMinusOne<\rho<1$. 
Suppose that $\dmn\geq 4$ and the condition~\eqref{eqn:biharmonic:stripe} is valid, or $\dmn=2$ or $\dmn=3$ and the condition~\eqref{eqn:biharmonic:stripe:2} is valid.

Then for each $\arr f\in {\dot W\!A^p_{1,\smooth}(\partial\Omega)}$, 
there is a solution to the problem
\begin{equation}\label{eqn:biharmonic:Dirichlet:homogeneous}
\Delta^2 u = 0 \text{ in }\Omega,\quad
\Tr_1^\Omega u = \arr f,\quad
\doublebar{u}_{\dot W^{p,\smooth}_{2,av}(\Omega)} 
 \leq
C \doublebar{\arr f}_{\dot W\!A^p_{1,\smooth}(\partial\Omega)}
.\end{equation}

Also, for each $\arr g\in {\dot N\!A^p_{1,\smooth-1}(\partial\Omega)}$, 
there is a solution to the problem
\begin{equation}\label{eqn:biharmonic:Neumann:homogeneous}
\Delta^2 u =0\text{ in }\Omega,\quad
\M_{\mat A_\rho,0}^\Omega u=\arr g, \quad \doublebar{u}_{\dot W^{p,\smooth}_{m,av}(\Omega)} 
 \leq
C \doublebar{\arr g}_{\dot N\!A^p_{1,\smooth-1}(\partial\Omega)}
.\end{equation}
\end{lem}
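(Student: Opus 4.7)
The plan is to deduce Lemma~\ref{lem:MitM13B:translated} from Theorem~\ref{thm:MitM13B} by a translation between the two systems of function spaces, with uniqueness already in hand from Lemma~\ref{lem:biharmonic:unique}. For the Dirichlet part, given $\arr f\in \dot W\!A^p_{1,\smooth}(\partial\Omega)$, I would first invoke Lemma~\ref{lem:poincare:whitney} to produce a linear polynomial $P$ and a function $f_0\in L^p(\partial\Omega)\cap \dot W^p_1(\partial\Omega)$ for which the Whitney compatibility $\nu_j\partial_k f_0-\nu_k\partial_j f_0=\nu_j(f_k-\partial_k P)-\nu_k(f_j-\partial_j P)$ holds, so that the pair $(f_0,\arr f-\nabla P)$ lies in $B^{p,p}_{1,\smooth}(\partial\Omega)$. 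Theorem~\ref{thm:MitM13B} applied with $w=0$ then yields a solution $v\in F^{p,2}_{\smooth+1/p+1}(\Omega)$ with $\Delta^2 v=0$, $\Trace v=f_0$, $\Tr_1 v=\arr f-\nabla P$; set $u=v+P$ so that $\Delta^2 u=0$ and $\Tr_1 u=\arr f$. Because $\nabla^2 u=\nabla^2 v$, Lemma~\ref{lem:AdoP98} with $k=1$ (the restriction $p\geq 1/(1-\smooth)$ ensures $\smooth+1/p\leq 1$, so $k=1$ is admissible) converts the Triebel--Lizorkin bound into $\int_\Omega|\nabla^2 u|^p\dist(x,\partial\Omega)^{p-1-p\smooth}\,dx$, and the Caccioppoli/Meyers reverse H\"older argument used in Lemma~\ref{lem:biharmonic:unique} upgrades this pointwise integral to the averaged $\dot W^{p,\smooth}_{2,av}(\Omega)$-norm, giving \eqref{eqn:biharmonic:Dirichlet:homogeneous}.

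The Neumann case follows the same outline once the data are identified. Given $\arr g\in \dot N\!A^p_{1,\smooth-1}(\partial\Omega)$, I would form the Mitrea--Mitrea pair $(g_0,\arr g'):=(0,\arr g)$. The compatibility $\langle(0,\arr g),(P,\nabla P)\rangle_{\partial\Omega}=0$ for affine $P$ reduces to $\int_{\partial\Omega}\arr g\,d\sigma=0$, which is automatic from the definition of the quotient space $\dot N\!A^p_{1,\smooth-1}(\partial\Omega)$ in \cite[Section~2.2]{Bar16pB}: its representatives must annihilate $\nabla^{m-1}$ of polynomials of degree $\leq m-1$, i.e., constant arrays in the $m=2$ case. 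Theorem~\ref{thm:MitM13B} then produces $u\in F^{p,2}_{\smooth+1/p+1}(\Omega)$ satisfying the variational identity with data $(0,\arr g)$; comparing with the definition~\eqref{dfn:Neumann} of $\M_{\mat A_\rho,0}^\Omega u$ shows that $\M_{\mat A_\rho,0}^\Omega u=\arr g$, and the norm translation proceeds exactly as in the Dirichlet case.

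The main obstacle is the norm comparison between the two systems of boundary spaces. On the Dirichlet side, one must bound $\|(f_0,\arr f-\nabla P)\|_{B^{p,p}_{1,\smooth}}$ by $\|\arr f\|_{\dot W\!A^p_{1,\smooth}}$: the $L^p(\partial\Omega)$ pieces come directly from Lemma~\ref{lem:poincare:whitney}, while the homogeneous Besov seminorm of $\arr f-\nabla P$ equals that of $\arr f$ (since $\nabla P$ is constant), and the Besov seminorm of $f_0$ must be extracted from its tangential derivatives via a real-interpolation estimate of the form $B^{p,p}_\smooth=(L^p,W^{p,1})_{\smooth,p}$. On the Neumann side, one must identify $\dot N\!A^p_{1,\smooth-1}(\partial\Omega)$ with the subspace of $(B^{p',p'}_{1,(1-\smooth)}(\partial\Omega))^*$ consisting of functionals annihilating $(P,\nabla P)$, with equivalent norms. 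Both identifications are essentially bookkeeping that unwinds the quotient definitions from \cite[Section~2.2]{Bar16pB}, but they require careful verification.
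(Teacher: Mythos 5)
Your proposal follows essentially the same route as the paper's proof: normalize, use Lemma~\ref{lem:poincare:whitney} to produce the pair $(f_0,\arr f-\nabla P)\in B^{p,p}_{1,\smooth}(\partial\Omega)$ in the Dirichlet case and take $(0,\arr g)$ as a bounded functional on $B^{p',p'}_{1,1-\smooth}(\partial\Omega)$ in the Neumann case, invoke Theorem~\ref{thm:MitM13B} with $w=0$, and translate the $F^{p,2}_{\smooth+1/p+1}$-bound into the weighted pointwise integral $\int_\Omega|\nabla^2 u|^p\dist(\cdot,\partial\Omega)^{p-1-p\smooth}$ via Lemma~\ref{lem:AdoP98} with $k=1$. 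One caution: the final step, bounding $\doublebar{u}_{\dot W^{p,\smooth}_{2,av}(\Omega)}$ by that weighted integral, needs a comparison in the \emph{opposite} direction from the Caccioppoli/Morrey estimate you cite from Lemma~\ref{lem:biharmonic:unique} (which gives $\int|\nabla^2 u|^p\dist^{p-1-p\smooth}\lesssim\doublebar{u}_{\dot W^{p,\smooth}_{2,av}(\Omega)}^p$, not its converse); the paper instead uses H\"older's inequality for $p\geq 2$ and the reverse H\"older/Meyers estimate of \cite[Theorem~24]{Bar16} (cf.\ Theorem~\ref{thm:Meyers:interior}) for $p<2$. Since biharmonic functions admit pointwise interior bounds by $L^q$ averages for every $q>0$, both comparisons do hold, so your argument is not wrong in substance, but the correct tool to cite for this direction is the reverse H\"older inequality rather than the Morrey estimate from the uniqueness lemma.
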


\begin{proof}
Without loss of generality we may assume that $\diam\Omega=1$.
Let $\arr f\in \dot W\!A^p_{1,\smooth}(\partial\Omega)$. By
Theorem~\ref{thm:MitM13B}, Lemma~\ref{lem:AdoP98} and Lemma~\ref{lem:poincare:whitney}, there is some $u$ that satisfies
\begin{multline*}\Delta^2 u=0 \text{ in }\Omega, \quad \Tr_{m-1}^\Omega \vec u=\arr f,
\\
\int_\Omega \abs{\nabla^2 u(x)}^p\dist(x,\partial\Omega)^{p-1-p\smooth}\,dx \leq C \doublebar{\arr f}_{\dot W\!A^p_{1,\smooth}(\partial\Omega)}
.\end{multline*}
By H\"older's inequality (if $p\geq 2$) or by \cite[Theorem~24]{Bar16} (if $p<2$), we have that
\begin{equation*}\doublebar{u}_{\dot W^{p,\smooth}_{2,av}(\Omega)}^p
\leq 
C\int_\Omega \abs{\nabla^2 u(x)}^p \dist(x,\partial\Omega)^{p-1-p\smooth}\,dx 
\end{equation*}
and so the proof is complete.

We now turn to the Neumann problem~\eqref{eqn:biharmonic:Neumann:homogeneous}.
Let $\arr g\in \dot N\!A^p_{1,\smooth-1}(\partial\Omega)$, and let $g_0=0$. If $(\varphi_0,\arr \varphi)\in W\!A^{p'}_{1,(1-\smooth)}(\partial\Omega)$, then
\begin{equation*}\abs{\langle (\varphi_0,\arr\varphi),(0,\arr g)\rangle_{\partial\Omega}}
\leq C \doublebar{\arr \varphi}_{\dot W\!A^{p'}_{1,1-\smooth}(\partial\Omega)}
\doublebar{\arr g}_{\dot N\!A^p_{1,\smooth-1}(\partial\Omega)}.\end{equation*}
But $\doublebar{\arr \varphi}_{\dot W\!A^{p'}_{1,1-\smooth}(\partial\Omega)}\leq \doublebar{(\varphi_0,\arr \varphi)}_{B^{p',p'}_{1,1-\smooth}(\partial\Omega)}$, and so $\arr g$ is a bounded linear operator on ${B^{p',p'}_{1,1-\smooth}(\partial\Omega)}$. We may complete the proof using Theorem~\ref{thm:MitM13B} and Lemma~\ref{lem:AdoP98} as before.
\end{proof}

The following lemma allows us to pass to the case $\arr H\neq 0$. This lemma is a converse to Lemma~\ref{lem:zero:boundary}.

\begin{lem}\label{lem:zero:interior} Let $L$ be an operator of the form~\eqref{eqn:divergence}. Let $\Omega$ be a Lipschitz domain with connected boundary. Let $0<\smooth<1$ and $\pmin<p\leq \infty$ be such that $\vec\Pi^L$ is a bounded operator $L^{p,\smooth}_{av}(\Omega)\mapsto \dot W^{p,\smooth}_{m,av}(\Omega)$.

Suppose that for every $\arr\eta\in {\dot W\!A^p_{m-1,\smooth}(\partial\Omega)}$ there exists a solution $\vec u$ to the Dirichlet problem
\begin{equation}\label{eqn:Dirichlet:boundary}
L \vec u = 0 \text{ in }\Omega,
\quad \Tr_{m-1}^\Omega\vec u = \arr\eta,
\quad
\doublebar{\vec u}_{\dot W^{p,\smooth}_{m,av}(\Omega)} \leq C \doublebar{\arr\eta}_{\dot W\!A^p_{m-1,\smooth}(\partial\Omega)}
.\end{equation}

Then for each $\arr H\in  L^{p,\smooth}_{av}(\Omega)$ and for each $\arr f\in {\dot W\!A^p_{m-1,\smooth}(\partial\Omega)}$ there is a solution to the Dirichlet problem
\begin{multline*}
%\label{eqn:Dirichlet:full}
%\left\{\begin{aligned}
L \vec u = \Div_m \arr H \text{ in }\Omega,
\quad \Tr_{m-1}^\Omega\vec u = \arr f,
\\
\doublebar{\vec u}_{\dot W^{p,\smooth}_{m,av}(\Omega)} \leq C \doublebar{\arr H}_{L^{p,\smooth}_{av}(\Omega)}
+C\doublebar{\arr f}_{\dot W\!A^p_{m-1,\smooth}(\partial\Omega)}
.%\end{aligned}\right.
\end{multline*}

Suppose that $\Omega$ is a Lipschitz domain with connected boundary, $0<\smooth<1$, and $1<p\leq\infty$.  Suppose that that for every $\arr \gamma\in {\dot N\!A^p_{m-1,\smooth-1}(\partial\Omega)}$ there exists a solution $\vec u$ to the Neumann problem
\begin{equation}
\label{eqn:Neumann:boundary}
L \vec u = 0 \text{ in }\Omega,
\quad \M_{\mat A,0}^\Omega\vec u = \arr \gamma,
\quad
\doublebar{\vec u}_{\dot W^{p,\smooth}_{m,av}(\Omega)} \leq C \doublebar{\arr \gamma}_{\dot N\!A^p_{m-1,\smooth-1}(\partial\Omega)}
\end{equation}

Then for each $\arr H\in  L^{p,\smooth}_{av}(\Omega)$ and for each $\arr g\in {\dot N\!A^p_{m-1,\smooth-1}(\partial\Omega)}$ there is a solution to the Neumann problem
\begin{multline*}
%\label{eqn:Neumann:full}
L \vec u = \Div_m \arr H \text{ in }\Omega,
\quad \M_{\mat A,\arr H}^\Omega\vec u = \arr g,
\\
\doublebar{\vec u}_{\dot W^{p,\smooth}_{m,av}(\Omega)} \leq C \doublebar{\arr H}_{L^{p,\smooth}_{av}(\Omega)}
+C\doublebar{\arr g}_{\dot N\!A^p_{m-1,\smooth-1}(\partial\Omega)}
.
\end{multline*}
\end{lem}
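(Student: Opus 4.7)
The plan is to reduce each statement to its homogeneous-PDE counterpart (the hypothesis \eqref{eqn:Dirichlet:boundary} or \eqref{eqn:Neumann:boundary}) by using the Newton potential $\vec\Pi^L$ to absorb the inhomogeneity $\arr H$. This construction is dual to the one in Lemma~\ref{lem:zero:boundary}, where an extension operator was used to absorb boundary data; uniqueness statements would then follow automatically by linearity.

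For the Dirichlet case, given $\arr H\in L^{p,\smooth}_{av}(\Omega)$ and $\arr f\in \dot W\!A^p_{m-1,\smooth}(\partial\Omega)$, first set $\vec w = \vec\Pi^L(\mathcal{E}_\Omega^0 \arr H)\big\vert_\Omega$. By the hypothesis on $\vec\Pi^L$, $\vec w\in \dot W^{p,\smooth}_{m,av}(\Omega)$ with $\doublebar{\vec w}_{\dot W^{p,\smooth}_{m,av}(\Omega)}\leq C\doublebar{\arr H}_{L^{p,\smooth}_{av}(\Omega)}$, and from the defining identity~\eqref{dfn:newton} of the Newton potential we see that $L\vec w=\Div_m \arr H$ in $\Omega$. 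By the trace theorem of \cite[Theorem~5.1]{Bar16pB}, $\Tr_{m-1}^\Omega \vec w\in \dot W\!A^p_{m-1,\smooth}(\partial\Omega)$ with the natural bound, so $\arr\eta = \arr f - \Tr_{m-1}^\Omega \vec w$ is admissible data for \eqref{eqn:Dirichlet:boundary}. Invoking that hypothesis produces a $\vec v$, and then $\vec u=\vec w+\vec v$ solves the full Dirichlet problem, with the desired estimate following from the triangle inequality (if $p\geq 1$) or the $p$-norm inequality (if $p\leq 1$).

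For the Neumann problem, the same decomposition $\vec u = \vec w+\vec v$ is used, with $\vec w$ defined as above. A direct computation from the definition~\eqref{dfn:Neumann}, using bilinearity of the inner product, gives
\begin{equation*}
\M_{\mat A,\arr H}^\Omega(\vec w+\vec v) = \M_{\mat A,\arr H}^\Omega \vec w + \M_{\mat A,0}^\Omega \vec v,
\end{equation*}
so it suffices to choose $\vec v$ using hypothesis~\eqref{eqn:Neumann:boundary} with data $\arr\gamma := \arr g - \M_{\mat A,\arr H}^\Omega \vec w$. The main obstacle is therefore showing that $\M_{\mat A,\arr H}^\Omega \vec w$ lies in $\dot N\!A^p_{m-1,\smooth-1}(\partial\Omega)$ with the bound $\doublebar{\M_{\mat A,\arr H}^\Omega \vec w}_{\dot N\!A^p_{m-1,\smooth-1}(\partial\Omega)} \leq C\doublebar{\arr H}_{L^{p,\smooth}_{av}(\Omega)}$. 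This is a Neumann trace estimate: by definition, for $\vec\varphi\in C^\infty_0(\R^\dmn)$,
\begin{equation*}
\abs{\langle \nabla^{m-1}\vec\varphi,\M_{\mat A,\arr H}^\Omega \vec w\rangle_{\partial\Omega}} = \abs{\langle \nabla^m\vec\varphi,\mat A\nabla^m \vec w-\arr H\rangle_\Omega},
\end{equation*}
and the right-hand side is controlled, via the duality~\eqref{eqn:L:dual} between $L^{p,\smooth}_{av}(\Omega)$ and $L^{p',1-\smooth}_{av}(\Omega)$ (valid for $1<p\leq\infty$, which is precisely the restriction imposed in the Neumann half of the lemma), by $\doublebar{\nabla^m\vec\varphi}_{L^{p',1-\smooth}_{av}(\Omega)}$ times $\doublebar{\arr H}_{L^{p,\smooth}_{av}(\Omega)} + \doublebar{\vec w}_{\dot W^{p,\smooth}_{m,av}(\Omega)}$. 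Converting this into a bound in terms of $\doublebar{\Tr_{m-1}^\Omega \vec\varphi}_{\dot W\!A^{p'}_{m-1,1-\smooth}(\partial\Omega)}$ via the extension theorem \cite[Theorem~4.1]{Bar16pB} (choosing $\vec\varphi$ to be an extension of its boundary trace) gives the required estimate and identifies $\M_{\mat A,\arr H}^\Omega \vec w$ with an element of the predual to $\dot W\!A^{p'}_{m-1,1-\smooth}(\partial\Omega)$, which is $\dot N\!A^p_{m-1,\smooth-1}(\partial\Omega)$.
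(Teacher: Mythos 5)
Your proposal is correct and follows essentially the same route as the paper: write $\vec u = \vec\Pi^L\arr H + \vec v$ (up to sign), note that $L\vec\Pi^L\arr H=\Div_m\arr H$, and invoke the homogeneous-data hypothesis~\eqref{eqn:Dirichlet:boundary} or~\eqref{eqn:Neumann:boundary} with data given by the difference between $\arr f$ (or $\arr g$) and the trace of $\vec\Pi^L\arr H$. The one place you diverge is that the paper simply cites the Neumann trace theorem \cite[Theorem~7.1]{Bar16pB} to conclude $\M_{\mat A,\arr H}^\Omega\vec\Pi^L\arr H\in\dot N\!A^p_{m-1,\smooth-1}(\partial\Omega)$, while you sketch a proof of that fact via the duality~\eqref{eqn:L:dual} and the extension theorem \cite[Theorem~4.1]{Bar16pB}; this is a correct sketch of what that theorem delivers, but the citation is the more economical choice.
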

%Of course passing from the problems \eqref{eqn:Dirichlet:interior} or~\eqref{eqn:Neumann:interior} to the full boundary value problems \eqref{eqn:Dirichlet:full} or~\eqref{eqn:Neumann:full} is simply another application of Lemma~\ref{lem:zero:boundary}.

\begin{proof}
By assumption, $\vec\Pi^L\arr H\in \dot W^{p,\smooth}_{m,av}(\Omega)$. Let 
\begin{equation*}\arr\eta = \Tr_{m-1}^\Omega \vec\Pi^L\arr H,\qquad\arr\gamma = \M_m^\Omega (\mat A\nabla^m\vec\Pi^L\arr H -\arr H)\end{equation*}
where $\M_m^\Omega$ is as in \cite[formula~(1.8)]{Bar16pB}.
By \cite[Theorem~5.1]{Bar16pB}, $\arr\eta\in {\dot W\!A^p_{m-1,\smooth}(\partial\Omega)}$. By \cite[Theorem~7.1]{Bar16pB}, $\arr \gamma \in {\dot N\!A^p_{m-1,\smooth-1}(\partial\Omega)}$.

Let $\vec v$ be the solution to the problem \eqref{eqn:Dirichlet:boundary} or~\eqref{eqn:Neumann:boundary} with boundary data $\arr\eta-\arr f$ or $\arr\gamma-\arr g$. Let $\vec u=\vec\Pi^L\arr H-\vec v$. Then 
\begin{equation*}L\vec u=L\vec\Pi^L\arr H -L\vec v=\Div_m\arr H\end{equation*}
and either
\begin{equation*}\Tr_{m-1}^\Omega \vec u = \Tr_{m-1}^\Omega \vec\Pi^L\arr H-\Tr_{m-1}^\Omega \vec v=
\arr\eta-\arr\eta+\arr f =\arr f\end{equation*}
or, for every smooth test function~$\vec\varphi$,
\begin{align*}\langle \nabla^m\varphi,\mat A\nabla^m \vec u-\arr H\rangle_\Omega
&=\langle \nabla^m\varphi,\mat A\nabla^m \vec \Pi^L\arr H-\arr H\rangle_\Omega
-\langle \nabla^m\varphi,\mat A\nabla^m \vec v\rangle_\Omega
\\&=
\langle \Tr_{m-1}^\Omega\vec\varphi, \M_{m}^\Omega(\mat A\nabla^m\vec\Pi^L\arr H -\arr H)\rangle_{\partial\Omega}
\\&\qquad-
\langle \Tr_{m-1}^\Omega\vec\varphi, \M_{\mat A,0}^\Omega\vec v\rangle_{\partial\Omega}
\\&=
\langle \Tr_{m-1}^\Omega\vec\varphi, \arr \gamma\rangle_{\partial\Omega}
-
\langle \Tr_{m-1}^\Omega\vec\varphi, \arr \gamma-\arr g\rangle_{\partial\Omega}
=
\langle \Tr_{m-1}^\Omega\vec\varphi, \arr g\rangle_{\partial\Omega}
\end{align*}
as desired.
\end{proof}

By Lemmas~\ref{lem:biharmonic:unique}, \ref{lem:MitM13B:translated} and~\ref{lem:zero:interior}, we have that Theorem~\ref{thm:MitM13B:translated} is valid  if $0<\smooth<1$, $1/(1-\smooth)\leq p<\infty$, and if $\dmn\geq 4$ and the condition~\eqref{eqn:biharmonic:stripe} is valid,  or if $\dmn=2$ or $\dmn=3$ and the condition~\eqref{eqn:biharmonic:stripe:2} is valid.

We may pass to the case $1<p<1/(1-\smooth)$ using Theorems~\ref{thm:exist:unique} and~\ref{thm:unique:exist}. This completes the proof of Theorem~\ref{thm:MitM13B:translated}.

\subsection{Real symmetric $t$-independent coefficients if $m=N=1$ and $\dmn=2$} \label{sec:BarM16A:2}

In this section we complete the argument of Remark~\ref{rmk:BarM16A:2} by proving the following lemma.

\begin{lem}\label{lem:BarM16A:2}
Let $\Omega=\{(x',t):x'\in\R,\>t>\psi(x)\}$ be a Lipschitz graph domain in~$\R^2$. Let $L$ be an elliptic operator of the form~\eqref{eqn:divergence} with $m=N=1$, associated to real symmetric coefficients $\mat A$ that satisfy the ellipticity conditions~\eqref{eqn:elliptic:bounded} and~\eqref{eqn:elliptic:everywhere} and are $t$-independent in the sense of formula~\eqref{eqn:t-independent}.

Then there is some $\kappa>0$ such that the Dirichlet problem~\eqref{eqn:Dirichlet:p:smooth} and the Neumann problem~\eqref{eqn:Neumann:p:smooth} are well posed whenever 
\begin{equation*}0<\smooth<1,\quad 0<p\leq \infty, \quad -\frac{1}{2}-\kappa<\frac{1}{p}-\smooth<\frac{1}{2}+\kappa.
\end{equation*}
\end{lem}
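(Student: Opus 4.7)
The plan is to combine the symmetric case of Theorem~\ref{thm:BarM16A} (which already gives both Dirichlet and Neumann well-posedness on the parallelogram described by~\eqref{eqn:exponents:BarM16A:symmetric}) with the two-dimensional well-posedness results in the literature cited in Remark~\ref{rmk:BarM16A:2} (\cite{Pip97,KenR09,Bar13}), and then to fill in the remainder of the stripe by invoking the interpolation result Lemma~\ref{lem:interpolation} and the duality result Lemma~\ref{lem:BVP:duality}.

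First, I would observe that Theorem~\ref{thm:BarM16A} already provides well-posedness of both problems in the region~\eqref{eqn:exponents:BarM16A:symmetric}, which in two dimensions is a parallelogram strictly contained in the target stripe and containing the energy point $(\smooth,1/p)=(1/2, 1/2)$. (Indeed, one checks that $(\smooth-\kappa)/2 > \smooth-1/2-\kappa$ and $(1+\smooth+\kappa)/2 < \smooth+1/2+\kappa$ for every $0<\smooth<1$.) The remaining portion of the stripe lies outside this parallelogram and must be reached by external input.

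The key external input is the $L^p$-Dirichlet theory for real symmetric $t$-independent elliptic coefficients in $\R^2$ (Kenig-Rule, Pipher, Barton), which is well-posed for all $1<p<\infty$, together with the corresponding regularity and Neumann theory for $p$ in a neighborhood of~$2$. I would recast these statements as well-posedness of the problems~\eqref{eqn:Dirichlet:p:smooth}--\eqref{eqn:Neumann:p:smooth} at points $(\smooth,1/p)$ near the edges $\smooth=0^+$ or $\smooth=1^-$ of the stripe, using the interior Caccioppoli and Meyers self-improvement estimates (Theorem~\ref{thm:Meyers:interior}) together with Hardy-type boundary inequalities (in the spirit of Lemma~\ref{lem:poincare:av}) to pass between the nontangential/tent-space norms of those references and the weighted averaged norm~\eqref{eqn:W:norm:2}. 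The first part of Remark~\ref{rmk:BarM16A:2} then transfers Dirichlet well-posedness to Neumann via the map $\mat A \mapsto (1/\det\mat A)\mat A$, which preserves the class of real symmetric $t$-independent elliptic matrices in two dimensions.

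Having well-posedness in the interior parallelogram and near the two edges of the stripe, I would invoke Lemma~\ref{lem:interpolation} to interpolate along line segments joining interior points to edge points; the union of these segments covers the stripe, and the compatibility hypothesis needed to apply Lemma~\ref{lem:interpolation} is supplied by Corollary~\ref{cor:compatible}. Finally, Lemma~\ref{lem:BVP:duality} applied to the self-adjoint operator $L=L^*$ delivers the reflection $(\smooth,1/p) \mapsto (1-\smooth,1-1/p)$, which, together with Theorems~\ref{thm:exist:unique} and~\ref{thm:unique:exist}, closes any remaining gaps at the endpoint indices. The hardest step, and where most of the work lies, is the reformulation of the integer-smoothness $L^p$ and $W^p_1$ results into the fractional-smoothness $\dot W^{p,\smooth}_{1,av}$ framework: existence of solutions carries over fairly directly via Meyers self-improvement, but uniqueness in the endpoint ranges (especially for $p$ near $1$, and in the half of the stripe outside the parallelogram) will require a delicate application of Corollary~\ref{cor:unique:extrapolate} together with the duality results above.
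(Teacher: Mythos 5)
Your proposal follows a genuinely different route from the paper, and the crucial step is not justified. You propose to ``recast'' the integer-smoothness $L^p$-Dirichlet and $W^p_1$-regularity results at points $(\smooth,1/p)$ near the edges $\smooth=0^+$ or $\smooth=1^-$ of the stripe, and then interpolate inward. This step is the gap. The integer-smoothness problems use solution spaces (the nontangential spaces $\widetilde T^p_\infty$ and $\nabla^{-1}(\widetilde T^p_\infty)$) that are \emph{not} endpoints of the interpolation scale $\dot W^{p,\smooth}_{1,av}(\Omega)$. As the introduction to this paper explicitly notes, the corresponding inhomogeneous problem with $\arr H\neq 0$ and zero boundary data is \emph{ill-posed} in those spaces even for $L=\Delta$ in the half-space, so there is no way to fit them into the scale as $\smooth\to 0$ or $\smooth\to 1$ and apply Lemma~\ref{lem:interpolation}. ``Meyers self-improvement plus a Hardy-type inequality'' is not enough to convert $L^p(\partial\Omega)$ well-posedness into $\dot W^{p,\smooth}_{1,av}$ well-posedness at some $\smooth$ near $0$ or~$1$.

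The paper's actual proof instead attacks the part of the stripe with $1/p\geq 1$ directly via atomic decomposition. The key input, which your proposal never invokes, is Lemma~\ref{lem:Bar13} (from \cite{Bar13}): a weighted decay estimate for the nontangential maximal function $N(\nabla u)$ of a solution whose boundary data is an atom, obtained by combining $L^{q_-}$-Dirichlet and $L^{q_+}$-regularity well-posedness with a localization argument. This decay estimate, together with \cite[Theorem~7.11]{BarM16A}, gives a $\dot W^{p,\smooth}_{1,av}$ bound for each atomic solution; summing over atoms (which uses $p\leq 1$, so the Besov space has an atomic decomposition in the sense of \cite[Definition~2.6]{Bar16pB}) yields \emph{existence} for $1\leq 1/p<\smooth+1/(2-\varepsilon)$. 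Uniqueness then comes from Corollary~\ref{cor:unique:extrapolate} by comparison with the known-unique range of Theorem~\ref{thm:BarM16A}, interpolation via Lemma~\ref{lem:interpolation} and Corollary~\ref{cor:compatible} fills the gap between the parallelogram and this upper region, and the lower half of the stripe is recovered by duality (Theorems~\ref{thm:exist:unique} and~\ref{thm:unique:exist}). Your proposal gets the interpolation-and-duality scaffolding roughly right, but the hard analytic content — the atomic decay estimate of Lemma~\ref{lem:Bar13} — is missing, and without it there is no mechanism to produce solutions in the region outside the parallelogram.
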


We begin with the following known result.
The case $1/q_++1/q_-=1$, $q_+=\widetilde q_+$, is the Lipschitz graph domain case of \cite[Theorem~9.1]{Bar13}; a careful inspection of the proof therein reveals the general case. (For the sake of simplicity we will consider only the Dirichlet case, and derive results for the Neumann problem as in Remark~\ref{rmk:BarM16A:2}.)
\begin{lem}\label{lem:Bar13}
Let $L$ and $\Omega$ be as in Lemma~\ref{lem:BarM16A:2}. Let $a:\partial\Omega\mapsto \C$ satisfy 
\begin{equation}\label{eqn:atom}
\doublebar{\partial_\tau a}_{L^\infty(\partial\Omega)}\leq 1/r,
\quad
\supp a\subset B(x_0,r)\cap\partial\Omega \text{ for some $x_0\in\partial\Omega$ and $r>0$.}\end{equation}
Here $\partial_\tau$ is the derivative tangential to~$\partial\Omega$.

Suppose that for some $1<q_-<\infty$ and $1<q_+<\infty$, the boundary value problems
\begin{gather}\label{eqn:Dirichlet:Bar13}
Lu=0 \text{ in }\Omega,\quad \Trace^\Omega u=f,\quad \doublebar{Nu}_{L^{q_-}(\partial\Omega)}\leq c_- \doublebar{f}_{L^{q_-}(\partial\Omega)},
\\\label{eqn:regularity:Bar13}
Lu=0 \text{ in }\Omega,\quad \Trace^\Omega u=f,\quad \doublebar{N(\nabla u)}_{L^{q_+}(\partial\Omega)}\leq c_+ \doublebar{\nabla_\tau f}_{L^{q_+}(\partial\Omega)},
\end{gather}
are compatibly well posed. Suppose that there is some $1<\widetilde q_+<\infty$ such that the boundary value problem
\begin{equation}\label{eqn:regularity:Bar13:local}
Lu=0 \text{ in }Q,\quad \Trace^Q u=f,\quad \doublebar{N(\nabla u)}_{L^{\widetilde q_+}(\partial Q)}\leq \widetilde c_+ \doublebar{\partial_\tau f}_{L^{\widetilde q_+}(\partial Q)},
\end{equation}
is well posed whenever $Q=Q(x_0',\rho)=\{(x',t):\abs{x'-x_0'}<\rho,\psi(x')<t<\psi(x')+\rho\}$ for some $x_0'\in\R$ and some $\rho>0$. 

Then the solution $u$ to the problems \textup{(\ref{eqn:Dirichlet:Bar13}--\ref{eqn:regularity:Bar13})}, with $f=a$, satisfies
\begin{equation}\label{eqn:NT:decay}\int_{\partial\Omega} N(\nabla u)(x) \,(1+\abs{x-x_0}/r)^\kappa \,d\sigma(x)\leq C\end{equation}
for any $0<\kappa<1/q_-$, where $C$ depends on $\kappa$, $q_-$, $q_+$, $\widetilde q_+$, the Lipschitz character of~$\Omega$, and the numbers $c$ above.
\end{lem}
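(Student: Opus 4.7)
The plan is to follow the proof of [Bar13, Theorem~9.1], adapted so that the regularity problem hypothesis comes with potentially different exponents $q_+$ (global) and $\widetilde q_+$ (local in Carleson boxes). The core is a dyadic decomposition of $\partial\Omega$ centered at $x_0$: use the global regularity problem in a bounded neighborhood of $\supp a$, and in each dyadic annulus far from $x_0$ combine the local regularity problem \eqref{eqn:regularity:Bar13:local} with decay of $u$ derived from the global $L^{q_-}$ Dirichlet bound.

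\textbf{Near-field.} Apply the global regularity estimate \eqref{eqn:regularity:Bar13} with $f=a$. Since $\|\partial_\tau a\|_{L^{q_+}(\partial\Omega)}\le(1/r)\cdot r^{1/q_+}=r^{1/q_+-1}$, we obtain $\|N(\nabla u)\|_{L^{q_+}(\partial\Omega)}\le c_+ r^{1/q_+-1}$. Integrating over $B(x_0,C_0 r)\cap\partial\Omega$ via H\"older and absorbing the bounded factor $(1+|x-x_0|/r)^\kappa\sim 1$ in that region gives an $O(1)$ contribution to the left side of \eqref{eqn:NT:decay}.

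\textbf{Far-field.} For $k\ge k_1$ with $2^{k_1}r\ge 4C_0 r$, let $A_k=\{x\in\partial\Omega:\ 2^k r\le|x-x_0|<2^{k+1}r\}$, and cover $A_k$ by $O(1)$ Carleson boxes of the form $Q=Q(x_k',\rho_k)=\{(x',t): |x'-x_k'|<\rho_k,\ \psi(x')<t<\psi(x')+\rho_k\}$ with $\rho_k\approx 2^k r$, chosen so that $\partial Q\cap\partial\Omega\subset\partial\Omega\setminus B(x_0,2r)$; then $u\big|_{\partial Q\cap\partial\Omega}=a\big|_{\partial Q\cap\partial\Omega}=0$. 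The hypothesis \eqref{eqn:regularity:Bar13:local} therefore gives
\begin{equation*}
\|N_Q(\nabla u)\|_{L^{\widetilde q_+}(\partial Q)}\le \widetilde c_+\,\|\partial_\tau(u\big|_{\partial Q\cap\Omega})\|_{L^{\widetilde q_+}(\partial Q\cap\Omega)}.
\end{equation*}
On the internal face $\partial Q\cap\Omega$, which lies at distance comparable to $\rho_k$ from $\partial\Omega$, the Caccioppoli inequality and Moser's iteration for $Lu=0$ convert tangential derivatives to $L^\infty$ bounds on $u$: $|\partial_\tau u|\lesssim \rho_k^{-1}\sup_{B}|u|$ on a suitable enlargement, so the right-hand side is bounded by $C\rho_k^{1/\widetilde q_+-1}\sup_{\Omega\cap B(x_k,C\rho_k)}|u|$.

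\textbf{Decay of $u$ and summation.} To turn this into algebraic decay in $k$, use the $L^{q_-}$ Dirichlet estimate \eqref{eqn:Dirichlet:Bar13}, which gives $\|Nu\|_{L^{q_-}(\partial\Omega)}\le c_-\|a\|_{L^{q_-}(\partial\Omega)}\le Cr^{1/q_-}$. On the enlarged annulus $\widetilde A_k=A_{k-1}\cup A_k\cup A_{k+1}$ the trace of $u$ vanishes, so boundary H\"older continuity for $Lu=0$ with vanishing boundary data---available because $\mat A$ is real, symmetric, and $t$-independent (giving the full De~Giorgi--Nash--Moser boundary theory)---combines with the interior sup bound controlled by solid $L^{q_-}$ averages $\bigl(\fint_{\Omega\cap B(x_k,C\rho_k)}|u|^{q_-}\bigr)^{1/q_-}\lesssim \rho_k^{-1/q_-}r^{1/q_-}=2^{-k/q_-}$. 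Substituting back and integrating on $A_k$ via H\"older gives $\int_{A_k}N(\nabla u)\,d\sigma\le C\,2^{-k/q_-}$. Summing against $(1+|x-x_0|/r)^\kappa\sim 2^{k\kappa}$ converges as soon as $\kappa<1/q_-$, which is the claimed range.

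\textbf{Main obstacle.} The technical heart is the uniform $\sup$-bound $\sup_{\Omega\cap B(x_k,C\rho_k)}|u|\lesssim 2^{-k/q_-}$: converting the nontangential $L^{q_-}$ control of $u$ into a genuine pointwise decay at the right rate requires careful use of boundary H\"older regularity on $\widetilde A_k$ and the $t$-independent structure, together with matching the dyadic scaling in $r$ and $\rho_k$ so that constants are independent of $k$. One must also verify that the Carleson boxes $Q_{k,j}$ can be chosen with uniformly good Lipschitz character (so that the hypothesis \eqref{eqn:regularity:Bar13:local} applies with a $k$-independent constant $\widetilde c_+$); this is where a careful re-reading of the Bar13 argument is needed to separate the roles of $q_+$ and $\widetilde q_+$.
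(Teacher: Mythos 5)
Your high-level architecture—near-field via the global regularity problem, far-field via local regularity in dyadic Carleson boxes combined with $L^{q_-}$ decay of $u$, then summing $2^{-k/q_-}\cdot 2^{k\kappa}$—is consistent with what the paper points to (the lemma is credited directly to \cite[Theorem~9.1]{Bar13}, with the general-exponent form stated to follow ``by careful inspection''). The solid-average bound $\bigl(\fint_{\Omega\cap B(x_k,C\rho_k)}|u|^{q_-}\bigr)^{1/q_-}\lesssim 2^{-k/q_-}$, obtained by Fubini from the nontangential $L^{q_-}$ estimate, and the passage to a sup bound via local boundedness near the zero Dirichlet boundary, are routine; that step is not, in my view, the main obstacle, even though you single it out as such.

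The genuine gap is in the far-field application of the local regularity estimate. You assert that the internal face $\partial Q\cap\Omega$ of the Carleson box lies at distance comparable to $\rho_k$ from $\partial\Omega$ and deduce $|\partial_\tau u|\lesssim \rho_k^{-1}\sup|u|$ there. That is false: the two vertical sides of $Q(x_k',\rho_k)=\{(x',t):|x'-x_k'|<\rho_k,\,\psi(x')<t<\psi(x')+\rho_k\}$ descend from the top all the way to $\partial\Omega$ at the corners, so the distance to $\partial\Omega$ ranges from $\sim\rho_k$ down to zero and the pure interior gradient estimate does not hold uniformly along those sides. To salvage the bound one must use boundary H\"older continuity of $u$ (De~Giorgi--Nash--Moser with zero Dirichlet data on $\partial\Omega\cap B(x_k,C\rho_k)$) to obtain $|u(y)|\lesssim(d(y)/\rho_k)^\alpha\sup|u|$ with $d(y)=\dist(y,\partial\Omega)$, combine with Caccioppoli to get $|\nabla u(y)|\lesssim d(y)^{\alpha-1}\rho_k^{-\alpha}\sup|u|$, and then integrate in $L^{\widetilde q_+}$ along the side; the integral converges and reproduces your claimed bound $\rho_k^{1/\widetilde q_+-1}\sup|u|$ only when $(\alpha-1)\widetilde q_+>-1$, i.e.\ $\widetilde q_+<1/(1-\alpha)$. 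Since the De~Giorgi exponent $\alpha$ can be small, this is a nontrivial compatibility condition linking the local regularity exponent $\widetilde q_+$ to the ellipticity constants; your proposal neither verifies it nor argues that $\widetilde q_+$ supplied by \cite{Rul07} can always be taken close enough to $1$, so the far-field estimate as written is not justified.
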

Here $N$ is the nontangential maximal function common in the literature. 

By \cite{Rul07}, we have well posedness of the local boundary value problems~\eqref{eqn:regularity:Bar13:local} for some (possibly small) $\widetilde q_+>1$.  By \cite{KenP93}, we have well posedness of the problem~\eqref{eqn:regularity:Bar13} for $q_+=2$, while by \cite{JerK81A} we have that there is some $\varepsilon>0$ such that  the problem~\eqref{eqn:Dirichlet:Bar13} is well posed for all $2-\varepsilon<q_-<\infty$. These problems are compatibly well posed; see the above papers or \cite{AusAH08,AusM14,AusS14p}. 

Fix some $a$ as in Lemma~\ref{lem:Bar13} and let $u$ be as in Lemma~\ref{lem:Bar13}. Then the estimate~\eqref{eqn:NT:decay} is valid for all $0<\kappa<1/(2-\varepsilon)$.

An elementary argument involving H\"older's inequality shows that if $1\geq p_0>1/(1+\kappa)$, then
\begin{equation*}\int_{\partial\Omega} N(\nabla u)(x)^{p_0}\,d\sigma(x)\leq Cr^{1-p_0}.\end{equation*}
By \cite[Theorem~7.11]{BarM16A} and a change of variables, if $\Omega$ is a Lipschitz graph domain, then 
\begin{equation*}\doublebar{u}_{\dot W^{p,\smooth}_{1,av}(\Omega)} \leq C r^{1/p-\smooth}\end{equation*}
whenever $0<\smooth<1$, $p_0<p\leq \infty$ and $\smooth-1/p=1-1/p_0$.

Let $p$ and $\smooth$ satisfy the given conditions and be such that such a $p_0$ and $\kappa$ exist. We impose the additional condition $p\leq 1$; we thus require
\begin{equation*}0<p\leq 1,\quad 0<\smooth<1,\quad 1/p-\smooth<1/(2-\varepsilon).\end{equation*} 
Let $f\in {\dot B^{p,p}_\smooth(\partial\Omega)}$. Then by \cite[Definition~2.6]{Bar16pB}, we have that $f=\sum_{j=1}^\infty\lambda_j a_j$, where $\sum_j \abs{\lambda_j}^p\approx \doublebar{f}_{\dot B^{p,p}_\smooth(\partial\Omega)}^p$ and where $a_j$ satisfies the conditions
\begin{equation*}\supp a_j\subset B(x_j,r_j)\cap\partial\Omega,\quad \doublebar{\partial_\tau a_j}_{L^\infty(\partial\Omega)}\leq r_j^{\smooth-1-1/p}\end{equation*}
for some $x_j\in \partial\Omega$ and some $r_j>0$. Let $u_j$ be as in Lemma~\ref{lem:Bar13} with $a=r_j^{1/p-s}a_j$ and let $u=\sum_j r_j^{s-1/p}u_j$; then
\begin{equation*}\doublebar{u}_{\dot W^{p,\smooth}_{1,av}(\Omega)}^p\leq C\sum_j\abs{\lambda_j}^p \approx C\doublebar{f}_{\dot B^{p,p}_\smooth(\partial\Omega)}^p\end{equation*}
and so we have existence of solutions to the Dirichlet problem~\eqref{eqn:Dirichlet:boundary} provided $0<\smooth<1$ and $1\leq 1/p<\smooth+1/(2-\varepsilon)$. By \cite[Theorem~3.1]{BarM16A} and \cite[Th\'eor\`eme~II.2]{AusT95}, the Newton potential is bounded $L^{p,\smooth}_{av}(\Omega)\mapsto \dot W^{p,\smooth}_{1,av}(\Omega)$ whenever $0<\smooth<1$ and $0\leq 1/p<1+\smooth$; thus, by Lemma~\ref{lem:zero:interior}, solutions to the problem \eqref{eqn:Dirichlet:p:smooth} exist whenever $1\leq 1/p<\smooth+1/(2-\varepsilon)$.

By \cite{BarM16A} (see Theorem~\ref{thm:BarM16A} above), there is some $q_0$ with $1<q_0<2$ such that the Dirichlet problem~\eqref{eqn:Dirichlet:p:smooth} is well posed for all $0<\smooth<1$ and all $q_0< q< q_0'$, where $1/q_0+1/q_0'=1$. We impose the additional assumption that  $1/p-\smooth<1/q_0$. There is then some $\sigma$, $q$ with $0<\sigma<1$, $q_0<q<q_0'$, and $1/p-\smooth=1/q-\sigma$. Solutions to the Dirichlet problem~\eqref{eqn:Dirichlet:p:smooth} with $p=q$ and $\smooth=\sigma$ are unique; thus, by Corollary~\ref{cor:unique:extrapolate}, solutions to the the Dirichlet problem~\eqref{eqn:Dirichlet:p:smooth} with $p$ and $s$ as above are unique. 

Furthermore, by Corollary~\ref{cor:compatible}, the Dirichlet problem with $p$, $\smooth$ as above and the Dirichlet problem with $p=q$, $\smooth=\sigma$ are compatibly well posed in the sense of Lemma~\ref{lem:interpolation}; thus, by Lemma~\ref{lem:interpolation}, we have that the Dirichlet problem is well posed whenever $0<\smooth<1$, $0<p<q_0'$ and $1<1/p-\smooth<\min(1/q,1/(2-\varepsilon))$. 

By Theorem~\ref{thm:BarM16A} and the above remarks, we have well posedness whenever $0<\smooth<1$, $0<p<q_0'$ and $1/p-\smooth<\min(1/q,1/(2-\varepsilon))$. By Theorems~\ref{thm:exist:unique} and~\ref{thm:unique:exist}, we have well posedness whenever $q_0'<p\leq \infty$ and $1/p-\smooth>\max(1/(2-\varepsilon),\allowbreak 1/q_0)$. This completes the proof.

\bibliographystyle{amsalpha}
\bibliography{bibli}

\end{document}